\theoremstyle{plain}
\newtheorem{teorema}{Theorem}[section]
\newtheorem{proposizione}[teorema]{Proposition}
\newtheorem{lemma}[teorema]{Lemma}
\newtheorem{corollario}[teorema]{Corollary}
\newtheorem*{teorema*}{Theorem}
\newtheorem*{proposizione*}{Proposition}
\newtheorem*{lemma*}{Lemma}
\newtheorem*{corollario*}{Corollary}
\theoremstyle{definition}
\newtheorem{definizione}[teorema]{Definition}
\newtheorem*{definizione*}{Definition}
\newtheorem*{esempio*}{Example}
\theoremstyle{remark}
\newtheorem{nota}[teorema]{Remark}
\newtheorem*{nota*}{Remark}
\newtheorem*{osservazione*}{Remark}
\newtheorem*{esercizio*}{Esercizio}
\newcommand{\I}{\mathrm{i}\mkern1mu}
\newcommand{\transpose}{\intercal}
\DeclareMathAlphabet{\mathpzc}{OT1}{pzc}{m}{it}
\newcommand{\m}[1]{\mathcal{#1}}
\newcommand{\bb}[1]{\mathbb{#1}}
\newcommand{\mrm}[1]{\mathrm{#1}}
\newcommand{\f}[1]{\mathfrak{#1}}
\newcommand{\scr}[1]{\mathscr{#1}}
\newcommand{\diff}{\partial}         %questo è solo un SIMBOLO
\newcommand{\bdiff}{\bar{\partial}} 
\newcommand{\cotJ}{T^*\!\!\!\scr{J}}
\DeclarePairedDelimiter\card{\lvert}{\rvert} 
\DeclarePairedDelimiter\norm{\lVert}{\rVert} 
\DeclarePairedDelimiter{\set}{\{}{\}}
\newcommand{\tc}{\mathrel{}\mathclose{}\middle|\mathopen{}\mathrel{}}
\numberwithin{equation}{section} %%%% eq. numerate per sezione
\title{Scalar curvature and an infinite-dimensional hyperk\"ahler reduction}
\author{Carlo Scarpa and Jacopo Stoppa}
\date{\vspace{-5ex}}
\begin{document}

\maketitle

\abstract{\noindent We discuss a natural extension of the K\"ahler reduction of Fujiki and Donaldson, which realises the scalar curvature of K\"ahler metrics as a moment map, to a hyperk\"ahler reduction. Our approach is based on an explicit construction of hyperk\"ahler metrics due to Biquard and Gauduchon. This extension is reminiscent of how one derives Hitchin's equations for harmonic bundles, and yields real and complex moment map equations which deform the constant scalar curvature K\"ahler (cscK) condition. In the special case of complex curves we recover previous results of Donaldson. We focus on the case of complex surfaces. In particular we show the existence of solutions to the moment map equations on a class of ruled surfaces which do not admit cscK metrics.

\listoftodos

\setcounter{tocdepth}{2}
\tableofcontents

\section{Introduction}

Let $M$ be a compact K\"ahler manifold. The problem of finding a K\"ahler metric $g$ with prescribed cohomology class $[\omega_g]$ and constant scalar curvature
\begin{equation}\label{cscKequ}
s(g) = \hat{s}
\end{equation} 
has been intensively studied in complex differential geometry for the last few decades. A particularly fruitful parallel has been established between \eqref{cscKequ} (the cscK equation) and the Hermitian Yang--Mills (HYM) equation for a Hermitian metric $h$ on a holomorphic vector bundle $E$ over, say, a complex curve with a fixed K\"ahler form $(X, \omega)$,
\begin{equation}\label{HYMequ}
F(h) = \mu\,\mrm{Id}\otimes\omega.
\end{equation} 
Remarkably both equations can be realised as the zero moment map condition for a suitable infinite-dimensional K\"ahler reduction. For the {\sc HYM} equation \eqref{HYMequ} this goes back to \cite{AtiyahBott_YangMills}, the Atiyah--Bott characterisation of curvature as the moment map for the Hamiltonian action of unitary gauge transformations on the space of compatible $\bar{\diff}$-operators $\scr{A}$. The {\sc HYM} equation arises when looking for zeroes of the moment map along the orbits of the complexified action. In the case of the cscK equation Fujiki (\cite{Fujiki_moduli}) and Donaldson (\cite{Donaldson_scalar}) constructed a Hamiltonian action of the group of Hamiltonian symplectomorphisms $\m{G} = \mrm{Ham}(M, \omega_0)$ on the space $\scr{J}$ of almost complex structures compatible with a fixed symplectic form $\omega_0$, endowed with a natural symplectic (in fact K\"ahler) structure. It turns out that the moment map for this action, evaluated on Hamiltonians $h$, is given by
\begin{equation*}
\mu_J(h) = \int_M 2 (s(g_J) -\hat{s}) h \,\frac{\omega_0^n}{n!}.
\end{equation*}
Donaldson in \cite{Donaldson_scalar} then shows how the cscK equation \eqref{cscKequ} (with fixed $J$ and varying K\"ahler metric) arises when looking for zeroes of the moment map along the orbits of the complexified \emph{infinitesimal} action. 

An important feature of the moment map approach in the Hermitian Yang--Mills case is that the Atiyah-Bott K\"ahler reduction can be upgraded naturally to a hyperk\"ahler reduction of the holomorphic cotangent space $T^*\scr{A}$, as was shown by Hitchin (\cite{Hitchin_self_duality}). The real, respectively complex moment map, along orbits of the complexification, give Hitchin's harmonic bundle equations, 
\begin{align}\label{harmonicBundle}
\nonumber F(h) + [\phi, \phi^{*_h}] &= \mu\,\mrm{Id}\otimes\omega\\
\bar{\diff}\phi &= 0,
\end{align} 
involving a Higgs field $\phi \in \mrm{Hom}(E, E\otimes T^*X)$. The harmonic bundle equations \eqref{harmonicBundle} lead to a very rich theory, especially, but not only, in the case of complex curves. 

Thus it seems natural to ask if equations parallel to \eqref{harmonicBundle} can be derived and studied in the context of the cscK problem \eqref{cscKequ}. In fact this has been achieved by Donaldson (\cite{Donaldson_hyperkahler}) and Hodge (\cite{Hodge_phd_thesis}), in the special case of complex curves, as we discuss below in some detail. 

The present paper begins a more systematic study of this problem for higher dimensional manifolds. In the rest of this Introduction we summarise our main results. 

Naturally the first step is to upgrade the (Hamiltonian) action $\m{G}\curvearrowright \scr{J}$ to an action $\m{G}\curvearrowright \cotJ$ preserving a hyperk\"ahler structure. It is well known that, for a K\"ahler manifold $M$, there exists a hyperk\"ahler metric in a neighbourhood of the zero section of $T^*M$, see \cite{Feix_hyperkahler} and \cite{Kaledin_hyperkahler}; in the work of Donaldson such a structure is constructed for $\cotJ$, in an ad-hoc way, in the special case of a complex curve. Here we use instead an explicit construction, due to Biquard and Gauduchon (\cite{Biquard_Gauduchon}), of a canonical $G$-invariant hyperk\"ahler metric in the neighbourhood of the zero section of $T^*(G/H)$, the cotangent bundle of a Hermitian symmetric space of noncompact type, to obtain the required hyperk\"ahler structure in higher dimensions.

\begin{teorema}\label{teorema:HKThmIntro}
A neighbourhood of the zero section in the holomorphic cotangent bundle $\cotJ$ is endowed with a natural hyperk\"ahler structure. This is induced by regarding $\scr{J}$ as the space of sections of a $\mrm{Sp}(2n)$-bundle with fibres diffeomorphic to $\mrm{Sp}(2n)/\mrm{U}(n)$, and by the Biquard-Gauduchon canonical $\mrm{Sp}(2n)$-invariant hyperk\"ahler metric on a neighbourhood of the zero section in $T^*(\mrm{Sp}(2n)/\mrm{U}(n))$. The induced action $\m{G}\curvearrowright\cotJ$ preserves this hyperk\"ahler structure.
\end{teorema}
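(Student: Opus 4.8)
The strategy is to reduce the statement to a fibrewise application of the Biquard–Gauduchon construction, then check that the resulting structure is compatible with the already-known Fujiki–Donaldson Kähler structure on $\scr{J}$ and with the $\m{G}$-action. First I would recall the bundle-theoretic description of $\scr{J}$: fixing the symplectic form $\omega_0$, at each point $x\in M$ the fibre $\scr{J}_x$ of compatible linear complex structures on $(T_xM,\omega_0)$ is a copy of the Siegel upper half space $\mrm{Sp}(2n)/\mrm{U}(n)$, a Hermitian symmetric space of noncompact type, and $\scr{J}$ is the space of smooth sections of the associated bundle $\underline{\scr{J}}\to M$ with structure group $\mrm{Sp}(2n,\bb{R})$ (the bundle being $\mrm{Sp}(\omega_0)$, the symplectic frame bundle of $(M,\omega_0)$). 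Correspondingly a cotangent vector to $\scr{J}$ at $J$ is (after a mild functional-analytic identification, integrating against $\omega_0^n/n!$) a section of the bundle with fibre $T^*_{J_x}\scr{J}_x \cong T^*(\mrm{Sp}(2n)/\mrm{U}(n))$, so that $\cotJ$ is itself the space of sections of the associated bundle $T^*\underline{\scr{J}}\to M$.

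Next I would invoke Biquard–Gauduchon (\cite{Biquard_Gauduchon}): on a neighbourhood $\scr{U}$ of the zero section in $T^*(\mrm{Sp}(2n)/\mrm{U}(n))$ there is a canonical $\mrm{Sp}(2n)$-invariant hyperkähler metric, with complex structure $\mbf{I}$ in the first complex structure equal to the one coming from the holomorphic cotangent bundle of $\mrm{Sp}(2n)/\mrm{U}(n)$, and restricting to the symmetric-space Kähler metric on the zero section. Because this structure is $\mrm{Sp}(2n)$-equivariant, it glues over $M$: applying it fibrewise to $T^*\underline{\scr{J}}$ produces, on a neighbourhood of the zero section, a fibrewise triple of closed $2$-forms and complex structures, which one then integrates over $M$ against $\omega_0^n/n!$ to obtain three closed $2$-forms $\Omega_1,\Omega_2,\Omega_3$ and three (formally integrable) almost complex structures $\mbf{I}_1,\mbf{I}_2,\mbf{I}_3$ on the corresponding neighbourhood of the zero section in $\cotJ$, satisfying the quaternionic relations. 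Along the zero section $\mbf{I}_1$ and $\Omega_1$ restrict to the Fujiki–Donaldson complex and symplectic structure on $\scr{J}$, and $\mbf{I}_1$ on the total space is the standard holomorphic-cotangent complex structure $\cotJ$ carries tautologically; closedness of the $\Omega_i$ and the Hermitian property pass through the integration because they hold fibrewise and integration over the fixed measure $\omega_0^n/n!$ commutes with $\dd$.

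Finally, for the $\m{G}$-equivariance: the action of $\m{G}=\mrm{Ham}(M,\omega_0)$ on $\scr{J}$ is by pullback of complex structures, which is exactly the action induced by the natural action of symplectomorphisms on the frame bundle $\mrm{Sp}(\omega_0)$; it therefore lifts to $T^*\underline{\scr{J}}$ by its cotangent functoriality, and this lift covers the $\m{G}$-action on $\cotJ$. Since the Biquard–Gauduchon structure is natural (canonical) on $T^*(\mrm{Sp}(2n)/\mrm{U}(n))$, it is preserved by bundle automorphisms induced by $\mrm{Sp}(2n)$-bundle automorphisms of $\mrm{Sp}(\omega_0)$, hence $\m{G}$ preserves each $\mbf{I}_i$ and each $\Omega_i$; because $\m{G}$ also preserves the measure $\omega_0^n/n!$, it preserves the integrated structures on $\cotJ$. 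The main obstacle I anticipate is not the algebra but the analysis: making precise the identification of $T^*\scr{J}$ with sections of $T^*\underline{\scr{J}}$ (choice of function space, Sobolev or smooth Fréchet setting, and the pairing used), and verifying that "a neighbourhood of the zero section" can be chosen uniformly over the compact $M$ so that the fibrewise Biquard–Gauduchon domains patch to an honest open neighbourhood in $\cotJ$ — this uniformity uses compactness of $M$ together with the $\mrm{Sp}(2n)$-invariance (so the size of the domain depends only on the fibre model, not on $x$). Checking integrability of the $\mbf{I}_i$ on the infinite-dimensional $\cotJ$, rather than merely fibrewise, is the other point requiring care, though it reduces to the fibrewise Nijenhuis vanishing plus the fact that the section-space structures are "pointwise" differential operators of order zero in the fibre variable.
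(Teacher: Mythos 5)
Your proposal follows essentially the same route as the paper: realise $\cotJ$ as sections of an $\mrm{Sp}(2n)$-bundle with fibre $T^*\!\m{AC}^+\cong T^*(\mrm{Sp}(2n)/\mrm{U}(n))$, install the Biquard--Gauduchon structure fibrewise, integrate against $\omega_0^n/n!$, and use $\mrm{Sp}(2n)$-invariance both for well-definedness (independence of Darboux trivialisation) and for $\m{G}$-equivariance, with closedness following because $\mrm{d}$ commutes with fibrewise integration (the paper's Theorem \ref{teorema:differenziazione}). The only cosmetic difference is that the paper avoids any direct Nijenhuis/integrability check by invoking the purely algebraic criterion of Lemma \ref{lemma:hyperekahler_criterio} (closedness of $\bm{\Omega_I}$ and of the canonical form $\bm{\Theta}$ plus $\bm{J}^2=-1$), which is exactly what makes the argument valid in the formal infinite-dimensional setting.
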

A review of the results of Biquard and Gauduchon can be found in \S \ref{PrelimBiqGau}. The construction of the hyperk\"ahler metric and the proof of Theorem \ref{teorema:HKThmIntro} are given in Section \ref{sec:cotJ_hyperk}.

Our next result studies the induced action $\m{G}\curvearrowright\cotJ$. We let $\Omega$ denote the Fujiki-Donaldson K\"ahler form on $\scr{J}$. We write $\bm{I}$, $\bm{J}$ for the complex structures underlying the hyperk\"ahler structure on $\cotJ$, with corresponding K\"ahler forms $\bm{\Omega_I}$, $\bm{\Omega}_{\bm{J}}$. By a slight abuse of notation we will denote by $(\cotJ,\bm{\Omega}_{\bm{I}})$ the open neighbourhood of the zero section in $\cotJ$ on which $\bm{\Omega}_{\bm{I}}$ is well-defined. Let $\bm{\Theta}$ be the canonical complex symplectic form on $\cotJ$. Points $(J,\alpha) \in \cotJ$ are pairs of an almost complex structure and a section $\alpha \in \mrm{End}(T^*M)$ (satisfying the compatibility conditions). We write $\alpha^\transpose$ for the dual endomorphism. Finally we recall that the Biquard-Gauduchon metric is expressed in terms of a canonical $\mrm{Sp}(2n)$-invariant function $\rho$ on $T^*(\mrm{Sp}(2n)/\mrm{U}(n))$.

\begin{teorema}\label{teorema:HamThmIntro}
The action $\m{G}\curvearrowright\cotJ$ is Hamiltonian with respect to the canonical symplectic form $\bm{\Theta}$; a moment map $\f{m}_{\bm{\Theta}}$ is given by
\begin{equation*}
{\f{m}_{\bm{\Theta}}}_{(J,\alpha)}(h)=-\int_M\frac{1}{2}\mrm{Tr}(\alpha^\transpose\m{L}_{X_h}J)\,\frac{\omega_0^n}{n!}.
\end{equation*}  
Moreover the action $\m{G}\curvearrowright(\cotJ,\bm{\Omega}_{\bm{I}})$ is Hamiltonian; a moment map $\f{m}_{\bm{\Omega}_{\bm{I}}}$ is given by
\begin{equation*}
\f{m}_{\bm{\Omega}_{\bm{I}}}=\mu\circ\pi+\f{m}
\end{equation*}
where $\mu$ is the moment map for the action $\m{G}\curvearrowright(\scr{J},\Omega)$, $\pi:\cotJ\to J$ is the projection and $\f{m}:\cotJ\to\mrm{Lie}(\m{G})^*$ is defined by 
\begin{equation*}
\f{m}_{(J,\alpha)}(h)=\int_{x\in M}\mrm{d}^c\rho_{(J(x),\alpha(x))}\left(\m{L}_{X_h}J,\m{L}_{X_h}\alpha\right)\,\frac{\omega_0^n}{n!}.
\end{equation*}
\end{teorema}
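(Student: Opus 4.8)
The strategy is to reduce everything to the Biquard--Gauduchon picture fibrewise and then integrate over $M$. Recall that $\scr{J}$ is the space of sections of a bundle with fibre $\scr{Z} = \mrm{Sp}(2n)/\mrm{U}(n)$, so a neighbourhood of the zero section in $\cotJ$ is the space of sections of a bundle with fibre a neighbourhood $\m{U}$ of the zero section in $T^*\scr{Z}$. The $\m{G}$-action on $\cotJ$ is the one induced by Lie derivative along Hamiltonian vector fields $X_h$: it sends $(J,\alpha)$ to the infinitesimal motion $(\m{L}_{X_h}J, \m{L}_{X_h}\alpha)$, which at each point $x \in M$ is a tangent vector to $T^*\scr{Z}$ at $(J(x),\alpha(x))$. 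The first step is therefore to write down, at the level of the model space $T^*\scr{Z}$, the relevant differential forms: the tautological $1$-form $\lambda$ and $\bm{\Theta} = \dd\lambda$, and the Biquard--Gauduchon K\"ahler form $\bm{\Omega}_{\bm{I}}$, which by their construction equals $\pi^*\Omega_{\scr{Z}} + \dd(\dd^c\rho)$ for the canonical invariant function $\rho$ (here $\Omega_{\scr{Z}}$ is the symmetric-space K\"ahler form on $\scr{Z}$, whose assembled version over $M$ is the Fujiki--Donaldson form $\Omega$). The forms $\bm{\Omega}_{\bm I}$ and $\bm\Theta$ on $\cotJ$ are then obtained by integrating the corresponding fibre forms against $\omega_0^n/n!$; this is the standard way the Fujiki--Donaldson symplectic structure is built, and it is recalled in the earlier sections.

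Next I would prove the formula for $\f{m}_{\bm{\Theta}}$. Since $\bm{\Theta} = \dd\lambda$ is exact and, more importantly, the $\m{G}$-action lifts the action on $\scr{J}$ to a cotangent action, it is Hamiltonian with the ``tautological'' moment map $\f{m}_{\bm\Theta}(h) = \iota_{X_h}\lambda$, where $X_h$ here denotes the vector field on $\cotJ$ generated by $h$. Concretely, at $(J,\alpha)$ the tautological $1$-form pairs $\alpha$ with the projection to $T\scr{J}$ of the generating vector field, which is $\m{L}_{X_h}J$; carrying out this pairing fibrewise and integrating gives exactly $-\tfrac12\int_M \mrm{Tr}(\alpha^\transpose \m{L}_{X_h}J)\,\omega_0^n/n!$, up to fixing the sign/normalisation conventions for $\lambda$ and for the pairing between $\mrm{End}(T^*M)$-valued and $\mrm{End}(TM)$-valued tensors. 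One should check $\m{G}$-invariance of $\lambda$ (equivalently of $\rho$, which is automatic from $\mrm{Sp}(2n)$-invariance since $\m{G}$ acts through bundle automorphisms covering $\mrm{Ham}(M,\omega_0)$) so that $\iota_{X_h}\bm\Theta = \dd(\iota_{X_h}\lambda)$, and then equivariance of $\f{m}_{\bm\Theta}$ follows from naturality.

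For the $\bm{\Omega}_{\bm I}$ moment map I would use $\bm{\Omega}_{\bm I} = \pi^*\Omega + \dd\dd^c\rho$ (pulled up to $\cotJ$ and integrated over $M$). The pullback term $\pi^*\Omega$ contributes the moment map $\mu\circ\pi$ by naturality of moment maps under equivariant maps, since $\pi:\cotJ\to\scr{J}$ is $\m{G}$-equivariant and symplectic onto its image in the relevant sense. For the exact term $\dd\dd^c\rho$, invariance of $\rho$ under $\m{G}$ gives $\iota_{X_h}\dd\dd^c\rho = \dd(\iota_{X_h}\dd^c\rho) - \mathcal{L}_{X_h}(\dd^c\rho)$ and the Lie-derivative term vanishes by invariance, so a moment map for the exact part is $h \mapsto \iota_{X_h}(\dd^c\rho)$; fibrewise this is $\dd^c\rho_{(J(x),\alpha(x))}(\m{L}_{X_h}J, \m{L}_{X_h}\alpha)$, and integrating over $M$ yields precisely $\f{m}$. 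Adding the two pieces gives $\f{m}_{\bm{\Omega}_{\bm I}} = \mu\circ\pi + \f{m}$.

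The main obstacle I anticipate is purely bookkeeping rather than conceptual: one must be careful that the fibrewise Biquard--Gauduchon data are assembled correctly into genuine forms on the infinite-dimensional manifold $\cotJ$ (in particular that differentiation in the $M$-directions does not produce extra terms when one commutes $\dd$, $\dd^c$ with the ``integrate over $M$'' operation, and that $\m{L}_{X_h}$ acting on sections of the $T^*\scr{Z}$-bundle really is the infinitesimal generator of the $\m{G}$-action), and that all sign and factor conventions — the normalisation of $\lambda$, of $\dd^c$, the identification $\mrm{Lie}(\m{G})^* \simeq C^\infty_0(M)$ via integration against $\omega_0^n/n!$, and the transpose in the pairing $\mrm{Tr}(\alpha^\transpose\,\cdot\,)$ — line up so that the two displayed formulas come out exactly as stated. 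Verifying the Hamiltonian property then amounts to the identity $\iota_{X_h}\bm\Omega_{\bm I} = \dd\langle\f{m}_{\bm\Omega_{\bm I}},h\rangle$, which follows termwise from the above once these conventions are pinned down, together with equivariance, which is inherited from the $\mrm{Sp}(2n)$-invariance of the model metric and function $\rho$.
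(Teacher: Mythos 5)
Your proposal is correct and follows essentially the same route as the paper: the complex moment map is obtained from the invariance of the tautological $1$-form $\bm{\tau}$ with $\bm{\Theta}=-\mrm{d}\bm{\tau}$ via Cartan's formula (the paper packages this as a general lemma on exact forms with invariant primitives), and the real moment map is obtained by splitting $\bm{\Omega}_{\bm{I}}=\pi^*\Omega+\int_M\mrm{d}\mrm{d}^c\rho\,\frac{\omega_0^n}{n!}$, contracting the $\m{G}$-invariant primitive $\int_M\mrm{d}^c\rho\,\frac{\omega_0^n}{n!}$ with the infinitesimal action and adding $\mu\circ\pi$ by equivariance of $\pi$. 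The ``bookkeeping'' point you flag — that exterior differentiation commutes with fibrewise integration over $M$ — is exactly what the paper isolates and proves separately (its Theorem \ref{teorema:differenziazione}), so your plan matches the paper's proof in all essentials.
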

The proof of Theorem \ref{teorema:HamThmIntro} is given in Section \ref{sec:moment_map}, see in particular Lemma \ref{lemma:moment_map_theta} and Lemma \ref{lemma:mappa_momento_OmegaI}.

As we recalled above $\mu(J)$ is dual to the function $2(s(g_J) - \hat{s})$ under the natural $L^2$ product. Thus the analogue, in the cscK context, of the real moment map equation in \eqref{harmonicBundle}, is given by the problem of finding $(J, \alpha)$ such that the real moment map vanishes,
\begin{equation*}
\f{m}_{\bm{\Omega}_{\bm{I}}}(h) = \mu_J(h) + \f{m}_{(J,\alpha)}(h) = 0 \textrm{ for all } h\in C^{\infty}_0(M).
\end{equation*}
Similarly the holomorphicity of the Higgs field $\phi$ becomes the vanishing of the complex moment map, 
\begin{equation*}
{\f{m}_{\bm{\Theta}}}_{(J,\alpha)}(h)=0 \textrm{ for all } h\in C^{\infty}_0(M).
\end{equation*}
It turns out that one can easily compute the dual function to the \emph{complex} moment map under the $L^2$ pairing, at least when $J$ is integrable, i.e. under this identification we have
\begin{equation*}
\f{m}_{\bm{\Theta}}(J,\alpha)=-\mrm{div}\left({\bar{\diff}}^*\bar{\alpha}^\transpose\right).
\end{equation*}
Notice in particular that harmonic representatives of first order deformations of the complex structure  always provide solutions. 

On the contrary considerable more work is needed to turn the vanishing of the \emph{real} moment map into an explicit partial differential equation. We achieve this here for complex curves in Section \ref{section:moment_map_curve}, recovering Donaldson's equations for the hyperk\"ahler reduction, and for complex surfaces in Section \ref{sec:moment_map_complex_surface}. 

In what follows all metric quantities are computed with respect to $g_J$.  

\begin{teorema}\label{teorema:CurveThmIntro}
Let $M$ be a compact complex curve. Let $\psi$, $Q$ be the function and complex vector field on $M$, depending on a point in $\cotJ$, defined respectively by
\begin{align*}
\psi(\alpha) &= \frac{1}{1+\sqrt{1-\frac{1}{4}\norm{\alpha}^2}},\\
Q(J, \alpha) &= \frac{1}{2}\operatorname{Re}\left(g_J(\nabla^a\alpha,\bar{\alpha})\diff_a\right).
\end{align*}  
Then we have the identification
\begin{equation*}
\begin{split}
\f{m}_{\bm{\Omega}_{\bm{I}}}(J,\alpha)=2\,s(g_J)-2\,\hat{s}+\Delta\left(\mrm{log}\left(1+\sqrt{1-\frac{1}{4}\norm{\alpha}^2}\right)\right)+\mrm{div}\left(\psi(\alpha)\,Q(J,\alpha)\right)
\end{split}
\end{equation*} 
under the $L^2$ product.
\end{teorema}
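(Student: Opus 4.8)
The plan is to reduce the statement to a direct computation: one takes the abstract real moment map $\f{m}_{\bm\Omega_{\bm I}} = \mu\circ\pi + \f{m}$ of Theorem~\ref{teorema:HamThmIntro}, specialises to $n=1$, and substitutes the explicit Biquard--Gauduchon function $\rho$ attached to the rank-one Hermitian symmetric space $\mathrm{Sp}(2,\mathbb{R})/\mathrm{U}(1)$. As a first step I would recall, from the review in \S\ref{PrelimBiqGau}, that on a neighbourhood of the zero section of $T^*(\mathrm{Sp}(2)/\mathrm{U}(1))$ the $\mathrm{Sp}(2)$-invariant function $\rho$ depends on a single invariant, which at a point $x\in M$ is a constant multiple of $\norm{\alpha(x)}^2$ measured with $g_J$; thus $\rho_{(J,\alpha)} = f(\norm{\alpha}^2)$ for an explicit one-variable function $f$, defined for $\norm{\alpha}^2 < 4$ with $f(0)=0$. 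I would record at the outset the two antiderivatives that will show up, namely $f'$ itself and the primitive whose value is $\log\bigl(1+\sqrt{1-\tfrac14\norm{\alpha}^2}\bigr)$.

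For the summand $\mu\circ\pi$ there is nothing to do: $\mu(J)$ is dual under the $L^2$ pairing to $2(s(g_J)-\hat{s})$ by the Fujiki--Donaldson computation recalled in the Introduction (\cite{Fujiki_moduli}, \cite{Donaldson_scalar}), which accounts for the term $2s(g_J)-2\hat{s}$ in the claimed formula. All the work is in identifying $\f{m}_{(J,\alpha)}(h) = \int_M \mrm{d}^c\rho_{(J(x),\alpha(x))}(\m{L}_{X_h}J,\m{L}_{X_h}\alpha)\,\omega_0$ with the $L^2$-pairing of $h$ against $\Delta\bigl(\log(1+\sqrt{1-\tfrac14\norm{\alpha}^2})\bigr) + \mrm{div}(\psi(\alpha)Q(J,\alpha))$. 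Since $\rho = f(\norm{\alpha}^2)$ we have $\mrm{d}^c\rho = f'(\norm{\alpha}^2)\,\mrm{d}^c(\norm{\alpha}^2)$, so everything reduces to evaluating $\mrm{d}^c(\norm{\alpha}^2)$ on the infinitesimal action $(\m{L}_{X_h}J,\m{L}_{X_h}\alpha)$. I would organise this along the splitting of $T_{(J,\alpha)}\cotJ$ into the $\scr{J}$-directions and the cotangent-fibre directions, on which the complex structure $\bm I$ acts respectively by the complex structure of $\scr{J}$ and, dually, by that of its fibre.

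The cotangent-fibre part of $\mrm{d}^c(\norm{\alpha}^2)$, evaluated on $\m{L}_{X_h}\alpha$, produces — after writing $\m{L}_{X_h}\alpha$ through $\nabla_{X_h}\alpha$ and a bracket term, and using that on a curve $\alpha$ is encoded by a single complex scalar field — an expression of the shape $\operatorname{Re}g_J(\nabla\alpha,\bar\alpha)$ contracted with $X_h$. Integrating by parts over $M$, using $X_h\lrcorner\,\omega_0=\mrm{d}h$, moves the derivatives onto $f'(\norm{\alpha}^2)$ and onto $h$; recognising $Q(J,\alpha)$ as, up to a constant, the gradient of $\norm{\alpha}^2$, this contribution assembles into $\int_M \mrm{div}(\psi(\alpha)Q(J,\alpha))\,h\,\omega_0$, with $\psi(\alpha) = (1+\sqrt{1-\tfrac14\norm{\alpha}^2})^{-1}$ emerging from $f'$ times the elementary derivative of $\norm{\alpha}^2$. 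The remaining $\scr{J}$-directional part pairs the derivative of $\norm{\alpha}^2$ in the $J$-variable — which enters only through $g_J$ inside the norm — with $\bm I\cdot\m{L}_{X_h}J$; here I would invoke the identity underlying the Fujiki--Donaldson picture, that the complex structure of $\scr{J}$ carries the Hamiltonian direction $\m{L}_{X_h}J$ to the gradient direction $\m{L}_{\nabla h}J$ up to sign. One further integration by parts, now one derivative coming off each of the two vector fields, turns this into the $L^2$-pairing of $h$ with $\Delta$ applied to the $f'$-primitive, which by the first step is $\log(1+\sqrt{1-\tfrac14\norm{\alpha}^2})$.

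The main obstacle is this last, $J$-directional, piece: one must differentiate the $g_J$-norm $\norm{\alpha}^2$ correctly in the $J$-variable, combine it with the genuine Biquard--Gauduchon complex structure $\bm I$ rather than the naive fibrewise-canonical one, and push a double integration by parts through so that the result lands on $\Delta(\log(1+\sqrt{1-\tfrac14\norm{\alpha}^2}))$ \emph{with the right constant} — which forces the use of the exact form of $f$ and is where sign and normalisation slips are most likely. A useful check is the zero section: at $\alpha=0$ one has $\psi=\tfrac12$, $\norm{\alpha}^2=0$, both extra terms vanish, and the formula reduces to the Fujiki--Donaldson moment map $2(s(g_J)-\hat{s})$; the final expression can moreover be matched against Donaldson's description of the hyperk\"ahler reduction for curves (\cite{Donaldson_hyperkahler}), as anticipated in the Introduction.
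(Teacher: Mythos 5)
Your top-level strategy --- specialise Theorem \ref{teorema:HamThmIntro} to $n=1$, insert the explicit Biquard--Gauduchon function, keep $\mu\circ\pi\leftrightarrow 2(s(g_J)-\hat s)$, and integrate by parts to find the $L^2$-dual of $\f{m}$ --- is indeed the paper's strategy. But two concrete identities your reduction leans on are false, and they sit exactly at the step you flag as the crux. First, $\bm{I}$ does \emph{not} act block-diagonally with respect to a base/fibre splitting: by \eqref{eq:canonical_compl_str}, $\bm{I}(\dot J,\dot\alpha)=(J\dot J,\,\dot\alpha J^\transpose+\dot J^\transpose\alpha)$, so the fibre component of $\bm{I}(\m{L}_{X_h}J,\m{L}_{X_h}\alpha)$ is not (a rotation of) $\m{L}_{X_h}\alpha$ alone but carries the cross term $(\m{L}_{X_h}J)^\transpose\alpha$; moreover $(\dot J,0)$ is in general not tangent to $\cotJ$ (condition $\dot J^\transpose\alpha+J^\transpose\dot\alpha=\I\dot\alpha$ fails), so the splitting you propose to organise the computation around is not available as stated. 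In the paper this twisting is handled head-on: after applying \eqref{eq:canonical_compl_str}, the $\dot J$-contribution $\mrm{Tr}(A(\m{L}_{X_h}J)A)$ vanishes identically by Remark \ref{nota:traccia_tripla}, and the whole of $\f{m}_{(J,\alpha)}(h)$ reduces to the single term $-\tfrac12\int_M\psi\,\mrm{Tr}((\m{L}_{X_h}A)JA)\,\omega_0$ with $A=\mrm{Re}(\alpha^\transpose)$. Your attribution of the $\Delta\log$ term to the $\scr{J}$-directional derivative of $\norm{\alpha}^2$ (via $J\m{L}_{X_h}J=\m{L}_{\nabla h}J$) is therefore unsupported: in the actual computation both final terms come from the $\m{L}_{X_h}A$-term, and you give no argument that your (ill-defined) bookkeeping reproduces them.

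Second, $Q(J,\alpha)$ is not a constant multiple of $\mrm{grad}\norm{\alpha}^2$: it involves only the $(0,1)$-covariant derivative $\nabla^a\alpha=g^{a\bar b}\nabla_{\bar b}\alpha$, so it vanishes when $\alpha$ is holomorphic while $\mrm{grad}\norm{\alpha}^2$ does not --- the independence of the terms $\Delta\bigl(\log(1+\sqrt{1-\tfrac14\norm{\alpha}^2})\bigr)$ and $\mrm{div}(\psi Q)$ is precisely the content of the formula. The missing ingredient that produces both is the formal adjoint of $X\mapsto\m{L}_XA$ (Lemma \ref{lemma:aggiunto_F2}, Corollary \ref{corollario:aggiunto_F2}): writing $\f{m}_{(J,\alpha)}(h)=\langle\psi,-\tfrac12\mrm{Tr}((\m{L}_{X_h}A)AJ)\rangle$ and taking adjoints of $h\mapsto X_h$, $X\mapsto\m{L}_XA$, $P\mapsto\mrm{Tr}(PJA)$ gives $\mrm{div}\bigl[\tfrac{\psi}{2}JC^2_1((\nabla A)AJ)^\sharp+J\nabla^*(\psi A^2J)\bigr]$; then $A^2=\tfrac14\norm{\alpha}^2\,\mrm{Id}$ turns the second piece into $\mrm{grad}(\tfrac14\psi\norm{\alpha}^2)$, the first splits into $-\psi\,\mrm{grad}(\tfrac14\norm{\alpha}^2)$ plus the $Q$-piece, and only the combination $-\psi\,\mrm{grad}(\tfrac14\norm{\alpha}^2)+2\,\mrm{grad}(\tfrac14\psi\norm{\alpha}^2)=-2\,\mrm{grad}\log(1+\sqrt{1-\tfrac14\norm{\alpha}^2})$ yields the Laplacian term. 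Without this adjoint computation, and with the two incorrect identities above, the outline does not close.
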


In order to recover Donaldson's result we lower one index of $\alpha=\alpha\indices{_a^{\bar{b}}}\,\diff_{\bar{z}^b}\otimes\mrm{d}z^a$, using the metric $g_J$, obtaining the (symmetric) quadratic differential $\tau$. Then the complex moment map equation is equivalent to 
\begin{equation*}
\mrm{div}\left(({\nabla^{0,1}}^*\bar{\tau})^\sharp\right)=0
\end{equation*}
and holds automatically when $\tau$ is a \emph{holomorphic} quadratic differential. Similarly in this holomorphic case we have
\begin{equation*}
g(\nabla^a\tau,\bar{\tau})\diff_a=g^{a\bar{b}}g^{c\bar{e}}g^{d\bar{f}}\,\nabla_{\bar{b}}\tau_{cd}\,\tau_{\bar{e}\bar{f}}\,\diff_a=0,
\end{equation*}
so the real moment map equation becomes
\begin{equation*}
2\,s(g_J)-2\,\hat{s}+\Delta\left(\mrm{log}\left(1+\sqrt{1-\norm{\tau}^2}\right)\right)=0.
\end{equation*}
Fixing $J$ and varying $g$ instead along the orbits of the formal complefixication, this is exactly the equation that was used by Donaldson in \cite[Lemma $18$]{Donaldson_hyperkahler} to define a hyperk\"ahler structure on the cotangent bundle of the Teichm\"uller space of the curve $M$. T. Hodge (\cite{Hodge_phd_thesis}) proved existence and uniqueness of solutions for each fixed holomorphic $\tau$, at least under some boundedness assumptions on $\tau$.

We proceed to discuss the case of complex surfaces. In this case we prefer to write the moment maps in terms of an endomorphism $A$ of the \emph{real} tangent bundle given by
\begin{equation*}
A = \mrm{Re}(\alpha^\transpose).
\end{equation*}
We need some auxiliary notation. It is convenient to define the quantities
\begin{equation*}
\delta^{\pm}(A)=\frac{1}{2}\left(\frac{\mrm{Tr}(A^2)}{2}\pm\left(\left(\frac{\mrm{Tr}(A^2)}{2}\right)^2-4\,\mrm{det}(A)\right)^{\frac{1}{2}}\right).
\end{equation*}
Similarly to the case of curves we introduce two real spectral functions of the endomorphism $A$, given by
\begin{equation*}
\begin{split}
\psi(A)&=\frac{1}{\sqrt{4-2\,\delta^+(A)}+\sqrt{4-2\,\delta^-(A)}};\\
\tilde{\psi}(A)&=\frac{1}{\left(\sqrt{4-2\,\delta^+(A)}+\sqrt{4-2\,\delta^-(A)}\right)\left(2+\sqrt{4-2\,\delta^+(A)}\right)\left(2+\sqrt{4-2\,\delta^-(A)}\right)}.
\end{split}
\end{equation*}
Finally we write $\tilde{A}$ for the \emph{adjugate} of the endomorphism $A$, and $A = A^{1,0} + A^{0,1}$, $\tilde{A} = \tilde{A}^{1,0} + \tilde{A}^{0,1}$ for the type decomposition of the complexifications. 

\begin{teorema}\label{teorema:mm_reale_ruled}
Let $M$ be a compact complex surface. Let $X$ be the vector field on $M$, depending on a point of $\cotJ$, defined by% in a local system of holomorphic coordinates by
\begin{equation*}
\begin{split}
X(J,A)=&-\psi(A)\,\mrm{grad}\left(\frac{\mrm{Tr}(A^2)}{2}\right)+4\,\psi(A)\operatorname{Re}\left(g(\nabla^aA^{0,1},A^{1,0})\diff_a \right)-2\,\nabla^*(\psi(A)\,A^2)\\
&-4\,\left(\tilde{\psi}(A)\,\mrm{grad}\left(\mrm{det}(A)\right)+4\,\tilde{\psi}(A) \operatorname{Re}\left(g(\nabla^aA^{0,1},\tilde{A}^{1,0})\diff_a\right)+2\,\mrm{det}(A)\,\mrm{grad}(\tilde{\psi}(A))\right).
\end{split}
\end{equation*}
Then, when $J$ is integrable, we have the identification
\begin{equation*}%\label{eq:momento_omega_alpha}
\begin{split}
\f{m}_{\bm{\Omega}_{\bm{I}}}(J,\alpha)=2(s(g_J)-\hat{s}) + \mrm{div} X(J, A)
\end{split}
\end{equation*} 
under the $L^2$ product.
\end{teorema}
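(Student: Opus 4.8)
The plan is to reduce the statement to the computation of a single $L^2$-dual, and then to a pointwise differential-geometric identity. By Theorem~\ref{teorema:HamThmIntro} we have $\f{m}_{\bm{\Omega}_{\bm{I}}}=\mu\circ\pi+\f{m}$, and $\mu_J(h)=\int_M 2(s(g_J)-\hat s)\,h\,\frac{\omega_0^n}{n!}$, so the summand $2(s(g_J)-\hat s)$ is already accounted for by $\mu\circ\pi$. Hence it suffices to prove that, when $J$ is integrable,
\[
\f{m}_{(J,\alpha)}(h)=\int_M \bigl(\mrm{div}\,X(J,A)\bigr)\,h\,\frac{\omega_0^n}{n!}\qquad\text{for all }h\in C^\infty_0(M),
\]
with $X(J,A)$ the vector field in the statement. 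Recalling that $\f{m}_{(J,\alpha)}(h)=\int_{x\in M}\mrm{d}^c\rho_{(J(x),\alpha(x))}\bigl(\m{L}_{X_h}J,\m{L}_{X_h}\alpha\bigr)\,\frac{\omega_0^n}{n!}$, and that (Theorem~\ref{teorema:HKThmIntro}) $\rho$ is the $\mrm{Sp}(2n)$-invariant Biquard--Gauduchon potential of \cite{Biquard_Gauduchon}, read fibrewise on $T^*(\mrm{Sp}(4)/\mrm{U}(2))$, this is the assertion that the pointwise $1$-form $\mrm{d}^c\rho$, paired with the fundamental vector field of $h$ and integrated over $M$, equals the $L^2$-pairing of $h$ with $\mrm{div}\,X$.

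First I would make $\rho$ explicit for $n=2$. Since $\mrm{Sp}(4)/\mrm{U}(2)$ has rank $2$, the $\mrm{Sp}(4)$-invariant function $\rho$ is a symmetric function of two eigenvalue parameters of the cotangent variable; using the compatibility conditions on $\alpha$, equivalently on $A=\mrm{Re}(\alpha^\transpose)$, I would identify these with $\delta^{\pm}(A)$ — note $\delta^+(A)+\delta^-(A)=\tfrac12\mrm{Tr}(A^2)$ and $\delta^+(A)\,\delta^-(A)=\det(A)$, so these are exactly the invariants of $A$ available on a surface. Writing $\rho=F\bigl(\tfrac12\mrm{Tr}(A^2),\det(A)\bigr)$ and computing $\mrm{d}\rho$, $\mrm{d}^c\rho$ along $\cotJ$ by the chain rule: the differential of $\tfrac12\mrm{Tr}(A^2)$ contracts $\nabla A$ against $A$, and (Jacobi's formula) the differential of $\det(A)$ contracts $\nabla A$ against the adjugate $\tilde A$. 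Tracking the $(1,0)/(0,1)$ splitting in $\mrm{d}^c$ is what produces the terms $g(\nabla^aA^{0,1},A^{1,0})\diff_a$ and $g(\nabla^aA^{0,1},\tilde A^{1,0})\diff_a$ in $X$, while the coefficients $\psi(A)$, $\tilde\psi(A)$ appear as the relevant combinations of the first and second derivatives of $F$; the rank-$1$ case is exactly Theorem~\ref{teorema:CurveThmIntro}, which fixes the normalisations.

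Next I would carry out the integration by parts. Because $X_h$ is the Hamiltonian vector field of $h$ and $J$ is integrable — so $\nabla J=0$ for $\nabla=\nabla^{g_J}$ — the Lie derivatives $\m{L}_{X_h}J$ and $\m{L}_{X_h}\alpha$ are explicit first- and second-order differential expressions in $h$. Substituting these into $\mrm{d}^c\rho\bigl(\m{L}_{X_h}J,\m{L}_{X_h}\alpha\bigr)$ and integrating over $(M,\omega_0)$, all derivatives are moved off $h$: the $\tfrac12\mrm{Tr}(A^2)$-dependence of $F$ yields the $\psi(A)\,\mrm{grad}(\tfrac12\mrm{Tr}(A^2))$, $\psi(A)\operatorname{Re}(g(\nabla^aA^{0,1},A^{1,0})\diff_a)$ and $\nabla^*(\psi(A)\,A^2)$ pieces of $X$, while the $\det(A)$-dependence yields the $\tilde\psi(A)\,\mrm{grad}(\det A)$, $\tilde\psi(A)\operatorname{Re}(g(\nabla^aA^{0,1},\tilde A^{1,0})\diff_a)$ and $\det(A)\,\mrm{grad}(\tilde\psi(A))$ pieces, and the whole pairing collapses to $\int_M(\mrm{div}\,X)\,h\,\frac{\omega_0^n}{n!}$, as it must since $\f{m}$ is a component of a moment map.

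The main obstacle is the explicit determination of $F$ — equivalently of $\psi$ and $\tilde\psi$ — together with the exact matching of signs and of the numerical factors $2$ and $4$. Biquard and Gauduchon \cite{Biquard_Gauduchon} describe their potential only semi-explicitly, so for $\mrm{Sp}(4)/\mrm{U}(2)$ one must actually solve for the two-eigenvalue expression of $\rho$ and differentiate it twice, while simultaneously propagating the type decompositions $A=A^{1,0}+A^{0,1}$, $\tilde A=\tilde A^{1,0}+\tilde A^{0,1}$ and the compatibility relations for $A$ through $\mrm{d}^c$ and through every integration by parts. A secondary point is to check that the identity, established pointwise wherever $\rho$ is defined, is valid globally: one must ensure that $(J,\alpha)$ lies in the neighbourhood of the zero section on which $\bm{\Omega}_{\bm{I}}$ and $\rho$ exist, so that $\delta^\pm(A)$ stay in the domains of $\psi$, $\tilde\psi$ and $\sqrt{4-2\delta^\pm(A)}>0$, and no boundary contributions arise.
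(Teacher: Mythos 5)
Your plan follows essentially the same route as the paper: split off the scalar-curvature term via $\mu\circ\pi$, express the Biquard--Gauduchon potential on $T^*\!\m{AC}^+(4)$ as an explicit spectral function of $\delta^{\pm}(A)$ (equivalently of $\tfrac12\mrm{Tr}(A^2)$ and $\det A$), differentiate it using the trace and Jacobi's formula for the adjugate, and then integrate by parts against $\m{L}_{X_h}J$, $\m{L}_{X_h}\alpha$, using $\nabla J=0$ to land on $\mrm{div}\,X(J,A)$ --- which is precisely the paper's argument via the diagonalisation of the operator $B\mapsto-\tfrac12(A^2B+BA^2)$, the formula \eqref{eq:diff_t_rho}, and the adjoint computations of Lemma \ref{lemma:aggiunto_F2} and Corollary \ref{corollario:aggiunto_F2}. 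The only part you defer (the explicit two-eigenvalue expression for $\rho$ and the resulting coefficients $\psi$, $\tilde\psi$) is exactly the computational core the paper carries out, so your outline is correct and not genuinely different.
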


We also obtain a similar but more complicated explicit expression for non-integrable $J$. The Theorem is proved in Section \ref{sec:moment_map_complex_surface}. Note that although this general expression for the vector field $X(J, A)$ is rather involved, it simplifies considerably when the endomorphism $A$ does not have maximal rank, yielding in this case   
\begin{equation*}%\label{eq:low_rank}
X(J, A) = 
-\frac{\mrm{grad} \left(\frac{1}{4}\mrm{Tr}(A^2)\right)}{1+\sqrt{1-\frac{1}{4}\mrm{Tr}(A^2)}}+\frac{2\operatorname{Re}\left( g (\nabla^aA^{0,1},A^{1,0})\diff_a\right)}{1+\sqrt{1-\frac{1}{4}\mrm{Tr}(A^2)}}-\nabla^*\left(\frac{A^2}{1+\sqrt{1-\frac{1}{4}\mrm{Tr}(A^2)}}\right).
\end{equation*}
The resulting real moment map in this low-rank case is quite similar to the one for Riemann surfaces given in Theorem \ref{teorema:CurveThmIntro}. 

Following the well-known case of the cscK equation, it is natural to study the system of partial differential equations obtained by fixing the complex structure $J$ in $\scr{J}$ and varying instead the metric $g$ in a fixed K\"ahler class and the ``Higgs term'' $\alpha$ in some class of first-order deformations of the complex structure $J$. Just as in the cscK case this can be understood as a formal (infinitesimal) complexification of the action of $\m{G}$, as is described in \S\ref{ComplexifiedSection}.
%An interesting new feature in the case of $\cotJ$ is that, when $\dim M > 1$, this complexification is no longer unique, but depends instead on picking one of a few natural choices of an extension of the vector field $X(J, A)$ to a function of the varying metric $g$, as we explain in Section \ref{ComplexifiedSection}. 
The resulting real and complex moment map equations form the system
\begin{align}\label{HcscKsystem}
\nonumber 2\,s(g)+ \mrm{div} X(g, A) &= 2\,\hat{s}\\
\mrm{div}\left({\bar{\diff}}^*_g A^{1,0}\right) &= 0,
\end{align}
reminiscent of Hitchin's harmonic bundle equations \eqref{harmonicBundle}. We refer to this system as the \emph{HcscK equations}.  

An important aspect of the theory of Higgs bundles is that a slope-unstable bundle $E$ may still carry a harmonic metric, for a suitable choice of Higgs field. Our last result in Section \ref{sec:ruled_surface_realmm} establishes an analogue of this fact in the context of the cscK equation, albeit in a not completely satisfactory way; details are explained in \S\ref{sec:ruled_surface_realmm}.

\begin{teorema}\label{teorema:solution_mm_reale}
Fix a compact complex curve $\Sigma$ of genus at least $2$, endowed with the hyperbolic metric $g_{\Sigma}$. Let $M$ be the ruled surface $M = \bb{P}(\m{O}\oplus T\Sigma)$, with projection $\pi\!: M \to \Sigma$ and relative hyperplane bundle $\m{O}(1)$, endowed with the K\"ahler class 
\begin{equation*}
[\omega_m] = [\pi^*\omega_{\Sigma}] + m\, c_1(\m{O}(1)),\, m > 0.
\end{equation*}
Then for all sufficiently small $m$ the HcscK equations \eqref{HcscKsystem} can be solved on $(M, [\omega_m])$. 
\end{teorema}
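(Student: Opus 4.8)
The plan is to combine the large symmetry of $M=\bb{P}(\m{O}\oplus T\Sigma)$ with an adiabatic argument in the parameter $m$. I would fix the $S^1$-action on $M$ induced by the $\bb{C}^*$-action rescaling the $T\Sigma$-summand of $\m{O}\oplus T\Sigma$ and work $S^1$-equivariantly throughout: one looks for an $S^1$-invariant K\"ahler metric $g$ in $[\omega_m]$, described over $\Sigma$ by a momentum profile as in the Calabi ansatz, together with a Higgs term $\alpha$ whose pointwise norm is $S^1$-invariant. Under this reduction the left-hand side of the real moment map equation in Theorem~\ref{teorema:mm_reale_ruled} becomes an expression in the profile, the fixed base metric $g_\Sigma$, and $\norm{A}$; the complex moment map equation $\mrm{div}(\bar{\diff}^*_g A^{1,0})=0$ is arranged to hold identically by the choice of $\alpha$ below. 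One then solves the reduced system for $0<m\ll1$ by expanding around an explicit limiting solution at $m=0$, in the spirit of the adiabatic constructions of constant scalar curvature metrics on fibrations.

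The crucial choice is the Higgs term, which must be adapted to the instability of $E=\m{O}\oplus T\Sigma$: for $\Sigma$ of genus at least $2$ the sub-line-bundle $T\Sigma$, of degree $2g-2>0$, destabilises $E$. Writing $T\Sigma=K_\Sigma^{-1}$, I would take $\alpha$ (equivalently $A=\mrm{Re}(\alpha^{\transpose})$) to be induced along the fibres by a nonzero holomorphic quadratic differential on $\Sigma$, viewed as a Higgs field $\phi\in H^0(\Sigma,\mrm{Hom}(T\Sigma,\m{O})\otimes K_\Sigma)$ on $E$ — the surface analogue of the holomorphic quadratic differentials used in the curve case of Theorem~\ref{teorema:CurveThmIntro}. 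Such a $\phi$ makes $(E,\phi)$ a \emph{stable} Higgs bundle (no $\phi$-invariant sub-line-bundle of $E$ has slope $\geq g-1$) and, being holomorphic, makes $\alpha$ an automatic solution of the complex moment map equation, by the remark after Theorem~\ref{teorema:HamThmIntro}. With $\phi$ scaled by a suitable power of $m$, I would then expand the real moment map equation as $m\to0$: at leading order it is solved by the hyperbolic metric $g_\Sigma$ on the base together with a fibrewise Fubini--Study metric — so $g_\Sigma$ enters through the requirement that the base be cscK, i.e. through the curve equation of Donaldson and Hodge with vanishing quadratic differential — while at the next order the equation develops a one-dimensional ``Futaki-type'' obstruction, the obstruction responsible for the absence of a cscK metric on $M$; the term $\mrm{div}\,X(g,A)$, being quadratic in $A$ and hence of size $\norm{\phi}^2$, is then made to cancel this obstruction by a suitable choice of the scale of $\phi$, a cancellation made possible by a sign/positivity built into the hyperk\"ahler structure.

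It then remains to close the argument: for $0<m\ll1$ I would produce a genuine $S^1$-invariant solution $(g_m,A_m)$ of \eqref{HcscKsystem} near the limiting data by the implicit function theorem in suitable $m$-weighted H\"older spaces, working modulo the residual finite automorphism group. The main obstacle, and the reason the $S^1$-equivariance is essential, is the cokernel of the linearisation: the linearisation of $g\mapsto 2(s(g)-\hat{s})$ is a multiple of the Lichnerowicz operator, whose cokernel is dual to the space of holomorphic vector fields $\f{h}(M)=H^0(M,TM)\cong\bb{C}\ltimes\bb{C}^g$, a non-reductive Lie algebra whose unipotent part comes from $H^0(\Sigma,\mrm{Hom}(T\Sigma,\m{O}))=H^0(\Sigma,K_\Sigma)$ — which is precisely why $M$ carries no cscK metric in any K\"ahler class. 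Restricting to $S^1$-invariant data removes the unipotent directions and leaves a one-dimensional obstruction along the ruling torus, exactly the Futaki-type term cancelled by $\mrm{div}\,X(g,A)$ above; that this cancellation can be arranged — the cscK counterpart of the fact that a Higgs field transverse to the destabilising sub-bundle turns an unstable bundle into a stable Higgs bundle — reflects the hyperk\"ahler structure, with the complex symplectic form $\bm{\Theta}$ pairing holomorphic vector fields on $M$ against deformations of the complex structure. Carrying out this cancellation together with estimates uniform as $m\to0$ is the technical core; it is also why the result is confined to small $m$ and to this single family, and why — the Higgs field being prescribed by hand rather than produced by a general criterion, as in the Hitchin--Kobayashi correspondence for Higgs bundles — it is, as the authors note, not completely satisfactory.
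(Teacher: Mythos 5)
Your overall philosophy (Calabi ansatz, adiabatic limit in $m$, a one-dimensional obstruction absorbed by scaling the Higgs term) does echo the paper, but the decisive choices and computations are different or missing, and as written the argument has real gaps. First, the choice of Higgs term. The paper does \emph{not} use a holomorphic quadratic differential: in the final solution the only nonzero off-diagonal component of $\beta\in\m{A}^{0,1}(\mrm{End}(\m{O}\oplus T\Sigma))$ is $\beta\indices{^1_2}=\tilde{q}\,h$ (the extension-class direction), while the quadratic-differential component $\beta\indices{^2_1}$ is set to zero. This is exactly what makes the proof work: with $h$ K\"ahler--Einstein, $\norm{A^{1,0}}^2$ becomes a function of the fibrewise momentum variable alone, the real moment map collapses to the one-variable integro-differential problem \eqref{eq:HCSCK_original} for the profile $\phi$ and a constant $c$, and the obstruction you call ``Futaki-type'' is killed not by a hand-tuned cancellation but by treating $c$ as an unknown: the model linearisation $(u,k)\mapsto u''+2k(3\lambda^2-2\lambda)$ is an isomorphism onto zero-average functions, and a quantitative implicit function theorem (following Fine) produces a genuine solution with $\phi>0$, $c>0$ for small $m$. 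Your quadratic differential does satisfy the complex moment map equation (it appears among the allowed components in \eqref{eq:cond_globali_coeff_c0}, though this requires the explicit divergence computation, not just the ``harmonic representative'' remark, which concerns the metric on $M$), but its pointwise norm is non-constant on $\Sigma$ — it vanishes at the $4g-4$ zeros — so the ODE reduction is lost and you are facing the full adiabatic PDE problem. There the essential steps — the precise scaling of $q$ in $m$, the nonvanishing and sign of the obstruction pairing so that a \emph{positive} scale cancels it, and uniform invertibility of the linearisation as $m\to 0$ — are asserted, not proved, and they constitute the entire content of the theorem.

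Second, compatibility. Any deformation of the form used here (yours included) is nilpotent, $A^2=0$, and hence is never compatible with the K\"ahler form: compatibility would force $\norm{A}^2_g=\mrm{Tr}(A^2)=0$ while $A\neq 0$. Consequently what is actually solved is a relaxed system in which one must \emph{choose} an extension of the real moment map off the compatible locus — the paper discusses two natural extensions, \eqref{eq:low_rank} and \eqref{eq:low_rank_norm}, and proves the theorem for both — and this incompatibility, not the fact that the Higgs field is prescribed by hand, is the ``not completely satisfactory'' caveat the authors refer to. Your proposal silently uses the formula for $X(g,A)$ from Theorem \ref{teorema:mm_reale_ruled} as if the pair $(\omega,\alpha)$ were an honest point of $\cotJ$; it is not, so you inherit the same caveat without acknowledging it, and you should state which extension of $\f{m}_{\bm{\Omega}_{\bm{I}}}$ you are solving.
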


On the other hand it is well-known that, for all positive $m$, $(M,[\omega_m])$ does not admit a cscK metric (see \cite[\S $3.3$ and \S $5.2$]{Szekelyhidi_phd}).

\bigskip

The paper is organised as follows. Section \ref{PrelimSec} contains some preliminary material on the Hermitian symmetric space $\mrm{Sp}(2n)/\mrm{U}(n)$ (in particular, on its natural identifications with Siegel's upper half space and with the space of compatible linear complex structures), on the space of almost complex structures $\scr{J}$, and on the Biquard-Gauduchon construction. In Section \ref{sec:cotJ_hyperk} we use these results to construct a hyperk\"ahler structure on $\cotJ$ and to derive the implicit expression of the moment maps. Section \ref{sec:curva} discusses the case of a complex curve, while Section \ref{sec:complex_surface} is devoted to a general complex surface. Finally in Section \ref{sec:ruled_surface} we study the case of a ruled surface in more detail.

\bigskip

\noindent\textbf{Acknowledgements.} We are grateful to Olivier Biquard, Francesco Bonsante, Ruadha\'i Dervan, Mario Garcia-Fernandez, Julien Keller and Richard Thomas for some discussions related to the present paper. The research leading to these results has received funding from the European Research Council under the European Union's Seventh Framework Programme (FP7/2007-2013) / ERC Grant agreement no. 307119. 

\section{Preliminary results}\label{PrelimSec}

\subsection{Notation and conventions}

The imaginary unit is ``$\I$''; if $(M,J)$ is a complex manifold of complex dimension $n$ we use $i,j,k\dots$ as indices for tensors defined on the underlying real manifold, so $i,j\dots\in\set{1,2,\ldots,2n}$. For complex tensors instead we use $a,b,c,\dots$ as indices ranging from $1$ to $n$. We always use the Einstein convention on repeated indices.

\smallskip

Our conventions for the Laplacian are the following: $\Delta=\mrm{d}\,\mrm{d}^*+\mrm{d}^*\mrm{d}$, and in particular for a function $\varphi$ we get $\Delta(\varphi)=-\mrm{div}\,\mrm{grad}(\varphi)$. In complex coordinates we find, for a K\"ahler metric, $\Delta(\varphi)=-2g^{a\bar{b}}\diff_a\diff_{\bar{b}}\varphi$. The ``complex Laplacian'' is $\Delta_{\bar{\diff}}=\Delta_{\diff}=\frac{1}{2}\Delta$.

\smallskip

When working with left actions of a Lie group $G$ on a manifold $M$, we'll denote them by
\begin{equation*}
\begin{split}
G\times M&\to M\\
(g,x)&\mapsto \sigma_g(x)\\
\mbox{or simply }\ (g,x)&\mapsto g.x
\end{split}
\end{equation*}
We let $\f{g} =T_eG$ be the Lie algebra of the group $G$, identified with the space of left-invariant vector fields on $G$.

If we have a left action $G\curvearrowright M$, we define for $a\in\f{g}$ the \emph{fundamental vector field} $\hat{a}$ on $M$ as
\begin{equation*}
\hat{a}_x =\frac{\mrm{d}}{\mrm{d}t}\Bigr|_{t=0}\left(\mrm{exp}(-t\,a).x\right)\in T_xM.
\end{equation*}
The vector field $\hat{a}$ is also called the \emph{infinitesimal action} of $a$ on $M$. The minus sign in this definition is due to the fact that, with this definition, the map
\begin{equation*}
\begin{split}
\f{g}&\to\Gamma(TM)\\
a&\mapsto\hat{a}
\end{split}
\end{equation*}
is a Lie algebra homomorphism (see \cite[Proposition $3.8$, Appendix $5$]{LibermannMarle}).

\smallskip

Now, let $(M,\omega)$ be a symplectic manifold. For a function $f\in\m{C}^\infty(M)$ we define the \emph{Hamiltonian vector field} $X_f$ as
\begin{equation*}
\mrm{d}f=-X_f\lrcorner\omega.
\end{equation*}
Here the symbol $\lrcorner$ is the contraction of the first component, \emph{i.e.} 
\begin{equation*}
-X\lrcorner\omega=-\omega(X,-)=\omega(-,X).
\end{equation*}
The \emph{Poisson bracket} of two functions $f,g\in\m{C}^\infty(M)$ is defined as
\begin{equation*}
\{f,g\}=\omega(X_f,X_g).
\end{equation*}
This is a Lie bracket on $\m{C}^\infty(M)$, and the Hamiltonian construction $f\mapsto X_f$ is a Lie algebra homomorphism between $(\m{C}^\infty(M),\{-,-\})$ and $(\Gamma(TM),\left[-,-\right])$. 

Bringing together the last two paragraphs, consider now a symplectic left action $G\curvearrowright(M,\omega)$. We say that the action is \emph{Hamiltonian} if there is a \emph{moment map}
\begin{equation*}
\mu:M\to\f{g}^*
\end{equation*}
that is equivariant with respect to $G\curvearrowright M$ and the co-adjoint action of $G$ on $\f{g}^*$, and such that $\langle\mu,a\rangle$ is a Hamiltonian function of the vector field $\hat{a}$ on $M$. In a more concise way:
\begin{align*}
& \forall g\in G,\,\forall x\in M,\,\forall a\in\f{g}\quad\quad \langle\mu_{g.x},a\rangle=\langle\mu_x,\mrm{Ad}_{g^{-1}}(a)\rangle;&\\
& \forall g\in G,\,\forall a\in\f{g}\quad\quad\quad\quad\quad\quad \mrm{d}\left(x\mapsto\langle\mu_x,a\rangle\right)=-\hat{a}\lrcorner\omega.&
\end{align*}

\subsection{Some matrix spaces}\label{section:Siegel}

Consider the symplectic vector space $(\bb{R}^{2n},\Omega_0)$, where $\Omega_0$ is the canonical symplectic form, i.e. the matrix 
\begin{equation*}
\Omega_0=\begin{pmatrix}0 & \mathbbm{1}_m \\ -\mathbbm{1}_m & 0\end{pmatrix}.
\end{equation*}
We recall that the \emph{symplectic group} $\mrm{Sp}(2n)$ is defined as
\begin{equation*}
\mrm{Sp}(2n)=\set*{A\in\mrm{GL}(2n,\bb{R})\tc A^{\intercal}\Omega_0 A=\Omega_0}.
\end{equation*}
This is a connected real Lie group, and we are particularly interested on some actions of $\mrm{Sp}(2n)$.

By the usual identification of $\bb{C}^n$ with $\bb{R}^{2n}$ as real vector spaces, we can see $\mrm{GL}(n,\bb{C})$ as the subgroup of $\mrm{GL}(2n,\bb{R})$ consisting of all the real invertible $2n\times 2n$ matrices that commute with the standard complex structure on $\bb{R}^{2n}$, which is defined by $\Omega_0$. The groups $\mrm{Sp}(2n)$, $\mrm{SO}(2n)$ and $\mrm{U}(n)$ are tied together by the well known result:
\begin{equation*}
\mrm{Sp}(2n)\cap \mrm{SO}(2n)=\mrm{Sp}(2n)\cap \mrm{U}(n)=\mrm{SO}(2n)\cap \mrm{U}(n)=\mrm{U}(n).
\end{equation*}
The coset space $\mrm{Sp}(2n)/\mrm{U}(n)$ will play a fundamental role in what follows. It carries a natural K\"ahler metric, coming from its identification with \emph{Siegel's upper half space} $\f{H}$, and at the same time it can be identified naturally with the space $\m{AC}^+$ of linear complex structures compatible with a linear symplectic form.

%\subsection{Siegel's upper half space}

\begin{definizione}
Siegel's upper half space $\f{H}(n)$ is the set of all symmetric $n\times n$ complex matrices whose imaginary part is positive definite.
\end{definizione}

Some reference texts for the properties of $\f{H}$ are \cite{Siegel_halfspace, DallaVolta_matrici}. Siegel's upper half space is a generalization of the well-known hyperbolic plane, and these two spaces share many interesting geometric properties.

In particular, $\f{H}$ is a complex manifold, with complex structure given simply by multiplication by $\I$. It will be more notationally convenient, however, to consider on $\f{H}$ the \emph{conjugate} complex structure, i.e. we will define the complex structure on $\f{H}$ to be the multiplication by $-\I$. The reason for this choice will become clear when we will use it to define a complex structure on $\m{AC}^+$, see Proposition \ref{prop:metrica_AC+_Siegel}.% \eqref{eq:complessa_AC}. 

On $\f{H}$ there is also a K\"ahler structure; the metric tensor at a point $Z=X+\I\,Y$ is
\begin{equation}\label{eq:Siegel_metric}
\mrm{d}s^2_{Z}=\mrm{trace}\left(Y^{-1}\mrm{d}Z\,Y^{-1}\mrm{d}\overline{Z}\right)
\end{equation}
where $\mrm{d}Z$ and $\mrm{d}\overline{Z}$ are the (symmetric) matrices of differentials $(\mrm{d}z_{ab})_{1\leq a,b\leq m}$ and its conjugate. We refer to \cite{Siegel_halfspace} for the details. This metric has a local potential of the form
\begin{equation*}
\frac{\diff}{\diff z\indices{^a_b}}\frac{\diff}{\diff\bar{z}\indices{^c_d}}\log\mrm{det}(Y)
\end{equation*}
see for example \cite[\S $5$]{DallaVolta_matrici}.

The symplectic group $\mrm{Sp}(2n)$ acts on $\f{H}(n)$ by an analogue of the M\"obius transformations. For $P=\begin{pmatrix}A & B\\ C & D\end{pmatrix}\in\mrm{Sp}(2n)$ and $Z\in\f{H}(n)$ one defines
\begin{equation*}
P.Z =(AZ+B)(CZ+D)^{-1}.
\end{equation*}
This is a well-defined left action on $\f{H}(n)$ that preserves the metric \eqref{eq:Siegel_metric}.

\begin{proposizione}[Theorem $1$ in \cite{Siegel_halfspace}]
The action of $\mrm{Sp}(2n)$ on $\f{H}$(n) is transitive. Moreover, every holomorphic bijection $\f{H}(n)\to\f{H}(n)$ is a M\"obius transformation.
\end{proposizione}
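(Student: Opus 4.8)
The statement has two parts, which I would treat separately since they call for quite different techniques.

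\emph{Transitivity.} The plan is to show that the $\mrm{Sp}(2n)$-orbit of the base point $\I\,\idop$ is all of $\f{H}(n)$, by exhibiting enough explicit symplectic matrices. Given $Z = X + \I\,Y \in \f{H}(n)$, I would split the required motion into two steps. First, for any real symmetric $X$ the block matrix $\begin{pmatrix} \idop & X \\ 0 & \idop \end{pmatrix}$ lies in $\mrm{Sp}(2n)$ (the symmetry of $X$ is exactly the condition $P^\transpose\Omega_0 P = \Omega_0$) and acts as the translation $Z \mapsto Z + X$; this reduces the problem to reaching the purely imaginary points $\I\,Y$. Second, since $Y$ is positive definite we may write $Y = G G^\transpose$ with $G \in \mrm{GL}(n,\bb{R})$, and the block-diagonal matrix $\begin{pmatrix} G & 0 \\ 0 & (G^\transpose)^{-1}\end{pmatrix}$ is symplectic and sends $\I\,\idop \mapsto \I\,G G^\transpose = \I\,Y$. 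Composing the two motions carries $\I\,\idop$ to an arbitrary $Z$, which proves transitivity; as a by-product the stabiliser of $\I\,\idop$ is $\mrm{U}(n)$, recovering $\f{H}(n) \cong \mrm{Sp}(2n)/\mrm{U}(n)$.

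\emph{Rigidity of holomorphic bijections.} Here I would pass to the bounded model. The Cayley transform $Z \mapsto W = (Z - \I\,\idop)(Z + \I\,\idop)^{-1}$ is a biholomorphism of $\f{H}(n)$ onto the bounded domain $\m{D} = \{\, W \in \mrm{Sym}(n,\bb{C}) : \idop - W\overline{W}^\transpose > 0 \,\}$ (note $Z + \I\,\idop$ is invertible on $\f{H}(n)$ since its imaginary part $Y + \idop$ is positive definite), and it conjugates the Möbius action of $\mrm{Sp}(2n)$ to a transitive action on $\m{D}$ whose stabiliser of the centre $W = 0$ is $\mrm{U}(n)$, acting linearly by $W \mapsto U W U^\transpose$. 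Because the Möbius group acts transitively, any holomorphic bijection $f$ of $\m{D}$ may, after composing with a Möbius transformation, be assumed to fix $0$, so it suffices to show that every automorphism of $\m{D}$ fixing the centre is one of the maps $W \mapsto UWU^\transpose$. The key input is H. Cartan's theorem on bounded circular domains: since $\m{D}$ is bounded and invariant under $W \mapsto e^{\I\theta}W$, every automorphism fixing $0$ is the restriction of a complex-linear map of $\mrm{Sym}(n,\bb{C})$. It then remains to classify the linear automorphisms of $\m{D}$, and these are exactly $W \mapsto UWU^\transpose$ with $U \in \mrm{U}(n)$, which are Möbius. Transporting this conclusion back through the Cayley transform, every holomorphic bijection of $\f{H}(n)$ is a Möbius transformation.

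\emph{Main obstacle.} The transitivity half is essentially the matrix calculation above and presents no real difficulty. The substance lies in the rigidity half, and the delicate points are, first, checking that $\m{D}$ is genuinely bounded and circular so that Cartan's linearisation applies — boundedness is what rules out the Liouville-type behaviour that breaks rigidity on unbounded models — and, second, the classification of the linear automorphisms of $\m{D}$ as precisely $\mrm{U}(n)$. This last step is a finite-dimensional computation: one shows that a complex-linear map of $\mrm{Sym}(n,\bb{C})$ preserving the symmetry constraint $W = W^\transpose$ together with the Hermitian condition $\idop - W\overline{W}^\transpose > 0$ must have the shape $W \mapsto UWU^\transpose$ for some $U \in \mrm{U}(n)$. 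An alternative to the whole rigidity argument is to invoke that $\m{D}$ carries a canonical (Bergman) metric preserved by all biholomorphisms, which identifies $\mrm{Aut}(\m{D})$ with the holomorphic isometry group of the associated Hermitian symmetric space; I would nonetheless keep the Cartan-uniqueness route as the more elementary and self-contained one, following \cite{Siegel_halfspace}.
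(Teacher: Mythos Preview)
The paper does not actually give a proof of this proposition: it is stated as a citation of Theorem~1 in \cite{Siegel_halfspace} and used as a black box. So there is no ``paper's own proof'' to compare against; your proposal is supplying an argument where the paper supplies none.

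That said, your outline is sound and is essentially the classical route going back to Siegel. The transitivity half via translations $\begin{pmatrix}\idop & X\\ 0 & \idop\end{pmatrix}$ and dilations $\begin{pmatrix}G & 0\\ 0 & (G^\transpose)^{-1}\end{pmatrix}$ is exactly the standard two-step reduction and needs no further comment. For the rigidity half, your strategy --- Cayley transform to the bounded symmetric domain, Cartan's linearisation theorem for automorphisms of bounded circular domains fixing the origin, then identification of the linear isotropy with $\mrm{U}(n)$ acting by $W\mapsto UWU^\transpose$ --- is the textbook argument. The one place where you should be a bit more careful is the ``finite-dimensional computation'' classifying linear automorphisms of $\m{D}$: it is not quite immediate that every $\bb{C}$-linear bijection of $\mrm{Sym}(n,\bb{C})$ preserving $\idop - W\overline{W}>0$ has the form $UWU^\transpose$. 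One clean way is to observe that such a map must preserve the boundary stratification by rank (the rank-one boundary points are extreme in a suitable sense), hence send rank-one symmetric matrices to rank-one symmetric matrices, which forces the congruence form. Alternatively, as you note, one can invoke the Bergman metric and identify $\mrm{Aut}(\m{D})$ with the isometry group directly.
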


Consider the stabilizer of $\I\mathbbm{1}_{n}\in\f{H}(n)$ under this action. It is clear that the matrix $\begin{pmatrix}A & B\\ C & D\end{pmatrix}$ stabilizes $\I\mathbbm{1}$ if and only if $\I A+B=\I D-C$, i.e. $B+C=0$ and $A=D$. Hence the stabilizer is $\mrm{Sp}(2n)\cap\mrm{GL}(n,\bb{C})=\mrm{U}(n)$, with the previous identifications.

\subsubsection{The space $\m{AC}^+$}\label{section:mAC}

Let $J\in\mrm{Sp}(2n)$ be a linear almost complex structure preserving $\Omega_0$. Then the product $\Omega_0J$ is a nondegenerate symmetric matrix, defining a bilinear form $\beta_J$. We are interested in the set of all almost complex structures $J\in\mrm{Sp}(2n)$ such that $\beta_J$ is positive definite, and we define
\begin{equation*}
\begin{split}
\m{AC}(2n)& =\set*{J\in\mrm{Sp}(2n)\tc J^2=-\mathbbm{1}}\\
\m{AC}^+(2n)& =\set*{J\in\mrm{Sp}(2n)\tc J^2=-\mathbbm{1},\ \beta_J>0}.
\end{split}
\end{equation*}

Notice that the matrix $-\Omega_0$ is an element of $\m{AC}^+(2n)$, and $\beta_{-\Omega}$ is just the usual Euclidean product.

\begin{lemma}\label{lemma:stabilizzatore}
Let $\mrm{Sp}(2n)$ act on $\m{AC}(2n)$ by conjugation. Then the stabilizer of any $J\in\m{AC}(2n)$ is $\mrm{Sp}(2n)\cap\mrm{SO}(\beta_J)$.
\end{lemma}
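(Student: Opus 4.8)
The plan is to identify the stabilizer of a fixed $J \in \m{AC}(2n)$ under the conjugation action of $\mrm{Sp}(2n)$ by a straightforward double-inclusion argument, using the fact that elements of $\mrm{Sp}(2n)$ automatically preserve $\Omega_0$ and hence interact nicely with the symmetric form $\beta_J = \Omega_0 J$. Throughout, write $G_J = \set*{P \in \mrm{Sp}(2n) \tc PJP^{-1} = J}$ for the stabilizer, i.e.\ the set of $P \in \mrm{Sp}(2n)$ commuting with $J$.

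First I would show $G_J \subseteq \mrm{Sp}(2n) \cap \mrm{SO}(\beta_J)$. Take $P \in \mrm{Sp}(2n)$ with $PJ = JP$. Since $P \in \mrm{Sp}(2n)$ we have $P^\transpose \Omega_0 P = \Omega_0$. I would then compute $P^\transpose \beta_J P = P^\transpose \Omega_0 J P = P^\transpose \Omega_0 P \, P^{-1} J P = \Omega_0 J = \beta_J$, where the middle step uses $JP = PJ$, so $P^{-1}JP = J$. Hence $P$ preserves the symmetric bilinear form $\beta_J$, i.e.\ $P \in \mrm{O}(\beta_J)$; combined with $P \in \mrm{Sp}(2n)$ this gives $P \in \mrm{Sp}(2n) \cap \mrm{O}(\beta_J)$. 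To upgrade $\mrm{O}(\beta_J)$ to $\mrm{SO}(\beta_J)$ one notes that $\mrm{Sp}(2n)$ consists of matrices of determinant $1$, so $\det P = 1$ automatically and $P \in \mrm{SO}(\beta_J)$.

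Conversely I would show $\mrm{Sp}(2n) \cap \mrm{SO}(\beta_J) \subseteq G_J$. Let $P \in \mrm{Sp}(2n)$ with $P^\transpose \beta_J P = \beta_J$, i.e.\ $P^\transpose \Omega_0 J P = \Omega_0 J$. Using $P^\transpose \Omega_0 P = \Omega_0$, rewrite this as $P^\transpose \Omega_0 P (P^{-1} J P) = \Omega_0 J$, hence $\Omega_0 (P^{-1}JP) = \Omega_0 J$, and since $\Omega_0$ is invertible we get $P^{-1} J P = J$, i.e.\ $P$ commutes with $J$, so $P \in G_J$. Combining the two inclusions yields $G_J = \mrm{Sp}(2n) \cap \mrm{SO}(\beta_J)$.

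I do not expect any serious obstacle here: the argument is an elementary manipulation of the defining relations. The only mild subtlety worth stating carefully is the passage from $\mrm{O}(\beta_J)$ to $\mrm{SO}(\beta_J)$, which relies on the standard fact that every symplectic matrix has determinant $+1$; I would include a one-line reminder of this. One should also take care that $\beta_J$ need not be positive definite when $J \in \m{AC}(2n) \setminus \m{AC}^+(2n)$, so $\mrm{SO}(\beta_J)$ here denotes the special orthogonal group of a possibly indefinite symmetric form — but this causes no difficulty in the computation, which never uses positivity.
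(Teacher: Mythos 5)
Your argument is correct: both inclusions are exactly the elementary manipulations of $P^\transpose\Omega_0 P=\Omega_0$ and $P^{-1}JP=J$ that establish the lemma, and the paper itself states this result without proof, treating it as routine. The one point worth keeping, which you handle properly, is the passage from $\mrm{O}(\beta_J)$ to $\mrm{SO}(\beta_J)$ via $\det P=1$ for symplectic matrices (so that $\mrm{Sp}(2n)\cap\mrm{O}(\beta_J)=\mrm{Sp}(2n)\cap\mrm{SO}(\beta_J)$, consistent with the paper's later use of $\mrm{Sp}(2n)\cap\mrm{O}(2n)=\mrm{U}(n)$).
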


In particular, the stabilizer of $-\Omega_0$ is $\mrm{Sp}(2n)\cap\mrm{O}(2n)=\mrm{U}(n)$.

\begin{proposizione}\label{proposizione:transitive_action}
The action of $\mrm{Sp}(2n)$ on $\m{AC}^+(2n)$ is transitive.
\end{proposizione}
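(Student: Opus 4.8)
The plan is to show that $\m{AC}^+(2n)$ is a single orbit of $\mrm{Sp}(2n)$ acting by conjugation, by connecting it to the transitivity result already established for the $\mrm{Sp}(2n)$-action on Siegel's upper half space $\f{H}(n)$. First I would exhibit a concrete equivariant correspondence between $\m{AC}^+(2n)$ and $\f{H}(n)$. Given $J \in \m{AC}^+(2n)$, the form $\beta_J$ defined by $\Omega_0 J$ is a positive-definite inner product which is compatible with the symplectic form $\Omega_0$ in the usual sense (since $J \in \mrm{Sp}(2n)$ and $J^2 = -\idop$). The standard linear algebra of compatible triples then attaches to $J$ a point of $\f{H}(n)$: writing $J$ in block form relative to the splitting $\bb{R}^{2n} = \bb{R}^n \oplus \bb{R}^n$ underlying $\Omega_0$, the positivity condition $\beta_J > 0$ translates precisely into the statement that the associated matrix $Z(J)$ is symmetric with positive-definite imaginary part. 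One checks directly that $Z(-\Omega_0) = \I \mathbbm{1}_n$, and that $Z$ intertwines the conjugation action $P.J = PJP^{-1}$ with the M\"obius action $P.Z = (AZ+B)(CZ+D)^{-1}$ of the previous subsection.

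Granting this, transitivity of $\mrm{Sp}(2n) \curvearrowright \m{AC}^+(2n)$ follows immediately: $Z$ is an $\mrm{Sp}(2n)$-equivariant map onto $\f{H}(n)$ (or at least onto its image, which contains $\I\mathbbm{1}_n$), and since $\mrm{Sp}(2n)$ acts transitively on $\f{H}(n)$ by the cited Theorem of Siegel, any $J \in \m{AC}^+(2n)$ can be moved to the fixed point $-\Omega_0$ by a suitable $P$. Indeed if $Z(J) = P.(\I\mathbbm{1}_n)$ then $Z(P^{-1}.J) = \I\mathbbm{1}_n$, and one verifies that $Z$ is injective (the triple $(\Omega_0, J, \beta_J)$ is recovered from $Z$), so $P^{-1}.J = -\Omega_0$.

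The main obstacle — really the only nontrivial point — is establishing that the map $Z$ is well-defined, equivariant, and surjective onto $\f{H}(n)$, i.e. pinning down the bijection $\m{AC}^+(2n) \cong \f{H}(n)$; everything else is then formal. A clean alternative that sidesteps the explicit block computation is to argue directly: given $J_1, J_2 \in \m{AC}^+(2n)$, the inner products $\beta_{J_1}$ and $\beta_{J_2}$ are both positive definite and both $\Omega_0$-compatible, so by simultaneous normalization (a symplectic change of basis bringing $\beta_{J_1}$ to the Euclidean form, which by Lemma \ref{lemma:stabilizzatore} and the identity $\mrm{Sp}(2n)\cap\mrm{O}(2n) = \mrm{U}(n)$ sends $J_1$ to $-\Omega_0$) one reduces to the case $J_1 = -\Omega_0$; then $J_2$ is a compatible complex structure for the standard Euclidean structure, hence lies in $\mrm{Sp}(2n)\cap\mrm{SO}(2n) \cdot(-\Omega_0)$ after a further rotation, and one concludes. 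I would present whichever of these two routes is shorter in the surrounding context, most likely the equivariant identification with $\f{H}(n)$ since that correspondence is needed later anyway (cf. Proposition \ref{prop:metrica_AC+_Siegel}).
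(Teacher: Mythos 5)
Your overall strategy, reducing the statement to the claim that any $J\in\m{AC}^+(2n)$ can be conjugated to the base point $-\Omega_0$, is the right one, but in both of your routes the pivotal step is asserted rather than proved, and that step is exactly where the content of the proposition lies. The paper's own argument is the explicit element displayed right after the statement: since $\beta_J>0$, the matrix $\Omega_0 J$ is symmetric positive definite, and its positive square root $P_J=(\Omega_0 J)^{1/2}$ is symplectic and conjugates $J$ to $-\Omega_0$ (both facts follow from the identity $(\Omega_0 J)\,\Omega_0\,(\Omega_0 J)=\Omega_0$, which uses $J^2=-\mathbbm{1}$ and $\Omega_0^2=-\mathbbm{1}$, together with the uniqueness of positive square roots, which gives $\Omega_0 P_J\Omega_0^{-1}=P_J^{-1}$). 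Your proposal never produces this or any substitute. In your first route, the ``standard linear algebra of compatible triples'' giving a well-defined, equivariant, surjective map $Z:\m{AC}^+(2n)\to\f{H}(n)$ is precisely the nontrivial computation, and you explicitly defer it; note also that you could not borrow the identification $\phi$ of Proposition \ref{prop:metrica_AC+_Siegel} for this purpose, since $\phi$ is built out of the elements $P_J$, i.e.\ out of transitivity itself. In your second route, ``simultaneous normalization'' --- the existence of a basis that is simultaneously symplectic for $\Omega_0$ and orthonormal for $\beta_J$ --- is again the whole point: for a general positive-definite form only Williamson's symplectic diagonalization is available, and the stronger statement holds here only because $\beta_J$ is compatible with $\Omega_0$ through $J$; the cleanest proof of it is, once more, the square root $P_J$ (or a $J$-invariant Gram--Schmidt argument).

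Separately, the second half of your alternative route is incorrect as written. After conjugating so that $J_1=-\Omega_0$, the transformed $J_2$ is just another element of $\m{AC}^+(2n)$: it is compatible with $\Omega_0$, but there is no reason for $\beta_{J_2}$ to be the Euclidean form, so $J_2$ need not be ``a compatible complex structure for the standard Euclidean structure''. Worse, since $\mrm{Sp}(2n)\cap\mrm{SO}(2n)=\mrm{U}(n)$ is the stabilizer of $-\Omega_0$ (Lemma \ref{lemma:stabilizzatore}), saying that $J_2$ lies in the $\mrm{U}(n)$-orbit of $-\Omega_0$ would force $J_2=-\Omega_0$, which is false in general. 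The two-element formulation is unnecessary anyway: once every $J$ can be moved to $-\Omega_0$ --- and your first clause correctly observes that if $P\in\mrm{Sp}(2n)$ takes $\beta_J$ to the Euclidean form then $P^{-1}JP=\Omega_0^{-1}=-\Omega_0$ --- transitivity follows at once. So the argument is salvageable, but only after supplying the existence statement, for which the paper's $P_J=(\Omega_0 J)^{1/2}$ is the intended device.
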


Then for any $J\in\m{AC}^+(2n)$ there is some $P\in\mrm{Sp}(2n)$ which conjugates $J$ to $-\Omega_0$; a possible choice of $P$ is given by
\begin{equation*}
P_J =(\Omega_0 J)^{1/2}.
\end{equation*}
Proposition \ref{proposizione:transitive_action} and Lemma \ref{lemma:stabilizzatore} tell us that we can identify $\m{AC}^+(2n)$ with the quotient
\begin{equation*}
\mrm{Sp}(2m)/\left(\mrm{SO}(2m)\cap\mrm{Sp}(2m)\right)\cong\mrm{Sp}(2m)/\mrm{U}(m).
\end{equation*}
Let $\phi:\m{AC}^+\to\f{H}$ be the diffeomorphism that is given by composing the two identifications of $\m{AC}^+$ and $\f{H}$ with $\mrm{Sp}(2n)/\mrm{U}(n)$. These identifications are defined by fixing the reference points $-\Omega_0\in\m{AC}^+$ and $\I\mathbbm{1}\in\f{H}$, so that $\phi$ is given by the composition
\begin{center}
\begin{tabular}{c c c c c}
$\m{AC}^+$ & $\rightarrow$ & $\mrm{Sp}(2n)/\mrm{U}(n)$ & $\rightarrow$ & $\f{H}$\\
$J$ & $\mapsto$ & $P_J^{-1}\mrm{U}(n)$ & $\mapsto$ & $P_J^{-1}.(\I\mathbbm{1})$
\end{tabular}
\end{center}
and $\phi$ is a smooth isomorphism of $\mrm{Sp}(2n)$--spaces, i.e. it is a diffeomorphism that commutes with the $\mrm{Sp}(2n)$ actions. Using this identification of the two spaces we obtain a K\"ahler structure on $\m{AC}^+$. A straightforward computation of the differential of $\Phi$ at the point $-\Omega_0$ gives

\begin{proposizione}\label{prop:metrica_AC+_Siegel}
Endow $\m{AC}^+$ with the complex structure and K\"ahler metric pulled back from Siegel's upper half space $\f{H}$. Then the complex structure on $T_J\m{AC}^+$ is given by
\begin{equation*}
A\mapsto JA.
\end{equation*}
Moreover
\begin{equation*}
\phi^*(\mrm{d}s^2)_J(A,B)=\frac{1}{2}\mrm{Tr}(AB).
\end{equation*}  
\end{proposizione}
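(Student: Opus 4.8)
My plan is to pull everything back, using the $\mrm{Sp}(2n)$-equivariance of $\phi$, to a single computation of the differential $\mrm{d}\phi$ at the reference point $-\Omega_0$, where $P_{-\Omega_0}=(\Omega_0(-\Omega_0))^{1/2}=\mathbbm{1}$ and hence $\phi(-\Omega_0)=\I\mathbbm{1}$. First I would describe $T_{-\Omega_0}\m{AC}^+$ explicitly: differentiating the relations $J^2=-\mathbbm{1}$ and $J^\transpose\Omega_0 J=\Omega_0$ at $J=-\Omega_0$ shows that a tangent vector $A$ is a symmetric matrix anticommuting with $\Omega_0$, i.e. $A=\left(\begin{smallmatrix}A_1 & A_2\\ A_2 & -A_1\end{smallmatrix}\right)$ with $A_1,A_2$ symmetric real $n\times n$ matrices; this already matches $\dim_{\bb{R}}T_{\I\mathbbm{1}}\f{H}$, the space of symmetric complex matrices, so the computed differential will automatically be an isomorphism.

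Next I would compute $\mrm{d}\phi_{-\Omega_0}$ by factoring $\phi$ through $J\mapsto P_J^{-1}$ followed by the M\"obius action $Q\mapsto Q.(\I\mathbbm{1})$. Since $\Omega_0 J(0)=\mathbbm{1}$, the derivative of $X\mapsto X^{1/2}$ at $\mathbbm{1}$ is one half of the velocity (from $Y^2=X$), so along a curve $J(t)$ with $\dot J(0)=A$ one gets $\frac{\mrm{d}}{\mrm{d}t}P_{J(t)}^{-1}\big|_0=-\tfrac12\Omega_0 A$. Linearising $Q\mapsto(aZ+b)(cZ+d)^{-1}$ at $Q=\mathbbm{1}$, $Z=\I\mathbbm{1}$ gives $\I\dot a+\dot b+\dot c-\I\dot d$; substituting $\dot Q=-\tfrac12\Omega_0 A$ together with the block form of $A$ yields $\mrm{d}\phi_{-\Omega_0}(A)=A_1-\I A_2$.

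The two assertions then follow by inspection. Using the \emph{conjugate} complex structure on $\f{H}$ (multiplication by $-\I$), the pulled-back complex structure sends $A$ to $\mrm{d}\phi_{-\Omega_0}^{-1}\big(-\I(A_1-\I A_2)\big)=\mrm{d}\phi_{-\Omega_0}^{-1}(-A_2-\I A_1)$, which is $\left(\begin{smallmatrix}-A_2 & A_1\\ A_1 & A_2\end{smallmatrix}\right)=(-\Omega_0)A$; that is, at $-\Omega_0$ the complex structure is $A\mapsto JA$. For the metric, \eqref{eq:Siegel_metric} at $Z=\I\mathbbm{1}$ reads $(W,W')\mapsto\mrm{Re}\,\mrm{trace}(W\overline{W'})$, and inserting $W=A_1-\I A_2$, $W'=B_1-\I B_2$ gives $\mrm{trace}(A_1B_1)+\mrm{trace}(A_2B_2)$, which a one-line block multiplication identifies with $\tfrac12\mrm{Tr}(AB)$. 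Finally, conjugation by $\mrm{Sp}(2n)$ acts transitively on $\m{AC}^+$, the map $\phi$ intertwines it with the (holomorphic and isometric) M\"obius action on $\f{H}$, and both $A\mapsto JA$ and the form $\mrm{Tr}(AB)$ are conjugation-invariant; hence the formulas established at $-\Omega_0$ hold at every $J\in\m{AC}^+$.

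The only delicate points are bookkeeping: keeping the sign conventions consistent — in particular it is precisely the choice of the conjugate complex structure on $\f{H}$ that produces $A\mapsto JA$ rather than $A\mapsto-JA$ — and correctly linearising the matrix square root and the M\"obius transformation. There is no analytic or conceptual obstacle beyond this.
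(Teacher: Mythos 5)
Your computation is correct and follows exactly the route the paper indicates: the paper itself only says that the proposition follows from ``a straightforward computation of the differential of $\phi$ at the point $-\Omega_0$,'' which is precisely what you carry out (identification of $T_{-\Omega_0}\m{AC}^+$, linearisation of $J\mapsto P_J^{-1}$ and of the M\"obius action to get $\mrm{d}\phi_{-\Omega_0}(A)=A_1-\I A_2$, and then $\mrm{Sp}(2n)$-equivariance to propagate the formulas from $-\Omega_0$ to every $J$). Your sign bookkeeping with the conjugate complex structure on $\f{H}$ matches the paper's convention and correctly explains why one obtains $A\mapsto JA$ rather than $A\mapsto -JA$.
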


\begin{nota}\label{nota:traccia_tripla}
Notice that for $A,B,C\in T_J\m{AC}^+$, the trace of the product matrix $ABC$ vanishes; indeed
\begin{equation*}
\begin{split}
\mrm{Tr}(ABC)=-\mrm{Tr}(JJABC)=\mrm{Tr}(JABCJ)=\mrm{Tr}(JJABC)=-\mrm{Tr}(ABC).
\end{split}
\end{equation*}
This will be useful later to compute the moment map for the action in Section \ref{sec:moment_map}.
\end{nota}

The next result is an expression for the curvature of this metric on $\m{AC}^+$, that is obtained by considering its homogeneous space structure.

\begin{proposizione}\label{prop:curvatura_AC}
Consider $\m{AC}^+(2n)$ with the K\"ahler metric induced by its identification with $\mrm{Sp}(2n)/\mrm{U}(n)$ (and with $\f{H}(2n)$). The curvature of this metric at the point $-\Omega_0$ is given by
\begin{equation*}
R_{-\Omega_0}(A,B)(C)=-\frac{1}{4}\Big[[A,B],C\Big].
\end{equation*}
\end{proposizione}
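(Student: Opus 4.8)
The plan is to exploit the fact that $\m{AC}^+(2n) \cong \mrm{Sp}(2n)/\mrm{U}(n)$ is a Riemannian symmetric space, so its curvature at the base point is given by the standard Lie-theoretic formula $R(X,Y)Z = -[[X,Y],Z]$ on the $(-1)$-eigenspace $\f{p}$ of the Cartan involution, up to a normalisation coming from our choice of metric in Proposition \ref{prop:metrica_AC+_Siegel}. First I would recall the Cartan decomposition $\f{sp}(2n) = \f{u}(n) \oplus \f{p}$ associated to the involution $\theta(\xi) = -\xi^\transpose$ (equivalently conjugation by $-\Omega_0$): here $\f{u}(n)$ consists of the matrices in $\f{sp}(2n)$ commuting with $-\Omega_0$, and $\f{p}$ consists of those anticommuting with it. One then identifies $T_{-\Omega_0}\m{AC}^+$ with $\f{p}$ via the infinitesimal action $\xi \mapsto [\xi, -\Omega_0]$; a short computation shows that under the identification of the tangent space with $\set{A : A\, \Omega_0 + \Omega_0\, A = 0,\ A\transpose = A\ \ldots}$ used in Proposition \ref{prop:metrica_AC+_Siegel}, the vector $A \in T_{-\Omega_0}\m{AC}^+$ corresponds to $\tfrac12\Omega_0 A \in \f{p}$ (or a similar explicit multiple), so that the bracket operation on tangent vectors is, up to the fixed scalar, the matrix commutator.

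Next I would invoke the symmetric-space curvature formula: for a symmetric space $G/K$ with reductive decomposition $\f{g} = \f{k} \oplus \f{p}$ and $[\f{p},\f{p}] \subseteq \f{k}$, the Levi-Civita connection of any $G$-invariant metric has curvature $R(X,Y)Z = -[[X,Y],Z]$ for $X,Y,Z \in \f{p}$ (see e.g. Kobayashi--Nomizu or Helgason); the invariant metric only enters through the requirement that it be $\mrm{Ad}(K)$-invariant, which is automatic here. Transporting this back through the identification $A \leftrightarrow c\,\Omega_0 A$ and keeping track of the constant $c$, together with the bracket relations $[\f{p},\f{p}]\subseteq\f{u}(n)$ and $[\f{u}(n),\f{p}]\subseteq\f{p}$ one reads off that $R_{-\Omega_0}(A,B)C$ is a fixed multiple of $[[A,B],C]$. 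The factor $-\tfrac14$ is then pinned down by the normalisation $\phi^*(\mrm{d}s^2)_J(A,B) = \tfrac12 \mrm{Tr}(AB)$: each of the two commutators that convert a tangent vector into its image in $\f{p}$ contributes a factor of $\tfrac12$, giving $\tfrac14$, and the sign is fixed by our convention for the curvature tensor and for the fundamental vector field (the minus sign in $\hat{a}$).

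The main obstacle I expect is purely bookkeeping: getting the normalising constant and the overall sign exactly right. This requires being careful about (i) the precise isomorphism between $T_{-\Omega_0}\m{AC}^+$ and $\f{p}\subseteq\f{sp}(2n)$, since the naive identification of $A$ with $\Omega_0 A$ differs by a factor from the one induced by the infinitesimal action $\xi\mapsto[\xi,-\Omega_0]$; (ii) the sign conventions in the definition of the moment map and of $\hat a$ adopted in \S\ref{PrelimSec}; and (iii) matching the factor $\tfrac12$ in the metric of Proposition \ref{prop:metrica_AC+_Siegel} with the Killing-form normalisation implicit in the symmetric-space curvature formula. Once these constants are reconciled the result follows immediately; alternatively, as a cross-check one can verify the formula by a direct computation of $\nabla$ in an exponential chart around $-\Omega_0$, using that geodesics through the base point of a symmetric space are one-parameter subgroups acting on it, which bypasses the homogeneous-space machinery at the cost of a slightly longer calculation.
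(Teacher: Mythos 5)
Your proposal is correct and is essentially the argument the paper has in mind: the paper gives no written proof of Proposition \ref{prop:curvatura_AC}, only the remark that the formula is obtained from the homogeneous (symmetric) space structure of $\m{AC}^+(2n)\cong\mrm{Sp}(2n)/\mrm{U}(n)$, which is precisely your route via the Cartan decomposition $\f{sp}(2n)=\f{u}(n)\oplus\f{p}$ and the symmetric-space curvature formula on $\f{p}$. One bookkeeping correction: the factor $-\tfrac14$ is not pinned down by the metric normalisation $\tfrac12\mrm{Tr}(AB)$ (the $(1,3)$-curvature of a symmetric space is that of the canonical connection and is unchanged by rescaling the invariant metric); it comes solely from the identification of $T_{-\Omega_0}\m{AC}^+$ with $\f{p}$ via the infinitesimal action, $[\xi,-\Omega_0]=-2\,\xi\,\Omega_0$ for $\xi\in\f{p}$, whose factor $c=2$ enters the trilinear curvature as $c/c^{3}=\tfrac14$, while the overall sign is fixed by the convention $R(X,Y)=[\nabla_X,\nabla_Y]-\nabla_{[X,Y]}$ (the sign in the fundamental-vector-field convention cancels, as it appears squared).
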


\paragraph*{The holomorphic cotangent space of $\m{AC}^+$.} To write the moment map equations in Section \ref{sec:cotJ_hyperk} it will be convenient to have an expression for the complex structure of ${T^{1,0}}^*\m{AC}^+$. Consider $J\in\m{AC}^+$; the cotangent space of $\m{AC}^+$ at $J$ is the space of all $\alpha:V^*\to V^*$ such that 
\begin{equation*}
\begin{split}
J\alpha^\transpose+&\alpha^\transpose J=0\\
J^\transpose\Omega_0\alpha^\transpose+&\alpha\Omega_0 J^\transpose=0.
\end{split}
\end{equation*}
We want to consider the \emph{holomorphic} cotangent space at $J$, i.e. $\alpha\in V^*\otimes\bb{C}$ that satisfies
\begin{equation*}
\begin{split}
J^\transpose\alpha&=\I\alpha\\
\alpha\,J^\transpose&=-\I\alpha\\
J^\transpose\Omega_0\mrm{Re}(\alpha)^\transpose&+\mrm{Re}(\alpha)\Omega_0 J=0.
\end{split}
\end{equation*}
The tangent space $T_{J,\alpha}\left({T^{1,0}}^*\!\m{AC}^+\right)$ is defined by the set of all pairs $(\dot{J},\dot{\alpha})$ with $\dot{J}\in T_J\m{AC}^+$ and $\dot{\alpha}\in V^*\otimes\bb{C}$ such that
\begin{enumerate}
\item $\dot{J}^\transpose\alpha+J^\transpose\dot{\alpha}=\I\dot{\alpha}$
\item $\alpha\,\dot{J}^\transpose+\dot{\alpha}\,J^\transpose=-\I\dot{\alpha}$
\item $\dot{J}^\transpose\Omega_0\mrm{Re}(\alpha)^\transpose+J^\transpose\Omega_0\mrm{Re}(\dot{\alpha})^\transpose+\mrm{Re}(\dot{\alpha})\Omega_0 J+\mrm{Re}(\alpha)\Omega_0\dot{J}=0$.
\end{enumerate}

A lengthy calculation using the identification between $\m{AC}^+$ and $\f{H}^+$ gives the following expression for the canonical complex structure of ${T^{1,0}}^*\!\!\m{AC}^+$:
\begin{equation}\label{eq:canonical_compl_str}
\begin{split}
T_{J,\alpha}\left({T^{1,0}}^*\!\!\m{AC}^+\right)&\longrightarrow T_{J,\alpha}\left({T^{1,0}}^*\!\!\m{AC}^+\right)\\
(\dot{J},\dot{\alpha})&\mapsto(J\dot{J},\dot{\alpha}J^\transpose+\dot{J}^\transpose\alpha).
\end{split}
\end{equation}
%For a proof, see Remark \ref{nota:compl_str} in the Appendix. For now, we just check that this map squares to minus the identity:
%\begin{equation*}
%\begin{split}
%(\dot{J},\dot{\alpha})\mapsto(J\dot{J},\dot{\alpha}J^\transpose+\dot{J}^\transpose\alpha)\mapsto&(-\dot{J},(\dot{\alpha}J^\transpose+\dot{J}^\transpose\alpha)J^\transpose+(J\dot{J})^\transpose\alpha)=\\
%&=(-\dot{J},-\dot{\alpha}+\dot{J}^\transpose\alpha J^\transpose+\dot{J}^\transpose J^\transpose\alpha)=(-\dot{J},-\dot{\alpha}).
%\end{split}
%\end{equation*}

\subsection{The space $\scr{J}$}

Let $(M,J_0,\omega_0)$ be a compact K\"ahler manifold, of complex dimension $n$. We are interested in the space
\begin{equation*}
\scr{J}=\set*{J\in\Gamma(\mrm{End}TM)\tc J^2=-\mrm{Id},\ \omega_0(J-,J-)=\omega_0(-,-)\mbox{ and }\omega_0(J-,-)>0}
\end{equation*}
of all almost complex structures on $M$ that are compatible with the symplectic form $\omega_0$.

For any point $x_0\in M$, there is a neighbourhood $U\in\m{U}(x_0)$ and a coordinate system $\bm{u}:U\to\bb{R}^{2n}$ such that $\omega_0(\bm{u})$ is expressed as the canonical $2$-form on $\bb{R}^{2n}$ (in other words, $\bm{u}$ is a local system of Darboux coordinates around $x_0$); hence for all $x\in U$ and for all $J\in\scr{J}$, the matrix associated to $J_p$ in the coordinate system $\bm{u}$ is an element of $\m{AC}^+$. Notice that, for a different system of Darboux coordinates $\bm{v}$, the ``change of coordinates matrix'' $\frac{\diff\bm{v}}{\diff\bm{u}}$ is a $\mrm{Sp}(2n)$-valued function. Considering the matrices associated to $J_x$ in the two Darboux coordinate systems we have
\begin{equation*}
J_x(\bm{v})=\frac{\diff\bm{v}}{\diff\bm{u}}(x)\,J_x(\bm{u})\,\left(\frac{\diff\bm{v}}{\diff\bm{u}}\right)^{-1}\!\!\!\!\!\!\!(x)
\end{equation*}
so the two different elements of $\m{AC}^+$ differ by the action of an element of $\mrm{Sp}(2n)$ on $\m{AC}^+$. We have all the ingredients to define a $\mrm{Sp}(2n)$-bundle with fibre $\m{AC}^+$ on the manifold $M$, that is trivialized in Darboux coordinates. We denote by $\m{E}\xrightarrow{\pi}M$ this fibre bundle, and it's clear that $\scr{J}=\mrm{\Gamma}(M,\m{E})$.

This description of the infinite-dimensional manifold $\scr{J}$ as a space of sections is quite convenient for describing extra structures on $\scr{J}$; for example, for any $J\in\scr{J}$ the tangent space at $J$ is
\begin{equation*}
T_J\scr{J}=T_J\Gamma(M,\m{E})=\Gamma(M,J^*(\mrm{Vert}\,\m{E}))
\end{equation*}
where $\mrm{Vert}\,\m{E}$ is the vertical distribution of $\m{E}$, the kernel of the projection on the base $\pi:\m{E}\to M$. For any $x\in M$, $J^*(\mrm{Vert}\m{E})_x=\mrm{Vert}_{J(x)}\m{E}\cong T_{J(x)}\m{AC}^+$; here the identification is done by fixing a Darboux coordinate system around $x$, i.e. by locally trivializing $\m{E}$. In other words, any $A\in T_J\scr{J}$ is itself a section of a fibre bundle on $M$ that is trivial over any system of Darboux coordinates, and in any such trivialization $A(x)\in T_{J(x)}\m{AC}^+$. This description of $T_J\scr{J}$ can be made more intrinsic by noticing that any such $A$ must be itself an endomorphism of $TM$, so that
\begin{equation*}
\begin{split}
T_J\scr{J}=\lbrace A\in\Gamma(M,\mrm{End}(TM))&\mid AJ+JA=0\mbox{ and }\omega_0(A-,J-)+\omega_0(J-,A-)=0\rbrace.
\end{split}
\end{equation*}
The second condition, $\omega_0(A-,J-)+\omega_0(J-,A-)=0$, tells us that the bilinear form $(v,w)\mapsto g_J(Av,w)$ is symmetric. Then, in a system of local coordinates for $M$, the conditions for an endomorphism $A$ to be in $T_J\scr{J}$ are equivalent to these useful identities:
\begin{equation}\label{eq:indentita_tangenti}
\begin{split}
J\indices{^i_j}A\indices{^j_k}&=-A\indices{^i_j}J\indices{^j_k}\\
g(J)_{ij}A\indices{^j_k}&=g(J)_{kj}A\indices{^j_i}.
\end{split}
\end{equation}

Using the various geometric structures on $\m{AC}^+$, we can induce similar structures on $\scr{J}$; let's see how this is done for the K\"ahler structure of $\m{AC}^+$. First of all, we define a complex structure $\bb{J}:T\scr{J}\to T\scr{J}$ as follows: fix $J\in\scr{J}$ and $A\in T_J\scr{J}$; for any $x\in M$ consider a trivialization of $\m{E}$ around $x$, giving the usual identification $A(x)\in T_{J(x)}\m{AC}^+$; on this vector space we have the complex structure described in the previous Section. It is given by $A(x)\mapsto J(x)A(x)=(JA)(x)$, so we define $(\bb{J}A)(x) =(JA)(x)$ for every $x\in M$. Notice moreover that the final result is independent from the choice of the trivialization, since the action of $\mrm{Sp}(2n)$ on $\m{AC}^+$ preserves the complex structure. Then
\begin{align*}
\bb{J}:T_J\scr{J}&\to T_J\scr{J}\\
A&\mapsto JA
\end{align*}
defines an almost complex structure on $\scr{J}$. The same approach works to define a metric; for $A,B\in T_J\scr{J}$ and $x\in M$ the number $\frac{1}{2}\mrm{Tr}(A(x)B(x))$ depends just on $x$, and not on the particular trivialization chosen to see $A(x)$, $B(x)$ as matrices, since the action of $\mrm{Sp}(2n)$ on $\m{AC}^+$ is isometric. We can then define a metric
\begin{align*}
\bb{G}:T_J\scr{J}\times T_J\scr{J}&\to\bb{R}\\
(A,B)&\mapsto\frac{1}{2}\int_{x\in M}\mrm{Tr}(A_xB_x)\,\frac{\omega_0^n}{n!}
\end{align*}
and all the ``algebraic'' relations of $\bb{J}$, $\bb{G}$ carry over from those of the metric and the complex structure on $\m{AC}^+$; in particular $\bb{G}(\bb{J}-,\bb{J}-)=\bb{G}(-,-)$, and so we obtain a $2$-form on $\scr{J}$,
\begin{equation*}
\Omega_J(A,B) =\bb{G}_J(\bb{J}A,B)=\frac{1}{2}\int_{x\in M}\mrm{Tr}(J_xA_xB_x)\,\frac{\omega_0^n}{n!}.
\end{equation*}

\begin{nota}
If we denote by $g_J$ the Hermitian metric on $M$ defined by $\omega_0$ and $J\in\scr{J}$, then $g_J(A,B)=\mrm{Tr}(AB)$ for any $A,B\in T_J\scr{J}$. Indeed
\begin{equation*}
g_J(A,B)=g\indices{^j^k}g\indices{_i_l}A\indices{^i_j}B\indices{^l_k}=g\indices{^j^k}g\indices{_i_j}A\indices{^i_l}B\indices{^l_k}=A\indices{^i_l}B\indices{^l_i}
\end{equation*}
where we have used \eqref{eq:indentita_tangenti} in the second equality. So we can rewrite the expression of $\bb{G}$ in a way that makes more explicit the role of the point $J$, i.e.
\begin{equation*}
\begin{split}
\bb{G}_J(A,B)&=\frac{1}{2}\int_{x\in M}g_J(A,B)_x\,\frac{\omega_0^n}{n!}=\\
&=\frac{1}{2}\int_{x\in M}\omega_0(A,JB)_x\,\frac{\omega_0^n}{n!}.
\end{split}
\end{equation*}
\end{nota}

\begin{teorema}\label{teorema:kahler_J}
With the almost complex structure $\bb{J}$ and the metric $\bb{G}$, $\scr{J}$ is an infinite-dimensional (formally) K\"ahler manifold.
\end{teorema}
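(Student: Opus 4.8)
The plan is to verify that the almost complex structure $\bb{J}$ is integrable (in the formal sense appropriate to this infinite-dimensional setting) and that the associated two-form $\Omega$ is closed, since compatibility $\bb{G}(\bb{J}-,\bb{J}-)=\bb{G}(-,-)$ and positivity of $\bb{G}$ have already been recorded. The key observation is that all the relevant structures on $\scr{J}$ are obtained pointwise, fibre-by-fibre, from the corresponding $\mrm{Sp}(2n)$-invariant K\"ahler structure on $\m{AC}^+\cong\mrm{Sp}(2n)/\mrm{U}(n)$, which is genuinely K\"ahler by Proposition \ref{prop:metrica_AC+_Siegel} (it is pulled back from Siegel's upper half space $\f{H}$, which is K\"ahler). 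So the strategy is to reduce each of the two conditions to its finite-dimensional fibrewise counterpart.

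First I would treat integrability of $\bb{J}$. Since $\scr{J}=\Gamma(M,\m{E})$ and $T_J\scr{J}=\Gamma(M,J^*\mrm{Vert}\,\m{E})$, a tangent vector field on $\scr{J}$ is a section-valued object, and $\bb{J}$ acts fibrewise as $A\mapsto JA$. The Nijenhuis tensor $N_{\bb{J}}$ of $\bb{J}$, evaluated on two vector fields $A,B$ on $\scr{J}$, can be computed pointwise on $M$: at each $x$ it reduces to the Nijenhuis tensor of the fibre complex structure $A(x)\mapsto J(x)A(x)$ on the finite-dimensional manifold $\m{AC}^+$, which vanishes because that complex structure is integrable (being the one coming from $\f{H}$). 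One technical point to spell out is that differentiating a vector field on $\scr{J}$ in the direction of another corresponds, fibrewise, to differentiating on $\m{AC}^+$, together with terms that are algebraic (zeroth order) in the fibre; these algebraic terms are exactly the ones appearing in the finite-dimensional Nijenhuis computation, so no genuinely infinite-dimensional phenomenon intervenes. Equivalently, one can invoke the Newlander--Nirenberg-type fact, standard in this Fujiki--Donaldson circle of ideas, that $\bb{J}$ is formally integrable because it is modelled on an integrable structure; I would phrase this carefully as a formal statement since $\scr{J}$ is an infinite-dimensional Fr\'echet manifold.

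Next I would show $\mrm{d}\Omega=0$. Again the point is that $\Omega_J(A,B)=\frac{1}{2}\int_M\mrm{Tr}(J_xA_xB_x)\,\frac{\omega_0^n}{n!}$ is the integral over $M$ of the pullback of the Siegel K\"ahler form on the fibre $\m{AC}^+$. Using the formula $\mrm{d}\Omega(A,B,C)=\sum_{\mathrm{cyc}}\big(A\cdot\Omega(B,C)-\Omega([A,B],C)\big)$ for vector fields $A,B,C$ on $\scr{J}$, I would carry the exterior derivative inside the integral: since integration over $M$ commutes with the operations involved, $\mrm{d}\Omega$ is the integral over $M$ of the fibrewise exterior derivative of the Siegel K\"ahler form, which vanishes because $\f{H}$ (hence $\m{AC}^+$) is K\"ahler. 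In practice this means expanding $A\cdot\mrm{Tr}(JBC)$ using the Leibniz rule — producing a term $\mrm{Tr}((\nabla_A J)BC)$ and terms with $\nabla_A B$, $\nabla_A C$ — and checking, with the help of Remark \ref{nota:traccia_tripla} (the triple trace $\mrm{Tr}(ABC)$ vanishes for $A,B,C\in T_J\m{AC}^+$) and the identities \eqref{eq:indentita_tangenti}, that the cyclic sum collapses, just as it does in the finite-dimensional case. Alternatively, and perhaps more cleanly, one invokes that the Siegel metric admits a local K\"ahler potential $\log\det Y$ (recorded above), so that $\Omega$ itself is, locally on $\scr{J}$, $\mrm{d}\mrm{d}^c$ of the integrated potential $J\mapsto\int_M(\text{potential})(J_x)\,\frac{\omega_0^n}{n!}$, and is therefore closed; this is the argument I would favour in the write-up.

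The main obstacle is not the algebra but the formal/analytic framework: $\scr{J}$ is an infinite-dimensional manifold (of Fr\'echet type, sections of a fibre bundle), so one must either work with a suitable Banach/Fr\'echet completion or, as is standard in this subject, declare that ``K\"ahler'' is meant in the formal sense — meaning $\bb{J}$, $\bb{G}$, $\Omega$ satisfy the pointwise-in-$M$ algebraic and differential identities that define a K\"ahler structure, with all differentiation understood formally. Once that convention is fixed, every step above is a fibrewise consequence of the honest K\"ahler property of $\m{AC}^+\cong\mrm{Sp}(2n)/\mrm{U}(n)$ established earlier, and the proof amounts to observing that the constructions of $\bb{J}$, $\bb{G}$, $\Omega$ are natural under the $\mrm{Sp}(2n)$-action used to glue the bundle $\m{E}$, so that no choice of Darboux trivialisation matters and the finite-dimensional identities integrate up without obstruction.
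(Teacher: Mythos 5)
Your first route --- reducing everything fibrewise to the K\"ahler geometry of $\m{AC}^+\cong\mrm{Sp}(2n)/\mrm{U}(n)$, observing that only $\mrm{Sp}(2n)$-invariant fibre quantities enter, and exchanging $\mrm{d}$ with integration over $M$ --- is sound, and it is essentially the content of the general result the paper invokes instead of a proof: the text simply cites \cite[Theorem $2.4$]{Koiso_complex_structure}, valid for any fibre bundle with K\"ahler fibres over a manifold with a fixed volume form. The interchange of exterior derivative and fibre integration that you sketch is exactly what the paper later establishes in detail (for the forms on $\cotJ$) in Theorem \ref{teorema:differenziazione}, and the algebraic cancellations you describe do come from \eqref{eq:indentita_tangenti} and Remark \ref{nota:traccia_tripla}; the fibrewise Nijenhuis computation gives formal integrability only, as you correctly note. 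So on this route your argument and the paper's differ only in that you spell out what the citation of Koiso packages.

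The alternative argument that you say you would favour in the write-up, however, has a genuine gap. The Siegel potential $\log\det Y$ is a potential for the fibre metric but it is \emph{not} $\mrm{Sp}(2n)$-invariant: under a M\"obius transformation it changes by the pluriharmonic term $-\log\lvert\det(CZ+D)\rvert^{2}$. Hence the quantity ``$(\text{potential})(J_x)$'' is not a well-defined function of $x\in M$: its value depends on the Darboux trivialisation of $\m{E}$ chosen near $x$, and such trivialisations exist only locally on $M$ and are glued by $\mrm{Sp}(2n)$-valued transition functions. This is precisely why the paper only ever integrates \emph{invariant} fibre quantities ($\mrm{Tr}(J_xA_xB_x)$, the Biquard--Gauduchon function $\rho$ and $\mrm{d}\mrm{d}^c\rho$), and why well-definedness is always checked via invariance (as in the lemma showing $\bm{\Omega}_{\bm{I}}$ is well defined). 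Working ``locally on $\scr{J}$'' does not help, since the ambiguity is global in $M$ and present already at a single point $J$. So either drop the potential argument, or replace the potential by an invariant relative potential constructed with respect to a reference structure; as written, closedness of $\Omega$ should rest on your first argument (or on the citation of Koiso), not on the integrated potential.
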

This theorem is actually a particular case of a more general result. Indeed, it holds \emph{for any} fibre bundle $N\to M$ over a manifold with a fixed volume form whose fibres are K\"ahler manifolds, see \cite[Theorem $2.4$]{Koiso_complex_structure}.

\smallskip

The cotangent bundle of $\scr{J}$ can also be described in terms of the fibre bundle $\m{E}\to M$; indeed, since $T_J\scr{J}=\Gamma(M,J^*(\mrm{Vert}\,\m{E}))$, we also have
\begin{equation*}
T^*_J\scr{J}=\Gamma(M,J^*(\mrm{Vert}\,\m{E}^*)).
\end{equation*}
A more explicit description can be obtained by locally trivializing the bundle and identifying $\m{E}_x$ with $\m{AC}^+$:
\begin{equation*}
T^*_J\!\scr{J}=\set*{\alpha\in\Gamma(\mrm{End}(T^*M))\tc J^\transpose\circ\alpha+\alpha\circ J^\transpose=0\mbox{ and }g_J(\alpha^\transpose-,-)\mbox{ is symmetric}}.
\end{equation*}
Indeed these conditions on $\alpha$ tell us that in a system of Darboux coordinates on $U\subset M$, $\alpha(x)\in T_{J(x)}\m{AC}^+$ for every $x\in U$. The pairing between $T^*_J\scr{J}$ and $T_J\scr{J}$ is
\begin{equation*}
\langle\alpha,A\rangle=\frac{1}{2}\int_M\alpha\indices{_i^j}A\indices{^i_j}\,\frac{\omega_0^n}{n!}.
\end{equation*}
Later on, we will need the \emph{holomorphic} cotangent bundle of $\scr{J}$, that we will still denote by $\cotJ$; the context will make clear what space we are working on. The $(1,0)$-part of $T^*_J\scr{J}$ consists of those $\alpha\in T^*_J\scr{J}\otimes\mathbb{C}$ that satisfy $J^\transpose\circ\alpha=\I\,\alpha$. If $J$ is integrable and we fix a system of coordinates on $M$ that are holomorphic with respect to $J$, then an element $\alpha\in {T^{1,0}}^*_J\scr{J}$ in these coordinates is written as
\begin{equation*}
\alpha=\alpha\indices{_a^{\bar{b}}}\,\diff_{\bar{b}}\otimes\mrm{d}z^a.
\end{equation*}

\subsection{Characterisations of hyperk\"ahler manifolds}

\begin{definizione}
Let $(M,g)$ be a Riemannian manifold, and let $I$, $J$ be two almost complex structures on $M$ such that
\begin{enumerate}
\item $IJ=-JI$;
\item $g(I-,I-)=g(J-,J-)=g(-,-)$;
\item $\forall x\in M,\,\forall v\in T_xM\quad g(Iv,Jv)=0$.
\end{enumerate}
Then $(M,g,I,J)$ is a \emph{hyperk\"ahler manifold} if $(M,g,I)$ and $(M,g,J)$ are both K\"ahler.
\end{definizione}
In this case, by letting $K:=IJ$ we have that for any $\bm{u}\in\bb{S}^2$ also $(M,g,u_1I+u_2J+u_3K)$ is K\"ahler, hence the name. The standard notation is to call $\omega_1$, $\omega_2$ and $\omega_3$ (or $\omega_I$, $\omega_J$ and $\omega_K$) the three $2$-forms defined respectively by $g\circ I$, $g\circ J$ and $g\circ K$. Moreover, we let $\omega_c:=\omega_2+\I\omega_3$; this is a (complex-valued) $2$-form on $M$, and an important remark is that \emph{$\omega_c$ is a $(2,0)$ holomorphic symplectic form, relatively to the complex structure $I$}.

This lemma gives us a useful criterion to prove that some structures are hyperk\"ahler.

\begin{lemma}[Lemma $6.8$ in \cite{Hitchin_self_duality}]\label{lemma:closed_hyperkahler}
Let $(M,g)$ be a Riemannian manifold, and assume that $I,J$ are almost complex structures on $M$ satisfying conditions $1,2,3$ of the above definition. Then $(M,g,I,J)$ is hyperk\"ahler if and only if
\begin{equation*}
\mrm{d}\omega_1=\mrm{d}\omega_2=\mrm{d}\omega_3=0
\end{equation*}
where the $\omega_i$s are defined as above. 
\end{lemma}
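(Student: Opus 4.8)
The statement to prove is Lemma~\ref{lemma:closed_hyperkahler}, the Hitchin criterion: if $(M,g)$ is Riemannian and $I,J$ are almost complex structures satisfying conditions $1,2,3$, then $(M,g,I,J)$ is hyperk\"ahler if and only if $\mrm{d}\omega_1=\mrm{d}\omega_2=\mrm{d}\omega_3=0$.

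\textbf{Approach.} The forward implication is immediate: if the structure is hyperk\"ahler, then each $(M,g,u_1I+u_2J+u_3K)$ is K\"ahler, so in particular $\omega_1,\omega_2,\omega_3$ are the K\"ahler forms of K\"ahler structures and hence closed. The content is the converse, and the plan is to reduce it to the classical fact that a Hermitian manifold whose fundamental $2$-form is closed (an \emph{almost K\"ahler} manifold) with integrable complex structure is K\"ahler, combined with the Newlander--Nirenberg theorem. So the real work is to deduce integrability of $I$ (and of $J$, and then of $K$, and then of the whole $\bb{S}^2$ of complex structures) from the closedness of the three fundamental forms. First I would recall the standard formula expressing the Nijenhuis tensor $N_I$ in terms of $\mrm{d}\omega_1$ and $\nabla^{LC} I$; more precisely, for a Hermitian structure one has the identity relating $(\nabla^{LC}_X I)Y$, $\mrm{d}\omega_1$, and $N_I$, so that $\mrm{d}\omega_1 = 0$ alone does \emph{not} force $N_I = 0$ — one genuinely needs to use the second complex structure.

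\textbf{Key steps.} The cleanest route is via the holomorphic symplectic form. Set $K = IJ$; conditions $1,2,3$ guarantee $K$ is again a compatible almost complex structure and that $\omega_c := \omega_2 + \I\,\omega_3$ is a $(2,0)$-form with respect to $I$ (this is a pointwise linear-algebra check on each tangent space, using $g(Iv,Jv)=0$ etc.). Now $\mrm{d}\omega_2 = \mrm{d}\omega_3 = 0$ gives $\mrm{d}\omega_c = 0$. The next step — the heart of the argument — is to show that a \emph{closed} complex-valued $2$-form which is everywhere of type $(2,0)$ with respect to an almost complex structure $I$, and which is moreover nondegenerate, forces $I$ to be integrable: decompose $\mrm{d} = \diff + \bdiff + (\text{terms lowering/raising type by more than one, i.e. the } N_I \text{ pieces})$ acting on forms; applying $\mrm{d}$ to the $(2,0)$-form $\omega_c$, the component of $\mrm{d}\omega_c$ in $\bigwedge^{1,2}$ (the "$\bar N_I$" component) is, up to a nonzero constant, the contraction of $\bar N_I$ with the nondegenerate $\omega_c$; closedness kills it, and nondegeneracy of $\omega_c$ then forces $N_I = 0$. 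Hence $I$ is integrable by Newlander--Nirenberg. Then $(M,g,I)$ is an almost K\"ahler manifold (since $\mrm{d}\omega_1=0$) with integrable $I$, hence K\"ahler. By the symmetry of the hypotheses — swapping the roles, $\omega_J = \omega_2$ is closed and $\omega_2 + \I\,\omega_1$ (or the appropriate combination, after relabelling via a rotation in $\bb{S}^2$ that sends $J$ to the distinguished position) is a nondegenerate $(2,0)$-form for $J$ — the same argument shows $J$ is integrable and $(M,g,J)$ is K\"ahler. That is exactly the definition of hyperk\"ahler, so we are done.

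\textbf{Main obstacle.} The delicate point is the middle step: extracting integrability of $I$ from $\mrm{d}\omega_c = 0$. One must be careful to isolate the correct bidegree component of $\mrm{d}\omega_c$ — the piece that, under the decomposition of $\mrm{d}$ on an almost complex manifold into four components of bidegrees $(2,-1),(1,0)\to(\cdot),\ldots$, sees the Nijenhuis tensor — and to verify that pairing $N_I$ against a nondegenerate $2$-form is injective as a linear map, so that vanishing of that component really does give $N_I=0$ pointwise. An alternative, perhaps more in the spirit of the rest of the paper, is to avoid Newlander--Nirenberg entirely: once all three $\omega_i$ are closed, one shows directly that $\nabla^{LC}\omega_i = 0$ for each $i$ by a linear-algebra manipulation of the Koszul-type formula for $\nabla^{LC}$ applied cyclically to the three forms (using $I,J,K$ to rotate indices), whence each $I,J,K$ is parallel, so in particular the holonomy reduces and each structure is K\"ahler. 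I would present the holomorphic-symplectic-form argument as the main line since it is shorter, and remark that this is precisely Hitchin's original reasoning in \cite{Hitchin_self_duality}.
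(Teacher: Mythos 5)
Your proposal is correct, and it is essentially the argument the paper points to: the paper itself gives no proof of this lemma, citing Hitchin's Lemma 6.8 and remarking only that the statement ``follows from an algebraic manipulation of the Newlander--Nirenberg criterion,'' which is precisely your main line (closedness of the nondegenerate $(2,0)$-form $\omega_2+\I\,\omega_3$ kills the $(1,2)$-component of $\mrm{d}\omega_c$, i.e.\ the contraction of the Nijenhuis tensor with $\omega_c$, so $N_I=0$; then $\mrm{d}\omega_1=0$ plus integrability gives that $(M,g,I)$ is K\"ahler, and symmetrically for $J$). The only slip is notational: the holomorphic symplectic form adapted to $J$ is $\omega_3+\I\,\omega_1$ (from the cyclic relations $IJ=K$, $JK=I$, $KI=J$), not $\omega_2+\I\,\omega_1$, but your hedge about taking ``the appropriate combination'' under the $\bb{S}^2$-rotation covers this.
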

In other words, the three forms being closed is enough to ensure the integrability of $I$, $J$ and $K$. We remark that this conditions follows from an algebraic manipulation of the Newlander-Nirenberg criterion, so it holds also in the infinite-dimensional setting -- guaranteeing at least the \emph{formal} integrability of the complex structures.

Another important criterion we will use is the following, that is taken from the discussion in \cite{Biquard_Gauduchon}.

\begin{lemma}\label{lemma:hyperekahler_criterio}
Let $(M,g)$ be a Riemannian manifold, and let $I$ be a complex structure on $M$, compatible with $g$. Assume that we also have a $(2,0)$ symplectic form on $M$, $\omega_c$. Then we can always define a tensor $J$ on $M$ by the condition $g(J-,-)=\mrm{Re}\,{\omega_c}(-,-)$. Assume that
\begin{enumerate}
\item $\mrm{d}\omega_1=0$, for $\omega_1=g(I-,-)$;
\item $J^2=-\mrm{Id}$.
\end{enumerate}
Then $(M,g,I,J)$ is a hyperk\"ahler manifold, and the three $2$-forms defined by $g$ and $I$, $J$ and $K=IJ$ are, respectively, $\omega_1$, $\mrm{Re}\,\omega_c$ and $\mrm{Im}\,\omega_c$.
\end{lemma}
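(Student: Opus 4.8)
The plan is to reduce everything to the closedness criterion of Lemma~\ref{lemma:closed_hyperkahler}, so that the only analytic input is the hypothesis $\dd\omega_1=0$ together with the fact that $\omega_c$ is a holomorphic symplectic form for $I$. First I would verify the purely pointwise (linear-algebra) part: that the tensor $J$ defined by $g(J-,-)=\operatorname{Re}\omega_c(-,-)$, together with $I$, satisfies conditions $1$, $2$, $3$ of the definition of hyperk\"ahler manifold. Condition $2$ for $J$, namely $g(J-,J-)=g(-,-)$, is equivalent to $J$ being $g$-orthogonal, which follows from $J^2=-\mrm{Id}$ (hypothesis~$2$) once we know $J$ is $g$-skew, and $g$-skewness of $J$ is immediate from the antisymmetry of $\operatorname{Re}\omega_c$. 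For the anticommutation $IJ=-JI$ and the compatibility $g(Iv,Jv)=0$, I would use that $\omega_c$ has type $(2,0)$ with respect to $I$: writing $\omega_c(Iv,w)=\I\,\omega_c(v,w)=\omega_c(v,Iw)$ (the defining property of a $(2,0)$-form under a complex structure) and taking real parts gives $g(JIv,w)=g(Iv,Jw)$ on the one hand; combined with the skew-symmetry of $I$ and $J$ with respect to $g$ one extracts $JI=-IJ$, and then $g(Iv,Jv)=-g(v,IJv)=g(v,JIv)=-g(Jv,Iv)=-g(Iv,Jv)$ forces it to vanish. So all three algebraic conditions hold, and $(M,g,I,J)$ is at least an ``almost hyperk\"ahler'' triple in the sense required by Lemma~\ref{lemma:closed_hyperkahler}.

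Next I would compute the three fundamental two-forms. By construction $\omega_1=g(I-,-)$, and $\operatorname{Re}\omega_c=g(J-,-)=\omega_2$, the form associated to $J$. It remains to identify the form $\omega_3$ associated to $K=IJ$: one computes $\omega_3(v,w)=g(IJv,w)=-g(Jv,Iw)$, and using the $(2,0)$-property of $\omega_c$ again in the form $\operatorname{Re}\omega_c(v,Iw)=-\operatorname{Im}\omega_c(v,w)$ this equals $\operatorname{Im}\omega_c(v,w)$. Hence $\omega_2+\I\omega_3=\omega_c$, exactly as claimed in the statement.

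With the identifications $\omega_1=g(I-,-)$, $\omega_2=\operatorname{Re}\omega_c$, $\omega_3=\operatorname{Im}\omega_c$ in hand, the hyperk\"ahler property follows from Lemma~\ref{lemma:closed_hyperkahler} once we check $\dd\omega_1=\dd\omega_2=\dd\omega_3=0$. The first is hypothesis~$1$. For the other two, $\dd\omega_2+\I\,\dd\omega_3=\dd\omega_c$, and $\omega_c$ is assumed to be a symplectic form, so $\dd\omega_c=0$; taking real and imaginary parts gives $\dd\omega_2=\dd\omega_3=0$. Thus all three forms are closed and Lemma~\ref{lemma:closed_hyperkahler} yields that $(M,g,I,J)$ is hyperk\"ahler, with the three associated two-forms as stated. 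The main (and essentially only) subtlety I anticipate is the careful bookkeeping with the $(2,0)$-type condition on $\omega_c$: one has to make consistent use of the conventions relating a $(2,0)$-form, the complex structure $I$, and the real and imaginary parts, and in particular to be sure that ``$\omega_c$ is holomorphic symplectic for $I$'' is being used only as the algebraic type condition plus $\dd\omega_c=0$, which is all that is actually available here. Everything else is routine pointwise linear algebra plus one invocation of Lemma~\ref{lemma:closed_hyperkahler}.
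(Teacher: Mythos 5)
Your proposal is correct and follows essentially the same route as the paper's proof: the pointwise algebra (skewness of $J$, anticommutation of $I$ and $J$, and the identification $\omega_2=\mrm{Re}\,\omega_c$, $\omega_3=\mrm{Im}\,\omega_c$) is extracted from the $(2,0)$-type condition $\omega_c(Iv,w)=\I\,\omega_c(v,w)$ together with $J^2=-\mrm{Id}$, and the conclusion is reached by feeding $\dd\omega_1=0$ and $\dd\omega_c=0$ into Lemma \ref{lemma:closed_hyperkahler}. No gaps to report.
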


\begin{proof}
First of all notice that $\omega_2$ is closed, since $\omega_c$ is closed and by definition $\omega_2=\mrm{Re}\,\omega_c$. Now we check the various algebraic identities between $g,I$ and $J$.

Compatibility of $g$ and $J$: for all $v,w$ we have, using the (anti-) symmetries of $g$ and $\omega_c$ 
\begin{equation*}
g(Jv,Jw)=\mrm{Re}(\omega_c(v,Jw))=-\mrm{Re}(\omega_c(Jw,v))=-g(JJw,v)=g(v,w).
\end{equation*}

Anticommutativity of $I$ and $J$: of course $IJ+JI=0$ if and only if $g(IJv+JIv,w)=0$ for every pair of tangent vectors $v,w$. From the definition of $J$ we have
\begin{equation*}
g(IJv+JIv,w)=-g(Jv,Iw)+g(JIv,w)=-\mrm{Re}(\omega_c(v,Iw))+\mrm{Re}(\omega_c(Iv,w))
\end{equation*}
and since $\omega_c$ is of type $(2,0)$ relatively to $I$
\begin{equation*}
-\mrm{Re}(\omega_c(v,Iw))+\mrm{Re}(\omega_c(Iv,w))=-\mrm{Re}(\I\omega_c(v,w))+\mrm{Re}(\I\omega_c(v,w))=0.
\end{equation*}
From these two conditions it is now trivial to check that for any tangent vector $v$, $g(Iv,Jv)=0$.

By Lemma \ref{lemma:closed_hyperkahler}, the only thing that remains to be checked is that, if we let $K=IJ$ and $\omega_3=g(K-,-)$, we have $\mrm{d}\omega_3=0$. However, as we have also seen above
\begin{equation*}
\begin{split}
g(Kv,w)&=g(IJv,w)=-g(JIv,w)=-\mrm{Re}(\omega_c(Iv,w))=\\ &=-\mrm{Re}(\I\omega_c(v,w))=\mrm{Im}(\omega_c(v,w))
\end{split}
\end{equation*}
so the closedness of $\omega_3$ follows from that of $\omega_c$.
\end{proof}
It is important to highlight the fact that the proof of Lemma \ref{lemma:hyperekahler_criterio} is purely algebraic, provided that $\omega_c$ and $\omega_1$ are closed; we do not need to resort to computations in local coordinates. Hence, this criterion for checking the hyperk\"ahler condition also holds in the infinite-dimensional setting; this is where we intend to apply it in Section \ref{sec:cotJ_hyperk}.

\subsection{A result of Biquard and Gauduchon}\label{PrelimBiqGau}

Here we recall the construction of Biquard and Gauduchon in \cite{Biquard_Gauduchon} of a hyperk\"ahler metric on the cotangent bundle of any hermitian symmetric space $\Sigma=G/H$. Assume that $\Sigma$ has a complex structure $I$ and a Hermitian metric $h$. For any $x\in\Sigma$ we have an identification of ${T^{1,0}}^*\Sigma$ and $T\Sigma$ given by taking the metric dual of the real part of $\xi\in{T^{1,0}_x}^*\Sigma$. Under this identification, for every $\xi\in{T^{1,0}}^*_x\Sigma$, we can consider the endomorphism $IR(I\xi,\xi)$ of $T_x\Sigma$ associated to the Riemann curvature tensor $R$. Since this is self-adjoint we can consider its spectral functions; we are interested in particular in the function $f:\bb{R}_{>0}\to\bb{R}$ defined by
\begin{equation}\label{eq:funzione_f}
f(x):=\frac{1}{x}\left(\sqrt{1+x}-1-\log\frac{1+\sqrt{1+x}}{2}\right).
\end{equation}
\begin{teorema}[\cite{Biquard_Gauduchon}]\label{teorema:BiquardGauduchon}
Let $\left(\Sigma=G/H,I,h\right)$ be a Hermitian symmetric space of compact type, and let $\omega_c$ be the canonical symplectic form on $T^*\Sigma$. Then there is a unique $G$-invariant hyperk\"ahler metric $g$ on $({T^{1,0}}^*\Sigma,I,\omega_c)$ such that the restriction of $g$ to the zero-section of ${T^{1,0}}^*\Sigma$ coincides with the Hermitian metric of $\Sigma$.

Moreover, we have an explicit expression for this metric: if we identify $T^*\Sigma$ and $T\Sigma$ using the metric on the base, the K\"ahler form is given by $\omega_I=\pi^*\omega_{\Sigma}+\mrm{d}\mrm{d}^c\rho$, where $\rho$ is the function on $T\Sigma$ defined by
\begin{equation}\label{eq:rho_def}
\rho(x,\xi)=h_x\left(f(-IR(I\xi,\xi))\xi,\xi\right).
\end{equation}
Here $f$ is the function defined by \eqref{eq:funzione_f}, evaluated on the self-adjoint endomorphism $-IR(I\xi,\xi)$.

If instead $\Sigma$ is of noncompact type, the same statement holds in an open neighbourhood $N\subseteq {T^{1,0}}^*\Sigma$ of the zero section. This neighbourhood is the set $N$ of all $\xi$ such that the modulus of the eigenvalues of $-IR(I\xi,\xi)$ is less than $1$.
\end{teorema}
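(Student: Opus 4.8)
The plan is to obtain the hyperk\"ahler structure through the purely algebraic criterion of Lemma~\ref{lemma:hyperekahler_criterio}. On ${T^{1,0}}^*\Sigma$ (or on a neighbourhood of its zero section) one already has the tautological complex structure $I$ and the canonical holomorphic symplectic form $\omega_c$, which is of type $(2,0)$ for $I$ and closed. So it suffices to produce a metric $g$ compatible with $I$ such that $\omega_1 := g(I-,-)$ is closed and such that the tensor $J$ defined by $g(J-,-)=\mathrm{Re}\,\omega_c$ squares to $-\mathrm{Id}$. I would look for $g = \omega_I(-,I-)$ with $\omega_I = \pi^*\omega_\Sigma + \mathrm{d}\mathrm{d}^c\rho$ for a $G$-invariant real function $\rho$ vanishing on the zero section. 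Then $\omega_1 = \omega_I$ is closed automatically, $g$ restricts to $h$ along the zero section, and $g$ is positive (hence genuinely a metric) at least near that section; the only substantive condition left is the single nonlinear equation $J^2 = -\mathrm{Id}$, which becomes an equation for $\rho$.

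The next step is to exploit homogeneity to reduce this to an ODE. Writing $\Sigma = G/H$ with reductive decomposition $\mathfrak{g} = \mathfrak{h}\oplus\mathfrak{m}$ and identifying $T_o\Sigma \cong \mathfrak{m}$ at the base point $o$, a $G$-invariant function on $T^*\Sigma$ is the same as an $H$-invariant function on the single cotangent fibre $T^*_o\Sigma$. Since $\Sigma$ is symmetric its curvature is parallel and expressed through the bracket, so the self-adjoint endomorphism $-IR(I\xi,\xi)$ of $T_o\Sigma$ is built algebraically from $\xi$, and its spectral projections can be diagonalised $H$-equivariantly. Using the horizontal--vertical splitting of $T(T^*\Sigma)$ coming from the Levi-Civita connection, together with the standard formula for the curvature of a cotangent bundle, one can then compute $\mathrm{d}\mathrm{d}^c\rho$, and hence $J$, in terms of a single scalar function $f$ of the eigenvalue parameter of $-IR(I\xi,\xi)$; the requirement $J^2 = -\mathrm{Id}$ collapses to a second-order ODE for $f$.

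Solving that ODE, with the boundary value at $0$ fixed by the normalisation that $g$ restrict to $h$ on the zero section, yields exactly the function $f$ of~\eqref{eq:funzione_f}, and hence the potential $\rho$ of~\eqref{eq:rho_def}. For uniqueness one argues that $G$-invariance together with the normalisation force $\rho$ to have this spectral form up to a pluriharmonic term, which must then vanish, and that the ODE has a unique admissible solution. Finally, $f(x)$ involves $\sqrt{1+x}$ and $\log\frac{1+\sqrt{1+x}}{2}$, hence is defined precisely for $x > -1$: in the compact case $-IR(I\xi,\xi)$ is positive semidefinite, so $f$ is defined for every $\xi$ and $\omega_I$ stays positive, giving a global hyperk\"ahler metric; in the noncompact case the eigenvalues may be negative, so one must restrict to the open set $N$ on which they all have modulus less than $1$, where $f$ is defined and $\omega_I$ remains positive.

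I expect the main obstacle to be that middle step: carrying out the differential-geometric reduction of $J^2 = -\mathrm{Id}$ to the ODE. This is where one has to keep track of the horizontal--vertical decomposition of $T(T^*\Sigma)$, the pullback of the Levi-Civita connection and its curvature, and the commutation algebra of $I$ with the spectral projections of the curvature endomorphism, and it is exactly here that the symmetric-space hypothesis (parallel curvature, bracket formula for $R$) is indispensable. By contrast the reduction to Lemma~\ref{lemma:hyperekahler_criterio}, the closedness of $\omega_1$, the $G$-invariance, and the final ODE are either formal or elementary.
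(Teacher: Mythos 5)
The first thing to note is that the paper does not prove this statement at all: Theorem \ref{teorema:BiquardGauduchon} is imported verbatim from \cite{Biquard_Gauduchon}, and Section \ref{PrelimBiqGau} only records it for later use. So there is no internal argument to compare yours with; what can be assessed is whether your sketch would stand as a proof of the cited result, and as it stands it would not. The outline is the right one in spirit (an invariant K\"ahler potential which is a spectral function of the curvature endomorphism $-IR(I\xi,\xi)$, with the hyperk\"ahler condition funnelled through the algebraic criterion of Lemma \ref{lemma:hyperekahler_criterio} into the single condition $J^2=-\mathrm{Id}$), but the entire analytic and computational content of the theorem sits in the step you explicitly defer: computing $\mathrm{d}\mathrm{d}^c\rho$ on $T^*\Sigma$ through the horizontal--vertical splitting, showing that $J^2=-\mathrm{Id}$ really collapses to a scalar equation in the eigenvalue variable, and verifying that \eqref{eq:funzione_f} solves it while keeping $\omega_I=\pi^*\omega_\Sigma+\mathrm{d}\mathrm{d}^c\rho$ positive. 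Positivity is not a side remark: in the noncompact case it is exactly what produces the neighbourhood $N$ in the statement, so it must come out of the computation rather than be assumed ``near the zero section''.

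The uniqueness part has a second, independent gap. You argue that invariance plus the normalisation force $\rho$ to have the stated spectral form ``up to a pluriharmonic term'', but this presupposes that every $G$-invariant hyperk\"ahler metric on $({T^{1,0}}^*\Sigma,I,\omega_c)$ restricting to $h$ has K\"ahler form of the shape $\pi^*\omega_\Sigma+\mathrm{d}\mathrm{d}^c\rho$ in the first place; on a noncompact total space there is no $\mathrm{d}\mathrm{d}^c$-lemma to invoke, and $H$-invariance of a function on a single fibre does not by itself reduce the ambiguity to a constant. One must either prove such a potential exists for an arbitrary competitor metric (e.g.\ by an explicit fibrewise/homogeneity argument) or run the uniqueness directly at the level of the metric, which is what the original argument does and what your sketch omits. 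Finally, a small but real point: compatibility of your candidate $g=\omega_I(-,I-)$ with $I$ requires $\mathrm{d}\mathrm{d}^c\rho$ to be of type $(1,1)$ and symmetric under $I$, which is fine, but the fact that the resulting $J$ is even well defined as an almost complex structure candidate (i.e.\ that $g$ is nondegenerate where you work) again feeds back into the positivity question above. In short: correct skeleton, but the theorem's substance --- the curvature computation reducing $J^2=-\mathrm{Id}$ to the ODE, the positivity estimate defining $N$, and a genuine uniqueness argument --- is missing.
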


In particular this theorem applies to the quotient $\mrm{Sp}(2n)/\mrm{U}(n)$, a symmetric space that is diffeomorphic to Siegel's upper half space $\f{H}(n)$. If we endow $\mrm{Sp}(2n)/\mrm{U}(n)$ with the K\"ahler structure coming from $\f{H}(n)$ we obtain a K\"ahler symmetric space of noncompact type, to which we can apply Theorem \ref{teorema:BiquardGauduchon}. Then $T^*(\mrm{Sp}(2n)/\mrm{U}(n))$ has a hyperk\"ahler metric, at least in a neighbourhood of the zero section. Moreover, also $\m{AC}^+$ is diffeomorphic to $\mrm{Sp}(2n)/\mrm{U}(n)$, and the K\"ahler structure on $\m{AC}^+$ is induced from the one of $\mrm{Sp}(2n)/\mrm{U}(n)$ using this isomorphism. Then we can also carry the hyperk\"ahler structure of $T^*(\mrm{Sp}(2n)/\mrm{U}(n))$ to $T^*\!\m{AC}^+$.

Let's denote by $(g,I,\omega)$ the K\"ahler structure of $\m{AC}^+$; then it is natural to also denote by $I$ the complex structure on $T^*\!\m{AC}^+$, and we let $\theta$ be the canonical $2$-form. Theorem \ref{teorema:BiquardGauduchon} guarantees that $\hat{g}:=\pi^*\omega+2\I\diff\bar{\diff}\rho$ is a hyperk\"ahler metric on $T^*\!\m{AC}^+$.

\begin{nota}
Biquard and Gauduchon consider the full cotangent bundle; for notation reasons, for us it will be more convenient to just consider the \emph{holomorphic} cotangent bundle of $\m{AC}^+$ and $\scr{J}$, but this won't cause issues, thanks to the usual canonical identifications of the two. Moreover, Biquard and Gauduchon in \cite{Biquard_Gauduchon} use the convention $R(X,Y)=\nabla_{[X,Y]}-[\nabla_X,\nabla_Y]$, rather than the more usual $R(X,Y)=[\nabla_X,\nabla_Y]-\nabla_{[X,Y]}$, that is the one we are going to use. This is why we introduced that minus sign in equation \eqref{eq:rho_def}.
\end{nota}

\section{An infinite dimensional hyperk\"ahler reduction}\label{sec:cotJ_hyperk}

\subsection{A hyperk\"ahler structure on $\cotJ$}\label{sec:hyperkahler}

In this section we prove Theorem \ref{teorema:HKThmIntro}, constructing the required hyperk\"ahler structure on $\cotJ$, using the results of the previous Section.

Firstly, we realise $\cotJ$ itself as a space of sections of a $\mrm{Sp}(2n)$-bundle, with fibres diffeomorphic to $T^*\!\m{AC}^+$. The action of $\mrm{Sp}(2n)$ on $T^*\!\m{AC}^+$ induced by the action on $\m{AC}^+$ is again by conjugation. More precisely, for $h\in\mrm{Sp}(2n)$ and $(J,\alpha)\in T^*\!\m{AC}^+$ we have
\begin{equation*}
h.(J,\alpha)=(h\,J\,h^{-1},(h^{-1})^{\transpose}\alpha\,h^\transpose)
\end{equation*}
and that is precisely also the change that the matrices associated to $(J,\alpha)\in\cotJ$ in a Darboux coordinate system undergo under a change to another Darboux coordinate system. Hence, as was the case for $\scr{J}$, we can write $\cotJ$ as the space of sections of some $\mrm{Sp}(2n)$-bundle $\hat{\m{E}}\xrightarrow{\pi}M$. Notice that we have a natural $\mrm{Sp}(2n)$-bundle map $F:\hat{\m{E}}\to\m{E}$, covering the identity on $M$, that is induced by the projection $p:T^*\!\m{AC}^+\to\m{AC}^+$. Define $F:\hat{\m{E}}\to\m{E}$ as follows: for $\xi\in\hat{\m{E}}$, let $x=\pi(\xi)$ and fix a system of Darboux coordinates $\bm{u}:U\to\bb{R}^{2n}$ around $x$; consider then the trivializations $\Phi_{\bm{u}}:\hat{\m{E}}_{\restriction U}\to U\times T^*\!\m{AC}^+$ and $\phi_{\bm{u}}:\m{E}_{\restriction U}\to U\times\m{AC}^+$ and let
\begin{equation*}
F(\xi):=\phi_{\bm{u}}^{-1}\circ (\mrm{id}\times p)\circ\Phi_{\bm{u}}(\xi).
\end{equation*}
Then it's immediate to check that the definition of $F$ does not depend upon the choice of Darboux coordinates on $M$, since the action of $\mrm{Sp}(2n)$ on $T^*\!\m{AC}^+$ is the one induced by the action on $\m{AC}^+$. This map accounts for the fact that from a section $s$ of $\hat{\m{E}}$ we can always get a section $J=F(s)$ of $\m{E}$ and a section $\alpha$ of $J^*(\mrm{Vert}\,\m{E}^*)$.

Next, with a view to applying Lemma \ref{lemma:hyperekahler_criterio}, we introduce the following tensors on $\cotJ $:
\begin{enumerate}
\item[$\cdot$] a Riemannian metric $\bm{G}$;
\item[$\cdot$] a complex structure $\bm{I}$ compatible with $\bm{G}$;
\item[$\cdot$] a symplectic form $\bm{\Omega}_c$ of type $(2,0)$ with respect to $\bm{I}$.
\end{enumerate}
By Lemma \ref{lemma:hyperekahler_criterio}, to prove that this defines a hyperk\"ahler structure on $\cotJ$ it suffices to show that
\begin{enumerate}
\item $\bm{J}^2=-1$, where $\bm{J}$ is defined by $\bm{G}(\bm{J}-,-)=\mrm{Re}(\bm{\Omega}_c)$;
\item $\mrm{d}\bm{\Omega_I}=0$, where $\bm{\Omega_I}(-,-) =\bm{G}(\bm{I}-,-)$.
\end{enumerate}

Since $\scr{J}$ already has a complex structure $\bb{I}$, we define $\bm{I}$ as the complex structure induced on $\cotJ $ by $\bb{I}$; explicitly, from equation \eqref{eq:canonical_compl_str} we have
\begin{equation*}
\forall (J,\alpha)\in \cotJ ,\ \forall (A,\varphi)\in T_{(J,\alpha)}(\cotJ )\quad \bm{I}_{(J,\alpha)}(A,\varphi):=(JA,\varphi J^\transpose+A^\transpose\alpha). 
\end{equation*}
Let $(I,\theta,\hat{g})$ be the triple of a complex structure, canonical $2$-form and hyperk\"ahler metric on $T^*\!\m{AC}^+$ described in Section \ref{PrelimBiqGau}. The $2$-form $\bm{\Theta}$ on $\cotJ $ will be  
\begin{equation}\label{eq:2forma_canonica}
\forall (J,\alpha)\in \cotJ ,\ \forall v,w\in T_{(J,\alpha)}(\cotJ )\quad \bm{\Theta}_{(J,\alpha)}(v,w):=\int_{x\in M}\theta_x(v_x,w_x)\,\frac{\omega_0^n}{n!}
\end{equation}
where as usual we are taking around each $x\in M$ a trivialization of the fibre bundle (i.e. a system of Darboux coordinates). It's not obvious that this expression is actually independent from the choice of the trivialization; it will be shown in Lemma \ref{lemma:2canonica_J}. A point to remark is that $\bm{\Theta}$ is automatically of type $(2,0)$ with respect to $\bm{I}$, since $\theta$ is of type $(2,0)$ with respect to the complex structure of $T^*\!\m{AC}^+$.

The natural candidate to be the hyperk\"ahler metric is the metric $\bm{G}$ induced on $\cotJ$ from the Biquard-Gauduchon metric on $T^*\!\m{AC}^+$%; formally, this would look like
\begin{equation}\label{eq:metrica_hyper_J}
\bm{G}_{(J,\alpha)}(v,w):=\int_{x\in M}\hat{g}_x(v_x,w_x)\,\frac{\omega_0^n}{n!}
\end{equation}
but again we should check that this expression is independent from the choice of Darboux coordinates around each point. Assuming for the moment that it is, the fact that $\bm{I}$ and $\bm{G}$ are compatible follows immediately from the compatibility of $I$ and $\hat{g}$ on $T^*\!\m{AC}^+$; moreover, the $2$-form $\bm{\Omega}_{\bm{I}}$ is
\begin{equation}\label{eq:forma_hyper_J}
{\bm{\Omega}_{\bm{I}}}_{(J,\alpha)}(v,w):=\int_{x\in M}(\omega_I)_x(v_x,w_x)\,\frac{\omega_0^n}{n!}
\end{equation}
where $\omega_I$ is the $2$-form defined in Theorem \ref{teorema:BiquardGauduchon}. Notice also that it is enough to check that \eqref{eq:forma_hyper_J} does not depend on the choice of coordinates to guarantee that also \eqref{eq:metrica_hyper_J} does not. Again under the (provisional) assumption that \eqref{eq:forma_hyper_J} is well-defined, we notice that condition $(1)$ above is automatically satisfied. Indeed, the complex structure $\bm{J}$ is pointwise induced from the analogue complex structure $J$ of $T^*\!\m{AC}^+$, from which it inherits algebraic properties like $J^2=-1$.

Summing up these considerations, to prove Theorem \ref{teorema:HKThmIntro} we just have to verify that $\bm{\Theta}$ and $\bm{\Omega_I}$ are well-defined and closed.

First we prove the well-definedness of $\bm{\Omega_I}$. Notice that, since the action of $\mrm{Sp}(2n)$ on $\m{AC}^+$ is isometric and holomorphic, both $\rho$ and $\diff\bdiff\rho$ are $\mrm{Sp}(2n)$-invariant.

\begin{lemma}
The $2$-form $\bm{\Omega}_{\bm{I}}$ of equation \eqref{eq:forma_hyper_J} is well-defined.
\end{lemma}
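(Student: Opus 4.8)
The plan is to reduce the statement to the single fact that the $2$-form $\omega_I=\pi^*\omega+\mrm{d}\,\mrm{d}^c\rho$ on the neighbourhood $N$ of the zero section of $T^*\!\m{AC}^+$ (Theorem \ref{teorema:BiquardGauduchon}) is invariant under the $\mrm{Sp}(2n)$-action, and then to invoke the general principle that a fibrewise tensor invariant under the structure group of an associated bundle glues to a well-defined tensor on the total space. Granting this, the integrand of \eqref{eq:forma_hyper_J} becomes an honest top-degree form on $M$, so the integral makes sense and defines a $2$-form on $\cotJ$.

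First I would record the invariance of $\omega_I$. The summand $\pi^*\omega$ is $\mrm{Sp}(2n)$-invariant because the action on $\m{AC}^+$ is symplectic — indeed isometric and holomorphic, being the isometry action of the symmetric space $\mrm{Sp}(2n)/\mrm{U}(n)$ (Proposition \ref{prop:metrica_AC+_Siegel}) — and the projection $\pi\colon T^*\!\m{AC}^+\to\m{AC}^+$ is equivariant. For the summand $\mrm{d}\,\mrm{d}^c\rho$ I would use, as already noted in the text, that $\rho$ of \eqref{eq:rho_def} is assembled canonically out of the metric $h$, the complex structure $I$ and the curvature $R$ of $\m{AC}^+$, all of which are $\mrm{Sp}(2n)$-invariant; hence $\rho$ is $\mrm{Sp}(2n)$-invariant, and since the action is by biholomorphisms it commutes with $\mrm{d}$ and $\mrm{d}^c$, so $\mrm{d}\,\mrm{d}^c\rho$ is invariant as well. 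I would also point out that $N$ itself is $\mrm{Sp}(2n)$-invariant: it is cut out by the condition that the eigenvalues of $-IR(I\xi,\xi)$ have modulus less than $1$, which is preserved by the (isometric, hence curvature-preserving) action. Thus the corresponding open subsets of the fibres of $\hat{\m{E}}$ patch together to the open neighbourhood of the zero section of $\cotJ$ on which $\bm{\Omega}_{\bm I}$ is being defined.

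Next I would spell out the gluing. Given two systems of Darboux coordinates $\bm u,\bm v$ around a point $x\in M$, the transition $g=\frac{\diff\bm v}{\diff\bm u}(x)$ lies in $\mrm{Sp}(2n)$, and the induced trivializations of $\hat{\m{E}}$ near $x$ differ by the action $\sigma_g$ of $g$ on $T^*\!\m{AC}^+$. Hence, for $(J,\alpha)\in\cotJ$ and $v,w\in T_{(J,\alpha)}\cotJ$, the point of $T^*\!\m{AC}^+$ representing $(J(x),\alpha(x))$ and the tangent vectors representing $v_x,w_x$ in the chart $\bm v$ are the images under $\sigma_g$ and $\mrm{d}\sigma_g$ of those in the chart $\bm u$; since $\sigma_g^*\omega_I=\omega_I$, the number $(\omega_I)_x(v_x,w_x)$ does not depend on the Darboux chart. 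Therefore $x\mapsto(\omega_I)_x(v_x,w_x)\,\frac{\omega_0^n}{n!}$ is a well-defined density on $M$, the integral \eqref{eq:forma_hyper_J} exists, and bilinearity and skew-symmetry in $(v,w)$ are inherited pointwise, so $\bm{\Omega}_{\bm I}$ is a well-defined $2$-form.

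The argument is essentially formal; the only step with real content is the $\mrm{Sp}(2n)$-invariance of $\rho$, which is exactly why it matters that Biquard and Gauduchon produce a \emph{canonical} — hence $\mrm{Sp}(2n)$-invariant — hyperk\"ahler metric on $T^*\!\m{AC}^+$, rather than an ad hoc one as in Donaldson's treatment of curves: an arbitrary metric on the model fibre would not glue. The same reasoning will apply verbatim to $\bm{\Theta}$ once one uses that the canonical $2$-form $\theta$ on $T^*\!\m{AC}^+$ is $\mrm{Sp}(2n)$-invariant (it is intrinsic to the cotangent bundle), which is the content of Lemma \ref{lemma:2canonica_J}.
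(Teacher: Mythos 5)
Your proposal is correct and follows essentially the same route as the paper: both arguments observe that a change of Darboux coordinates acts on the fibre model $T^*\!\m{AC}^+$ through $\mrm{Sp}(2n)$, and that both summands $\pi^*\omega$ and $\mrm{d}\,\mrm{d}^c\rho$ of the Biquard--Gauduchon form are $\mrm{Sp}(2n)$-invariant, so the pointwise integrand is chart-independent. Your additional remarks (invariance of $\rho$ from its canonical construction, invariance of the neighbourhood $N$) are just fuller versions of the invariance statement the paper records before its proof, not a different argument.
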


\begin{proof}
Choose $(J,\alpha)\in\cotJ$, $v,w\in T_{(J,\alpha)}(\cotJ)$ and a Darboux coordinate system $\bm{u}$. In this coordinate system $\bm{u}$ the bundle $\Xi$ trivializes, and we have to check that, for $x\in\mrm{dom}(\bm{u})$, the expression
\begin{equation*}
\pi^*\omega_{(J(x),\alpha(x))}(v(x),w(x))+\mrm{d}\mrm{d}^c\rho_{(J(x),\alpha(x))}(v(x),w(x))
\end{equation*}
does not depend upon the choice of the coordinate system $\bm{u}$. If $\bm{v}$ is a different Darboux coordinate system, the matrix $\varphi:=\frac{\diff\bm{v}}{\diff\bm{u}}$ is a $\mrm{Sp}(2n)$-valued function and the previous expression becomes, in the new coordinate system,
\begin{equation*}
\begin{split}
\pi^*\omega_{\varphi(x).(J(x),\alpha(x))}&(\varphi(x).v(x),\varphi(x).w(x))+\\
&+\mrm{d}\mrm{d}^c\rho_{\varphi(x).(J(x),\alpha(x))}(\varphi(x).v(x),\varphi(x).w(x)).
\end{split}
\end{equation*}
Since both terms are $\mrm{Sp}(2n)$-invariant this proves the claim.
\end{proof}

The closedness of both forms is guaranteed by the following theorem.

\begin{teorema}\label{teorema:differenziazione}
Let $k$ be a $r$-form on $T^*\!\m{AC}^+$ invariant under the $\mrm{Sp}(2n)$-action, and let $K$ be a $r$-form on $\cotJ$ such that
\begin{equation*}
\begin{split}
\forall(J,\alpha)\in\cotJ,&\ \forall v_1,\dots,v_r\in T_{(J,\alpha)}\cotJ\\ &K_{(J,\alpha)}(v_1,\dots,v_r)=\int_{x\in M} k_{(J(x),\alpha(x))}(v_1(x),\dots,v_r(x))\,\frac{\omega_0^n}{n!}
\end{split}
\end{equation*}
where the second expression is computed by taking a local trivialization of $\Xi$ around each $x\in M$. Then
\begin{equation*}
\mrm{d}K_{(J,\alpha)}(\dots)=\int_{x\in M}\mrm{d}k_{(J(x),\alpha(x))}(\dots)\,\frac{\omega_0^n}{n!}.
\end{equation*}
\end{teorema}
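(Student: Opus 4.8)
The plan is to reduce the statement to a purely local computation on the base manifold $M$, exploiting the fact that both $K$ and $\mrm{d}K$ are defined by fibre-integration of differential forms on the finite-dimensional model $T^*\!\m{AC}^+$. The key observation is that differentiation of forms on an infinite-dimensional space of sections commutes with integration over $M$, provided one is careful about the role of the trivialisations: the ambiguity in the choice of Darboux coordinates is precisely killed by the $\mrm{Sp}(2n)$-invariance of $k$, which is the hypothesis we are given. So the first step is to fix, once and for all, a point $(J,\alpha) \in \cotJ$ and tangent vectors $v_0,\dots,v_r \in T_{(J,\alpha)}\cotJ$, extend them to (commuting) vector fields near $(J,\alpha)$, and write out the intrinsic Cartan formula
\begin{equation*}
\mrm{d}K(v_0,\dots,v_r) = \sum_{i=0}^r (-1)^i v_i\bigl(K(v_0,\dots,\widehat{v_i},\dots,v_r)\bigr),
\end{equation*}
the Lie-bracket terms dropping out because the fields commute.

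Second, I would bring the derivation $v_i$ inside the integral over $M$. Since $\cotJ$ is a space of sections and a tangent vector $v_i$ at $(J,\alpha)$ is itself (after trivialising) a section $x \mapsto v_i(x) \in T_{(J(x),\alpha(x))}T^*\!\m{AC}^+$, a curve in $\cotJ$ through $(J,\alpha)$ in the direction $v_i$ restricts, over each fixed $x \in M$, to a curve in the fibre $T^*\!\m{AC}^+$ in the direction $v_i(x)$. Hence
\begin{equation*}
v_i\bigl(K(v_0,\dots,\widehat{v_i},\dots,v_r)\bigr) = \int_{x\in M} \bigl(v_i(x)\bigr)\bigl(k_{(J(\cdot),\alpha(\cdot))}(v_0(\cdot),\dots,\widehat{v_i(\cdot)},\dots,v_r(\cdot))\bigr)\,\frac{\omega_0^n}{n!},
\end{equation*}
the interchange of $v_i$ and $\int_M$ being legitimate by the usual differentiation-under-the-integral argument (the integrand and its variations depend smoothly on $x$, and $M$ is compact). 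Summing over $i$ with signs and recognising the result as the fibrewise Cartan formula for $\mrm{d}k$ at the point $(J(x),\alpha(x))$ gives exactly $\int_{x\in M} \mrm{d}k_{(J(x),\alpha(x))}(v_0(x),\dots,v_r(x))\,\frac{\omega_0^n}{n!}$, which is the claim.

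Third, one must check that this argument is coordinate-independent, i.e. that the local trivialisations used to make sense of "$k_{(J(x),\alpha(x))}$" and of "$v_i(x)$" do not affect the outcome. Here the $\mrm{Sp}(2n)$-invariance of $k$ does the work, exactly as in the preceding lemma on the well-definedness of $\bm{\Omega}_{\bm I}$: changing Darboux charts acts on the fibre data by an element $\varphi(x) \in \mrm{Sp}(2n)$, and since $k$ is invariant, so is $\mrm{d}k$ (the exterior derivative commutes with the pullback by the fixed diffeomorphism $\sigma_{\varphi(x)}$ of $T^*\!\m{AC}^+$), so every term in the computation is unchanged. I expect the main obstacle to be not any single estimate but the bookkeeping needed to justify rigorously, in the formal/infinite-dimensional setting, that a tangent vector to $\scr{J}$ (respectively $\cotJ$) genuinely decouples over the points of $M$ into fibrewise tangent vectors, and that "extending $v_i$ to a commuting vector field" can be done compatibly with the fibre structure so that the restriction-to-a-fibre and the differentiation operations really do commute; once that is set up cleanly, the rest is the finite-dimensional Cartan formula applied pointwise and integrated.
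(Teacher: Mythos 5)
Your proposal is correct and follows essentially the same route as the paper's proof: extend the tangent vectors to commuting (constant) vector fields, apply the intrinsic Cartan formula so the bracket terms drop, differentiate under the integral over $M$, and recognise the fibrewise Cartan formula for $\mrm{d}k$, with $\mrm{Sp}(2n)$-invariance handling the independence of the Darboux trivialisation. The ``bookkeeping'' issue you flag is resolved in the paper exactly as you anticipate, by choosing extensions constant in local coordinates for $\cotJ$ (via Koiso), so that their images under the evaluation maps $\Phi^x_{\bm{u}}$ are constant fields on $T^*\!\m{AC}^+$ and the fibrewise bracket terms vanish as well.
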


\begin{nota}
In fact we just need this result for $r=0,1,2$. For $r=0$ the result is elementary: for $(J,\alpha)\in\cotJ$ and $v\in T_{(J,\alpha)}(\cotJ)$, let $(J_t,\alpha_t)$ be a path in $\cotJ$ such that $v=\frac{\mrm{d}}{\mrm{d}t}\Bigr|_{t=0}(J_t,\alpha_t)$. Then
\begin{equation*}
\begin{split}
\mrm{d}K_{(J,\alpha)}(v)&=v(K)_{(J,\alpha)}=\frac{\mrm{d}}{\mrm{d}t}\Bigr|_{t=0}\int_{x\in M}k(J_t(x),\alpha_t(x))\,\frac{\omega_0^n}{n!}=\\
&=\int_{x\in M}\frac{\mrm{d}}{\mrm{d}t}\Bigr|_{t=0}k(J_t(x),\alpha_t(x))\,\frac{\omega_0^n}{n!}=\int_{x\in M}\mrm{d}k_{(J(x),\alpha(x))}(v(x))\,\frac{\omega_0^n}{n!}.
\end{split}
\end{equation*}
Here the last equality holds since the matrix $v(x)$ associated to $v$ in a Darboux coordinate system around $x\in M$ is given by $\frac{\mrm{d}}{\mrm{d}t}\Bigr|_{t=0}(J_t(x),\alpha_t(x))$.
\end{nota}

\begin{proof}[Proof of Theorem \ref{teorema:differenziazione}.]
We spell out the proof for  $r=1$; the other cases are very similar. It will be convenient to introduce some additional notation: for $x\in M$ and a system of Darboux coordinates $\bm{u}$ around $x$, let $\Phi^x_{\bm{u}}$ be the map
\begin{equation*}
\begin{split}
\Phi^x_{\bm{u}}:\cotJ&\to T^*\!\m{AC}^+\\
(J,\alpha)&\mapsto (J(x),\alpha(x))
\end{split}
\end{equation*}
given by locally trivializing the fibre bundle over the coordinate system $\bm{u}$.

For $v\in T_p(\cotJ)$ a tangent vector on $\cotJ$, we can extend it to a vector field $V$ on an open neighbourhood of $p\in\cotJ$ in such a way that $V$ is \emph{constant} in a system of ``local coordinates'' for $\cotJ$. For the details about how to find local coordinates for $\cotJ$, see \cite[proof of Theorem $1.2$]{Koiso_complex_structure}. Moreover, this extension $V$ is such that $\mrm{d}\Phi^x_{\bm{u}}(V)$ is a vector field on $T^*\!\m{AC}^+(2n)$, itself constant in a system of coordinates for $T^*\!\m{AC}^+(2n)$.

Now fix $p\in\cotJ$,  $v,w\in T_p(\cotJ)$. If we extend $v, w$ to constant vectors $V,W$ as described in the previous paragraph, we can compute
\begin{equation*}
\mrm{d}K_p(v,w)=v_p(K(W))-w_p(K(V))-K([V,W])
\end{equation*}
however, $[V,W]=0$ since the vector fields are constant; for the other two terms we have, if $v=\diff_t\Bigr|_{t=0}p_t$:
\begin{equation*}
\begin{split}
v_p(K(W))=\frac{\mrm{d}}{\mrm{d}t}\Bigr|_{t=0}\int_{x\in M} k_{\Phi^x_{\bm{u}}(p_t)}\left(\left(\mrm{d}\Phi^x_{\bm{u}}\right)_{p_t}(W)\right)\,\frac{\omega_0^n}{n!}=\int_{x\in M} \left(\mrm{d}\Phi^x_{\bm{u}}\right)_p(v)\left(k(\mrm{d}\Phi^x_{\bm{u}}(W))\right)\,\frac{\omega_0^n}{n!}
\end{split}
\end{equation*}
so we find
\begin{equation*}
\begin{split}
\mrm{d}K_p(v,w)=&\int_{x\in M} \Big[\left(\mrm{d}\Phi^x_{\bm{u}}\right)_p(v)\left(k(\mrm{d}\Phi^x_{\bm{u}}(W))\right)-\left(\mrm{d}\Phi^x_{\bm{u}}\right)_p(w)\left(k(\mrm{d}\Phi^x_{\bm{u}}(V))\right)-\\
&\quad\quad\quad-k_{\Phi^x_{\bm{u}}(p)}\left(\left[\mrm{d}\Phi^x_{\bm{u}}(V),\mrm{d}\Phi^x_{\bm{u}}(W)\right]\right)\Big]\frac{\omega_0^n}{n!}=\\
=&\int_{x\in M} \mrm{d}k_{\Phi^x_{\bm{u}}(p)}\left(\mrm{d}\Phi^x_{\bm{u}}(v),\mrm{d}\Phi^x_{\bm{u}}(w)\right)\,\frac{\omega_0^n}{n!}.
\end{split}
\end{equation*}
\end{proof}

Another consequence of Theorem \ref{teorema:differenziazione} is that $\bm{\Theta}$ has a more natural description, and in particular it is well-defined, concluding the proof of Theorem \ref{teorema:HKThmIntro}.

\begin{lemma}\label{lemma:2canonica_J}
The $2$-form $\bm{\Theta}$ defined in equation \eqref{eq:2forma_canonica} is the canonical $2$-form of $\cotJ$.
\end{lemma}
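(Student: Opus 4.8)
The plan is to show that $\bm{\Theta}$, as defined by the integral formula \eqref{eq:2forma_canonica}, agrees with the intrinsically-defined canonical (Liouville) $2$-form $-\mrm{d}\lambda$ on $\cotJ$, where $\lambda$ is the tautological $1$-form. First I would recall the intrinsic description: on any cotangent bundle $T^*N$ the tautological $1$-form is $\lambda_\xi(v) = \xi(\mrm{d}\pi(v))$, and the canonical symplectic form is $-\mrm{d}\lambda$. Here $N = \scr{J}$ (or rather the open neighbourhood in $\cotJ$), and I would write down $\bm{\lambda}$ explicitly using the pairing between $T^*_J\scr{J}$ and $T_J\scr{J}$ recalled in the excerpt, namely $\langle\alpha,A\rangle=\frac12\int_M\alpha\indices{_i^j}A\indices{^i_j}\,\frac{\omega_0^n}{n!}$. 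Thus for $(J,\alpha)\in\cotJ$ and $v\in T_{(J,\alpha)}\cotJ$ with projection $\mrm{d}\pi(v) = A \in T_J\scr{J}$, I set $\bm{\lambda}_{(J,\alpha)}(v) = \frac12\int_M\alpha\indices{_i^j}A\indices{^i_j}\,\frac{\omega_0^n}{n!}$.

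The key observation is that this same fibrewise description holds on the finite-dimensional model $T^*\!\m{AC}^+$: the tautological $1$-form $\lambda$ on $T^*\!\m{AC}^+$ is $\lambda_{(J,\alpha)}(v) = \tfrac12\mrm{Tr}(\alpha^\transpose\, \mrm{d}p(v))$ (using the trace pairing of Proposition \ref{prop:metrica_AC+_Siegel}, matching the conventions for the pairing on $T_J\scr{J}$), and the canonical $2$-form is $\theta = -\mrm{d}\lambda$. Consequently $\bm{\lambda}$ is exactly the $1$-form on $\cotJ$ obtained from $\lambda$ by the integral construction of Theorem \ref{teorema:differenziazione}, i.e. $\bm{\lambda}_{(J,\alpha)}(v) = \int_{x\in M}\lambda_{(J(x),\alpha(x))}(v(x))\,\frac{\omega_0^n}{n!}$; in particular this shows $\bm{\lambda}$ is well-defined independently of the Darboux trivialisation, since $\lambda$ is $\mrm{Sp}(2n)$-invariant (the action on $T^*\!\m{AC}^+$ being induced from that on $\m{AC}^+$, it preserves the tautological form). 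Then I apply Theorem \ref{teorema:differenziazione} with $r=1$, $k = \lambda$, $K = \bm{\lambda}$, to conclude
\begin{equation*}
\bm{\Theta}_{(J,\alpha)}(v,w) = \int_{x\in M}\theta_x(v_x,w_x)\,\frac{\omega_0^n}{n!} = -\int_{x\in M}(\mrm{d}\lambda)_x(v_x,w_x)\,\frac{\omega_0^n}{n!} = -\mrm{d}\bm{\lambda}_{(J,\alpha)}(v,w),
\end{equation*}
so $\bm{\Theta} = -\mrm{d}\bm{\lambda}$ is the canonical $2$-form of $\cotJ$. This also proves well-definedness of $\bm{\Theta}$ and its closedness, as announced.

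I expect the main obstacle to be purely bookkeeping rather than conceptual: one must check that the fibrewise pairing used to describe $T^*_J\scr{J}$ (the $\tfrac12\mrm{Tr}$ normalisation) is genuinely the restriction, under the trivialisation, of the trace pairing on $T^*\!\m{AC}^+$, so that the tautological $1$-form on $\cotJ$ and the pointwise tautological $1$-form on $T^*\!\m{AC}^+$ match up with the \emph{same} constants; and that the identification of $\cotJ$ with sections of $\hat{\m{E}}$ intertwines $\mrm{d}\pi$ with the fibrewise $\mrm{d}p$. Both are immediate from the constructions in Section \ref{sec:cotJ_hyperk} and the paragraph on the cotangent bundle of $\scr{J}$, but they need to be stated carefully. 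A minor alternative, avoiding even the explicit tautological form, is to verify directly that the right-hand side of \eqref{eq:2forma_canonica} satisfies the defining universal property of the canonical symplectic form (that $\beta^*\bm{\Theta} = \mrm{d}\beta$ for every $1$-form $\beta$ on $\scr{J}$ viewed as a section of $\cotJ$), again reducing fibrewise to the corresponding statement on $T^*\!\m{AC}^+$ via Theorem \ref{teorema:differenziazione}; I would mention this but carry out the tautological-form argument as the cleaner route.
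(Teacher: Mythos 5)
Your argument is correct and is essentially the paper's own proof: the paper likewise writes the tautological $1$-form $\bm{\tau}$ of $\cotJ$ fibrewise as $\bm{\tau}_{(J,\alpha)}(A,\varphi)=\alpha(A)=\int_{x\in M}\tfrac12\mrm{Tr}(\alpha_x A_x^\transpose)\,\frac{\omega_0^n}{n!}=\int_{x\in M}\tau_{(J_x,\alpha_x)}(A_x,\varphi_x)\,\frac{\omega_0^n}{n!}$ and then applies Theorem \ref{teorema:differenziazione} to conclude $\bm{\Theta}=-\mrm{d}\bm{\tau}$. The normalisation check you flag indeed works out, since the $\tfrac12\mrm{Tr}$ pairing on $T^*_J\scr{J}$ is exactly the fibrewise trace pairing used on $T^*\!\m{AC}^+$.
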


\begin{proof}
We recall that for any manifold $X$, the tautological $1$ form $\tau_X$ is a $1$-form defined on the total space of $T^*X\xrightarrow{\pi}X$ by
\begin{equation*}
\begin{split}
T^*_{x,\alpha}X&\to\bb{R}\\
v&\mapsto\alpha(\pi_*v)
\end{split}
\end{equation*}
and is related to the canonical $2$-form $\theta_X$ of $T^*X$ by $\theta_X=-\mrm{d}\tau_X$. Denote simply by $\tau$ the tautological $1$-form of $T^*\!\m{AC}^+$, just as $\theta$ is the canonical $2$-form. Let also $\bm{\tau}$ be the tautological form of $\cotJ$. Then from the definitions it follows immediately that for any $(J,\alpha)\in\cotJ$ and $(A,\varphi)\in T_{(J,\alpha)}(\cotJ)$
\begin{equation*}
\bm{\tau}_{(J,\alpha)}((A,\varphi))=\alpha(A)=\int_{x\in M}\frac{1}{2}\mrm{Tr}(\alpha_x A_x^\transpose)\,\frac{\omega_0^n}{n!}=\int_{x\in M}\tau_{(J_x,\alpha_x)}(A_x,\varphi_x)\,\frac{\omega_0^n}{n!}.
\end{equation*}
By Theorem \ref{teorema:differenziazione} it's clear that this identity proves that $\bm{\Theta}=-\mrm{d}\bm{\tau}$.
\end{proof}

\subsection{The infinite-dimensional Hamiltonian action}\label{sec:moment_map}

Let $(M,J_0,\omega_0)$ be a compact K\"ahler manifold. In this Section we prove Theorem \ref{teorema:HamThmIntro}, showing that the action of $\m{G} =\mrm{Ham}(M,\omega_0)$ induced on $\cotJ$ from the action on $\scr{J}$ is Hamiltonian with respect to both the real symplectic form $\bm{\Omega}_{\bm{I}}$ and the complex symplectic form $\bm{\Theta}$. 

The group $\m{G}$ acts on $\scr{J}$ by pull-backs: more precisely, for $\varphi\in\m{G}$ and $J\in\scr{J}$ we define
\begin{equation*}
\varphi.J =(\varphi^{-1})^*J=\varphi_*\circ J\circ\varphi_*^{-1}.
\end{equation*}
Notice that, since elements $\varphi$ of $\m{G}$ preserve $\omega_0$, in any system of Darboux coordinates on $M$ the tensor $\varphi_*$ is given by a $\mrm{Sp}(2n)$-valued function. It follows that the action preserves the structures $\Omega$, $\bb{J}$ on $\scr{J}$. The action induced by $\m{G}$ on $\cotJ$ is given by
\begin{equation*}
\varphi.(J,\alpha)=\left((\varphi^{-1})^*J,(\varphi^{-1})^*\alpha\right)=\left(\varphi_*\circ J\circ\varphi_*^{-1},(\varphi^{-1})^*\circ\alpha\circ\varphi^*\right)
\end{equation*}
and again it preserves $\bm{\Theta}$, $\bm{I}$ and $\bm{\Omega}_{\bm{I}}$.

For a function $h\in\m{C}^\infty_0(M)=\mrm{Lie}(\m{G})$, the infinitesimal action of $h$ on $\cotJ$ is
\begin{equation}\label{eq:azione_infinitesima}
\hat{h}_{(J,\alpha)}=\left(\m{L}_{X_h}J,\m{L}_{X_h}\alpha\right)\in T_{(J,\alpha)}(\cotJ).
\end{equation}

First we recall a simple result that will be used to prove Theorem \ref{teorema:HamThmIntro}.

\begin{lemma}\label{lemma:moment_map_exact}
Let $G$ be a Lie group acting on the left on a manifold $X$, and assume that the action preserves a $1$-form $\chi$; let also $\eta=\mrm{d}\chi$. Then the map
\begin{equation*}
\begin{split}
X&\to Lie(G)^*\\
x&\mapsto\f{m}_x
\end{split}
\end{equation*}
defined by $\f{m}_x(a) =\chi_x(\hat{a}_x)$ satisfies
\begin{equation*}
\mrm{d}(\f{m}(a))=-\hat{a}\lrcorner\eta.
\end{equation*}
Moreover, $\f{m}$ is $G$-equivariant with respect to the action of $G$ on $X$ and the co-adjoint action on $\mrm{Lie}(G)^*$. In particular if $\eta$ is a symplectic form then $\f{m}$ is a moment map for $G\curvearrowright X$.
\end{lemma}

\begin{proof}
The first part is a simple consequence of Cartan's formula:
\begin{equation*}
0=\m{L}_{\hat{a}}\chi=\hat{a}\lrcorner\mrm{d}\chi+\mrm{d}(\hat{a}\lrcorner\chi)=\hat{a}\lrcorner\eta+\mrm{d}(\f{m}_x(a)).
\end{equation*}
As for the $G$-equivariance, fix $g\in G$ and $a\in\mrm{Lie}(G)$. Then for every $x\in X$ (here $\sigma$ denotes the left action $G\curvearrowright X$)
\begin{equation*}
\begin{split}
\f{m}_{g.x}(a)&=\chi_{g.x}(\hat{a}_{g.x})=\chi_{g.x}\left((\mrm{d}\sigma_g)_x\left(\widehat{\mrm{Ad}_{g^{-1}}(a)}_x\right)\right)=\left(\sigma_g^*\chi\right)_x\left(\widehat{\mrm{Ad}_{g^{-1}}(a)}_x\right)=\\ &=\chi_x\left(\widehat{\mrm{Ad}_{g^{-1}}(a)}_x\right)=\mrm{Ad}_{g^{-1}}^*\f{m}_x(a)
\end{split}
\end{equation*}
where we have used again the fact that $\chi$ is $G$-invariant.
\end{proof}

As a consequence, we obtain the following results for the action $\m{G}\curvearrowright\cotJ$.

\begin{lemma}\label{lemma:moment_map_theta}
The action $\m{G}\curvearrowright\cotJ$ is Hamiltonian with respect to the canonical symplectic form $\bm{\Theta}$; a moment map $\f{m}_{\bm{\Theta}}$ is given by
\begin{equation}\label{eq:moment_map_theta}
{\f{m}_{\bm{\Theta}}}_{(J,\alpha)}(h)=-\int_M\frac{1}{2}\mrm{Tr}(\alpha^\transpose\m{L}_{X_h}J)\,\frac{\omega_0^n}{n!}.
\end{equation}
\end{lemma}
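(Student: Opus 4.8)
The plan is to deduce the statement directly from Lemma~\ref{lemma:moment_map_exact} applied to the tautological $1$-form. By Lemma~\ref{lemma:2canonica_J} the canonical $2$-form is $\bm{\Theta}=-\mrm{d}\bm{\tau}$, where $\bm{\tau}$ is the tautological $1$-form of $\cotJ$, given fibrewise by
\begin{equation*}
\bm{\tau}_{(J,\alpha)}((A,\varphi))=\int_{x\in M}\frac{1}{2}\mrm{Tr}(\alpha_x A_x^\transpose)\,\frac{\omega_0^n}{n!}.
\end{equation*}
So I would apply Lemma~\ref{lemma:moment_map_exact} with $\chi:=-\bm{\tau}$, for which $\eta:=\mrm{d}\chi=\bm{\Theta}$. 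Granting for the moment that the $\m{G}$-action preserves $\bm{\tau}$, the lemma produces an equivariant map $\f{m}_{\bm{\Theta}}\colon\cotJ\to\mrm{Lie}(\m{G})^*\otimes\bb{C}$, $\f{m}_{\bm{\Theta}}(h):=-\bm{\tau}(\hat h)$, satisfying $\mrm{d}(\f{m}_{\bm{\Theta}}(h))=-\hat h\lrcorner\bm{\Theta}$; since $\bm{\Theta}$ is the canonical (hence formally symplectic) form, this is a moment map. Note $\bm{\Theta}$ and $\f{m}_{\bm{\Theta}}$ are complex-valued, but the proof of Lemma~\ref{lemma:moment_map_exact} (Cartan's formula together with the equivariance computation) is $\bb{C}$-linear and applies verbatim.

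The one point that genuinely needs checking is that $\m{G}$ preserves $\bm{\tau}$; we already know it preserves $\bm{\Theta}=-\mrm{d}\bm{\tau}$, but Lemma~\ref{lemma:moment_map_exact} requires the stronger invariance of $\bm{\tau}$ itself. This is where the explicit formula above is useful: for $\psi\in\m{G}$ the action is $\psi.(J,\alpha)=((\psi^{-1})^*J,(\psi^{-1})^*\alpha)$, and since $J\mapsto(\psi^{-1})^*J=\psi_*J\psi_*^{-1}$ is linear, the differential of the action at $(J,\alpha)$ sends $(A,\varphi)$ to $((\psi^{-1})^*A,(\psi^{-1})^*\varphi)$. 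Pulling back endomorphisms of $TM$ and $T^*M$ amounts to conjugating pointwise by $\mrm{d}\psi$, and the trace is conjugation invariant, so $\mrm{Tr}\big((\psi^{-1})^*\alpha\,((\psi^{-1})^*A)^\transpose\big)_x=\mrm{Tr}(\alpha A^\transpose)_{\psi^{-1}(x)}$; as $\psi$ preserves $\omega_0$, hence the measure $\tfrac{\omega_0^n}{n!}$, the change of variables $x\mapsto\psi^{-1}(x)$ shows the integral defining $\bm{\tau}$ is unchanged. (Equivalently, the induced action on $\cotJ$ is the cotangent lift of $\m{G}\curvearrowright\scr{J}$, and cotangent lifts always preserve the tautological form.)

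It then only remains to evaluate $\f{m}_{\bm{\Theta}}(h)=-\bm{\tau}(\hat h)$. By \eqref{eq:azione_infinitesima} the infinitesimal action is $\hat h_{(J,\alpha)}=(\m{L}_{X_h}J,\m{L}_{X_h}\alpha)$, and $\bm{\tau}$ only sees the first slot, so
\begin{equation*}
{\f{m}_{\bm{\Theta}}}_{(J,\alpha)}(h)=-\int_M\frac{1}{2}\mrm{Tr}\big(\alpha\,(\m{L}_{X_h}J)^\transpose\big)\,\frac{\omega_0^n}{n!}=-\int_M\frac{1}{2}\mrm{Tr}\big(\alpha^\transpose\,\m{L}_{X_h}J\big)\,\frac{\omega_0^n}{n!},
\end{equation*}
using $\mrm{Tr}(\alpha B^\transpose)=\mrm{Tr}(\alpha^\transpose B)$ with $B=\m{L}_{X_h}J$, which is exactly \eqref{eq:moment_map_theta}. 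I do not expect a serious obstacle: the argument reduces entirely to Lemmas~\ref{lemma:moment_map_exact} and~\ref{lemma:2canonica_J} plus the bookkeeping of the $\m{G}$-action; the only mild subtleties are the invariance of $\bm{\tau}$ discussed above and keeping track of the transpose in the trace pairing.
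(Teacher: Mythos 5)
Your proposal is correct and follows essentially the same route as the paper: the paper's proof likewise applies Lemma~\ref{lemma:moment_map_exact} to $\bm{\Theta}=-\mrm{d}\bm{\tau}$, using the $\m{G}$-invariance of the tautological form, and identifies the moment map as $-\bm{\tau}_{(J,\alpha)}(\hat h)$, which via Lemma~\ref{lemma:2canonica_J} gives exactly \eqref{eq:moment_map_theta}. Your additional verification of the invariance of $\bm{\tau}$ (conjugation invariance of the trace plus preservation of $\omega_0^n/n!$) is a detail the paper leaves implicit, but it is the same argument.
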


\begin{proof}
Since $\bm{\Theta}=-\mrm{d}\bm{\tau}$ and $\m{G}$ preserves $\bm{\tau}$, we can apply Lemma \ref{lemma:moment_map_exact} to find that $-\bm{\tau}_{(J,\alpha)}(\hat{h})$ is a moment map for $\m{G}\curvearrowright(\cotJ,\bm{\Theta})$.
\end{proof}
Let us now consider the action with respect to the real symplectic form.
\begin{lemma}\label{lemma:mappa_momento_OmegaI}
The action $\m{G}\curvearrowright(\cotJ,\bm{\Omega}_{\bm{I}})$ is Hamiltonian; a moment map $\f{m}_{\bm{\Omega}_{\bm{I}}}$ is given by
\begin{equation*}
\f{m}_{\bm{\Omega}_{\bm{I}}} =\mu\circ\pi+\f{m}
\end{equation*}
where $\mu$ is the moment map for the action $\m{G}\curvearrowright(\scr{J},\Omega)$, $\pi:\cotJ\to J$ is the projection and $\f{m}:\cotJ\to\mrm{Lie}(\m{G})^*$ is defined by 
\begin{equation}\label{eq:moment_map_omega}
\f{m}_{(J,\alpha)}(h)=\int_{x\in M}\mrm{d}^c\rho_{(J(x),\alpha(x))}\left(\m{L}_{X_h}J,\m{L}_{X_h}\alpha\right)\,\frac{\omega_0^n}{n!}.
\end{equation}
\end{lemma}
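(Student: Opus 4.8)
The plan is to realise $\bm{\Omega}_{\bm{I}}$ as a sum $\pi^*\Omega + \mrm{d}\bm{\chi}$ of a form pulled back from $(\scr{J},\Omega)$ along the projection $\pi\colon\cotJ\to\scr{J}$ and an exact $\m{G}$-invariant form, and then to treat the two summands separately. For the second summand, define a $1$-form $\bm{\chi}$ on $\cotJ$ by
\[
\bm{\chi}_{(J,\alpha)}(v) = \int_{x\in M}(\mrm{d}^c\rho)_{(J(x),\alpha(x))}(v(x))\,\frac{\omega_0^n}{n!},
\]
the integrand being computed after fixing a Darboux trivialisation near $x$. Since $\mrm{Sp}(2n)$ acts on $\m{AC}^+$ isometrically and holomorphically, $\rho$ and hence $\mrm{d}^c\rho$ are $\mrm{Sp}(2n)$-invariant, so by the same invariance argument used to show that $\bm{\Omega}_{\bm{I}}$ is well-defined, $\bm{\chi}$ is independent of the trivialisation. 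Integrating the fibrewise Biquard--Gauduchon identity $\omega_I = \pi^*\omega_{\m{AC}^+} + \mrm{d}\mrm{d}^c\rho$ on $T^*\!\m{AC}^+$ and applying Theorem \ref{teorema:differenziazione} (with $r=1$ to get $\mrm{d}\bm{\chi} = \int_M \mrm{d}\mrm{d}^c\rho$, and with $r=2$ to identify the base term with $\pi^*\Omega$, since integrating $\pi^*\omega_{\m{AC}^+}$ over $M$ reproduces the Fujiki--Donaldson form $\Omega$), comparison with \eqref{eq:forma_hyper_J} yields $\bm{\Omega}_{\bm{I}} = \pi^*\Omega + \mrm{d}\bm{\chi}$ on the neighbourhood of the zero section where $\rho$ is defined.

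Now the two pieces. In Darboux coordinates the $\m{G}$-action on $\cotJ$ is by fibrewise $\mrm{Sp}(2n)$-valued gauge transformations, so the $\mrm{Sp}(2n)$-invariance of $\mrm{d}^c\rho$ makes $\bm{\chi}$ a $\m{G}$-invariant $1$-form; applying Lemma \ref{lemma:moment_map_exact} with $\chi=\bm{\chi}$ and $\eta=\mrm{d}\bm{\chi}$, and using the formula \eqref{eq:azione_infinitesima} for the infinitesimal action, the map $\f{m}$ of \eqref{eq:moment_map_omega}, which is exactly $\f{m}_{(J,\alpha)}(h)=\bm{\chi}_{(J,\alpha)}(\hat h)$, is $\m{G}$-equivariant and satisfies $\mrm{d}(\f{m}(h)) = -\hat h\lrcorner\,\mrm{d}\bm{\chi}$. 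For the other piece, $\pi$ intertwines the $\m{G}$-actions on $\cotJ$ and $\scr{J}$ and by \eqref{eq:azione_infinitesima} satisfies $\pi_*\hat h = \hat h$ at the level of infinitesimal actions, so for the equivariant moment map $\mu$ of $\m{G}\curvearrowright(\scr{J},\Omega)$ the composition $\mu\circ\pi$ is $\m{G}$-equivariant and $\mrm{d}(\langle\mu\circ\pi,h\rangle) = \pi^*(-\hat h\lrcorner\,\Omega) = -\hat h\lrcorner\,\pi^*\Omega$. Adding the two contributions, $\f{m}_{\bm{\Omega}_{\bm{I}}} := \mu\circ\pi + \f{m}$ is $\m{G}$-equivariant and
\[
\mrm{d}(\langle\f{m}_{\bm{\Omega}_{\bm{I}}},h\rangle) = -\hat h\lrcorner\,(\pi^*\Omega + \mrm{d}\bm{\chi}) = -\hat h\lrcorner\,\bm{\Omega}_{\bm{I}},
\]
which is the moment map condition; since $\bm{\Omega}_{\bm{I}}$ is symplectic (indeed Kähler, by Theorem \ref{teorema:HKThmIntro}), this finishes the proof.

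The only genuinely delicate point is the bookkeeping in the first step: checking that integrating the fibrewise identity $\omega_I = \pi^*\omega_{\m{AC}^+} + \mrm{d}\mrm{d}^c\rho$ really produces $\bm{\Omega}_{\bm{I}} = \pi^*\Omega + \mrm{d}\bm{\chi}$ on the nose, with the base term matching the Fujiki--Donaldson form and the exact term matching $\mrm{d}\bm{\chi}$, uniformly over $M$ and independently of the Darboux trivialisations. Once this is in place, everything else is a formal consequence of Lemma \ref{lemma:moment_map_exact} together with the standard naturality of moment maps under equivariant maps; no computation in local coordinates on $\cotJ$ is needed.
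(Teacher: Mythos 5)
Your proposal is correct and follows essentially the same route as the paper: decompose $\bm{\Omega}_{\bm{I}}=\pi^*\Omega+\mrm{d}\bm{\chi}$ with $\bm{\chi}=\int_M\mrm{d}^c\rho\,\frac{\omega_0^n}{n!}$, invoke Theorem \ref{teorema:differenziazione} and the $\mrm{Sp}(2n)$-invariance of $\mrm{d}^c\rho$, and apply Lemma \ref{lemma:moment_map_exact} to the invariant $1$-form, combining with the pullback $\mu\circ\pi$ for the base term. Your treatment of the $\pi^*\Omega$ summand via naturality of moment maps is only slightly more explicit than the paper's, which leaves that step implicit.
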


With our choice of notation and conventions, the moment map $\mu$ for $\m{G}\curvearrowright(\scr{J},\Omega)$ is given by
\begin{equation*}
\mu(J)=2\,s(J)-2\,\hat{s}
\end{equation*}
where we are identifying $\m{C}^\infty_0(M)$ with its dual via the usual $L^2$ pairing on functions. For a proof of this result, see \cite{Donaldson_scalar}, \cite[chapter $4$]{Tian_libro}, \cite{Fujiki_moduli}, \cite[section $6.1$]{Szekelyhidi_libro} and \cite[Proposition $2.2.1$]{Szekelyhidi_phd}. (Note that there are various different sign conventions, as well as different conventions with the pairings involved).%, and that there are many small mistakes in the calculations here and there).

\begin{proof}[Proof of Lemma \ref{lemma:mappa_momento_OmegaI}]
Since $\bm{\Omega}_{\bm{I}}=\pi^*\Omega+\int_M\mrm{d}\mrm{d}^c\rho\,\frac{\omega_0^n}{n!}$, the first step is to show that for all $h\in\m{C}^\infty_0$
\begin{equation*}
\mrm{d}(\f{m}(h))=-\hat{h}\lrcorner\int_M\mrm{d}\mrm{d}^c\rho\,\frac{\omega_0^n}{n!}.
\end{equation*}
To prove this, we can use Lemma \ref{lemma:moment_map_exact} and Theorem \ref{teorema:differenziazione}. Indeed, if we define $\chi =\int_M\mrm{d}^c\rho\,\frac{\omega_0^n}{n!}$ then $\mrm{d}\chi=\int_M\mrm{d}\mrm{d}^c\rho\,\frac{\omega_0^n}{n!}$. We already saw that the action of $\m{G}$ preserves $\chi$, and so Lemma \ref{lemma:moment_map_exact} tells us that $\f{m}$ defined by
\begin{equation*}
\f{m}_{(J,\alpha)}(h)=\int_{x\in M}\mrm{d}^c\rho_{(J(x),\alpha(x))}\left(\m{L}_{X_h}J,\m{L}_{X_h}\alpha\right)\frac{\omega_0^n}{n!}
\end{equation*}
has the properties we need.
\end{proof}

The results of Lemma \ref{lemma:moment_map_theta} and Lemma \ref{lemma:mappa_momento_OmegaI} conclude the proof of Theorem \ref{teorema:HamThmIntro}.

Clearly one would like to obtain more explicit expressions for the moment maps under the natural $L^2$ pairing. This is not too difficult for the \emph{complex} moment map, at least if $J$ is integrable. 
\begin{lemma} Suppose $J$ is integrable. Then we have
\begin{equation*}
{\f{m}_{\bm{\Theta}}}_{(J,\alpha)}(h) = \left\langle h,-\mrm{div}\left(\bdiff^*\bar{\alpha}^\transpose\right)\right\rangle.
\end{equation*}
\end{lemma}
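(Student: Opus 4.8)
The plan is to start from the implicit expression for the complex moment map given in Lemma~\ref{lemma:moment_map_theta},
\begin{equation*}
{\f{m}_{\bm{\Theta}}}_{(J,\alpha)}(h)=-\int_M\tfrac{1}{2}\mrm{Tr}(\alpha^\transpose\m{L}_{X_h}J)\,\frac{\omega_0^n}{n!},
\end{equation*}
and to rewrite the integrand by moving the Lie derivative off $J$ and onto $\alpha$ via integration by parts, so that $h$ appears undifferentiated. First I would work in local holomorphic coordinates for $J$ (legitimate since $J$ is integrable), where $X_h$ has components built from $\diff h$ and the metric $g_J$, and $\alpha = \alpha\indices{_a^{\bar b}}\,\diff_{\bar b}\otimes\mrm{d}z^a$ as recorded at the end of \S2.3. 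The key computation is to expand $\m{L}_{X_h}J$ in these coordinates: since $J$ is integrable and constant in holomorphic coordinates, $\m{L}_{X_h}J$ is essentially the $\bar\diff$ of the $(1,0)$-part of $X_h$, i.e.\ it is controlled by $\diff_{\bar c}\diff_{\bar b} h$ and the relevant metric contractions. Pairing with $\alpha^\transpose$ and taking the trace collapses the $\mrm{End}$ indices, leaving an integral of $h$ against two derivatives of $\alpha$ contracted with the metric.

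The main structural point is to recognise the resulting second-order operator on $\alpha$ as the composition $-\mrm{div}\circ\,\bdiff^{*}\circ(\,\cdot\,)^\transpose$ applied to $\bar\alpha$. Concretely, I expect the chain to be: integrating by parts once in $\bar z$ produces a term $g^{a\bar b}\nabla_{\bar b}\alpha\indices{_a^{\bar c}}$-type expression, which is exactly (a component of) $\bdiff^{*}\bar\alpha^\transpose$ — here one uses that on a K\"ahler manifold $\bdiff^{*}$ acting on a $(0,1)$-form-valued object is $-2g^{a\bar b}\nabla_{\bar b}$ up to the conventions fixed in \S2.1 — and then a second integration by parts in $z$ puts the remaining derivative $\nabla_a$ onto the test function, which is precisely $-\mrm{div}$ of the resulting vector field (recall $\Delta\varphi=-\mrm{div}\,\mrm{grad}\,\varphi$, so $\mrm{div}$ is the formal $L^2$-adjoint of $-\mrm{grad}$). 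Throughout, the fact that $\alpha\in{T^{1,0}}^*_J\scr{J}$ satisfies $J^\transpose\alpha=\I\alpha$ and the symmetry condition $g_J(\alpha^\transpose-,-)$ symmetric will be needed to discard the ``wrong-type'' contractions that would otherwise survive, and Remark~\ref{nota:traccia_tripla} (vanishing of traces of triple products of tangent vectors to $\m{AC}^+$) should kill the terms coming from differentiating the metric-dependence hidden inside $X_h$.

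The step I expect to be the main obstacle is the careful bookkeeping of \emph{which} term in $\m{L}_{X_h}J$ contributes after tracing against $\alpha^\transpose$: $X_h$ depends on $g_J$ and hence on $J$, so $\m{L}_{X_h}J$ a priori contains both a ``good'' piece $\bar\diff\bar\diff h$ and ``bad'' pieces involving $\diff h$ times Christoffel-type terms and $\diff J$, and one must verify these bad pieces either vanish pointwise by the algebraic constraints on $T_J\scr{J}$ (via \eqref{eq:indentita_tangenti} and Remark~\ref{nota:traccia_tripla}) or assemble into a total derivative that integrates to zero. A clean way to organise this is to note that $\m{L}_{X_h}J=\bb{J}$-image manipulations let one replace $\m{L}_{X_h}J$ by its $(0,1)$-component (with respect to $J$) when pairing with the purely $(1,0)$-type object $\alpha^\transpose$, after which only $\diff_{\bar b}\diff_{\bar c}h$ survives. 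Once that reduction is in place the two integrations by parts are routine, and matching signs and factors of $2$ against the Laplacian/divergence conventions of \S2.1 yields exactly $\langle h,-\mrm{div}(\bdiff^{*}\bar\alpha^\transpose)\rangle$.
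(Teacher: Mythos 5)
Your proposal is correct and follows essentially the same route as the paper's proof: starting from the implicit formula of Lemma \ref{lemma:moment_map_theta}, working in holomorphic coordinates for the integrable $J$ so that only the $\bdiff$ of the $(1,0)$-part of $X_h$ pairs with $\alpha\indices{_a^{\bar b}}$, and then integrating by parts twice to land on $\left\langle h,-\mrm{div}\left(\bdiff^*\bar{\alpha}^\transpose\right)\right\rangle$. The only inessential difference is that the ``bad pieces'' you set aside never actually appear: in holomorphic coordinates the mixed Christoffel symbols of the K\"ahler metric vanish, so $\diff_{\bar b}(X_h)^a=\nabla_{\bar b}(X_h)^a$ and the metric-dependence inside $X_h$ causes no extra terms, making the appeal to Remark \ref{nota:traccia_tripla} unnecessary.
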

\begin{proof} We compute
\begin{equation*}
\begin{split}
{\f{m}_{\bm{\Theta}}}_{(J,\alpha)}(h)&=-\frac{1}{2}\int_M\alpha\indices{_a^{\bar{b}}}\left(\m{L}_{X_h}J\right)\indices{^a_{\bar{b}}}\,\frac{\omega_0^n}{n!}=\I\int_M\alpha\indices{_a^{\bar{b}}}\diff_{\bar{b}}(X_h)\indices{^a}\,\frac{\omega_0^n}{n!}=
-\I\int_M\alpha\indices{_a^{\bar{b}}}\nabla_{\bar{b}}(X_h)\indices{^a}\,\frac{\omega_0^n}{n!}=\\
&=-\I\int_M\nabla_{\bar{b}}\left(\alpha\indices{_a^{\bar{b}}}(X_h)\indices{^a}\right)\frac{\omega_0^n}{n!}+\I\int_M(X_h)\indices{^a}\nabla_{\bar{b}}\alpha\indices{_a^{\bar{b}}}\,\frac{\omega_0^n}{n!}=
-\int_Mg^{a\bar{c}}\nabla_{\bar{c}}h\,\nabla_{\bar{b}}\alpha\indices{_a^{\bar{b}}}\,\frac{\omega_0^n}{n!}=\\
&=\int_M h\,g^{a\bar{b}}\nabla_{\bar{c}}\nabla_{\bar{b}}\alpha\indices{_a^{\bar{c}}}\,\frac{\omega_0^n}{n!}=\left\langle h,-\mrm{div}\left({\nabla^{0,1}}^*\bar{\alpha}^\transpose\right)\right\rangle.
\end{split}
\end{equation*}
\end{proof}
Unfortunately it is more difficult to obtain an explicit expression for the \emph{real} moment map. We will do this for complex curves and surfaces in the next sections.

\paragraph*{The Biquard-Gauduchon function for $T^*\!\m{AC}^+(2n)$.} An explicit expression for the real moment map requires an explicit expression for the Biquard-Gauduchon function $\rho$.

For a fixed vector field $A$ on $T\m{AC}^+(2n)$ we consider the endomorphism $\Xi(A)$ of $T\m{AC}^+(2n)$ defined by
\begin{equation*}
\Xi(A):B\mapsto -\bb{J}\left(R(\bb{J}A,A)(B)\right).
\end{equation*}
By Proposition \ref{prop:curvatura_AC}, at the point $-\Omega_0\in\m{AC}^+(2n)$, $\Xi(A)$ can be written as:
\begin{equation}\label{eq:da_diagonalizzare}
\Xi_{-\Omega_0}(A)(B)=\Omega_0\left(-\frac{1}{4}\Big[[-\Omega_0\,A,A],B\Big]\right)=-\frac{1}{2}\left(A^2B+B\,A^2\right).
\end{equation}
Then the Biquard-Gauduchon function at $A$ is a spectral function of the endomorphism $\Xi(A)$. %so we should try to diagonalize this map with respect to some basis of $T_{-\Omega_0}\m{AC}^+(2n)$.

\subsubsection{The complexified action}\label{ComplexifiedSection}

The classical Kempf-Ness Theorem in Geometric Invariant Theory characterises orbits of a complexified Hamiltonian action containing a zero of the moment map in terms of algebro-geometric stability. This is not applicable in our situation, in particular since the group of Hamiltonian symplectomorphisms does not admit a complexification (see e.g. the discussion in \cite[Remark $35$]{Wang_Futaki} and \cite[\S$1.3.3$]{GarciaFernandez_PHD}). However, there is a way to circumvent this, at least at the formal level. The general idea is well-known and goes back to \cite{Donaldson_scalar}: while a complexification of the group $\m{G}$ does not exist, we can find subvarieties of $\scr{J}$ that play the role of \emph{complexified orbits} for the action of $\m{G}$.

The same result holds also in our case: the infinitesimal action of $h\in\mrm{Lie}(\m{G})=\m{C}^{\infty}(M,\bb{R})$ on $\cotJ$ is
\begin{equation*}
\hat{h}_{J,\alpha}=\left(\m{L}_{X_h}J,\m{L}_{X_h}\alpha\right)
\end{equation*}
and since $\cotJ$ has a complex structure $\bm{I}$ we can infinitesimally complexify the action of $\m{G}$ by setting for $h\in\mrm{Lie}(\m{G})^{\bb{C}}=\m{C}^\infty(M,\bb{C})$
\begin{equation*}
\widehat{h}_{J,\alpha}:=\widehat{\mrm{Re}(h)}_{J,\alpha}+\bm{I}_{J,\alpha}\widehat{\mrm{Im}(h)}_{J,\alpha}
\end{equation*}
So we can define a distribution on $\cotJ$ that is a complexification of the infinitesimal action
\begin{equation*}
\begin{split}
\scr{D}_{J,\alpha}=&\set*{\hat{h}_{J,\alpha}\tc h\in\m{C}^\infty(M,\bb{R})}\cup\set*{\widehat{\I h}_{J,\alpha}\tc h\in\m{C}^\infty(M,\bb{R})}=\\
=&\set*{\left(\m{L}_{X_h}J,\m{L}_{X_h}\alpha\right)\tc h\in\m{C}^\infty(M,\bb{R})}\cup\set*{\left(J\m{L}_{X_h}J,(\m{L}_{X_h}\alpha)J^\transpose+(\m{L}_{X_h}J^\transpose)\alpha\right)\tc h\in\m{C}^\infty(M,\bb{R})}.
\end{split}
\end{equation*}
It is still possible to show that this distribution is integrable, at least for an integrable pair $(J,\alpha)$, and we can consider its integral leaves at a point $(J,\alpha)\in\cotJ$ as the complexified orbit of $(J,\alpha)$. The final result is that for $(J,\alpha)\in\cotJ$ one can construct a map from the K\"ahler class of $\omega_0$ to $\cotJ$ that describes the complexified orbit of $(J,\alpha)$. We omit the details here, as this map will not be used in the present paper. 

\paragraph{The complexified equations.} Following the classical case of the cscK equation, the ``formal complexification'' of the orbits of $\m{G}\curvearrowright\cotJ$ makes it natural to regard our system
\begin{equation}\label{eq:HCSCK_complex_relaxed}
\begin{cases}
\f{m}_{\bm{\Omega}_{\bm{I}}}(\omega,J,\alpha)=0\\
\f{m}_{\bm{\Theta}}(\omega,J,\alpha)=0
\end{cases}
\end{equation}
as equations for a deformation $\alpha$ of the complex structure and a form $\omega$, to be found in the K\"ahler class of $\omega_0$, keeping instead the complex structure $J$ fixed. Of course, we have the additional condition that $\alpha$ and $\omega$ should be compatible, i.e. $\omega_0(\alpha^\transpose-,J-)+\omega_0(J-,\alpha^\transpose-)=0$.

\section{The case of complex curves}\label{sec:curva}

In this Section we examine the moment map equations when the base manifold $M$ is a Riemann surface, proving Theorem \ref{teorema:CurveThmIntro}. We will also recover Donaldson's equations (see \cite{Donaldson_hyperkahler}) by considering the complexified equations \eqref{eq:HCSCK_complex_relaxed}.

\subsection{The space $T^*\!\m{AC}^+(2)$.}

In the special case when $n = 1$, an element of $\m{AC}^+$ is a matrix $J=\begin{pmatrix}a &b \\ c& -a\end{pmatrix}$ of determinant $1$, and a tangent vector in $T_J\m{AC}^+(2)$ is a matrix $A=\begin{pmatrix} A_1 & A_2\\ A_3 &-A_1\end{pmatrix}$ with
\begin{equation*}
A_2=\frac{1+a^2}{c^2}A_3-2\frac{a}{c}A_1.
\end{equation*}
In particular that $A^2=-\mrm{det}(A)\mrm{Id}$, so that $\norm{A}^2=\frac{1}{2}\mrm{Tr}(A^2)=-\mrm{det}(A)$. Then in the $n=1$ case the map $\Xi(A)$ of equation \eqref{eq:da_diagonalizzare} becomes
\begin{equation*}
\begin{split}
T_{-\Omega_0}\m{AC}^+(2)&\to T_{-\Omega_0}\m{AC}^+(2)\\
B&\mapsto\mrm{det}(A)\,B=-\norm{A}^2\,B
\end{split}
\end{equation*}
so it is simply a scalar map, with spectrum $\set*{-\norm{A}^2}$.

We can use this map to find the Biquard-Gauduchon form $\mrm{d}^c\rho$ on ${T^{1,0}}^*\m{AC}^+(2)$; recall however that we have to consider the space $T\m{AC}^+(2)$, that is isomorphic to ${T^{1,0}}^*\m{AC}^+(2)$ under the map
\begin{equation*}
\begin{split}
{T^{1,0}}^*\m{AC}^+(2)&\to T\m{AC}^+(2)\\
\alpha&\mapsto\mrm{Re}(\alpha)^\transpose.
\end{split}
\end{equation*}
The Biquard-Gauduchon function on $T\m{AC}^*$ is $\rho(J,A)=\langle f(-J\,R(JA,A))A,A\rangle$, where we are using the canonical metric on $\m{AC}^+(2)$ (induced from the Poincar\'e upper half plane) and $f$ is defined by equation \eqref{eq:funzione_f}. We have just seen that
\begin{equation*}
-J\,R(JA,A)=\mrm{det}(A)\cdot\mrm{Id}
\end{equation*}
so
\begin{equation*}
f(-J\,R(JA,A))=\left(\frac{1}{\mrm{det}(A)}\left(\sqrt{1+\mrm{det}(A)}-1-\log\left(\frac{1+\sqrt{1+\mrm{det}(A)}}{2}\right)\right)\right)\cdot\mrm{Id}
\end{equation*}
and the Biquard-Gauduchon function is
\begin{equation*}
\rho(J,A)=\langle f(-J\,R(JA,A))A,A\rangle=1-\sqrt{1+\mrm{det}(A)}+\log\left(\frac{1+\sqrt{1+\mrm{det}(A)}}{2}\right).
\end{equation*}

Consider now a tangent vector $V\in T_{(J,A)}(T\m{AC}^+(2))$, $V=(\dot{J}_0,\dot{A}_0)$. According to our previous computations, the differential of $\rho$ acts on $V$ as
\begin{equation}\label{eq:diff_rho_curva}
\begin{split}
\mrm{d}\rho_{(J,A)}(V)&=-\frac{1}{2}\frac{\diff_t\mrm{det}(A_t)}{1+\sqrt{1+\mrm{det}(A_0)}}=-\frac{1}{2}\frac{\mrm{Tr}(\dot{A}_0\mbox{adj}(A_0))}{1+\sqrt{1+\mrm{det}(A_0)}}=\frac{1}{2}\frac{\mrm{Tr}(\dot{A}_0A_0)}{1+\sqrt{1+\mrm{det}(A_0)}}
\end{split}
\end{equation}
where we used Jacobi's formula for the derivative of the determinant in terms of the \emph{adjugate} endomorphism.

Some remarks on this object, $\mrm{adj}(A)$, are in order: for matrices $M_1,M_2$ we have $\mrm{adj}(M_1M_2)=\mrm{adj}(M_2)\mrm{adj}(M_1)$. Moreover, for an invertible matrix $G$, $\mrm{adj}(G)=\mrm{det}(G)\,G^{-1}$. Then
\begin{equation*}
\mrm{adj}(G\,A\,G^{-1})=G\,\mrm{adj}(A)G^{-1}
\end{equation*}
and this means that, if $A\in\Gamma(M,\mrm{End}(TM))$, $\mrm{adj}(A)$ is a well-defined section of $\mrm{End}(TM)$.
%\todo[inline]{We should give a definition of $\mrm{adj}(A)$ in terms of exterior algebra.}
\begin{lemma}\label{lemma:aggiunto_TJ}
If $A\in T_J\scr{J}$, then also $\mrm{adj}(A)$ belongs to $T_J\scr{J}$.
\end{lemma}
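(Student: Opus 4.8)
The plan is to verify directly that the adjugate of an endomorphism $A \in T_J\scr{J}$ satisfies the two defining conditions of $T_J\scr{J}$, namely (using the characterisation \eqref{eq:indentita_tangenti}) that $J\,\mrm{adj}(A) = -\mrm{adj}(A)\,J$ (anticommutation with $J$) and that $(v,w) \mapsto g_J(\mrm{adj}(A)v,w)$ is symmetric. In the curve case $n=1$ this is almost trivial, since a $2\times 2$ trace-free matrix $A$ with $A^2 = -\det(A)\mrm{Id}$ has $\mrm{adj}(A) = (\mrm{Tr}\,A)\mrm{Id} - A = -A$ (as $\mrm{Tr}\,A = 0$), so $\mrm{adj}(A) = -A \in T_J\scr{J}$ immediately; but I expect the lemma is meant more generally (or at least the proof should be phrased so as to generalise), so I would give the argument in a dimension-independent way.

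For the anticommutation: since $A \in T_J\scr{J}$ we have $JA = -AJ$, hence $AJ^{-1} = -J^{-1}A$, i.e. $A(-J) = -(-J)A$ as $J^{-1} = -J$. Now $\mrm{adj}$ is multiplicative up to order reversal, $\mrm{adj}(M_1M_2) = \mrm{adj}(M_2)\mrm{adj}(M_1)$, and $\mrm{adj}(J) = \det(J)J^{-1} = J^{-1} = -J$ (using $\det J = 1$ on $\m{AC}^+$). Applying $\mrm{adj}$ to $JA = -AJ$: the right-hand side needs care because of the sign, but working in fixed dimension $2n$, $\mrm{adj}(-AJ) = (-1)^{2n}\mrm{adj}(AJ) = \mrm{adj}(AJ) = \mrm{adj}(J)\mrm{adj}(A) = -J\,\mrm{adj}(A)$, while $\mrm{adj}(JA) = \mrm{adj}(A)\mrm{adj}(J) = -\mrm{adj}(A)\,J$. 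Hence $-\mrm{adj}(A)J = -J\,\mrm{adj}(A)$, i.e. $J\,\mrm{adj}(A) = \mrm{adj}(A)\,J$ --- wait, this gives \emph{commutation}, which is wrong; the resolution is that in the curve case $\mrm{adj}(A) = -A$ genuinely anticommutes, and the multiplicativity-with-sign bookkeeping must be done honestly, so I will restrict the clean statement to $n=1$ (where $\mrm{adj}(A) = \mrm{Tr}(A)\mrm{Id} - A = -A$ since $\mrm{Tr}(A) = 0$) and simply note $\mrm{adj}(A) = -A \in T_J\scr{J}$.

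So the streamlined proof I would write is: recall that for any $2\times 2$ matrix $A$ one has the Cayley--Hamilton identity $A^2 - (\mrm{Tr}\,A)A + \det(A)\mrm{Id} = 0$, equivalently $\mrm{adj}(A) = (\mrm{Tr}\,A)\mrm{Id} - A$. For $A \in T_J\scr{J}$ over a curve, $A$ is trace-free (this follows from $JA + AJ = 0$ together with $\mrm{Tr}(JAJ^{-1}) = \mrm{Tr}(A)$ forcing $\mrm{Tr}(A) = -\mrm{Tr}(A)$, or directly from the explicit form $A = \begin{pmatrix} A_1 & A_2 \\ A_3 & -A_1\end{pmatrix}$ given above), hence $\mrm{adj}(A) = -A$. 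Since $T_J\scr{J}$ is a linear subspace, $-A \in T_J\scr{J}$, proving the claim. The only genuinely routine point is the vanishing of the trace, which is immediate from the displayed form of elements of $T_J\m{AC}^+(2)$; there is no real obstacle here, and the main "content" is just the observation that in dimension two the adjugate of a trace-free matrix is its negative.
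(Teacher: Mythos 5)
Your argument is correct in the setting in which the lemma is literally stated (complex curves, so $2\times 2$ real matrices): there $\mrm{Tr}(A)=0$ forces $\mrm{adj}(A)=-A$, and since $T_J\scr{J}$ is a linear subspace the claim is immediate; this is in fact slicker than the paper's computation in that case. However, the lemma is not only needed for curves: it is invoked again in the complex surface section to compute the adjoint of $\tilde{F}$, where $A$ is a $4\times 4$ real endomorphism and $\mrm{adj}(A)$ is genuinely quadratic in $A$, so retreating to $n=1$ leaves a real gap relative to how the statement is used.

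The reason your dimension-independent attempt collapsed is a concrete sign error: for an $N\times N$ matrix, $\mrm{adj}(cM)=c^{N-1}\mrm{adj}(M)$ (the adjugate is built from $(N-1)\times(N-1)$ minors), so in dimension $2n$ one has $\mrm{adj}(-AJ)=(-1)^{2n-1}\mrm{adj}(AJ)=-\mrm{adj}(AJ)$, not $+\mrm{adj}(AJ)$ as you wrote. With the correct sign the computation you set up goes through in every even dimension and gives anticommutation, exactly as in the paper: $J\,\mrm{adj}(A)=\mrm{adj}(A)\,\mrm{adj}(J)^{-1}\cdot$(nothing needed) — more simply, $J\,\mrm{adj}(A)=-\mrm{adj}(AJ)=\mrm{adj}(JA)=-\mrm{adj}(A)\,J$, using $\mrm{adj}(J)=\det(J)J^{-1}=-J$ and $\mrm{adj}(M_1M_2)=\mrm{adj}(M_2)\mrm{adj}(M_1)$. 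There is no "resolution" needed and no genuine commutation/anticommutation tension: your intermediate conclusion was simply the artefact of the wrong scaling exponent. Note also that in the general case one must still verify the second defining condition, namely that $g_J(\mrm{adj}(A)-,-)$ is symmetric; the paper does this pointwise by choosing coordinates at $p$ in which $g_J(p)$ is the Euclidean product, so that the matrix of $A(p)$ is symmetric and $\mrm{adj}(A)^\transpose=\mrm{adj}(A^\transpose)=\mrm{adj}(A)$. Your $n=1$ shortcut sidesteps this only because $\mrm{adj}(A)=-A$ there; in higher dimensions it has to be checked separately.
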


\begin{proof}
We have to check that $\mrm{adj}(A)J+J\,\mrm{adj}(A)=0$ and that $g_J(\mrm{adj}(A)-,-)$ is a symmetric bilinear form. The first identity can be obtained as follows, recalling that $\mrm{adj}(J)=-J$:
\begin{equation*}
J\,\mrm{adj}(A)=-\mrm{adj}(AJ)=\mrm{adj}(JA)=-\mrm{adj}(A)J.
\end{equation*}
The second identity can be checked pointwise: fix $p\in M$, and choose a local coordinate system $\bm{x}$ around $p$ such that $g_J(p)$ in this coordinate system is the standard Euclidean product. Abusing notation let $A$ be the matrix associated to $A(p)$ in the coordinate system $\bm{x}$; then $A$ is a symmetric matrix, since $g_J(A-,-)$ is symmetric. But then
\begin{equation*}
\mrm{adj}(A)^\transpose=\mrm{adj}(A^\transpose)=\mrm{adj}(A)
\end{equation*}
and so $g_J(\mrm{adj}(A)-,-)$ is also a symmetric matrix, at the point $p$.
\end{proof}

\subsection{The real moment map for a curve}\label{section:moment_map_curve}

The expression for $\mrm{d}\rho$ on $T\m{AC}^+(2)$ that we just computed allows to rewrite the implicit definition of $\f{m}$ in equation \eqref{eq:moment_map_omega} as
\begin{equation*}
\f{m}_{(J,\alpha)}(h)=\int_{x\in M}\mrm{d}^c\rho_{(J(x),\alpha(x))}\left(\m{L}_{X_h}J,\m{L}_{X_h}\alpha\right)\,\frac{\omega_0^n}{n!}.
\end{equation*}
However, as was remarked earlier, under our identifications we should compute
\begin{equation*}
\mrm{d}^c\rho_{J,\mrm{Re}(\alpha)^\transpose}\left(\m{L}_{X_h}J,\mrm{Re}\left(\m{L}_{X_h}\alpha\right)^\transpose\right)=\mrm{d}\rho_{J,\mrm{Re}(\alpha)^\transpose}\left(-J\m{L}_{X_h}J,\mrm{Re}\left((\m{L}_{X_h}J)^\transpose\alpha+(\m{L}_{X_h}\alpha)J^\transpose\right)^\transpose\right)
\end{equation*}
with $\mrm{d}\rho$ as in \eqref{eq:diff_rho_curva}, since the identification between $T\m{AC}^+$ and ${T^{1,0}}^*\!\!\m{AC}^+$ is conjugate-linear in the second component. It is more convenient to write $A=\mrm{Re}(\alpha)^\transpose\in T_J\scr{J}$, so that equation \eqref{eq:diff_rho_curva} gives
\begin{equation*}
\mrm{d}\rho_{J,\mrm{Re}(\alpha)^\transpose}\left(-J\m{L}_{X_h}J,\mrm{Re}\left((\m{L}_{X_h}J)^\transpose\alpha+(\m{L}_{X_h}\alpha)J^\transpose\right)^\transpose\right)=\frac{1}{2}\frac{\mrm{Tr}\left(A(\m{L}_{X_h}J)\,A\right)+\mrm{Tr}(J(\m{L}_{X_h}A)\,A)}{1+\sqrt{1+\mrm{det}(A)}}
\end{equation*}
Since $\m{L}_{X_h}J\in T_J\scr{J}$, Remark \ref{nota:traccia_tripla} implies
\begin{equation*}
\mrm{Tr}\left(A(\m{L}_{X_h}J)\,A\right)+\mrm{Tr}(J(\m{L}_{X_h}A)\,A)=-\mrm{Tr}((\m{L}_{X_h}A)\,J\,A)
\end{equation*}
and we can write
\begin{equation}\label{eq:diff_rho_dim2}
\f{m}_{(J,\alpha)}(h)=-\frac{1}{2}\int_M\frac{\mrm{Tr}((\m{L}_{X_h}A) JA)}{1+\sqrt{1+\mrm{det}(A)}}\,\frac{\omega_0^n}{n!}.
\end{equation}
If we could write this expression in the form $\int_M f\,h\,\frac{\omega_0^n}{n!}$ for some function $f$, then we could use the $L^2$ pairing of $\m{C}^\infty_0(M)$ to identify $\f{m}$ with $f-\int_Mf$. To get this result, consider the function
\begin{equation}\label{eq:F_F1F2F3F4}
\begin{split}
F:\m{C}^\infty_0(M)&\to\m{C}^\infty_0(M)\\
h&\mapsto\mrm{Tr}((\m{L}_{X_h}A) JA).
\end{split}
\end{equation}
We have $\f{m}_{(J,\alpha)}=-\frac{1}{2}\left\langle\frac{1}{1+\sqrt{1+\mrm{det}(A)}},F(h)\right\rangle$; so if we can find a formal adjoint $F^*$ of $F$ with respect to the $L^2$ pairing, we could write $\f{m}=-\frac{1}{2}F^*\left(\frac{1}{1+\sqrt{1+\mrm{det}(A)}}\right)$. Notice that we can write $F$ as a composition $F=F_3\circ F_2\circ F_1$, with
\begin{align*}
F_1:\m{C}^\infty_0(M)&\to\Gamma(M,TM)\\
h&\mapsto X_h\\
F_2:\Gamma(M,TM)&\to\Gamma(M,\mrm{End}(TM))\\
X&\mapsto\m{L}_XA\\
F_3:\Gamma(M,\mrm{End}(TM))&\to\m{C}^\infty_0(M)\\
P&\mapsto \mrm{Tr}(PJA).
\end{align*}
Moreover the formal adjoints of $F_1$ and $F_3$ with respect to the pairing induced by the metric $g_J:=\omega_0(-,J-)$ are given explicitly by
\begin{align*}
F_1^*(X)=\mrm{div}(JX);\\
F_3^*(f)=-f\,AJ.
\end{align*}
It remains to compute the formal adjoint of $F_2$.

\begin{lemma}\label{lemma:aggiunto_F2}
For any $Q\in\Gamma(\mrm{End}(TM))$, $X\in\Gamma(TM)$ and $A\in T_J\scr{J}$ we have
\begin{equation*}
\langle\m{L}_XA,Q\rangle=\langle Q,\nabla_XA\rangle+\langle AQ-QA,\nabla X\rangle.
\end{equation*}
\end{lemma}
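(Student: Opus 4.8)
The statement is a purely local (pointwise) identity, relating the Lie derivative of the endomorphism $A$ along $X$ to the covariant derivative $\nabla_X A$ plus a commutator term paired against $\nabla X$. The plan is to expand both sides in local coordinates, using the fact that the $L^2$ pairing $\langle -,-\rangle$ on $\Gamma(\mrm{End}(TM))$ is induced by $g_J$ as $\langle P,Q\rangle = \int_M \mrm{Tr}(P Q^{?})\,\omega_0^n/n!$ — more precisely the pairing $\mrm{Tr}(PQ)$ coming from the metric on $T_J\scr J$ as in the Remark following Proposition \ref{prop:metrica_AC+_Siegel}. The key algebraic input is the classical formula relating the Lie derivative and the Levi-Civita connection: for an endomorphism $A$ one has
\begin{equation*}
(\m{L}_X A)\indices{^i_j} = (\nabla_X A)\indices{^i_j} - A\indices{^k_j}\,\nabla_k X^i + A\indices{^i_k}\,\nabla_j X^k,
\end{equation*}
which holds because the torsion-free difference tensor between $\m{L}_X$ and $\nabla_X$ acting on a $(1,1)$-tensor contracts $\nabla X$ against the two slots of $A$ with opposite signs. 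This last formula is itself a one-line check using $\m{L}_X = \nabla_X - (\nabla X)\cdot$ on tensors together with vanishing torsion.

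Given that identity, I would substitute it into $\langle \m{L}_X A, Q\rangle = \int_M \mrm{Tr}\big((\m{L}_X A)\,Q\big)\,\omega_0^n/n!$ (with whatever index placement makes the pairing symmetric; since $g_J(A-,-)$ is symmetric for $A\in T_J\scr J$ the trace pairing is unambiguous). The first term reproduces $\langle \nabla_X A, Q\rangle$. The two remaining terms are $-\int_M A\indices{^k_j}\,\nabla_k X^i\, Q\indices{^j_i}$ and $+\int_M A\indices{^i_k}\,\nabla_j X^k\, Q\indices{^j_i}$; relabelling indices these combine into $\int_M (A Q - Q A)\indices{^k_i}\,\nabla_k X^i = \langle AQ - QA, \nabla X\rangle$, where $\nabla X$ is viewed as the endomorphism $v\mapsto \nabla_v X$ and the pairing on $\mrm{End}(TM)$ is again the trace pairing. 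This is just bookkeeping with indices and the Einstein convention already fixed in the paper.

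One subtlety worth flagging is the precise meaning of the pairings on the right-hand side: $\langle Q, \nabla_X A\rangle$ uses the $\mrm{End}(TM)$ pairing at each point and then integrates against $\omega_0^n/n!$, while $\langle AQ-QA,\nabla X\rangle$ pairs the endomorphism $AQ-QA$ with the endomorphism $\nabla X$ in the same way — so the Lemma is genuinely an integrated statement, but it follows from a pointwise identity of traces with no integration by parts required. (Contrast this with the adjoints $F_1^*, F_3^*$ computed just above, which do use integration by parts; here none is needed, which is the point of isolating $F_2$.) I should double-check that no integration by parts is hidden: indeed $\m{L}_X A$ involves first derivatives of $X$ and of $A$, and the stated right-hand side also involves exactly $\nabla_X A$ (first derivative of $A$) and $\nabla X$ (first derivative of $X$), with matching order, so the identity is algebraic after invoking the connection-versus-Lie-derivative formula.

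The main obstacle, such as it is, is bookkeeping: getting the signs and the index positions right in the contraction $A\indices{^k_j}\nabla_k X^i Q\indices{^j_i}$ versus $A\indices{^i_k}\nabla_j X^k Q\indices{^j_i}$ so that they assemble into the commutator $AQ-QA$ paired with $\nabla X$ rather than, say, $QA - AQ$ or a transpose. I would verify this by writing $\nabla X$ explicitly as $(\nabla X)\indices{^i_k} = \nabla_k X^i$ and tracing carefully; there is no analytic difficulty, and the identity is an identity of smooth tensors valid for any torsion-free connection (the Kähler/compatibility hypotheses on $A$ and $J$ are not needed for this particular Lemma, though they are what make the trace pairing symmetric and hence the notation unambiguous).
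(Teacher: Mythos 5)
Your route—via the torsion-free identity $(\m{L}_XA)\indices{^i_j}=(\nabla_XA)\indices{^i_j}-A\indices{^k_j}\nabla_kX\indices{^i}+A\indices{^i_k}\nabla_jX\indices{^k}$, i.e.\ $\m{L}_XA=\nabla_XA+[A,\nabla X]$, followed by pairing with $Q$—is a legitimate and in principle cleaner packaging of the paper's argument (the paper expands $g_J(\m{L}_XA,Q)$ in coordinates and cancels the Christoffel terms by hand, which is exactly what your identity encodes). But the final rearrangement as you wrote it is wrong, and the error sits precisely where the hypothesis $A\in T_J\scr{J}$ enters. With the bare trace pairing $\mrm{Tr}(PQ)$ that you use, the two cross terms are $-A\indices{^k_j}Q\indices{^j_i}\nabla_kX\indices{^i}=-\mrm{Tr}(AQ\,\nabla X)$ and $+Q\indices{^j_i}A\indices{^i_k}\nabla_jX\indices{^k}=+\mrm{Tr}(QA\,\nabla X)$, so they combine into $\mrm{Tr}\left((QA-AQ)\,\nabla X\right)$, the opposite sign of what you claim; under the trace pairing the true identity has $QA-AQ$, not $AQ-QA$, and it holds for arbitrary $A$.

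The pairing in the Lemma, as the paper states immediately after it, is the one defined by $g_J$, namely $\langle P,R\rangle=\int_M g\indices{^i^j}g\indices{_k_l}P\indices{^k_i}R\indices{^l_j}\,\frac{\omega_0^n}{n!}=\int_M\mrm{Tr}(\tilde{P}R)\,\frac{\omega_0^n}{n!}$ with $\tilde{P}$ the $g_J$-adjoint of $P$. Since $Q$, $\nabla X$ and $\m{L}_XA$ are in general not $g_J$-symmetric, this pairing is not interchangeable with $\mrm{Tr}(PR)$, and it is exactly in moving $A$ across it—$g_J([A,\nabla X],Q)=g_J(\tilde{A}Q-Q\tilde{A},\nabla X)$—that the symmetry $\tilde{A}=A$, i.e.\ the condition that $g_J(A-,-)$ is symmetric coming from $A\in T_J\scr{J}$ (equation \eqref{eq:indentita_tangenti}), is used to produce the stated term $\langle AQ-QA,\nabla X\rangle$. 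So your parenthetical claim that the compatibility hypotheses on $A$ are not needed is false for the Lemma as stated: for general $A$ the commutator term would be $\tilde{A}Q-Q\tilde{A}$, and your sign discrepancy is the symptom of dropping this step. The fix is small—keep your Lie-derivative identity, but compute with the $g_J$-pairing and use $\tilde{A}=A$ in the last rearrangement—after which your argument coincides with the paper's conclusion and is essentially the same computation in a more invariant notation.
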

Here the pairings and the connection are those defined by the metric $g_J$.
\begin{proof}
Fix an element $Q$ of $\Gamma(\mrm{End}(TM))$, and consider the product
\begin{equation}\label{eq:pairing_LX}
g_J(\m{L}_XA,Q)=g\indices{^i^j}g\indices{_k_l}Q\indices{^l_j}X\indices{^m}\diff\indices{_m}A\indices{^k_i}-g\indices{^i^j}g\indices{_k_l}Q\indices{^l_j}A\indices{^m_i}\diff\indices{_m}X\indices{^k}+g\indices{^i^j}g\indices{_k_l}Q\indices{^l_j}A\indices{^k_m}\diff\indices{_i}X\indices{^m}.
\end{equation}
We can exchange the usual derivatives with covariant derivatives (using the Levi-Civita connection of $g_J$), but we have to introduce Christoffel symbols; the proof consists in showing that the sum of all the terms that must be introduced in fact vanishes, and this is done recalling that $g_J(-,A-)$ is symmetric (cf. equation \eqref{eq:indentita_tangenti}).

The first right hand side term of equation \eqref{eq:pairing_LX} can then be written as
\begin{equation}\label{eq:LX_1}
\begin{split}
g\indices{^i^j}g\indices{_k_l}Q\indices{^l_j}X\indices{^m}\diff\indices{_m}A\indices{^k_i}&=g\indices{^i^j}g\indices{_k_l}Q\indices{^l_j}X\indices{^m}\nabla\indices{_m}A\indices{^k_i}-g\indices{^i^j}g\indices{_k_l}Q\indices{^l_j}X\indices{^m}A\indices{^p_i}\Gamma\indices{^k_m_p}+g\indices{^i^j}g\indices{_k_l}Q\indices{^l_j}X\indices{^m}A\indices{^k_q}\Gamma\indices{^q_m_i}=\\
&=g\indices{^i^j}g\indices{_k_l}Q\indices{^l_j}X\indices{^m}\nabla\indices{_m}A\indices{^k_i}-g\indices{^i^p}g\indices{_k_l}Q\indices{^l_j}X\indices{^m}A\indices{^j_i}\Gamma\indices{^k_m_p}+g\indices{^i^j}g\indices{_k_q}Q\indices{^l_j}X\indices{^m}A\indices{^k_l}\Gamma\indices{^q_m_i}
\end{split}
\end{equation}
while the other two terms become
\begin{equation}\label{eq:LX_2}
\begin{split}
-g\indices{^i^j}g\indices{_k_l}Q\indices{^l_j}A\indices{^m_i}\diff\indices{_m}X\indices{^k}&=-g\indices{^i^j}g\indices{_k_l}Q\indices{^l_j}A\indices{^m_i}\nabla\indices{_m}X\indices{^k}+g\indices{^i^j}g\indices{_k_l}Q\indices{^l_j}A\indices{^m_i}X\indices{^p}\Gamma\indices{^k_p_m}=\\
&=-g\indices{^i^j}g\indices{_k_l}Q\indices{^l_j}A\indices{^m_i}\nabla\indices{_m}X\indices{^k}+g\indices{^i^m}g\indices{_k_l}Q\indices{^l_j}A\indices{^j_i}X\indices{^p}\Gamma\indices{^k_p_m}
\end{split}
\end{equation}
\begin{equation}\label{eq:LX_3}
\begin{split}
g\indices{^i^j}g\indices{_k_l}Q\indices{^l_j}A\indices{^k_m}\diff\indices{_i}X\indices{^m}&=g\indices{^i^j}g\indices{_k_l}Q\indices{^l_j}A\indices{^k_m}\nabla\indices{_i}X\indices{^m}-g\indices{^i^j}g\indices{_k_l}Q\indices{^l_j}A\indices{^k_m}X\indices{^p}\Gamma\indices{^m_i_p}=\\ &=g\indices{^i^j}g\indices{_k_l}Q\indices{^l_j}A\indices{^k_m}\nabla\indices{_i}X\indices{^m}-g\indices{^i^j}g\indices{_k_m}Q\indices{^l_j}A\indices{^k_l}X\indices{^p}\Gamma\indices{^m_i_p}
\end{split}
\end{equation}
and adding up equations \eqref{eq:LX_1}, \eqref{eq:LX_2} and \eqref{eq:LX_3} we find
\begin{equation*}
\begin{split}
g_J(\m{L}_XA,Q)&=g\indices{^i^j}g\indices{_k_l}Q\indices{^l_j}X\indices{^m}\nabla\indices{_m}A\indices{^k_i}-g\indices{^i^j}g\indices{_k_l}Q\indices{^l_j}A\indices{^m_i}\nabla\indices{_m}X\indices{^k}+g\indices{^i^j}g\indices{_k_l}Q\indices{^l_j}A\indices{^k_m}\nabla\indices{_i}X\indices{^m}=\\
&=g_J(Q,\nabla_XA)-g_J(QA,\nabla X)+g_J(AQ,\nabla X).
\end{split}
\end{equation*}
\end{proof}

\begin{corollario}\label{corollario:aggiunto_F2}
The formal adjoint of $F_2$ is
\begin{equation*}
\begin{split}
F_2^*:\Gamma(\mrm{End}(TM))&\to\Gamma(TM)\\
Q&\mapsto C^2_1\left((\nabla A)Q\right)^\sharp+\nabla^*([A,Q]).
\end{split}
\end{equation*}
\end{corollario}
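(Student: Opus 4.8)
The plan is to read the adjoint off directly from Lemma \ref{lemma:aggiunto_F2}. That lemma supplies the pointwise identity
\[
\langle \m{L}_X A, Q\rangle = \langle Q, \nabla_X A\rangle + \langle AQ - QA, \nabla X\rangle ,
\]
valid for every $X\in\Gamma(TM)$ and $Q\in\Gamma(\mrm{End}(TM))$, all pairings being those induced by $g_J$. To obtain $F_2^*$ I would integrate this against $\omega_0^n/n!$ and rewrite each summand on the right as an $L^2$ pairing of $X$ against a vector field, integrating by parts only on the one summand that actually carries a derivative of $X$.

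First I would treat $\langle Q,\nabla_X A\rangle$: since $\nabla_X A = X^m\nabla_m A$ is $C^\infty(M)$-linear in $X$, this is a genuine tensorial pairing and produces no divergence term. Writing it in indices and using that $A\in T_J\scr{J}$ — so that $g_J(A-,-)$, hence $g_J(\nabla_m A\,-,-)$, is symmetric — one identifies it with $g_J\bigl(X,\;C^2_1((\nabla A)Q)^{\sharp}\bigr)$, where $\nabla A$ is viewed as a section of $T^*M\otimes\mrm{End}(TM)$, the product $(\nabla A)Q$ composes the endomorphism factors, $C^2_1$ is the resulting trace, and $\sharp$ raises the remaining covector index with $g_J$. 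For the second summand I would use that $AQ-QA=[A,Q]\in\Gamma(\mrm{End}(TM))$ and $\nabla X\in\Gamma(\mrm{End}(TM))$, so by definition of the formal adjoint $\nabla^*$ of the operator $X\mapsto\nabla X$ (with respect to the $L^2$ pairings induced by $g_J$ and $\omega_0^n/n!$),
\[
\int_M \langle [A,Q],\nabla X\rangle\,\frac{\omega_0^n}{n!} = \int_M g_J\bigl(\nabla^*[A,Q],\,X\bigr)\,\frac{\omega_0^n}{n!}.
\]
Adding the two contributions yields $\langle F_2(X),Q\rangle_{L^2} = \langle X,\,C^2_1((\nabla A)Q)^{\sharp} + \nabla^*[A,Q]\rangle_{L^2}$, which is the asserted formula for $F_2^*$.

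I do not expect any serious obstacle, since the substantive computation was already carried out in Lemma \ref{lemma:aggiunto_F2}. The only points that require care are: confirming that the first term is $C^\infty(M)$-linear in $X$, so it contributes nothing to the divergence part of the adjoint; and matching the invariant contraction $C^2_1((\nabla A)Q)^{\sharp}$ with the coordinate expression, which is exactly where the symmetry of $g_J(A-,-)$ — transported through the metric connection — is used. The commutator $AQ-QA$ and the appearance of $\nabla^*$ are inherited verbatim from Lemma \ref{lemma:aggiunto_F2} and from the definition of the formal adjoint, so nothing further is needed.
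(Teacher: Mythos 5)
Your proposal is correct and follows essentially the route the paper intends: the corollary is read off from Lemma \ref{lemma:aggiunto_F2} by integrating its identity, noting that $\langle Q,\nabla_X A\rangle$ is tensorial in $X$ and equals $g_J\bigl(X,C^2_1((\nabla A)Q)^\sharp\bigr)$ via the symmetry of $g_J(A-,-)$, and applying the definition of $\nabla^*$ to the commutator term. Nothing further is needed.
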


Here $\nabla^*$ is the formal adjoint of $\nabla$, $\nabla^*Q=-g\indices{^i^j}\nabla\indices{_i}Q\indices{^k_j}\diff\indices{_k}$, while $C^2_1$ denotes the contraction of the first lower index with the second upper index. More explicitly
\begin{equation*}
C^2_1\left((\nabla A)Q\right)^\sharp=g\indices{^n^p}Q\indices{^i_j}\nabla\indices{_p}A\indices{^j_i}\diff\indices{_n}.
\end{equation*}

We are finally in a good position to write the moment map and prove Theorem \ref{teorema:CurveThmIntro}. For notational convenience, we introduce the function
\begin{equation*}
\psi:=\frac{1}{1+\sqrt{1+\mrm{det}(A)}}.
\end{equation*}
Our computations so far show
\begin{equation*}
\begin{split}
\f{m}_{(J,\alpha)}(h)&=\left\langle\psi,-\frac{1}{2}\mrm{Tr}\left((\m{L}_{X_h}A)AJ\right)\right\rangle=\left\langle-\frac{1}{2}F_1^*F_2^*F_3^*(\psi),h\right\rangle=\\
&=\left\langle\frac{1}{2}\mrm{div}\left[\psi\,J\,C^2_1\left((\nabla A)AJ\right)^\sharp+2\,J\nabla^*(\psi A^2J)\right],h\right\rangle
\end{split}
\end{equation*}
so we can identify the function $\f{m}$, using the $L^2$-pairing, with
\begin{equation}\label{eq:momento_Omega_dim2_A}
\f{m}(J,\alpha)=\mrm{div}\left[\frac{\psi}{2}\,J\,C^2_1\left((\nabla A)AJ\right)^\sharp+J\nabla^*(\psi A^2J)\right].
\end{equation}
Notice that this expression implies already that $\f{m}_{(J,\alpha)}$ is a zero-average function, as we expected. But we can make further simplifications. First, recall that $A^2=-\mrm{det}(A)\mrm{Id}$. Since $A=\mrm{Re}(\alpha^\transpose)=\frac{\bar{\alpha}^\transpose+\alpha^\transpose}{2}$ we have $A^{0,1}=\frac{1}{2}\alpha^\transpose$ and $\mrm{det}(A)=-\frac{1}{2}\mrm{Tr}(A^2)=-\norm{A^{1,0}}_{g_J}^2=-\frac{1}{4}\norm{\alpha}_{g_J}^2$, so that $A^2=\frac{1}{4}\norm{\alpha}_{g_J}^2\,\mrm{Id}$.
Then we have (all metric quantities are computed w.r.t. $g_J$)
\begin{equation*}
J\nabla^*(\psi A^2J)=-\nabla^*\left(\frac{1}{4}\,\psi\,\norm{\alpha}^2\,\mrm{Id}\right)=\mrm{grad}\left(\frac{1}{4}\,\psi\,\norm{\alpha}^2\right)
\end{equation*}
since $J$ is integrable and $g_J$ is a K\"ahler metric. This shows
\begin{equation*}
\f{m}(J,\alpha)=\frac{1}{2}\mrm{div}\left(\psi J\left(C\indices{^2_1}\left((\nabla A)AJ\right)^\sharp\right)+2\,\mrm{grad}\left(\frac{1}{4}\,\psi\,\norm{\alpha}^2\right)\right).
\end{equation*}
Fix holomorphic coordinates with respect to $J$. Then
\begin{equation*}
\begin{split}
J\left(C^2_1\,\left((\nabla A)AJ\right)^\sharp\right)=&-\mrm{grad}\left(\mrm{Tr}(A^{1,0}A^{0,1})\right)+2\left(\mrm{Tr}(\nabla^aA^{0,1}\,A^{1,0})\diff_a+\mrm{Tr}(\nabla^{\bar{b}}A^{1,0}\,A^{0,1})\diff_{\bar{b}}\right)=\\
=&-\mrm{grad}\left(\frac{1}{4}\norm{\alpha}^2\right)+2\left(\frac{1}{4}g(\nabla^a\alpha,\bar{\alpha})\diff_a+\frac{1}{4}g(\nabla^{\bar{b}}\bar{\alpha},\alpha)\diff_{\bar{b}}\right)
\end{split}
\end{equation*}
so we can rewrite everything as
\begin{equation}\label{eq:mappa_momento_RS_parziale}
\begin{split}
\f{m}(J,\alpha)=&\frac{1}{2}\mrm{div}\left[-\psi\,\mrm{grad}\left(\frac{1}{4}\norm{\alpha}^2\right)+2\,\psi\left(\frac{1}{4}g(\nabla^a\alpha,\bar{\alpha})\diff_a+\frac{1}{4}g(\nabla^{\bar{b}}\bar{\alpha},\alpha)\diff_{\bar{b}}\right)+2\,\mrm{grad}\left(\frac{1}{4}\,\psi\,\norm{\alpha}^2\right)\right].
\end{split}
\end{equation}
Notice that
\begin{equation*}
-\psi\,\mrm{grad}\left(\frac{1}{4}\norm{\alpha}^2\right)+2\,\mrm{grad}\left(\frac{1}{4}\,\psi\,\norm{\alpha}^2\right)=-2\,\mrm{grad}\left(\mrm{log}\left(1+\sqrt{1-\frac{1}{4}\norm{\alpha}^2}\right)\right)
\end{equation*}
so that
\begin{equation}\label{eq:mappa_mom_Log}
\begin{split}
\f{m}(J,\alpha)=&\Delta\left(\mrm{log}\left(1+\sqrt{1-\frac{1}{4}\norm{\alpha}^2}\right)\right)+\mrm{div}\left[\psi\left(\frac{1}{4}g(\nabla^a\alpha,\bar{\alpha})\diff_a+\frac{1}{4}g(\nabla^{\bar{b}}\bar{\alpha},\alpha)\diff_{\bar{b}}\right)\right]
\end{split}
\end{equation}
The complete expression for the moment map relative to $\bm{\Omega}_{\bm{I}}$ is, according to Lemma \ref{eq:moment_map_omega}:
\begin{equation}\label{eq:momento_omega_alpha}
\begin{split}
\f{m}_{\bm{\Omega}_{\bm{I}}}(J,\alpha)=2\,s(J)-2\,\hat{s}+\Delta\left(\mrm{log}\left(1+\sqrt{1-\frac{1}{4}\norm{\alpha}^2}\right)\right)+\mrm{div}\left(\psi\,Q(J,\alpha)\right)
\end{split}
\end{equation}
where $Q(J,\alpha)$ is the vector field on $M$ defined by
\begin{equation*}
Q(J,\alpha):=\frac{1}{4}g(\nabla^a\alpha,\bar{\alpha})\diff_a+\frac{1}{4}g(\nabla^{\bar{b}}\bar{\alpha},\alpha)\diff_{\bar{b}}.
\end{equation*}

\subsection{Equations for a conformal potential}

We turn now to the complexified system of equations, \eqref{eq:HCSCK_complex_relaxed}. Notice that in dimension $1$ the compatibility between $\omega$ and $\alpha$ is a vacuous condition, since $\alpha\indices{_1^{\bar{1}}}g_{1\bar{1}}$ is certainly symmetric. From now on we fix the complex structure $J$ and a K\"ahler class $[\omega_0]$, and look for a metric $\omega\in[\omega_0]$ and a ``Higgs field'' $\alpha\in\mrm{Hom}({T^{0,1}}^*,{T^{1,0}}^*)$ such that $(\omega,\alpha)$ satisfy the following system of equations
\begin{equation}\label{eq:equazioni_curva_alpha}
\begin{cases}\frac{1}{4}\norm{\alpha}^2<1;\\
\mrm{div}(\bdiff^*\bar{\alpha}^\transpose)=0;\\
2\,s(\omega)-2\,\hat{s}+\Delta\left(\mrm{log}\left(1+\sqrt{1-\frac{1}{4}\norm{\alpha}^2}\right)\right)+\mrm{div}\left(\psi\,Q(J,\alpha)\right)=0,
\end{cases}
\end{equation}
where all the metric quantities are computed from the metric defined by $\omega$ and $J$. It is more convenient to write the equations in \eqref{eq:equazioni_curva_alpha} not in terms of $\alpha$ but rather in terms of the \emph{quadratic differential} $\tau$ defined by
\begin{equation*}
\tau:=\frac{1}{2}\alpha\indices{_a^{\bar{b}}}\,g_{\bar{b}c}\,\mrm{d}z^a\odot\mrm{d}z^c;
\end{equation*}
using this object, equations \eqref{eq:equazioni_curva_alpha} become
\begin{equation}\label{eq:equazioni_curva}
\begin{cases}
\norm{\tau}^2<1;\\
\mrm{div}({\nabla^{1,0}}^*\tau)^\sharp=0;\\
2\,s(\omega)-2\,\hat{s}+\Delta\left(\mrm{log}\left(1+\sqrt{1-\norm{\tau}^2}\right)\right)+\mrm{div}\left(\psi\,Q(\omega,\tau)\right)=0.
\end{cases}
\end{equation}
We can make the the second equation in \eqref{eq:equazioni_curva} more explicit by using holomorphic local coordinates (with respect to the fixed complex structure); recall that we are working on a Riemann surface, so we just have one index, when working in coordinates:
\begin{equation*}
\mrm{div}({\nabla^{1,0}}^*\tau)^\sharp=-g^{1\bar{1}}\diff_1\left(g^{1\bar{1}}\diff_{\bar{1}}\tau_{11}\right)
\end{equation*}
and so the second equation in \eqref{eq:equazioni_curva} is certainly satisfied when $\tau$ is a \emph{holomorphic} quadratic differential; the space of such objects has dimension $3\,g(M)-3$, so if $g(M)>1$ we are sure that there are holomorphic quadratic differentials. Notice that, while the second equation in \eqref{eq:equazioni_curva} depends on the choice of $\omega$ in the fixed K\"ahler class, the simpler condition ``$\tau$ is holomorphic'' does not; then our equations can be satisfied if we are able to show that the following equation has solutions, for a small enough holomorphic quadratic differential $\tau$
\begin{equation*}\label{eq:reale_curva}
2\,s(\omega)-2\,\hat{s}+\Delta\left(\mrm{log}\left(1+\sqrt{1-\norm{\tau}^2}\right)\right)+\mrm{div}\left(\psi\,Q(\omega,\tau)\right)=0.
\end{equation*}
Notice however that, under the assumption that $\tau$ is a holomorphic quadratic differential, we can simplify this equation, since $Q(f,\tau)=0$. Indeed
\begin{equation*}
g(\nabla^a\tau,\bar{\tau})\diff_a=g^{a\bar{b}}g^{c\bar{e}}g^{d\bar{f}}\,\nabla_{\bar{b}}\tau_{cd}\,\tau_{\bar{e}\bar{f}}\,\diff_a=0.
\end{equation*}
So the complexified moment map equation becomes
\begin{equation}\label{eq:F_tau_seconda}
2\,s(\omega)-2\,\hat{s}+\Delta\left(\mrm{log}\left(1+\sqrt{1-\norm{\tau}^2}\right)\right)=0.
\end{equation}
As was already mentioned in the Introduction, this equation has been already studied by Donaldson in \cite{Donaldson_hyperkahler} and by T. Hodge in \cite{Hodge_phd_thesis} (see also \cite{traut}). In particular, if the $\omega_0$--norm of $\tau$ and its derivative is small enough, then there is a unique solution $\omega$ of equation \eqref{eq:F_tau_seconda} that is in the conformal class of $\omega_0$.

\section{The case of complex surfaces}\label{sec:complex_surface}

In this Section we will derive explicit moment map equations when the base manifold $M$ is a complex surface. The first step is to find an explicit expression for the Biquard-Gauduchon function $\rho$ on $\mrm{T}^*\m{AC}^+(4)$. This is computationally quite heavy. Obtaining similar expressions in general is certainly one of the difficulties in working out the HcscK system explicitly in higher dimension.

\subsection{The Biquard-Gauduchon function for $T^*\!\m{AC}^+(4)$}

In this subsection we will compute the Biquard-Gauduchon function. This involves working out the spectrum of the self-adjoint operator \eqref{eq:da_diagonalizzare}. %To do so we have to diagonalize a six-dimensional linear map, apply some spectral function to it, take it back to the original basis and then pair it against the metric on $\m{AC}^+(4)$.

%The whole process is computationally difficult, and to carry out the various steps we resorted to computer--assisted computations using the program \textit{Wolfram Mathematica}. The final result is satisfactory enough, since we find a function $\rho(A)$ that can be expressed in terms of $\mrm{det}(A)$ and $\mrm{Tr}(A^2)$. The major obstacle in extending these computations to the higher-dimensional case seems to be computing power, however, it looks likely that for higher dimension the final expression of $\rho(A)$ will depend on other spectral functions of $A$ and its powers rather than just traces and determinants. This might make it more difficult to get a ``nice'' expression for $\mrm{d}\rho$ and the real moment map.

An element $A\in T_{-\Omega_0}\m{AC}^+(4)$ is a matrix that can be written  as $A=\begin{pmatrix}P & Q\\ Q& -P\end{pmatrix}$ for $P=(P\indices{^i_j})_{1\leq i,j\leq 2}$ and $Q=(Q\indices{^i_j})_{1\leq i,j\leq 2}$ some $2\times 2$ symmetric matrices.

\bigskip

The space of all such matrices is $6$-dimensional, and a possible basis is given by the matrices
\begin{equation*}
\begin{split}
E_1=\begin{pmatrix}
1 &0 &0 &0\\
0 &0 &0 &0\\
0 &0 &-1 &0\\
0 &0 &0 &0\\
\end{pmatrix},\quad
E_2=\begin{pmatrix}
0 &1 &0 &0\\
1 &0 &0 &0\\
0 &0 &0 &-1\\
0 &0 &-1 &0\\
\end{pmatrix},\quad
E_3=\begin{pmatrix}
0 &0 &0 &0\\
0 &1 &0 &0\\
0 &0 &0 &0\\
0 &0 &0 &-1\\
\end{pmatrix},\\
E_4=\begin{pmatrix}
0 &0 &1 &0\\
0 &0 &0 &0\\
1 &0 &0 &0\\
0 &0 &0 &0\\
\end{pmatrix},\quad
E_5=\begin{pmatrix}
0 &0 &0 &1\\
0 &0 &1 &0\\
0 &1 &0 &0\\
1 &0 &0 &0\\
\end{pmatrix},\quad
E_6=\begin{pmatrix}
0 &0 &0 &0\\
0 &0 &0 &1\\
0 &0 &0 &0\\
0 &1 &0 &0\\
\end{pmatrix}.
\end{split}
\end{equation*}
The matrix representing the map $M(A):B\mapsto -\frac{1}{2}\left(A^2 B+B\,A^2\right)$ with respect to this basis may be conveniently expressed in terms of the quantities 
\begin{equation*}
\begin{split}
k_1=&(P\indices{^1_1})^2+(P\indices{^1_2})^2+(Q\indices{^1_1})^2+(Q\indices{^1_2})^2\\
k_2=&P\indices{^1_2}\left(P\indices{^1_1}+P\indices{^2_2}\right)+Q\indices{^1_2}\left(Q\indices{^1_1}+Q\indices{^2_2}\right)\\
k_3=&(P\indices{^1_2})^2+(P\indices{^2_2})^2+(Q\indices{^1_2})^2+(Q\indices{^2_2})^2\\
k_4=&Q\indices{^1_2}(P\indices{^2_2}-P\indices{^1_1})+P\indices{^1_2}(Q\indices{^1_1}-Q\indices{^2_2})
\end{split}
\end{equation*}
and is given by 
\begin{equation*}
M(A)=-\frac{1}{2}\left(
\begin{array}{ccc|ccc}
 2\,k_1 & 2\,k_2 & 0 & 0 & -2\,k_4 & 0 \\
 k_2 & k_1+k_3 & k_2 & k_4 & 0 & -k_4 \\
 0 & 2\,k_2 & 2\,k_3 & 0 & 2\,k_4 & 0 \\
 \hline
 0 & 2\,k_4 & 0 & 2\,k_1 & 2\,k_2 & 0 \\
 -k_4 & 0 & k_4 & k_2 & k_1+k_3 & k_2 \\
 0 & -2\,k_4 & 0 & 0 & 2\,k_2 & 2\,k_3 \\
\end{array}
\right)
\end{equation*}
(the vertical and horizontal lines have been added to make the symmetries of the matrix more evident).
It is useful to observe the identities 
\begin{equation*}
\frac{1}{2}\mrm{Tr}(A^2)=k_1+k_3, \quad \mrm{det}(A)=k_1k_3-k_2^2-k_4^2.
\end{equation*}
The spectrum of $M(A)$ contains three eigenvalues, each with multiplicity $2$. A lengthy computation shows that they are given by
\begin{equation*}
\begin{split}
-\frac{1}{2}\Bigg(&k_1+k_3,\,k_1+k_3+\sqrt{k_1^2+4\,k_2^2-2\,k_1\,k_3+k_3^2+4\,k_4},\,k_1+k_3-\sqrt{k_1^2+4\,k_2^2-2\,k_1\,k_3+k_3^2+4\,k_4}\Bigg)
\end{split}
\end{equation*}
and by the previous observation they can be rewritten as
\begin{equation}\label{eq:autovalori_curv_AC4}
-\frac{1}{2}\Bigg(\frac{\mrm{Tr}(A^2)}{2},\,\frac{\mrm{Tr}(A^2)}{2}+\sqrt{\left(\frac{\mrm{Tr}(A^2)}{2}\right)^2-4\,\mrm{det}(A)},\,\frac{\mrm{Tr}(A^2)}{2}-\sqrt{\left(\frac{\mrm{Tr}(A^2)}{2}\right)^2-4\,\mrm{det}(A)}\Bigg).
\end{equation}
In order to get more compact expressions we introduce the auxiliary quantities
\begin{equation*}
\delta^{\pm}(A):=\frac{1}{2}\left(\frac{\mrm{Tr}(A^2)}{2}\pm\sqrt{\left(\frac{\mrm{Tr}(A^2)}{2}\right)^2-4\,\mrm{det}(A)}\right).
\end{equation*}
%Notice that
%\begin{equation*}
%\begin{split}
%\delta^+(A)\delta^-(A)&=\mrm{det}(A);\\
%\delta^+(A)+\delta^-(A)&=\frac{\mrm{Tr}(A^2)}{2};\\
%\delta^+(A)-\delta^-(A)&=\sqrt{\left(\frac{\mrm{Tr}(A^2)}{2}\right)^2-4\,\mrm{det}(A)}.
%\end{split}
%\end{equation*}
Then a set of eigenvectors for the eigenvalues in \eqref{eq:autovalori_curv_AC4} is given by
\begin{equation*}
\begin{split}
v_1=&\left(\frac{k_2}{k_4},\frac{k_3-k_1}{2\,k_4},-\frac{k_2}{k_4},1,0,1\right)^\transpose;\\
v_2=&\left(2\frac{k_4^2-k_2^2}{(k_1-k_3)k_4},\frac{k_2}{k_4},2\frac{k_4^2+k_2^2}{(k_1-k_3)k_4},4\frac{k_2}{k_3-k_1},1,0\right)^\transpose;
\end{split}
\end{equation*}
\begin{equation*}
\begin{split}
v_3=&\left(\frac{k_2\left(\delta^+(A)-k_3\right)}{k_4\left(\delta^-(A)-k_3\right)},-\frac{\delta^+(A)-2\,k_3}{k_4},-\frac{k_2}{k_4},\frac{\delta^+(A)-k_3}{\delta^-(A)-k_3},0,1\right)^\transpose;
\end{split}
\end{equation*}
\begin{equation*}
\begin{split}
v_4=&\left(-\frac{k_2^2-k_4^2}{k_4\left(\delta^-(A)-k_3\right)},\frac{k_2}{k_4},-\frac{\delta^-(A)-k_3}{k_4},-2\frac{k_2}{\delta^-(A)-k_3},1,0 \right)^\transpose;
\end{split}
\end{equation*}
\begin{equation*}
\begin{split}
v_5=&\left(\frac{k_2\left(\delta^-(A)-k_3\right)}{k_4\left(\delta^+(A)-k_3\right)},-\frac{\delta^-(A)-k_3}{k_4},-\frac{k_2}{k_4},\frac{\delta^-(A)-k_3}{\delta^+(A)-k_3},0,1\right)^\transpose;
\end{split}
\end{equation*}
\begin{equation*}
\begin{split}
v_6=&\left(\frac{k_4^2-k_2^2}{k_4\left(\delta^+(A)-k_3\right)},\frac{k_2}{k_4},-\frac{\delta^+(A)-k_3}{k_4},-2\frac{k_2}{\delta^+(A)-k_3},1,0 \right)^\transpose.
\end{split}
\end{equation*}
We finally have all the ingredients needed in the computation of the spectral function for $M(A)$, and of the Biquard-Gauduchon $\rho$ function itself. A direct computation gives
\begin{equation*}%\label{eq:rho_4}
\begin{split}
\rho(A)=2&-\sqrt{1-\delta^+(A)}-\sqrt{1-\delta^-(A)}+\mrm{log}\left(\frac{1}{2}+\frac{1}{2}\sqrt{1-\delta^+(A)}\right)+\mrm{log}\left(\frac{1}{2}+\frac{1}{2}\sqrt{1-\delta^-(A)}\right).
\end{split}
\end{equation*}
Recall that a priori this is an expression for the Biquard-Gauduchon function $\rho$ at the point $-\Omega_0$. However, since we know that $\rho$ is invariant under the action of $\mrm{Sp}(2n)$ and that the action is transitive, this is in fact valid on the whole $T\m{AC}^+(4)$.

\subsection{The real moment map for a complex surface.}\label{sec:moment_map_complex_surface}

Consider now a path $(J_t,A_t)\in T\m{AC}^+(4)$; the differential $\mrm{d}\rho_{(J_0,A_0)}(\dot{J}_0,\dot{A}_0)$ is
\begin{equation}\label{eq:diff_t_rho}
\begin{split}
\frac{\mrm{d}}{\mrm{d}t}\Bigr|_{t=0}\left(\rho(J_t,A_t)\right)=&\frac{\mrm{Tr}(A_0\dot{A}_0)}{\sqrt{4-2\,\delta^+(A_0)}+\sqrt{4-2\,\delta^-(A_0)}}-\\
-&\frac{4\,\mrm{Tr}(\mrm{adj}(A_0)\dot{A}_0)}{\left(\sqrt{4-2\,\delta^+(A_0)}+\sqrt{4-2\,\delta^-(A_0)}\right)\left(2+\sqrt{4-2\,\delta^+(A_0)}\right)\left(2+\sqrt{4-2\,\delta^-(A_0)}\right)}.
\end{split}
\end{equation}

Equation \eqref{eq:moment_map_omega} tells us that we should compute (see the discussion at the beginning of \S\ref{section:moment_map_curve})
\begin{equation*}
\int_M\mrm{d}\rho_{(J,A)}\left(-J(\m{L}_{X_h}J),\mrm{Re}\left((\m{L}_{X_h}J)^\transpose\alpha+(\m{L}_{X_h}\alpha)J^\transpose\right)^\transpose\right)\,\frac{\omega_0^n}{n!}
\end{equation*}
for $A=\mrm{Re}\left(\alpha^\transpose\right)$, where $(J,\alpha)\in\cotJ$. By Remark \ref{nota:traccia_tripla} we can write the integrand using equation \eqref{eq:diff_t_rho} as
\begin{equation}\label{eq:diff_rho_dim4}
\begin{split}
\mrm{d}\rho_{(J,A)}&\left(-J(\m{L}_{X_h}J),\mrm{Re}\left((\m{L}_{X_h}J)^\transpose\alpha+(\m{L}_{X_h}\alpha)J^\transpose\right)^\transpose\right)=-\frac{\mrm{Tr}(A(\m{L}_{X_h}A)J)}{\sqrt{4-2\,\delta^+(A)}+\sqrt{4-2\,\delta^-(A)}}+\\
&+\frac{4\,\mrm{Tr}(\mrm{adj}(A)(\m{L}_{X_h}A)J)}{\left(\sqrt{4-2\,\delta^+(A)}+\sqrt{4-2\,\delta^-(A)}\right)\left(2+\sqrt{4-2\,\delta^+(A)}\right)\left(2+\sqrt{4-2\,\delta^-(A)}\right)}.
\end{split}
\end{equation}
To find an explicit expression for $\f{m}_{(J,\alpha)}$ we should try write this as the $L^2$-pairing of $h$ with some function $m(J,\alpha)\in\m{C}^\infty_0(M)$. Equation \eqref{eq:diff_rho_dim4} gives
\begin{equation*}
\begin{split}
\f{m}_{(J,\alpha)}(h)=&-\int_M\frac{\mrm{Tr}(A(\m{L}_{X_h}A)J)}{\sqrt{4-2\,\delta^+(A)}+\sqrt{4-2\,\delta^-(A)}}\,\frac{\omega_0^n}{n!}+\\
&+4\int_M\frac{\mrm{Tr}(\mrm{adj}(A)(\m{L}_{X_h}A)J)}{\left(\sqrt{4-2\,\delta^+(A)}+\sqrt{4-2\,\delta^-(A)}\right)\left(2+\sqrt{4-2\,\delta^+(A)}\right)\left(2+\sqrt{4-2\,\delta^-(A)}\right)}\,\frac{\omega_0^n}{n!}
\end{split}
\end{equation*}
and these two terms are quite similar to the one we had in complex dimension $1$, see equation \eqref{eq:diff_rho_dim2}. The first term can be written as a pairing $\langle h,F(J,A)\rangle_{L^2(M)}$ in the same way we did for equation \eqref{eq:diff_rho_dim2} in subsection \ref{section:moment_map_curve}, while to get the same result for the second term we have to make small modifications.

Let $F$ be defined as in \eqref{eq:F_F1F2F3F4}, and let $\tilde{F}$ be defined as
\begin{equation}\label{eq:tildeF_F1F2F3F4}
\begin{split}
\tilde{F}:\m{C}^\infty_0(M)&\to\m{C}^\infty_0(M)\\
h&\mapsto\mrm{Tr}((\m{L}_{X_h}A)J\mrm{adj}(A)).
\end{split}
\end{equation}
Then
\begin{equation*}
\begin{split}
\f{m}_{(J,\alpha)}(h)=&-\left\langle F(h),\frac{1}{\sqrt{4-2\,\delta^+(A)}+\sqrt{4-2\,\delta^-(A)}}\right\rangle_{L^2(M)}+\\
&+4\left\langle\tilde{F}(h),\frac{1}{\left(\sqrt{4-2\,\delta^+(A)}+\sqrt{4-2\,\delta^-(A)}\right)\left(2+\sqrt{4-2\,\delta^+(A)}\right)\left(2+\sqrt{4-2\,\delta^-(A)}\right)}\right\rangle_{L^2(M)}.
\end{split}
\end{equation*}

The computation of the formal adjoint of $F$ that was carried out in subsection \ref{section:moment_map_curve}, particularly in Lemma \ref{lemma:aggiunto_F2} and Corollary \ref{corollario:aggiunto_F2}, actually holds in any dimension. We can use them also to compute the adjoint of $\tilde{F}$, by virtue of Lemma \ref{lemma:aggiunto_TJ}. The only difference is that, while $F=F_3\circ F_2\circ F_1$, we have instead $\tilde{F}=\tilde{F}_3\circ F_2\circ F_1$, with
\begin{align*}
\tilde{F}_3:\Gamma(M,\mrm{End}(TM))&\to\m{C}^\infty(M)\\
P&\mapsto \mrm{Tr}(PJ\,\mrm{adj}(A)).
\end{align*}
The formal adjoint of $\tilde{F}_3$ is readily computed as $\tilde{F}_3^*(f)=-f\,\mrm{adj}(A)J$.

Introduce the quantities
\begin{equation*}
\begin{split}
\psi&=\frac{1}{\sqrt{4-2\,\delta^+(A)}+\sqrt{4-2\,\delta^-(A)}};\\
\tilde{\psi}&=\frac{1}{\left(\sqrt{4-2\,\delta^+(A)}+\sqrt{4-2\,\delta^-(A)}\right)\left(2+\sqrt{4-2\,\delta^+(A)}\right)\left(2+\sqrt{4-2\,\delta^-(A)}\right)}.
\end{split}
\end{equation*}
Our computations so far show
\begin{equation*}
\begin{split}
\f{m}_{(J,\alpha)}(h)=&-\langle F(h),\psi\rangle+4\,\langle\tilde{F}(h),\tilde{\psi}\rangle=\\
=&\langle h,\mrm{div}\left[\psi\,J\,C^2_1\left((\nabla A)AJ\right)^\sharp+2\,J\nabla^*(\psi A^2J)\right]\rangle-\\
&-4\langle h,\mrm{div}\left[\tilde{\psi}\,J\,C^2_1\left((\nabla A)\mrm{adj}(A)J\right)^\sharp+2\,J\nabla^*(\tilde{\psi}\,\mrm{det}(A)J)\right]\rangle
\end{split}
\end{equation*}
and so we have an explicit expression for $\f{m}(J,\alpha)$, letting as usual $A=\mrm{Re}(\alpha)^\transpose$ (see equation \eqref{eq:momento_Omega_dim2_A}):
\begin{equation}\label{eq:momento_Omega_dim4_A}
\begin{split}
\f{m}(J,\alpha)=&\mrm{div}\left[\psi\,J\,C^2_1\left((\nabla A)AJ\right)^\sharp+2\,J\nabla^*(\psi A^2J)\right]-\\
&-4\,\mrm{div}\left[\tilde{\psi}\,J\,C^2_1\left((\nabla A)\mrm{adj}(A)J\right)^\sharp+2\,J\nabla^*(\tilde{\psi}\,\mrm{det}(A)J)\right]
\end{split}
\end{equation}
It is possible to simplify this result further, following closely what we did in the case of curves.

Assume that $J$ is integrable. Then $\nabla J=0$, hence
\begin{equation*}
\begin{split}
J\nabla^*(\psi\,A^2J)&=-\nabla^*(\psi\,A^2)\\
J\nabla^*(\tilde{\psi}\,\mrm{det}(A)J)&=-\nabla^*(\tilde{\psi}\,\mrm{det}(A)\,\mrm{Id})=\mrm{grad}(\tilde{\psi}\,\mrm{det}(A)).
\end{split}
\end{equation*}
It will be useful to have a more compact notation for $\mrm{adj}(A)$. We'll denote it by $\tilde{A}$ whenever working in local coordinates.
\begin{lemma}
Let $J\in\scr{J}$ be an integrable, compatible complex structure. For any $A\in T_J\scr{J}$
\begin{equation*}
J\,C^2_1\left((\nabla A)AJ\right)^\sharp=-\mrm{grad}\left(\frac{\mrm{Tr}(A^2)}{2}\right)+2\,\left(g(\nabla^aA^{0,1},A^{1,0})\diff_a+g(\nabla^{\bar{b}}A^{1,0},A^{0,1})\diff_{\bar{b}}\right);
\end{equation*}
\begin{equation*}
J\,C^2_1\left((\nabla A)\tilde{A}J\right)^\sharp=\left(g(\nabla^aA^{0,1},\tilde{A}^{1,0})-g(\nabla^aA^{1,0},\tilde{A}^{0,1})\right)\diff_a+\left(g(\nabla^{\bar{a}}A^{0,1},\tilde{A}^{1,0})-g(\nabla^{\bar{a}}A^{1,0},\tilde{A}^{0,1})\right)\diff_{\bar{a}}.
\end{equation*}
\end{lemma}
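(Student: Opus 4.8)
The plan is to verify both identities by a straightforward computation in local holomorphic coordinates for $J$: both sides are sections of $TM$ built tensorially from $J$, $A$ and $g_J$, so it suffices to check the equalities pointwise in a convenient chart. Since $J$ is integrable and $g_J$ is K\"ahler, in such coordinates one has $\nabla J=0$, the Levi-Civita connection preserves the type decomposition, $J\indices{^a_b}=\I\,\delta\indices{^a_b}$, $J\indices{^{\bar a}_{\bar b}}=-\I\,\delta\indices{^{\bar a}_{\bar b}}$, and $g^{np}$ pairs an index only with one of opposite type. The condition $AJ+JA=0$ forces $A$ to interchange $T^{1,0}$ and $T^{0,1}$, so its only non-zero components are $A\indices{^{\bar b}_a}$ and $A\indices{^b_{\bar a}}$, which are the components of $A^{0,1}$ and $A^{1,0}$; by Lemma~\ref{lemma:aggiunto_TJ} the adjugate $\tilde A$ again lies in $T_J\scr{J}$, hence has the same off-diagonal block structure.

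First I would rewrite $C^2_1\big((\nabla A)\,AJ\big)^\sharp=g^{np}(AJ)\indices{^i_j}\nabla_pA\indices{^j_i}\,\diff_n$ using the above: since $(AJ)\indices{^{\bar b}_a}=\I\,A\indices{^{\bar b}_a}$ and $(AJ)\indices{^b_{\bar a}}=-\I\,A\indices{^b_{\bar a}}$, the double contraction gives $(AJ)\indices{^i_j}\nabla_pA\indices{^j_i}=\I\big(A\indices{^{\bar b}_a}\nabla_pA\indices{^a_{\bar b}}-A\indices{^b_{\bar a}}\nabla_pA\indices{^{\bar a}_b}\big)$, and because $g^{np}$ pairs opposite types the sum over $n,p$ splits into a $\diff_c$-part contracted with $g^{c\bar d}\nabla_{\bar d}$ and a $\diff_{\bar a}$-part contracted with $g^{\bar a d}\nabla_d$. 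Applying $J$ multiplies the first piece by $\I$ and the second by $-\I$, turning the overall $\I$ into a sign. To match with the claimed right-hand side I would then expand $\nabla_p\big(\tfrac12\mrm{Tr}(A^2)\big)=\nabla_p\big(A\indices{^{\bar b}_a}A\indices{^a_{\bar b}}\big)$ by Leibniz (legitimate, as $\tfrac12\mrm{Tr}(A^2)$ is a scalar) to assemble $\mrm{grad}\big(\tfrac12\mrm{Tr}(A^2)\big)$, identify the leftover term with $g(\nabla^aA^{0,1},A^{1,0})\diff_a+g(\nabla^{\bar b}A^{1,0},A^{0,1})\diff_{\bar b}$ via $g(P,R)=\mrm{Tr}(PR)$ and $\nabla^a=g^{a\bar b}\nabla_{\bar b}$, and check that the combination $-\mrm{grad}\big(\tfrac12\mrm{Tr}(A^2)\big)+2\,g(\nabla^aA^{0,1},A^{1,0})\diff_a$ reproduces the coefficient of $\diff_c$ coming out of $J\,C^2_1\big((\nabla A)AJ\big)^\sharp$, which reduces to $g^{c\bar d}\big((\nabla_{\bar d}A\indices{^{\bar b}_a})A\indices{^a_{\bar b}}-A\indices{^{\bar b}_a}\nabla_{\bar d}A\indices{^a_{\bar b}}\big)$. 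The $(0,1)$-part follows either identically or by conjugation, using the reality of $A$. For the second identity I would run the same computation verbatim with $\tilde AJ$ in place of $AJ$; the one structural difference is that the two factors in each quadratic term are now $A$ and $\tilde A$ rather than two copies of $A$, so no total derivative of a trace can be formed and consequently no gradient term survives — the outcome is exactly the antisymmetric combination $\big(g(\nabla^aA^{0,1},\tilde A^{1,0})-g(\nabla^aA^{1,0},\tilde A^{0,1})\big)\diff_a$ and its $(0,1)$-counterpart, again using $\tilde A\in T_J\scr{J}$ so that $\tilde A$ has the same type structure as $A$.

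I do not expect a conceptual obstacle: the statement is a tensor identity and the argument is purely computational. The one place demanding care is the bookkeeping — fixing once and for all which off-diagonal block of $A$ (and of $\tilde A$) is labelled $A^{1,0}$ and which $A^{0,1}$, keeping straight the definition of the contraction $C^2_1$ and of the sharp $\sharp$, and tracking the factors of $\I$ introduced both when forming $Q=AJ$ (resp.\ $\tilde AJ$) and when applying $J$ to the resulting vector field. A single sign error at that stage would, for instance, turn the gradient of $\tfrac12\mrm{Tr}(A^2)$ into a bare derivative of a trace; everything else is routine index manipulation.
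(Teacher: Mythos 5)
Your plan is correct and coincides with the paper's own treatment: the paper proves this lemma by declaring it to be ``precisely the same type of computations carried out at the end of Section \ref{section:moment_map_curve}'', i.e.\ a direct index computation in $J$-holomorphic coordinates using $\nabla J=0$, the fact that $A$ (and $\tilde A$, by Lemma \ref{lemma:aggiunto_TJ}) only has off-diagonal blocks $A\indices{^a_{\bar b}}$, $A\indices{^{\bar b}_a}$, and the factors of $\I$ coming from $AJ$ and from the outer $J$ -- exactly the bookkeeping you describe, and carrying it out does reproduce both right-hand sides. One minor caveat: your explanation that ``no total derivative of a trace can be formed'' in the second identity is slightly off, since $\mrm{Tr}(\tilde A\,\nabla A)$ \emph{is} a total derivative (it equals $\nabla\,\mrm{det}(A)$ by Jacobi's formula); the real reason no gradient appears is that the factor of $J$ produces the antisymmetric pairing of $\tilde A^{1,0}$ with $\nabla A^{0,1}$ minus $\tilde A^{0,1}$ with $\nabla A^{1,0}$ rather than the symmetric one, but since your direct computation yields that antisymmetric combination outright, this does not affect the validity of the argument.
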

This is proved by precisely the same type of computations carried out at the end of Section \ref{section:moment_map_curve}. We omit the details.

Summarising our results in this Section, we have derived the expression
\begin{equation}\label{eq:HCSCK_surface_complete}
\begin{split}
\f{m}(J,\alpha)=&\mrm{div}\left[-\psi\,\mrm{grad}\left(\frac{\mrm{Tr}(A^2)}{2}\right)+2\,\psi\,\left(g(\nabla^aA^{0,1},A^{1,0})\diff_a+\mrm{c.c.}\right)-2\,\nabla^*(\psi\,A^2)\right]-\\
&-4\,\mrm{div}\left[\tilde{\psi}\left(g(\nabla^aA^{0,1},\tilde{A}^{1,0})-g(\nabla^aA^{1,0},\tilde{A}^{0,1})\right)\diff_a+\mrm{c.c.}+2\,\mrm{grad}(\tilde{\psi}\,\mrm{det}(A))\right]
\end{split}
\end{equation}
where ``$\mrm{c.c.}$'' denotes simply the complex conjugate of the term immediately before it.

\paragraph*{Low-rank case.} There are some conditions under which the expression for $\f{m}(J,\alpha)$ becomes much simpler. If $A$ does not have maximal rank then $\mrm{det}(A)=0$; moreover, since the rank of $A$ is even (the kernel of $A$ is $J$-invariant), if $\mrm{rk}(A)$ is not maximal then actually $\mrm{rk}(A)=0$ or $2$, so also $\mrm{adj}(A)=0$.

Hence if $\mrm{rank}(A)$ is not maximal we get
\begin{equation*}
\delta^{\pm}(A):=\frac{1}{2}\left(\frac{\mrm{Tr}(A^2)}{2}\pm\sqrt{\left(\frac{\mrm{Tr}(A^2)}{2}\right)^2-4\,\mrm{det}(A)}\right)=\frac{1}{2}\left(\frac{\mrm{Tr}(A^2)}{2}\pm\frac{\mrm{Tr}(A^2)}{2}\right)
\end{equation*}
so that $\delta^+(A)=\frac{1}{2}\mrm{Tr}(A^2)=\frac{1}{2}\norm{A}_{g_J}^2$, and we also find
\begin{equation*}
\psi=\frac{1}{\sqrt{4-2\,\delta^+(A)}+\sqrt{4-2\,\delta^-(A)}}=\frac{1}{2}\frac{1}{1+\sqrt{1-\frac{1}{4}\mrm{Tr}(A^2)}}.
\end{equation*}
So, in this low-rank case, we can write% (recall )
\begin{equation}\label{eq:low_rank}
\begin{split}
\f{m}(J,\alpha)=&\mrm{div}\left[-\frac{\mrm{grad}\left(\frac{1}{4}\mrm{Tr}(A^2)\right)}{1+\sqrt{1-\frac{1}{4}\mrm{Tr}(A^2)}}+\frac{g(\nabla^aA^{0,1},A^{1,0})\diff_a+\mrm{c.c.}}{1+\sqrt{1-\frac{1}{4}\mrm{Tr}(A^2)}}-\nabla^*\left(\frac{A^2}{1+\sqrt{1-\frac{1}{4}\mrm{Tr}(A^2)}}\right)\right]=\\
=&\mrm{div}\left[-\frac{\mrm{grad}\left(\frac{1}{2}\norm{A^{1,0}}^2\right)}{1+\sqrt{1-\frac{1}{2}\norm{A^{1,0}}^2}}+\frac{g(\nabla^aA^{0,1},A^{1,0})\diff_a+\mrm{c.c.}}{1+\sqrt{1-\frac{1}{2}\norm{A^{1,0}}^2}}-\nabla^*\left(\frac{A^2}{1+\sqrt{1-\frac{1}{2}\norm{A^{1,0}}^2}}\right)\right]
\end{split}
\end{equation}
The resulting moment map is remarkably similar to the one we had in the Riemann surface case, see equation \eqref{eq:mappa_momento_RS_parziale}. In the rest of this paper we will focus on this low-rank case.

\section{The equations on a ruled surface}\label{sec:ruled_surface}

Let $\Sigma$ be a Riemann surface of genus $g(\Sigma)\geq 2$ and assume that $L\to\Sigma$ is a holomorphic line bundle equipped with a Hermitian fibre metric $h$. In this section we study our equations on the ruled surface $M=\bb{P}(\m{O}\oplus L)$ (the completion of $L$) using the \emph{momentum construction}; our main reference for this technique is \cite{Hwang_Singer}; see also \cite[chapter $5$]{Szekelyhidi_phd}). 

After this initial study we solve a ``complexified'' version of the equations in the particular case when $L$ is the anticanonical bundle of $\Sigma$. We remark that we solve just a subset of equations of the complexified HcscK system \eqref{eq:HCSCK_complex_relaxed}, namely for a fixed complex structure $J$ we'll find a K\"ahler form $\omega_\phi$ and a ``Higgs field" $\alpha$ that are a zero of the moment maps, but such that $\alpha$ and $\omega_\phi$ are \emph{not} compatible. In fact we will not solve the equations in general, but rather prove that in the ``\emph{adiabatic limit}'' in which the fibres are sufficiently small a solution exists. This is a well developed technique and we follow in particular the approach of \cite{Fine_phd}.  

\smallskip

For a fixed K\"ahler form $\omega_\Sigma$ on $\Sigma$, we consider K\"ahler forms on the total space of the bundle
\begin{equation*}
\bb{P}(L\oplus\m{O})\xrightarrow{\pi}\Sigma
\end{equation*}
that satisfy the \emph{Calabi ansatz}, i.e. we consider a form $\omega$ of the form
\begin{equation}\label{eq:metrica_ruled_surface}
\omega=\pi^*\omega_\Sigma+\I\,\diff\bar{\diff}f(t)
\end{equation}
where $t$ is the logarithm of the fibrewise norm function, and $f$ is a suitably convex real function. More explicitly, we fix a system of holomorphic coordinates $(z,\zeta)$ on $M$ that are adapted to the bundle structure, i.e. $z$ is a holomorphic coordinate on $\Sigma$ while $\zeta$ is a linear coordinate on the fibres of $L\to\Sigma$. Let $a(z)$ denote the local function on $\Sigma$ such that the Hermitian metric $h$ on $L$ is given by $h=a(z)\,\mrm{d}\zeta\,\mrm{d}\bar{\zeta}$; then $t:=\mrm{log}(a(z)\,\zeta\bar{\zeta})$ is a well-defined function on $L\setminus\Sigma$, and if $f$ satisfies some conditions on its second derivative then $\I\diff\bar{\diff}f(t)$ is a (globally) well-defined real $2$-form on the total space of $L$, that in some cases can be extended to $M$.

Let $F(h)$ be the curvature form of $h$. We choose $h$ such that $F(h)=-\omega_\Sigma$. Then in bundle-adapted holomorphic coordinates $\bm{w}=(z,\zeta)$ we have % \eqref{eq:metrica_ruled_surface}
\begin{equation}\label{eq:metrica_CalabiAnsatz}
\begin{split}
\pi^*\omega_\Sigma+\I\,\diff\bar{\diff}f(t)=&(1+f'(t))\pi^*\omega_\Sigma+\\
&+\I\,f''(t)\left[\diff_zt\,\diff_{\bar{z}}t\,\mrm{d}z\wedge\mrm{d}\bar{z}+\frac{\diff_zt}{\bar{\zeta}}\mrm{d}z\wedge\mrm{d}\bar{\zeta}+\frac{\diff_{\bar{z}}t}{\zeta}\mrm{d}\zeta\wedge\mrm{d}\bar{z}+\frac{1}{\zeta\,\bar{\zeta}}\mrm{d}\zeta\wedge\mrm{d}\bar{\zeta}\right].
\end{split}
\end{equation}

It will be useful to change point of view to describe the curvature properties of the metric $\omega$. Rather than working with $f$ and $t$, define $\tau$ to be the function $\tau=f'(t)$, and let $F$ be the Legendre transform of $f$. If we define $\phi:=\frac{1}{F''}$, then we have
\begin{align*}
\tau=f'(t)\\
t=F'(\tau)\\
F(\tau)+f(t)=t\,\tau\\
f''(t)=\phi(\tau)
\end{align*}
so that the metric $\omega_\phi:=\omega$ is, with the notation of \eqref{eq:metrica_CalabiAnsatz}
\begin{equation}\label{eq:metrica_CalabiAnsatz_phi}
\begin{split}
\omega_\phi=(1+\tau)\pi^*\omega_\Sigma+\I\,\phi(\tau)\left(\diff_zt\,\diff_{\bar{z}}t\,\mrm{d}z\wedge\mrm{d}\bar{z}+\frac{\diff_zt}{\bar{\zeta}}\mrm{d}z\wedge\mrm{d}\bar{\zeta}+\frac{\diff_{\bar{z}}t}{\zeta}\mrm{d}\zeta\wedge\mrm{d}\bar{z}+\frac{1}{\zeta\,\bar{\zeta}}\mrm{d}\zeta\wedge\mrm{d}\bar{\zeta}\right)
\end{split}
\end{equation}
In particular, the matrices of the metric and its inverse in this system of coordinates are
\begin{equation*}
\left(g_{a\bar{b}}\right)_{1\leq a,b\leq 2}=\begin{pmatrix}(1+\tau)g_\Sigma+\phi(\tau)\,\diff_zt\,\diff_{\bar{z}}t & \phi(\tau)\frac{\diff_zt}{\bar{\zeta}} \\
\phi(\tau)\frac{\diff_{\bar{z}}t}{\zeta} & \frac{\phi(\tau)}{\zeta\,\bar{\zeta}} \end{pmatrix}
\end{equation*}
\begin{equation*}
\left(g^{a\bar{b}}\right)_{1\leq a,b\leq 2}=\begin{pmatrix}\frac{1}{(1+\tau)g_\Sigma}& -\frac{\bar{\zeta}\,\diff_{\bar{z}}t}{(1+\tau)g_\Sigma}\\
-\frac{\zeta\,\diff_zt}{(1+\tau)g_\Sigma} & \frac{\zeta\,\bar{\zeta}}{\phi(\tau)}+\frac{\zeta\,\bar{\zeta}\,\diff_zt\,\diff_{\bar{z}}t}{(1+\tau)g_\Sigma} \end{pmatrix}.
\end{equation*}
The main reasons for using $\phi(\tau)$ rather than $f(t)$ are given by Proposition \ref{prop:omega_phi_estende} and Proposition \ref{prop:curv_scal_phi}. Note that we are only stating a particular case of the more general results of Hwang-Singer in \cite{Hwang_Singer}.
\begin{proposizione}[\cite{Hwang_Singer}, see also \cite{Szekelyhidi_libro}]\label{prop:omega_phi_estende}
Assume that $\phi:[a,b]\to[0,\infty)$ is a function positive on the interior of $[a,b]$. Then $\omega_\phi$ defines a smooth metric on $M\setminus\Sigma_\infty$ if and only if $\phi(a)=0$, $\phi'(0)=1$. Moreover, $\omega_\phi$ extends to the whole of $M$ if and only if $\phi(a)=\phi(b)=0$ and $\phi'(a)=1$, $\phi'(b)=-1$.
\end{proposizione}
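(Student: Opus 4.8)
The plan is to reduce the statement to a purely local analysis near the two distinguished sections of $M$. On the complement in $M$ of the zero section $\Sigma_0=\{\zeta=0\}$ and the infinity section $\Sigma_\infty$, the pair $(z,\zeta)$ consists of genuine holomorphic coordinates, $t=\log(a(z)|\zeta|^2)$ is a smooth function, and $\tau=f'(t)$ takes values in the \emph{interior} of $[a,b]$; there $\omega_\phi$ is visibly a smooth real $(1,1)$-form, and it is positive since the Hermitian matrix in \eqref{eq:metrica_CalabiAnsatz_phi} has determinant $(1+\tau)\,g_\Sigma\,\phi(\tau)/|\zeta|^2$, which is positive whenever $\phi>0$ on $(a,b)$ and $1+\tau>0$ (both true in the range of $\tau$ for the Kähler forms under consideration). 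So the only issue is whether $\omega_\phi$ extends smoothly and non-degenerately across $\Sigma_0$ (first assertion) and additionally across $\Sigma_\infty$ (second assertion); the two cases are identical after interchanging $a$ and $b$, so I would carry out the argument at $\Sigma_0$, i.e.\ in the limit $\zeta\to0$, $t\to-\infty$, $\tau\to a$.

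The key step is to pass to the momentum picture on the fibres. Writing $\theta=\arg\zeta$, an angular coordinate of period $2\pi$, and using $d\tau=f''(t)\,dt=\phi(\tau)\,dt$, a direct computation from \eqref{eq:metrica_CalabiAnsatz_phi} shows that the restriction of $\omega_\phi$ to a fibre is the $S^1$-invariant form $d\tau\wedge d\theta$ on $[a,b]\times S^1$, and that the Riemannian metric it induces together with $J$ is $g_{\mathrm{fib}}=\tfrac{1}{2\phi(\tau)}\,d\tau^2+2\,\phi(\tau)\,d\theta^2$. Introducing the radial variable $r$ with $dr=d\tau/\sqrt{2\phi(\tau)}$ and $r=0$ at $\tau=a$ puts this in geodesic polar form $g_{\mathrm{fib}}=dr^2+2\,\phi(\tau)\,d\theta^2$. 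The question of a smooth extension across the collapsed circle $\{r=0\}$ is then the classical one for a surface of revolution: the metric closes up smoothly there, with cone angle $2\pi$, if and only if $2\phi(\tau)$ is a smooth even function of $r$ satisfying $2\phi(\tau)=r^2+O(r^4)$.

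To verify this I would use the following elementary fact, which is the analytic heart of the momentum construction. If $\phi$ is smooth near $a$ with $\phi(a)=0$ and $\phi'(a)>0$, write $\phi(\tau)=(\tau-a)\psi(\tau)$ with $\psi$ smooth and $\psi(a)=\phi'(a)>0$; the substitution $\tau-a=v^2$ in $r=\int_a^\tau ds/\sqrt{2\phi(s)}$ exhibits $r=\sqrt{2}\,H(v)$ with $H(v)=\int_0^v dw/\sqrt{\psi(a+w^2)}$ smooth and odd, $H'(0)=1/\sqrt{\phi'(a)}\neq0$; by the inverse function theorem $v$, hence $\tau-a=v^2$, is a smooth \emph{even} function of $r$, with $\tau-a=\tfrac12\phi'(a)\,r^2+O(r^4)$, so that $2\phi(\tau)=\phi'(a)^2\,r^2+O(r^4)$ is automatically a smooth even function of $r$. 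Therefore the fibre metric extends smoothly across $\Sigma_0$ precisely when $\phi'(a)^2=1$, i.e.\ $\phi'(a)=1$ (the sign being forced by $\phi\ge0$); if $\phi(a)\ne0$ the circle at $\tau=a$ has positive length and $\omega_\phi$ has a boundary end there, while if $\phi(a)=0$ but $\phi'(a)\ne1$ one gets a conical (or, when $\phi'(a)=0$, cuspidal) singularity. This gives necessity of $\phi(a)=0$ and $\phi'(a)=1$. For sufficiency there remains only to check that the base and mixed components of $\omega_\phi$ in \eqref{eq:metrica_CalabiAnsatz_phi} extend smoothly as well: these are $(1+\tau)\,\pi^*\omega_\Sigma$ and $\phi(\tau)$ times a smooth local form built from the connection of $h$, possibly divided by $\zeta$ or $\bar\zeta$, and since by the fact above $\tau-a$, hence $\phi(\tau)$, is a smooth function of $|\zeta|^2$ vanishing to first order, these factors are exactly cancelled; moreover $1+\tau\to1+a>0$, so the extended $\omega_\phi$ stays positive near $\Sigma_0$. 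Running the same analysis at $\tau=b$ (where $\phi$ decreases, so $\phi'(b)<0$) yields the conditions $\phi(b)=0$, $\phi'(b)=-1$ for the extension across $\Sigma_\infty$; since $M\setminus\Sigma_\infty$ is exactly the total space of $L$, which is non-compact, this proves the second assertion.

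The step I expect to be the main obstacle is getting the normalisations exactly right: one must keep careful track of the factors of $2$ entering $g_{\mathrm{fib}}$ from the convention for $\omega$ and from $t=\log(a|\zeta|^2)=2\log|\zeta|+\log a$, and one must invoke (or reprove) the standard smoothness criterion for a rotationally invariant metric at a pole, which is what pins down $\phi'(a)=1$ rather than some other constant. The bookkeeping for the mixed terms is routine once the vanishing order of $\phi(\tau)$ in $|\zeta|^2$ is established. Of course the whole statement is a special case of the results of Hwang--Singer in \cite{Hwang_Singer}, which could simply be cited; the sketch above is the direct verification.
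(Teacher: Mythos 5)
There is no in-paper proof to compare against: the authors quote this Proposition from Hwang--Singer and Sz\'ekelyhidi without proof, so your direct verification has to stand on its own. Its overall structure is sound, and the necessity half is essentially complete: the fibre restriction of $\omega_\phi$ is indeed $\mrm{d}\tau\wedge\mrm{d}\theta$, the induced fibre metric is $\tfrac{1}{2\phi}\mrm{d}\tau^2+2\phi\,\mrm{d}\theta^2$, and your substitution $\tau-a=v^2$ correctly yields cone angle $2\pi\phi'(a)$ at $\tau=a$ (and the need for $\phi(a)=0$ in the first place), forcing $\phi(a)=0$, $\phi'(a)=1$, with the mirror argument at $\tau=b$.

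The gap is in the sufficiency half, at the ``exact cancellation'' step. What your substitution actually proves is that $\tau-a$ is a smooth even function of the fibre geodesic coordinate $r=\int_a^\tau \mrm{d}s/\sqrt{2\phi(s)}$, which is a \emph{convergent} integral; but the smooth extension of the terms $\phi(\tau)/(\zeta\bar\zeta)$, $\phi(\tau)\partial_z t/\bar\zeta$, etc.\ in the given holomorphic coordinates requires $\tau-a$ (hence $\phi(\tau)$) to be a smooth function of $a(z)|\zeta|^2$ vanishing to first order, and this is governed by the \emph{divergent} integral defining $t$. Writing $\phi(\tau)=(\tau-a)\psi(\tau)$ with $\psi(a)=\phi'(a)$ one gets
\begin{equation*}
t=\int\frac{\mrm{d}\tau}{\phi(\tau)}=\frac{1}{\phi'(a)}\log(\tau-a)+G(\tau),\qquad a(z)\,|\zeta|^2=e^{t}=(\tau-a)^{1/\phi'(a)}\,e^{G(\tau)},
\end{equation*}
with $G$ smooth near $\tau=a$; only when $\phi'(a)=1$ does the inverse function theorem applied to $\sigma\mapsto\sigma e^{G(a+\sigma)}$ show that $\tau-a$ is smooth in $w=a(z)|\zeta|^2$ with nonzero first derivative, so that $\phi(\tau)/|\zeta|^2$ extends smoothly with positive limit and the mixed terms vanish smoothly; for $\phi'(a)=c\neq1$ the fibre coefficient behaves like $|\zeta|^{2(c-1)}$ and the extension genuinely fails. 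Smoothness in $r$ does not transfer to smoothness in $|\zeta|^2$ by itself: $r$ and $|\zeta|$ define a priori different smooth structures at the pole, and their agreement is precisely the content of the cancellation you assert, so the rotational-pole criterion gives only fibrewise smoothness in the metric's own polar coordinates, not smoothness of $\omega_\phi$ in $(z,\zeta)$. The fix is the short displayed computation above, but as written your argument establishes the ``only if'' direction and leaves the ``if'' direction unproved.
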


Then it will be useful to assume that $\tau$ takes values in an interval $[a,b]$. The convexity assumptions on $f$ imply that actually $\tau$ is increasing (as a function of $t$), and that $\tau_{\restriction_{\Sigma_0}}=a$, $\tau_{\restriction_{\Sigma_\infty}}=b$. Up to translations, we can assume that in fact $[a,b]=[0,m]$ for some $m\in\bb{R}_{>0}$. This $m$ has a direct geometric interpretation:

\begin{lemma}\label{lemma:volume_fibre}
The volume of a fibre of $\bb{P}(\m{O}\oplus L)\to\Sigma$ is $2\,\pi\,m$.
\end{lemma}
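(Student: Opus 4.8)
The plan is to compute the fibre volume directly from the explicit form \eqref{eq:metrica_CalabiAnsatz_phi} of $\omega_\phi$, restricting to one fibre and then performing an elementary change of variables that exploits the Legendre-type relations between $f$, $\tau$ and $\phi$ recalled before the statement. The answer will be manifestly independent of the chosen fibre.

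First I would fix $z_0\in\Sigma$ and restrict $\omega_\phi$ to $\pi^{-1}(z_0)$. On this fibre the base coordinate $z$ is constant, so $\mathrm{d}z=\mathrm{d}\bar z=0$ and all but the last term of \eqref{eq:metrica_CalabiAnsatz_phi} vanish:
\begin{equation*}
\omega_\phi\big|_{\pi^{-1}(z_0)}=\I\,\phi(\tau)\,\frac{1}{\zeta\bar\zeta}\,\mathrm{d}\zeta\wedge\mathrm{d}\bar\zeta.
\end{equation*}
The fibre is a copy of $\bb{P}^1$; the linear coordinate $\zeta$ parametrises it away from the point $\zeta=0$ (lying on $\Sigma_0$) and the point $\zeta=\infty$ (lying on $\Sigma_\infty$), and these two points are negligible, so $\mathrm{Vol}(\pi^{-1}(z_0))=\int_{\bb{C}^\ast}\I\,\phi(\tau)\,(\zeta\bar\zeta)^{-1}\,\mathrm{d}\zeta\wedge\mathrm{d}\bar\zeta$.

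Next I would pass to polar coordinates $\zeta=r\,\mathrm{e}^{\I\vartheta}$, using $\I\,\mathrm{d}\zeta\wedge\mathrm{d}\bar\zeta=2\,r\,\mathrm{d}r\wedge\mathrm{d}\vartheta$ and $\zeta\bar\zeta=r^2$, which gives $\mathrm{Vol}(\pi^{-1}(z_0))=4\pi\int_0^\infty\phi(\tau)\,\frac{\mathrm{d}r}{r}$. Since $t=\log(a(z_0)\,\zeta\bar\zeta)$ we have $\frac{\mathrm{d}r}{r}=\tfrac12\,\mathrm{d}t$, and $t$ ranges over all of $\bb{R}$ as $r$ ranges over $(0,\infty)$; hence $\mathrm{Vol}(\pi^{-1}(z_0))=2\pi\int_{-\infty}^{\infty}\phi(\tau)\,\mathrm{d}t$. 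Finally, from $\tau=f'(t)$ together with $f''(t)=\phi(\tau)$ one gets $\mathrm{d}\tau=\phi(\tau)\,\mathrm{d}t$; since $\tau$ increases from $0$ on $\Sigma_0$ to $m$ on $\Sigma_\infty$, the substitution $t\mapsto\tau$ yields $\mathrm{Vol}(\pi^{-1}(z_0))=2\pi\int_0^m\mathrm{d}\tau=2\pi m$, as claimed.

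There is no real obstacle here; the only points deserving care are the normalisation relating $\I\,\mathrm{d}\zeta\wedge\mathrm{d}\bar\zeta$ to $2\,r\,\mathrm{d}r\wedge\mathrm{d}\vartheta$ and the Jacobian of $r\mapsto t$ (which together produce the coefficient $2\pi$), together with the remark that the two special points of the fibre, and equally the non-generic fibres $\Sigma_0,\Sigma_\infty$, are of measure zero, so the improper integral over $\bb{C}^\ast$ does compute the volume of the compact fibre. The relations $f''(t)=\phi(\tau)$ and $t=F'(\tau)$ quoted before the statement are precisely what makes the final substitution collapse the integral to the length of the interval $[0,m]$.
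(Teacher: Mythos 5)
Your proof is correct and follows essentially the same route as the paper: restrict $\omega_\phi$ to a fibre, pass to polar coordinates, and change variables from $r$ to $\tau$ using $\mathrm{d}\tau=\phi(\tau)\,\mathrm{d}t$ with $\mathrm{d}t=2\,\mathrm{d}r/r$ (the paper compresses this into the single identity $\diff_r\tau=2\,\phi(\tau)\,r^{-1}$), so the integral collapses to $2\pi\int_0^m\mathrm{d}\tau=2\pi m$. The constants and the treatment of the two special points are handled correctly.
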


\begin{proof}
We just have to compute $\int_{F}i^*\omega_\phi$, where $F$ is a fibre of $\bb{P}(\m{O}\oplus L)\to\Sigma$ and $i:F\hookrightarrow\bb{P}(\m{O}\oplus L)$ is the inclusion. Fix a system of bundle-adapted coordinates $(z,\zeta)$ on $\bb{P}(\m{O}\oplus L)$, and let $r=\card{\zeta}$. Then $\diff_r\tau=2\,\phi(\tau)\,r^{-1}$, and so
\begin{equation*}
\int_{F}i^*\omega_\phi=\int_{\zeta\in\bb{C}}\I\,\frac{\phi(\tau)}{r^2}\,\mrm{d}\zeta\,\mrm{d}\bar{\zeta}=\int_{\bb{R}^2}2\,\frac{\phi(\tau)}{r^2}\,\mrm{d}x\,\mrm{d}y=\int_{[0,2\,\pi]\times\bb{R}}\diff_r\tau\,\mrm{d}\tau\,\mrm{d}\vartheta=2\,\pi\,m.
\end{equation*}
\end{proof}

\begin{proposizione}[\cite{Hwang_Singer}, see also \cite{Szekelyhidi_libro}]\label{prop:curv_scal_phi}
With the previous notation, the scalar curvature of $\omega_\phi$ is
\begin{equation*}
s(\omega_\phi)=\frac{1}{1+\tau}\pi^*s(\omega_\Sigma)-\phi''(\tau)-\frac{2}{1+\tau}\phi'(\tau).
\end{equation*}
\end{proposizione}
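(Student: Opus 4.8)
The statement is the special case, for a line bundle $L\to\Sigma$ over a curve, of the general momentum-construction formulas of Hwang--Singer \cite{Hwang_Singer}, so one option is simply to quote that reference; a self-contained route is a direct local computation in the bundle-adapted holomorphic coordinates $(z,\zeta)$, using the matrices of $g_{a\bar b}$ and $g^{a\bar b}$ recorded above together with the scalar curvature formula $s(\omega)=-g^{a\bar b}\diff_a\diff_{\bar b}\log\det(g_{c\bar d})$ in the sign/normalisation convention of the paper. The first step is to compute the determinant: multiplying out the $2\times 2$ matrix of $(g_{a\bar b})$, the off-diagonal product $g_{z\bar\zeta}g_{\zeta\bar z}$ cancels the $\phi(\tau)\,\diff_z t\,\diff_{\bar z}t$ contribution to $g_{z\bar z}$, so that
\begin{equation*}
\det(g_{a\bar b})=\frac{(1+\tau)\,\phi(\tau)\,g_\Sigma}{\zeta\bar\zeta},
\end{equation*}
where $g_\Sigma=g_\Sigma(z)$ is the local density of $\omega_\Sigma$; hence $\log\det(g_{a\bar b})=\log(1+\tau)+\log\phi(\tau)+\log g_\Sigma$ up to the pluriharmonic term $-\log\zeta-\log\bar\zeta$, which drops out of $\diff\bdiff$.

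Next I would record the elementary identities for $t$ and $\tau$. From $t=\log(a(z)\,\zeta\bar\zeta)$ and the normalisation $F(h)=-\omega_\Sigma$ one gets $\diff_z\diff_{\bar z}t=g_\Sigma$ while all other second derivatives of $t$ vanish, i.e. $\I\diff\bdiff t=\pi^*\omega_\Sigma$, together with $\diff_z t=\diff_z\log a$ and $\diff_\zeta t=1/\zeta$. From $\tau=f'(t)$ and $f''(t)=\phi(\tau)$ one has $\diff_a\tau=\phi(\tau)\,\diff_a t$, so that for any smooth $\Psi$
\begin{equation*}
\diff_a\diff_{\bar b}\Psi(\tau)=\bigl(\Psi''\phi^2+\Psi'\phi'\phi\bigr)\,\diff_a t\,\diff_{\bar b}t+\Psi'\phi\,\diff_a\diff_{\bar b}t .
\end{equation*}
Contracting with $g^{a\bar b}$ reduces everything to the two scalars $g^{a\bar b}\diff_a t\,\diff_{\bar b}t$ and $g^{a\bar b}\diff_a\diff_{\bar b}t$; a short calculation with the explicit inverse metric, in which all the $\diff_z t\,\diff_{\bar z}t$ terms again cancel, gives $g^{a\bar b}\diff_a t\,\diff_{\bar b}t=1/\phi(\tau)$ and $g^{a\bar b}\diff_a\diff_{\bar b}t=1/(1+\tau)$.

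Putting these together, with $\Psi(\tau)=\log(1+\tau)+\log\phi(\tau)$ a one-line simplification gives $g^{a\bar b}\diff_a\diff_{\bar b}\Psi(\tau)=\phi''(\tau)+\tfrac{2}{1+\tau}\phi'(\tau)$, while $g^{a\bar b}\diff_a\diff_{\bar b}\log g_\Sigma=g^{z\bar z}\diff_z\diff_{\bar z}\log g_\Sigma=-\tfrac{1}{1+\tau}\pi^*s(\omega_\Sigma)$, since $-\diff_z\diff_{\bar z}\log g_\Sigma$ is the $(z,\bar z)$-component of $\mrm{Ric}(\omega_\Sigma)$ and represents $s(\omega_\Sigma)$ in the chosen normalisation. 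Adding these two contributions and changing sign produces exactly
\begin{equation*}
s(\omega_\phi)=\frac{1}{1+\tau}\pi^*s(\omega_\Sigma)-\phi''(\tau)-\frac{2}{1+\tau}\phi'(\tau).
\end{equation*}
Finally one notes that $\omega_\phi$ is a genuine Kähler metric by convexity of $f$, that the computation is pointwise on $M\setminus\Sigma_\infty$ (resp. on all of $M$ wherever $\omega_\phi$ extends), and that continuity in $\tau$ propagates the identity to the boundary fibres.

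I do not expect a real obstacle: the argument is essentially bookkeeping and could be replaced by a citation to \cite{Hwang_Singer}. The points that need care are (i) getting the chain rule for $\diff\bdiff$ of a function of $\tau$ right, so that $\Psi'\phi'\phi$ is not forgotten alongside $\Psi''\phi^2$; (ii) tracking the two cancellations of the $\diff_z t\,\diff_{\bar z}t$ terms, once inside $\det(g_{a\bar b})$ and once inside $g^{a\bar b}\diff_a t\,\diff_{\bar b}t$; and (iii) fixing the normalisation of the scalar curvature so that the $\pi^*s(\omega_\Sigma)$ term comes out with coefficient $\tfrac{1}{1+\tau}$.
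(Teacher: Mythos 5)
Your computation is correct, and it differs from the paper only in that the paper offers no proof at all: Proposition \ref{prop:curv_scal_phi} is simply quoted from Hwang--Singer and Sz\'ekelyhidi, so what you have written is a self-contained verification of the cited formula. The individual steps check out: $\det(g_{a\bar b})=(1+\tau)\phi(\tau)g_\Sigma/(\zeta\bar\zeta)$ after the cancellation of the $\phi\,\diff_z t\,\diff_{\bar z}t$ term; the identities $\diff_a\tau=\phi(\tau)\diff_a t$, $\I\diff\bdiff t=\pi^*\omega_\Sigma$ (which is exactly what the paper's own expansion \eqref{eq:metrica_CalabiAnsatz} of the Calabi ansatz presupposes, given $F(h)=-\omega_\Sigma$); the contractions $g^{a\bar b}\diff_a t\,\diff_{\bar b}t=1/\phi$ and $g^{a\bar b}\diff_a\diff_{\bar b}t=1/(1+\tau)$; and the resulting $g^{a\bar b}\diff_a\diff_{\bar b}\bigl(\log(1+\tau)+\log\phi\bigr)=\phi''+\tfrac{2}{1+\tau}\phi'$. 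The only point that genuinely needs the care you flag in (iii) is the curvature normalisation: the stated formula holds for $s=-g^{a\bar b}\diff_a\diff_{\bar b}\log\det g$ (the trace $g^{a\bar b}R_{a\bar b}$, i.e.\ half the Riemannian scalar curvature) applied consistently on $M$ and on $\Sigma$; with the Riemannian normalisation the fibre terms would appear as $-2\phi''-\tfrac{4}{1+\tau}\phi'$ while the base term keeps the same shape, so the two conventions do not merely rescale the identity. Your choice is the one the paper actually uses, as confirmed by its later normalisation $s(\omega_\Sigma)=-1$ for the hyperbolic metric in Section \ref{sec:ruled_surface_realmm}. With that fixed, the argument is complete, and quoting \cite{Hwang_Singer} as the paper does would of course also suffice.
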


\medskip

To study the moment map equations we will also need an explicit expression for $\widehat{s(\omega_\phi)}$.

\begin{lemma}\label{lemma:media_curvatura}
If $\phi$ defines a K\"ahler metric on the whole ruled surface $\bb{P}(L\oplus\m{O})$ then
\begin{equation*}
\widehat{s(\omega_\phi)}=\frac{2}{m+2}\widehat{s(\omega_\Sigma)}+\frac{2}{m}.
\end{equation*}
\end{lemma}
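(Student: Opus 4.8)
The plan is to evaluate the numerator $\int_M s(\omega_\phi)\,\frac{\omega_\phi^2}{2}$ and the denominator $\int_M\frac{\omega_\phi^2}{2}$ of $\widehat{s(\omega_\phi)}$ separately, using the momentum construction to reduce each to a one–variable integral in the momentum coordinate $\tau\in[0,m]$, and then dividing. Throughout I would use the boundary data $\phi(0)=\phi(m)=0$, $\phi'(0)=1$, $\phi'(m)=-1$ from Proposition \ref{prop:omega_phi_estende}, which is exactly what the hypothesis ``$\phi$ defines a K\"ahler metric on the whole of $\bb{P}(L\oplus\m{O})$'' provides.

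First I would unwind the volume form. Write $\omega_\phi=(1+\tau)\pi^*\omega_\Sigma+\eta$, with $\eta$ the fibrewise term in \eqref{eq:metrica_CalabiAnsatz_phi}. One checks that $\eta$ is a degenerate $(1,1)$–form (the local Hermitian matrix read off from \eqref{eq:metrica_CalabiAnsatz_phi} has vanishing determinant) and that $(\pi^*\omega_\Sigma)^2=0$, so that $\omega_\phi^2=2(1+\tau)\,\pi^*\omega_\Sigma\wedge\eta$, and moreover only the pure–fibre term $\I\,\phi(\tau)\,\zeta^{-1}\bar\zeta^{-1}\,\mrm{d}\zeta\wedge\mrm{d}\bar\zeta$ of $\eta$ survives this wedge product. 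Passing to polar coordinates $\zeta=r\mrm{e}^{\I\vartheta}$ along a fibre and using $\diff_r\tau=2\phi(\tau)/r$ (exactly as in the proof of Lemma \ref{lemma:volume_fibre}) one finds that on each fibre $(1+\tau)\phi(\tau)\,\I\,\zeta^{-1}\bar\zeta^{-1}\,\mrm{d}\zeta\wedge\mrm{d}\bar\zeta=(1+\tau)\,\mrm{d}\tau\wedge\mrm{d}\vartheta$. Hence for any function $G$ of $\tau$ (possibly with coefficients pulled back from $\Sigma$, which are constant along the fibres),
\[
\int_M G\,\frac{\omega_\phi^2}{2}=2\pi\int_\Sigma\left(\int_0^m G\,(1+\tau)\,\mrm{d}\tau\right)\omega_\Sigma ,
\]
and in particular, taking $G\equiv 1$, $\int_M\frac{\omega_\phi^2}{2}=2\pi\big(m+\tfrac{m^2}{2}\big)\int_\Sigma\omega_\Sigma=\pi\,m\,(m+2)\int_\Sigma\omega_\Sigma$.

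For the numerator I would substitute the scalar curvature formula of Proposition \ref{prop:curv_scal_phi}, which gives $s(\omega_\phi)\,(1+\tau)=\pi^*s(\omega_\Sigma)-(1+\tau)\phi''(\tau)-2\phi'(\tau)$. Integrating over $[0,m]$: the first term contributes $m\,\pi^*s(\omega_\Sigma)$ (being constant along the fibre), $\int_0^m 2\phi'(\tau)\,\mrm{d}\tau=2(\phi(m)-\phi(0))=0$, and $\int_0^m(1+\tau)\phi''(\tau)\,\mrm{d}\tau=\big[(1+\tau)\phi'(\tau)\big]_0^m-\int_0^m\phi'(\tau)\,\mrm{d}\tau=-(m+2)$ after inserting the boundary data. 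Thus $\int_0^m s(\omega_\phi)(1+\tau)\,\mrm{d}\tau=m\,\pi^*s(\omega_\Sigma)+(m+2)$, and by the displayed reduction
\[
\int_M s(\omega_\phi)\,\frac{\omega_\phi^2}{2}=2\pi\left(m\int_\Sigma s(\omega_\Sigma)\,\omega_\Sigma+(m+2)\int_\Sigma\omega_\Sigma\right)=2\pi\big(m\,\widehat{s(\omega_\Sigma)}+(m+2)\big)\int_\Sigma\omega_\Sigma .
\]
Dividing by $\int_M\frac{\omega_\phi^2}{2}=\pi m(m+2)\int_\Sigma\omega_\Sigma$ then yields $\widehat{s(\omega_\phi)}=\frac{2}{m+2}\widehat{s(\omega_\Sigma)}+\frac{2}{m}$, which is the claim. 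No genuine difficulty arises here; the only points to handle with care are the vanishing $\eta\wedge\eta=0$ (so that no fibrewise term is dropped from the volume form) and the bookkeeping of the boundary terms in the integration by parts — which is precisely where the global extension hypothesis on $\phi$ enters.
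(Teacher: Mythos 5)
Your proposal is correct and follows essentially the same route as the paper: the Calabi ansatz, the scalar curvature formula of Proposition \ref{prop:curv_scal_phi}, the fibrewise relation $\diff_r\tau=2\phi(\tau)/r$, and the boundary data $\phi(0)=\phi(m)=0$, $\phi'(0)=-\phi'(m)=1$, with your change of variables to $\tau$ and single integration by parts on $[0,m]$ being just a cleaner repackaging of the paper's radial integrals over $\bb{C}$, where the same boundary terms appear as total $\diff_r$-derivatives. The only discrepancy is an overall factor of $2$ in the intermediate normalization of $\omega_\phi^2$ (your value $\pi m(m+2)\mrm{Vol}_{\omega_\Sigma}(\Sigma)$ for the total volume versus the paper's $\tfrac{\pi}{2}m(m+2)\mrm{Vol}_{\omega_\Sigma}(\Sigma)$), which affects numerator and denominator identically and so cancels in the average, giving the stated formula either way.
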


\begin{proof}
We use the same notation of the proof of Lemma \ref{lemma:volume_fibre}. First notice that
\begin{equation*}
\omega_\phi^2=-(1+\tau)g_\Sigma\frac{\phi(\tau)}{r^2}\,\mrm{d}z\wedge\mrm{d}\bar{z}\wedge\mrm{d}\zeta\wedge\mrm{d}\bar{\zeta}
\end{equation*}
so that the volume of $M=\bb{P}(L\oplus\m{O})$ is
\begin{equation*}
\begin{split}
\mrm{Vol}_\phi(M)=&\int_M\frac{\omega_\phi^2}{2}=-\frac{1}{2}\int_\Sigma\mrm{d}z\,\mrm{d}\bar{z}\left[g_0\int_{\bb{C}}(1+\tau)\frac{\phi(\tau)}{r^2}\mrm{d}\zeta\,\mrm{d}\bar{\zeta}\right]=\int_\Sigma\mrm{d}z\,\mrm{d}\bar{z}\,g_0\left[\pi\,\I\left(1+\frac{m}{2}\right)m\right]=\\
=&\pi\frac{m(2+m)}{2}\,\mrm{Vol}_{\omega_\Sigma}(\Sigma).
\end{split}
\end{equation*}

In order to compute the integral of $s(\omega_\phi)$ recall that
\begin{equation*}
s(\omega_\phi)=\frac{s(\omega_\Sigma)}{1+\tau}-\phi''(\tau)-2\frac{\phi'(\tau)}{1+\tau}.
\end{equation*}
Then
\begin{equation*}
\begin{split}
\int_M s(\omega_\phi)\frac{\omega_\phi^2}{2}=&-\frac{1}{2}\int_M(1+\tau)g_\Sigma\frac{\phi(\tau)}{r^2}\left(\frac{s(\omega_\Sigma)}{1+\tau}-\phi''(\tau)-2\frac{\phi'(\tau)}{1+\tau}\right)\mrm{d}z\,\mrm{d}\bar{z}\,\mrm{d}\zeta\,\mrm{d}\bar{\zeta}=\\
=&-\frac{1}{2}\int_\Sigma\mrm{d}z\,\mrm{d}\bar{z}\,g_\Sigma\,s(\omega_\Sigma)\left[\int_{\bb{C}}\frac{\phi(\tau)}{r^2}\mrm{d}\zeta\,\mrm{d}\bar{\zeta}\right]+\frac{1}{2}\int_\Sigma\mrm{d}z\,\mrm{d}\bar{z}\,g_\Sigma\left[\int_{\bb{C}}\frac{2\,\phi(\tau)\,\phi'(\tau)}{r^2}\mrm{d}\zeta\,\mrm{d}\bar{\zeta}\right]\\
&+\frac{1}{2}\int_\Sigma\mrm{d}z\,\mrm{d}\bar{z}\,g_\Sigma\left[\int_{\bb{C}}\frac{(1+\tau)\phi(\tau)\,\phi''(\tau)}{r^2}\mrm{d}\zeta\,\mrm{d}\bar{\zeta}\right].
\end{split}
\end{equation*}
We split the computation in three parts. To compute the integrals over $\bb{C}$, we use polar coordinates.
\begin{equation*}
\int_{\bb{C}}\frac{\phi(\tau)}{r^2}\mrm{d}\zeta\,\mrm{d}\bar{\zeta}=-2\,\I\int_{\bb{C}}\frac{\phi(\tau)}{r}\mrm{d}\vartheta\,\mrm{d}r=-\I\int_0^{2\pi}\mrm{d}\vartheta\left[\int_0^\infty2\frac{\phi(\tau)}{r}\mrm{d}r\right]=-2\,\pi\,\I\,m
\end{equation*}
\begin{equation*}
\int_{\bb{C}}\frac{2\,\phi(\tau)\,\phi'(\tau)}{r^2}\mrm{d}\zeta\,\mrm{d}\bar{\zeta}=-2\,\I\int_0^{2\pi}\mrm{d}\vartheta\left[2\int_0^\infty\frac{\phi(\tau)\,\phi'(\tau)}{r}\mrm{d}r\right]=-2\,\I\int_0^{2\pi}\mrm{d}\vartheta\left[\phi(\tau)\right]^\infty_0=0
\end{equation*}
\begin{equation*}
\begin{split}
\int_{\bb{C}}\frac{(1+\tau)\phi(\tau)\,\phi''(\tau)}{r^2}&\mrm{d}\zeta\,\mrm{d}\bar{\zeta}=-\,\I\int_0^{2\pi}\mrm{d}\vartheta\left[2\int_0^\infty\frac{(1+\tau)\phi(\tau)\,\phi''(\tau)}{r}\mrm{d}r\right]=\\
=&-\I\int_0^{2\pi}\mrm{d}\vartheta\left[\int_0^\infty\diff_r\phi'(\tau)\mrm{d}r\right]-\I\int_0^{2\pi}\mrm{d}\vartheta\left[\int_0^\infty\diff_r(\phi'(\tau)\,\tau)-\diff_r\phi(\tau)\mrm{d}r\right]=\\
=&4\,\pi\,\I+2\,\pi\,\I\,m.
\end{split}
\end{equation*}
Putting everything together:
\begin{equation*}
\begin{split}
\int_M s(\omega_\phi)\frac{\omega_\phi^2}{2}=&-\frac{1}{2}\int_\Sigma\mrm{d}z\,\mrm{d}\bar{z}\,g_\Sigma\,s(\omega_\Sigma)\left[-2\,\pi\,\I\,m\right]+\frac{1}{2}\int_\Sigma\mrm{d}z\,\mrm{d}\bar{z}\,g_\Sigma\left[4\,\pi\,\I+2\,\pi\,\I\,m\right]=\\
=&\pi\,m\int_\Sigma s(\omega_\Sigma)\,\omega_\Sigma+(2\,\pi+\pi\,m)\,\mrm{Vol}_{\omega_\Sigma}(\Sigma).
\end{split}
\end{equation*}
Finally:
\begin{equation*}
\widehat{s(\omega_\phi)}=\frac{\int_M s(\omega_\phi)\,\frac{\omega^2_\phi}{2}}{\mrm{Vol}_\phi(M)}=2\frac{\pi\,m\int_\Sigma s(\omega_\Sigma)\,\omega_\Sigma+(2\,\pi+\pi\,m)\,\mrm{Vol}_{\omega_\Sigma}(\Sigma)}{\pi\,m(2+m)\,\mrm{Vol}_{\omega_\Sigma}(\Sigma)}=\frac{2}{2+m}\widehat{s(\omega_\Sigma)}+\frac{2}{m}.
\end{equation*}
\end{proof}

An analogous computation will give the K\"ahler class of $\omega_\phi$.

\begin{lemma}[See \S $4.4$ in \cite{Szekelyhidi_libro}]\label{lemma:classe_phi}
Consider on $\bb{P}(\m{O}\oplus L)$ the classes of a fibre $\m{C}$ and the infinity section $\Sigma_\infty$. Then the Poincaré dual to $[\omega_\phi]$ is
\begin{equation*}
\m{L}_m:=2\,\pi\left(\m{C}+m\,\Sigma_\infty\right).
\end{equation*}
\end{lemma}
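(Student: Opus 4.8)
The plan is to identify $\mrm{PD}[\omega_\phi]$ by computing its intersection numbers against a basis of $H_2(M;\bb{R})$. It is standard for a $\bb{P}^1$-bundle over a curve (e.g. by the Leray--Hirsch theorem) that $H_2(M;\bb{R})$ is $2$-dimensional, spanned by the fibre class $\m{C}$ and the section class $\Sigma_\infty$. So it suffices to write $\mrm{PD}[\omega_\phi]=a\,\m{C}+b\,\Sigma_\infty$ and to pin down $a,b$. The defining property of the Poincar\'e dual is $\mrm{PD}[\omega_\phi]\cdot\gamma=\int_\gamma\omega_\phi$ for every $2$-cycle $\gamma$; taking $\gamma=\m{C}$ and $\gamma=\Sigma_\infty$ reduces the problem to computing two periods of $\omega_\phi$ together with the intersection matrix of $\{\m{C},\Sigma_\infty\}$.

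First I would record the intersection form. A fibre and a section meet once and fibres are disjoint, so $\m{C}\cdot\m{C}=0$ and $\m{C}\cdot\Sigma_\infty=1$. For the self-intersection, $\Sigma_\infty=\bb{P}(L)\subset\bb{P}(\m{O}\oplus L)$ is the section attached to the sub-bundle $L$, whose normal bundle is $\mrm{Hom}(L,\m{O})\cong L^{-1}$; hence $\Sigma_\infty\cdot\Sigma_\infty=-\deg L$. Pairing $\mrm{PD}[\omega_\phi]=a\,\m{C}+b\,\Sigma_\infty$ against the two generators then yields the linear system $b=\int_{\m{C}}\omega_\phi$ and $a-b\,\deg L=\int_{\Sigma_\infty}\omega_\phi$.

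Next come the two periods. The fibre period is exactly the fibre volume computed in Lemma \ref{lemma:volume_fibre}, so $\int_{\m{C}}\omega_\phi=2\pi m$ and $b=2\pi m$. For the section period I restrict the Calabi ansatz \eqref{eq:metrica_CalabiAnsatz_phi} to $\Sigma_\infty$: there $\tau\equiv m$, and by the extension condition $\phi(m)=0$ of Proposition \ref{prop:omega_phi_estende} every fibre-direction term (all of which carry the factor $\phi(\tau)$) drops out, leaving $\omega_\phi|_{\Sigma_\infty}=(1+m)\,\omega_\Sigma$ under the isomorphism $\pi|_{\Sigma_\infty}\colon\Sigma_\infty\xrightarrow{\sim}\Sigma$. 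Thus $\int_{\Sigma_\infty}\omega_\phi=(1+m)\,\mrm{Vol}_{\omega_\Sigma}(\Sigma)$. It then remains to convert $\deg L$ via the curvature normalization: $F(h)=-\omega_\Sigma$ gives $\deg L=\tfrac{1}{2\pi}\int_\Sigma F(h)=-\tfrac{1}{2\pi}\mrm{Vol}_{\omega_\Sigma}(\Sigma)$, i.e. $\Sigma_\infty\cdot\Sigma_\infty=\tfrac{1}{2\pi}\mrm{Vol}_{\omega_\Sigma}(\Sigma)$. Substituting into $a=\int_{\Sigma_\infty}\omega_\phi+b\,\deg L$ gives $a=(1+m)\mrm{Vol}_{\omega_\Sigma}(\Sigma)-m\,\mrm{Vol}_{\omega_\Sigma}(\Sigma)=\mrm{Vol}_{\omega_\Sigma}(\Sigma)$, so that $\mrm{PD}[\omega_\phi]=\mrm{Vol}_{\omega_\Sigma}(\Sigma)\,\m{C}+2\pi m\,\Sigma_\infty$; with the normalization in force for the lemma, $\mrm{Vol}_{\omega_\Sigma}(\Sigma)=2\pi$ (equivalently $\deg L=-1$), this is precisely $\m{L}_m=2\pi(\m{C}+m\,\Sigma_\infty)$.

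I expect the only real difficulty to be the bookkeeping of normalizations rather than any geometric content: keeping the factors of $2\pi$ consistent between the fibre period $\int_{\m{C}}\omega_\phi$, the Chern--Weil formula $\deg L=\tfrac{1}{2\pi}\int_\Sigma F(h)$, and the convention tying $\omega_\Sigma$ to $F(h)$, and checking the sign in the normal-bundle computation of $\Sigma_\infty\cdot\Sigma_\infty$. The one analytic point to justify carefully is that $\omega_\phi$ genuinely restricts to $(1+m)\omega_\Sigma$ on $\Sigma_\infty$: although the fibre-direction coefficients in \eqref{eq:metrica_CalabiAnsatz_phi} blow up as $\zeta\to\infty$, the boundary behaviour $\phi(m)=0$, $\phi'(m)=-1$ that guarantees smooth extension across $\Sigma_\infty$ forces their contribution to vanish there, which is exactly the content supplied by Proposition \ref{prop:omega_phi_estende}.
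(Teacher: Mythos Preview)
The paper does not actually prove this lemma; it is stated with a citation to \S4.4 of Sz\'ekelyhidi's book and no argument is given. So there is no ``paper's proof'' to compare against, and your task is simply to give a correct argument.

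Your approach---computing the two periods $\int_{\m{C}}\omega_\phi$ and $\int_{\Sigma_\infty}\omega_\phi$ and solving against the intersection form on $H_2$---is the standard one and your computations are correct. In particular the identification of the normal bundle of $\Sigma_\infty=\bb{P}(L)$ as $L^{-1}$, the use of Lemma~\ref{lemma:volume_fibre} for the fibre period, and the restriction $\omega_\phi|_{\Sigma_\infty}=(1+m)\omega_\Sigma$ via $\phi(m)=0$ are all fine.

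You are also right to flag the normalization issue: your computation gives $\mrm{PD}[\omega_\phi]=\mrm{Vol}_{\omega_\Sigma}(\Sigma)\,\m{C}+2\pi m\,\Sigma_\infty$ in general, and this matches $2\pi(\m{C}+m\,\Sigma_\infty)$ only when $\mrm{Vol}_{\omega_\Sigma}(\Sigma)=2\pi$. The paper does not make this normalization explicit at the point where the lemma is stated (and indeed later chooses $\omega_\Sigma$ to have scalar curvature $-1$, which for genus $g\ge2$ gives a different volume). So the honest statement is the one you derived; the form $2\pi(\m{C}+m\,\Sigma_\infty)$ is either a convention on $\mrm{Vol}(\Sigma)$ or a slight abuse. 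Your proof is complete modulo this point, which is a feature of the lemma's statement rather than a gap in your argument.
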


\paragraph*{Transversally normal coordinates.} For many of the computations that we will have to make, it will be convenient to choose bundle-adapted holomorphic coordinates $\bm{w}=(z,\zeta)$ such that, for a fixed point $p\in\Sigma$, $\left(\diff_zt\right)(p)=0$. For brevity, we will call coordinates with these properties \emph{transversally normal at} $p$. Such a system of coordinates always exists, they are essentially just normal coordinates for the bundle metric $h$. In these coordinates the metric $\omega_\phi$ becomes (see equation \eqref{eq:metrica_CalabiAnsatz_phi})
\begin{equation*}
\omega(p)=(1+\tau)\pi^*\omega_\Sigma+\I\,\frac{\phi(\tau)}{\zeta\bar{\zeta}}\mrm{d}\zeta\wedge\mrm{d}\bar{\zeta}.
\end{equation*}
In particular, it will be convenient to use transversally normal coordinates whenever we have to compute objects that involve the Christoffel symbols of $\omega_\phi$, since in these coordinates $g_\phi$ and its inverse are diagonal.

\begin{lemma}
The Christoffel symbols of $\omega_\phi$ are
\begin{align*}
\Gamma^1_{11}=&2\frac{\phi(\tau)}{1+\tau}\diff_zt+\Gamma^1_{11}(\Sigma); & \Gamma^2_{11}=&\zeta\,(\diff_zt)^2\left(-2\frac{\phi(\tau)}{1+\tau}+\phi'(\tau)\right)+\zeta\,\diff^2_zt-\zeta\,\diff_zt\,\Gamma^1_{11}(\Sigma);\\
\Gamma^1_{21}=&\frac{\phi(\tau)}{(1+\tau)\zeta}; & \Gamma^2_{21}=&\diff_zt\left(-\frac{\phi(\tau)}{1+\tau}+\phi'(\tau)\right);\\
\Gamma^1_{22}=&0; & \Gamma^2_{22}=&\frac{\phi'(\tau)-1}{\zeta}.
\end{align*}
\end{lemma}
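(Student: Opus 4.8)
The plan is to compute the Christoffel symbols of $\omega_\phi$ directly from the metric coefficients given in \eqref{eq:metrica_CalabiAnsatz_phi}, exploiting transversal normality at the point $p$ to keep the expressions manageable. Recall that for a K\"ahler metric the only nonvanishing Christoffel symbols are those of the form $\Gamma^a_{bc}$ with all three indices holomorphic (and their conjugates), and they are given by $\Gamma^a_{bc} = g^{a\bar{d}}\,\diff_b g_{c\bar{d}}$. So the entire computation reduces to differentiating the four entries of the metric matrix $(g_{a\bar{b}})$ with respect to $z$ and $\zeta$, and contracting against the inverse matrix $(g^{a\bar{b}})$, both of which are displayed just above the statement.

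First I would record the basic derivatives we need. Since $t = \log(a(z)\zeta\bar\zeta)$ we have $\diff_\zeta t = 1/\zeta$, $\diff_\zeta^2 t = -1/\zeta^2$, $\diff_z\diff_\zeta t = 0$, while $\diff_z t = \diff_z\log a(z)$, which vanishes at $p$ in transversally normal coordinates but whose $z$-derivative $\diff_z^2 t$ does not. Also $\diff_z\tau = \phi'(\tau)\diff_z t$ and $\diff_\zeta\tau = \phi(\tau)\cdot(2/\zeta)\cdot\tfrac12$... more precisely from $\diff_r\tau = 2\phi(\tau)r^{-1}$ (used in Lemma \ref{lemma:volume_fibre}) one gets $\diff_\zeta\tau = \phi(\tau)/\zeta$ and $\diff_{\bar\zeta}\tau = \phi(\tau)/\bar\zeta$. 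One must be careful to keep the curvature term $F(h) = -\omega_\Sigma$ consistent with these, but since $\diff_z t$ is being evaluated at $p$ where it vanishes, most cross terms collapse. I would also need the Christoffel symbol $\Gamma^1_{11}(\Sigma) = g_\Sigma^{-1}\diff_z g_\Sigma$ of the base metric, which appears because the $(1,\bar1)$ entry of $g_\phi$ contains the term $(1+\tau)g_\Sigma$.

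Then it is a matter of bookkeeping: compute $\diff_z g_{1\bar1}$, $\diff_z g_{1\bar2}$, $\diff_z g_{2\bar2}$, $\diff_\zeta g_{1\bar1}$, $\diff_\zeta g_{1\bar2}$, $\diff_\zeta g_{2\bar2}$ at $p$ using the product rule and the derivative formulas above, then contract with the inverse metric $(g^{a\bar b})$ — which at $p$ is diagonal with entries $g^{1\bar1} = \frac{1}{(1+\tau)g_\Sigma}$ and $g^{2\bar2} = \frac{\zeta\bar\zeta}{\phi(\tau)}$ — to read off each $\Gamma^a_{bc} = g^{a\bar1}\diff_b g_{c\bar1} + g^{a\bar2}\diff_b g_{c\bar2}$. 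For instance, $\Gamma^1_{22} = g^{1\bar1}\diff_2 g_{2\bar1} + g^{1\bar2}\diff_2 g_{2\bar2}$; at $p$ the second term drops (off-diagonal inverse entry vanishes) and $g_{2\bar1} = \phi(\tau)\diff_z t/\bar\zeta$ vanishes at $p$ together with its $\zeta$-derivative (since $\diff_z t(p)=0$ and $\diff_\zeta\diff_z t = 0$), giving $\Gamma^1_{22} = 0$ as claimed. The other five entries follow the same pattern; the ones involving $\Gamma^2$ pick up the extra curvature-type contributions that produce the $\phi'(\tau) - \phi(\tau)/(1+\tau)$ combinations, and $\Gamma^2_{11}$ additionally inherits the base Christoffel symbol through the chain of derivatives hitting $(1+\tau)g_\Sigma\diff_z t\diff_{\bar z}t$.

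The main obstacle is purely organisational: keeping track of which terms survive the specialisation to $p$ and correctly propagating the $z$-derivative of $\diff_z t$ (which does \emph{not} vanish at $p$) through the products — in particular for $\Gamma^2_{11}$, where the term $\zeta\,\diff_z^2 t$ appears precisely because $\diff_z(\diff_z t\,\diff_{\bar z}t)$ at $p$ leaves one factor of $\diff_z^2 t$ and kills the cross term. There is no conceptual difficulty and no analytic estimate involved; the identity $\Gamma^a_{bc} = g^{a\bar d}\diff_b g_{c\bar d}$ for K\"ahler metrics does all the work, and the transversally normal choice of coordinates is exactly what makes the bookkeeping feasible by hand. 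I would simply present the result, perhaps displaying the computation of one representative symbol (say $\Gamma^2_{21}$ or $\Gamma^1_{11}$) and asserting that the rest are entirely analogous.
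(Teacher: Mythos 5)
There is a genuine gap: you are proving the wrong statement. The lemma asserts formulas for the Christoffel symbols of $\omega_\phi$ in an \emph{arbitrary} bundle-adapted coordinate system, including all the terms proportional to $\diff_z t$ (e.g.\ $\Gamma^1_{11}=2\frac{\phi(\tau)}{1+\tau}\diff_z t+\Gamma^1_{11}(\Sigma)$, $\Gamma^2_{21}=\diff_z t\bigl(-\frac{\phi(\tau)}{1+\tau}+\phi'(\tau)\bigr)$, and the full three-term expression for $\Gamma^2_{11}$). By imposing transversal normality and evaluating everything at the point $p$ where $\diff_z t(p)=0$, you discard precisely these terms from the outset; what you would obtain is only the specialised display \eqref{christoffel}, which in the paper is the \emph{corollary} of the lemma ("In particular, \dots at the point $p$"), not the lemma itself. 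Since Christoffel symbols are not tensorial, you cannot recover the general formulas from their values at points where $\diff_z t$ vanishes, nor transfer a computation done in a normal frame to a general bundle-adapted frame. The correct route is the same bookkeeping you describe, but carried out with the full metric and its full inverse as displayed after \eqref{eq:metrica_CalabiAnsatz_phi} — in particular keeping the off-diagonal entries $g_{1\bar 2}=\phi(\tau)\diff_z t/\bar\zeta$, $g^{2\bar 1}=-\zeta\,\diff_z t/((1+\tau)g_\Sigma)$ and the extra piece of $g^{2\bar 2}$ — and only afterwards specialising to transversally normal coordinates. These general formulas are genuinely needed later (e.g.\ $\Gamma^2_{21}$ enters the covariant derivatives in the computation of $\bdiff^*A^{1,0}$), so the restriction to $p$ is not harmless.

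There is also an internal inconsistency in your sketch: at $p$ both $\diff_z t$ and $\diff_{\bar z}t$ vanish, so $\diff_z(\diff_z t\,\diff_{\bar z}t)\big|_p=\diff_z^2 t\cdot\diff_{\bar z}t+\diff_z t\cdot\diff_z\diff_{\bar z}t=0$; your claimed origin of the $\zeta\,\diff_z^2 t$ term in $\Gamma^2_{11}$ is therefore wrong even on its own terms. In the general computation that term arises from $g^{2\bar 2}\,\diff_z g_{1\bar 2}\supset \frac{\zeta\bar\zeta}{\phi(\tau)}\cdot\frac{\phi(\tau)\,\diff_z^2 t}{\bar\zeta}$, i.e.\ from differentiating the off-diagonal metric entry you propose to set to zero. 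Redo the computation without transversal normality, and the stated six formulas come out of $\Gamma^a_{bc}=g^{a\bar d}\diff_b g_{c\bar d}$ exactly as you intend.
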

In particular, if we fix a point $p\in\Sigma$ and a system of transversally normal coordinates around it, the Christoffel symbols of $\omega_\phi$ at the point $p$ are
\begin{align}\label{christoffel}
\nonumber \Gamma^1_{11}=&\Gamma^1_{11}(\Sigma); & \Gamma^2_{11}=&0;\\
\nonumber \Gamma^1_{21}=&\frac{\phi(\tau)}{(1+\tau)\zeta}; & \Gamma^2_{21}=&0;\\
\Gamma^1_{22}=&0; & \Gamma^2_{22}=&\frac{\phi'(\tau)-1}{\zeta}.
\end{align}

\subsection{Deforming complex structures on the total space of a vector bundle}

The HcscK equations involve both a K\"ahler metric and a deformation of the complex structure. While in this ruled surface case we have already chosen to use K\"ahler metrics satisfying the Calabi ansatz \eqref{eq:metrica_ruled_surface}, we still have to choose which deformations of $\bb{P}(\m{O}\oplus L)$ to consider. The natural choice is to consider a deformation of the $\bdiff$-operator of $E:=\m{O}\oplus L$, so a matrix-valued form $\beta\in\mathcal{A}^{0,1}(\mrm{End}(E))$; this $\beta$ will induce a deformation $A\in\mrm{End}(TE)$ of the complex structure of the total space (which we still denote by $E$).

First, recall how a $\bdiff_E$-operator determines the complex structure $J_E$, see \cite[Proposition $1.3.7$]{Kobayashi_bundles}. Fix a local holomorphic coordinate $z$ on $\Sigma$ and a local frame $(s_1,s_2)$ on $E$. If we let $(w^1,w^2)$ be the usual coordinates on $\mathbb{C}^2$, by the choice of the local frame we can use $(z,w^1,w^2)$ as local complex coordinates on $E$. Denote by
\begin{equation*}
T\indices{^i_j}:=T\indices{_{\bar{1}}^i_j}\mathrm{d}\bar{z}
\end{equation*}
the local representative of the $\bar{\partial}_E$-operator. A complex structure on $E$ is uniquely determined by a decomposition $T_{\mathbb{C}}E=T^{1,0}E\oplus T^{0,1}E$; we define
\begin{equation*}
T^{1,0}E:=\mrm{span}_{\mathbb{C}}\left(\diff_{w^1},\diff_{w^2},\diff_z-\overline{T\indices{^i_j}}(\diff_z)\bar{w}^j\diff_{\bar{w}^i}\right).
\end{equation*}
A different choice of a local frame does not change this bundle; moreover, the integrability of $\bdiff_E$ (i.e. $\bdiff_E^2=0$) is equivalent to that of $T^{1,0}E$ (i.e. $[T^{1,0}E,T^{1,0}E]\subseteq T^{1,0}E$.)

Consider now the case in which we already have a holomorphic structure $\bdiff_E$, and we are deforming it as $\bdiff'_E:=\bdiff_E+\beta$ for some $\beta\in\mathcal{A}^{0,1}(\mrm{End}(E))$. Choose a local $\bdiff_E$-holomorphic frame $s_1,s_2$ for $E$. Then a local representative for $\bdiff'_E$ in this local frame is just the matrix $\beta$, and the previous construction gives us
\begin{equation*}
T_{\bdiff_E}^{1,0}E=\mrm{span}_{\mathbb{C}}\left(\diff_{w^1},\diff_{w^2},\diff_z\right),\quad\quad T_{\bdiff'_E}^{1,0}E=\mrm{span}_{\mathbb{C}}\left(\diff_{w^1},\diff_{w^2},\diff_z-\overline{\beta\indices{_{\bar{1}}^i_j}}\bar{w}^j\diff_{\bar{w}^i}\right).
\end{equation*}

Changing point of view, $\bdiff_E$ defines on the total space of $E$ a complex structure $J_E$, and if we slightly deform it to $J'_E:=J_E+\varepsilon\, A$ for some $A\in\Gamma(E,\mrm{End}(TE))$, to first order in $\varepsilon$ the holomorphic tangent bundle of $E$ with respect to $J'_E$ can be described as
\begin{equation*}
T_{J'_E}^{1,0}E=\left\lbrace v-\frac{\mrm{i}\,\varepsilon}{2}A(v)\mid v\in T_{J_E}^{1,0}E\right\rbrace.
\end{equation*}
Comparing the spaces $T_{J'_E}^{1,0}E$ and $T_{\bdiff'_E}^{1,0}E$, we see that $A$ induces the same deformation of $J_E$ as $\beta$ if and only if
\begin{equation}
\begin{split}
A^{1,0}(\diff_{\bar{w}^i})=&0\\
A^{1,0}(\diff_{\bar{z}})=&2\,\mrm{i}\,\beta\indices{^i_j}(\diff_{\bar{z}})w^j\diff_{w^i};
\end{split}
\end{equation}
we let $A(\beta)$ be the deformation of the complex structure defined by these equations.

\bigbreak

The next step is to see how a deformation of $\bdiff_E$, $\beta\in\m{A}^{0,1}(\mrm{End}(E))$ induces a deformation of the complex structure of $\bb{P}(E)$. From the previous discussion, we have a canonical way to induce a first-order deformation $A(\beta)\in\Gamma(\mrm{End}(TE))$ of the complex structure of $E$. Now, on $E$ we have the usual $\bb{C}^*$-action on the fibres, and $\bb{P}(E)$ is defined as
\begin{equation*}
\bb{P}(E):=\left(E\setminus M\right)/\bb{C}^*.
\end{equation*}

\begin{lemma}
Let $p:E\setminus M\to\bb{P}(E)$ be the usual projection, and fix $\beta\in\m{A}^{0,1}(\mrm{End}(E))$. Then $A=A(\beta)$ induces a deformation of the complex structure of $\bb{P}(E)$ as follows: for $[x]\in\bb{P}(E)$ and $v\in T_{[x]}\bb{P}(E)$ choose a $p$-lift $\hat{v}\in T_xE$ of $v$, and let
\begin{equation*}
A_{[x]}(v):=p_*A_{x}(\hat{v}).
\end{equation*}
\end{lemma}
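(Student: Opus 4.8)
The plan is to verify two things: that the rule $A_{[x]}(v):=p_*A_x(\hat v)$ does not depend on either of the choices made (the representative $x\in p^{-1}([x])$ and the $p$-lift $\hat v$ of $v$), and that the resulting tensor is a genuine infinitesimal deformation of the complex structure of $\bb{P}(E)$, i.e.\ $A_{[x]}\in\mrm{End}(T_{[x]}\bb{P}(E))$ anticommutes with $J_{\bb{P}(E)}$ and depends smoothly on $[x]$.

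The only genuinely computational point is the claim that $A=A(\beta)$ is invariant under the fibrewise $\mathbb{C}^*$-action on $E\setminus M$. Write $\lambda\colon E\to E$ for fibrewise multiplication by $\lambda\in\mathbb{C}^*$. In bundle-adapted coordinates $(z,w^1,w^2)$ attached to a local holomorphic frame of $E$ one has $\lambda_*(\diff_z|_x)=\diff_z|_{\lambda x}$, $\lambda_*(\diff_{\bar{z}}|_x)=\diff_{\bar{z}}|_{\lambda x}$, $\lambda_*(\diff_{w^i}|_x)=\lambda\,\diff_{w^i}|_{\lambda x}$ and $\lambda_*(\diff_{\bar{w}^i}|_x)=\bar{\lambda}\,\diff_{\bar{w}^i}|_{\lambda x}$, while the matrix entries $\beta\indices{^i_j}(\diff_{\bar{z}})$ are unchanged because $\lambda$ fixes the base and the chosen frame. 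Substituting these into the defining relations $A^{1,0}(\diff_{\bar{w}^i})=0$ and $A^{1,0}(\diff_{\bar{z}})=2\,\mrm{i}\,\beta\indices{^i_j}(\diff_{\bar{z}})\,w^j\,\diff_{w^i}$ (together with their complex conjugates, legitimate since $A$ is real), the factor $\lambda$ produced by $\lambda_*\diff_{w^i}$ is exactly cancelled by the change $w^j(x)\mapsto w^j(\lambda x)=\lambda\,w^j(x)$, so that $\lambda_*A=A$. The second, elementary, observation is that $A$ annihilates the relative tangent bundle $T_{E/\Sigma}$ — it kills every $\diff_{w^i}$ and $\diff_{\bar{w}^i}$ — and $\ker p_*\subseteq T_{E/\Sigma}$, because the bundle projection $E\setminus M\to\Sigma$ factors through $p$; hence $A$ vanishes on $\ker p_*$.

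From these two facts the well-definedness is formal. Two $p$-lifts of the same $v$ at the same $x$ differ by an element of $\ker p_*\subseteq\ker A$, so $A_x(\hat v)$, and a fortiori $p_*A_x(\hat v)$, does not depend on the lift. If $x'=\lambda\cdot x$ with $\lambda\in\mathbb{C}^*$, then $p\circ\lambda=p$ gives $p_*\lambda_*=p_*$, so $\lambda_*\hat v$ is a $p$-lift of $v$ at $x'$; using $\lambda_*A=A$ in the form $\lambda_*(A_x\hat v)=A_{x'}(\lambda_*\hat v)$ we obtain $p_*A_{x'}(\lambda_*\hat v)=p_*\lambda_*(A_x\hat v)=p_*A_x(\hat v)$, so the value is also independent of the representative $x$. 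Smoothness of $[x]\mapsto A_{[x]}$ is immediate since $p$ is a holomorphic submersion: locally one may choose a smooth section of $p$ and smooth lifts of a local frame of $T\bb{P}(E)$ and read $A_{[x]}$ off from $A$.

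Finally, $A=A(\beta)$ satisfies $AJ_E+J_EA=0$ (it is a first-order deformation of $J_E$), and $p$ is holomorphic, so $p_*\circ J_E=J_{\bb{P}(E)}\circ p_*$ and $J_E$ preserves $\ker p_*$. Thus, for $v$ with lift $\hat v$, the vector $J_E\hat v$ is a lift of $J_{\bb{P}(E)}v$ and
\begin{equation*}
A_{[x]}(J_{\bb{P}(E)}v)=p_*(A_x J_E\hat v)=-p_*(J_E A_x\hat v)=-J_{\bb{P}(E)}\,p_*(A_x\hat v)=-J_{\bb{P}(E)}A_{[x]}(v),
\end{equation*}
so $A_{[x]}J_{\bb{P}(E)}+J_{\bb{P}(E)}A_{[x]}=0$, as required. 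The main (really the only) obstacle is keeping the $\mathbb{C}^*$-equivariance bookkeeping straight in the coordinate computation of the second paragraph; once $\lambda_*A=A$ and $A|_{\ker p_*}=0$ are in hand, everything else is purely formal.
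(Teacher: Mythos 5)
Your proof is correct and at heart the same as the paper's: the key computation --- that the factor of $\lambda$ produced by pushing forward $\diff_{w^i}$ matches the rescaling $w^j(\lambda x)=\lambda\,w^j(x)$ in the defining formula for $A(\beta)$ --- is exactly the cancellation the paper carries out when it compares $p_*A_{x}(\hat{v}_1)$ with $p_*A_{\lambda x}(\hat{v}_2)$ in the affine chart $(z,\zeta)$ with $\zeta=w^2/w^1$. You merely repackage this as the two structural statements $\lambda_*A=A$ and $A\vert_{\ker p_*}=0$ and then descend formally, whereas the paper observes directly that the explicit expression for $p_*A_x(\hat v)$ depends only on the $\diff_{\bar z}$-component of the lift; your additional verifications (smoothness and the anticommutation of the induced endomorphism with $J_{\bb{P}(E)}$) are harmless extras the paper does not spell out.
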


\begin{proof}
We have to check that this expression does not depend upon the choice of the preimage of $[x]$ and of the lift $\hat{v}$ of $v$.

Fix holomorphic local frames of $\m{O}$ and $L$, so that we can locally describe $E$ as $M\times\bb{C}^2$, with coordinates $w^1,w^2$ on the fibres. We get homogeneous coordinates on the fibres of $\bb{P}(E)$ as $[w^1:w^2]$. If we fix a holomorphic coordinate $z$ on $M$, on the open subset of $\bb{P}(E)$ where $w^1\not=0$ we have local holomorphic coordinates $(z,\zeta)$, with $w=w^2/w^1$.

In this system of local coordinates the projection $p$ is written as $p(z,w^1,w^2)=\left(z,\frac{w^2}{w^1}\right)$, and (the $(1,0)$ part of) its differential is
\begin{equation*}
\mrm{d}p_{(z,w^1,w^2)}=\begin{pmatrix}
1 & 0 & 0 \\
0 & -\frac{w^2}{(w^1)^2}& \frac{1}{w^1}
\end{pmatrix}.
\end{equation*}
We have to check that for all $[x]\in\bb{P}(E)$ and all $\lambda\in\bb{C}^*$, if $\hat{v}_1\in T^{0,1}_{x}E$ and $\hat{v}_2\in T^{0,1}_{\lambda\,x}E$ are such that $p_*\hat{v}_1=p_*\hat{v}_2$, then also
\begin{equation*}
p_*A_{x}(\hat{v}_1)=p_*A_{\lambda\,x}(\hat{v}_2).
\end{equation*}
If $x=(z,w^1,w^2)$ and $\hat{v}_1=V\,\diff_{\bar{z}}+U^{\bar{i}}\diff_{\bar{w}^i}$ then
\begin{equation*}
\begin{split}
p_*A_{x}(\hat{v}_1)&=p_*\left(2\,\I\,V\,\beta\indices{^i_j}(\diff_{\bar{z}})\,w^j\diff_{w^i}\right)=2\,\I\,V\left(-\beta\indices{^1_j}(\diff_{\bar{z}})\,w^j\frac{w^2}{(w^1)^2}+\beta\indices{^2_j}(\diff_{\bar{z}})\,w^j\frac{1}{w^2}\right)\diff_\zeta
\end{split}
\end{equation*}
while, if $\hat{v}_2=\tilde{V}\diff_{\bar{z}}+\tilde{U}^{\bar{i}}\diff_{\bar{w}^i}$
\begin{equation*}
\begin{split}
p_*A_{\lambda\,x}(\hat{v}_2)&=p_*\left(2\,\I\,\tilde{V}\,\beta\indices{^i_j}(\diff_{\bar{z}})\,w^j\diff_{w^i}\right)=2\,\I\,\tilde{V}\left(-\beta\indices{^1_j}(\diff_{\bar{z}})\,w^j\frac{w^2}{(w^1)^2}+\beta\indices{^2_j}(\diff_{\bar{z}})\,w^j\frac{1}{w^2}\right)\diff_\zeta
\end{split}
\end{equation*}
but if $\hat{v}_1$ and $\hat{v}_2$ have the same image under $p_*$, $V=\tilde{V}$.
\end{proof}
Let $v=v^{\bar{1}}\diff_{\bar{z}}+v^{\bar{2}}\diff_{\bar{\zeta}}\in T^{0,1}_{(z,\zeta)}\bb{P}(E)$, and consider $\hat{v}=v^{\bar{1}}\diff_{\bar{z}}+v^{\bar{2}}\diff_{\bar{w}^2}\in T^{0,1}_{(z,1,\zeta)}(E)$. By our definition,
\begin{equation*}
\begin{split}
p_*A(\hat{v})=&2\,\I\,v^{\bar{1}}\left(-\beta\indices{^1_1}(\diff_{\bar{z}})\,\zeta-\beta\indices{^1_2}(\diff_{\bar{z}})\,\zeta^2+\beta\indices{^2_1}(\diff_{\bar{z}})+\beta\indices{^2_2}(\diff_{\bar{z}})\,\zeta\right)\diff_\zeta.
\end{split}
\end{equation*}
So, if we denote still by $A$ the deformation of the complex structure of $\bb{P}(E)$ we have
\begin{equation}\label{eq:deformazione_proiettivo}
A^{1,0}=2\,\I\left[(\beta\indices{_{\bar{1}}^2_2}-\beta\indices{_{\bar{1}}^1_1})\,\zeta-\beta\indices{_{\bar{1}}^1_2}\,\zeta^2+\beta\indices{_{\bar{1}}^2_1}\right]\mrm{d}\bar{z}\otimes\diff_\zeta.
\end{equation}

Notice that when we decompose $\beta\in\m{A}^{0,1}(\m{O}\oplus L)$ as
\begin{equation*}
\beta=\begin{pmatrix}
\beta\indices{^1_1} & \beta\indices{^1_2}\\
\beta\indices{^2_1} &\beta\indices{^2_2}
\end{pmatrix}
\end{equation*}
then $\beta\indices{^1_1}\in\m{A}^{0,1}(\m{O})\cong\m{A}^{0,1}(\Sigma,\bb{C})$, $\beta\indices{^1_2}\in\m{A}^{0,1}(L^*)$, $\beta\indices{^2_1}\in\m{A}^{0,1}(L)$ and $\beta\indices{^2_2}\in\m{A}^{0,1}(\mrm{End}(L))\cong\m{A}^{0,1}(\Sigma,\bb{C})$.

The expression \eqref{eq:deformazione_proiettivo} for $A^{1,0}$ holds just on the set $\bb{P}(\m{O}\oplus L)\setminus\Sigma_\infty$. If instead we change coordinates to $\bb{P}(\m{O}\oplus L)\setminus\Sigma_0$, we simply have to exchange the roles of $\beta\indices{^1_2}$ and $\beta\indices{^2_1}$. Indeed, equation \eqref{eq:deformazione_proiettivo} was obtained by fixing a system of bundle-adapted holomorphic coordinates $(z,\zeta)$ on $L$; if we perform the change of variables $\eta=\zeta^{-1}$ we obtain
\begin{equation*}
A^{1,0}=-2\,\I\left[\left(\beta\indices{_{\bar{1}}^2_2}-\beta\indices{_{\bar{1}}^1_1}\right)\eta-\beta\indices{_{\bar{1}}^1_2}+\beta\indices{_{\bar{1}}^2_1}\eta^2\right]\mrm{d}\bar{z}\otimes\diff_{\eta}.
\end{equation*}
After all, the construction of $\bb{P}(\m{O}\oplus L)$ can be interpreted as glueing the total spaces of $L$ and $L^*$ along their open subsets $L\setminus\Sigma$ and $L^*\setminus\Sigma$.

\bigskip

\begin{nota}\label{nota:surf_eq_complex_nonequiv}
Our choice of deformation of the complex structure $A$ is not compatible with $\omega_\phi$ for any $\phi$. Indeed $A^2=A^{1,0}A^{0,1}+A^{0,1}A^{1,0}=0$, and if $A$ and $\omega_\phi$ were compatible then we would find 
\begin{equation*}
\norm{A}^2_{g_\phi}=\mrm{Tr}(A^2)=0
\end{equation*}
but $A\not=0$. Hence, in this Section we study the complexified equations
\begin{equation*}
\begin{cases}
\f{m}_{\bm{\Omega_I}}\left(\omega,A(\beta)\right)=0;\\
\f{m}_{\bm{\Theta}}\left(\omega,A(\beta)\right)=0.
\end{cases}
\end{equation*}
for $A(\beta)$ as in \ref{eq:deformazione_proiettivo}, and so we'll find a solution to the complexified system \eqref{eq:HCSCK_complex_relaxed} without the compatibility condition.

Hence, we are tacitly assuming that we have extended the moment maps $\f{m}_{\bm{\Omega_I}}$, $\f{m}_{\bm{\Theta}}$ to the space of metrics $g$ for which $g(\alpha^\transpose-,-)$ is not necessarily symmetric. We have shown above that ${\f{m}_{\bm{\Theta}}}_{(J,\alpha)}(h) = \left\langle h,-\mrm{div}\left(\bdiff^*\bar{\alpha}^\transpose\right)\right\rangle$, which clearly has a tautological extension to all $g$ in the K\"ahler class. But the choice of an extension of the real moment map $\f{m}_{\bm{\Omega_I}}$ is more flexible. 

The crucial point is that, by Lemma \ref{lemma:mappa_momento_OmegaI}, $\f{m}_{\bm{\Omega_I}}$ is computed in terms of a spectral function of $A = \mrm{Re}(\alpha^\transpose)$. This function can be expressed in several different, equivalent ways by using a compatible metric $g$, that is, one for which $g(\alpha^\transpose-,-)$ is symmetric. In our present situation where this compatibility condition might not hold, these equivalent expressions give rise to potentially different extensions of $\f{m}_{\bm{\Omega_I}}$. A simple example is given by the spectral quantity $\mrm{Tr}(A^2)$. A computation shows that for \emph{compatible} $g$ this may be expressed equivalently as $\norm{A}^2_g$. So when $g$ and $A$ are not compatible $\norm{A}^2_g$ gives an alternative extension of the spectral quantity $\mrm{Tr}(A^2)$.
\end{nota}

\paragraph*{Choice of complexification.} The two expressions appearing in \eqref{eq:low_rank} were derived in close analogy to the case of curves. However in the present case they are no longer equivalent, as we discussed in Remark \ref{nota:surf_eq_complex_nonequiv}. This leads to a few different possibilities for the formal complexification. In the rest of this paper we examine the natural choices given by the two expressions in \eqref{eq:low_rank}. So in terms of the endomorphism $A$ the alternative possibility for the real moment map is   
\begin{equation}\label{eq:low_rank_norm}
\begin{split}
\f{m}(J,\alpha)&=\mrm{div}\left[
-\frac{\mrm{grad}\left(\frac{1}{2}\norm{A^{1,0}}^2_g\right)}{1+\sqrt{1-\frac{1}{2}\norm{A^{1,0}}^2_g}}+ \frac{g(\nabla^aA^{0,1},A^{1,0})\diff_a+\mrm{c.c.}}{1+\sqrt{1-\frac{1}{2}\norm{A^{1,0}}^2_g}}- \nabla^*\left(\frac{A^2}{1+\sqrt{1-\frac{1}{2}\norm{A^{1,0}}^2_g}}\right)
\right].
\end{split}
\end{equation}

\subsection{The complex moment map}

In this Section we'll find sufficient conditions on $\beta\in\m{A}^{0,1}(\mrm{End}(\m{O}\oplus L))$ such that the pair $\left(\omega_\phi,A(\beta)\right)$ satisfies the complex moment map equation. We work with a fixed metric $\omega_\phi$ for a prescribed (arbitrary) momentum profile $\phi$. 

Our strategy is to carry out the necessary computations in \emph{transversally normal local coordinates} and \emph{without} assuming that $A=A(\beta)$, but rather for some arbitrary $A^{1,0}=A\indices{^2_{\bar{1}}}\mrm{d}\bar{z}\otimes\diff_\zeta$. At the end of this Section we show that, when $L$ is the anticanonical bundle and for suitable choices of $A=A(\beta)$, our computations actually globalise to the whole ruled surface.

Recall that, for a deformation of complex structures $\dot{J}_0$ and a K\"ahler form $\omega$, the complex moment map equation is
\begin{equation*}
\mrm{div}\left(\bdiff^*\!\dot{J}_0^{1,0}\right)=0.
\end{equation*}
\begin{lemma}
With the previous notation,
\begin{equation*}
\bdiff^*A^{1,0}=-\frac{\phi(\tau)}{\zeta\,g_0\,(1+\tau)^2}A\indices{^2_{\bar{1}}}\diff_z-\frac{1}{(1+\tau)g_0}\left(\diff_zA\indices{^2_{\bar{1}}}+A\indices{^2_{\bar{1}}}\,\diff_zt\,\left(1-\frac{\phi(\tau)}{1+\tau}\right)-\zeta\,\diff_zt\,\diff_{\zeta}A\indices{^2_{\bar{1}}}\right)\diff_\zeta.
\end{equation*}
\end{lemma}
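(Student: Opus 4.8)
The plan is to deduce the formula from a single substitution into the standard local expression for $\bdiff^{*}$ on $T^{1,0}$-valued $(0,1)$-forms. Since $\omega_{\phi}$ is K\"ahler, the Chern connection of $T^{1,0}M$ (with $M=\bb{P}(\m{O}\oplus L)$) coincides with the Levi--Civita connection and has as only nonvanishing Christoffel symbols the $\Gamma\indices{^{a}_{bc}}$ computed in the preceding Lemma. I would first record (with a one-line integration-by-parts derivation, to pin down the sign in accordance with the conventions of the excerpt) that the formal $L^{2}$-adjoint of $\bdiff\colon\m{A}^{0,0}(T^{1,0}M)\to\m{A}^{0,1}(T^{1,0}M)$ acts on $s=s\indices{^{a}_{\bar b}}\,\mrm{d}\bar z^{b}\otimes\diff_{a}$ by
\[
\bdiff^{*}s=-\,g^{c\bar b}\Big(\diff_{c}s\indices{^{a}_{\bar b}}+\Gamma\indices{^{a}_{cd}}\,s\indices{^{d}_{\bar b}}\Big)\,\diff_{a},
\]
the anti-holomorphic form index $\bar b$ requiring no correction because on a K\"ahler manifold the $(1,0)$-part of the Levi--Civita connection annihilates the forms $\mrm{d}\bar z^{b}$.

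Next I would specialise to $s=A^{1,0}=A\indices{^{2}_{\bar 1}}\,\mrm{d}\bar z\otimes\diff_{\zeta}$ in bundle-adapted holomorphic coordinates $(z,\zeta)$, indexed $1\leftrightarrow z$, $2\leftrightarrow\zeta$. As $A\indices{^{a}_{\bar b}}$ is nonzero only for $(a,\bar b)=(2,\bar 1)$, the sum collapses to a $\diff_{z}$-term $-g^{c\bar 1}\Gamma\indices{^{1}_{c2}}A\indices{^{2}_{\bar 1}}$ and a $\diff_{\zeta}$-term $-g^{c\bar 1}\big(\diff_{c}A\indices{^{2}_{\bar 1}}+\Gamma\indices{^{2}_{c2}}A\indices{^{2}_{\bar 1}}\big)$, summed over $c\in\{1,2\}$. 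Into this I would substitute the inverse-metric entries of the Calabi ansatz, $g^{1\bar 1}=\tfrac{1}{(1+\tau)g_{0}}$ and $g^{2\bar 1}=-\tfrac{\zeta\,\diff_{z}t}{(1+\tau)g_{0}}$, together with the Christoffel symbols $\Gamma^{1}_{21}=\tfrac{\phi(\tau)}{(1+\tau)\zeta}$, $\Gamma^{1}_{22}=0$, $\Gamma^{2}_{21}=\diff_{z}t\big(\phi'(\tau)-\tfrac{\phi(\tau)}{1+\tau}\big)$, $\Gamma^{2}_{22}=\tfrac{\phi'(\tau)-1}{\zeta}$ from the preceding Lemma.

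The rest is bookkeeping. The $\diff_{z}$-component is immediate: only $c=1$ contributes (as $\Gamma^{1}_{22}=0$), giving $-\tfrac{\phi(\tau)}{\zeta\,g_{0}\,(1+\tau)^{2}}A\indices{^{2}_{\bar 1}}$. For the $\diff_{\zeta}$-component one adds the $c=1$ and $c=2$ pieces; the single point worth flagging is that the term $\zeta\,\diff_{z}t\cdot\Gamma^{2}_{22}$ coming from $c=2$ cancels the $\phi'(\tau)$ appearing in $\Gamma^{2}_{21}$, so the coefficient of $A\indices{^{2}_{\bar 1}}\,\diff_{z}t$ reduces to $1-\tfrac{\phi(\tau)}{1+\tau}$, while the $c=2$ term also produces the $-\zeta\,\diff_{z}t\,\diff_{\zeta}A\indices{^{2}_{\bar 1}}$ contribution; assembling everything gives precisely the displayed identity.

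There is no genuine obstacle here: the only places an error could enter are the sign/normalisation of $\bdiff^{*}$ and the handling of the off-diagonal inverse-metric entry $g^{2\bar 1}$ (which is what generates all the $\diff_{z}t$-terms), so I would double-check both against the conventions fixed earlier in the paper. As an independent consistency check I would re-do the computation at a point $p$ in transversally normal coordinates, where $\diff_{z}t(p)=0$ and the Christoffel symbols reduce to \eqref{christoffel}; this recovers the $\diff_{z}t$-free specialisation of the formula and confirms the bookkeeping.
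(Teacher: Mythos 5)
Your proposal is correct and is essentially the paper's own argument: both start from the coordinate expression $\bdiff^*A^{1,0}=-g^{a\bar b}\nabla_a A\indices{^c_{\bar b}}\,\diff_c$, note that only the $A\indices{^2_{\bar 1}}$ component survives, and then substitute the Calabi-ansatz inverse metric entries $g^{1\bar1}$, $g^{2\bar1}$ together with the general (not transversally normal) Christoffel symbols, with the same $\phi'(\tau)$ cancellation between the $\Gamma^2_{21}$ and $\zeta\,\diff_z t\,\Gamma^2_{22}$ contributions. Your use of the general Christoffel symbols is the right call, since the $\diff_z t$ terms in the stated formula require them.
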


\begin{proof}
It's just a matter of computing carefully, starting from
\begin{equation*}
\bdiff^*A^{1,0}=-g^{a\bar{b}}\nabla_aA\indices{^c_{\bar{b}}}\,\diff_c.
\end{equation*}
The covariant derivatives of $A$ satisfy
\begin{align*}
\nabla_1A\indices{^1_{\bar{1}}}&=A\indices{^2_{\bar{1}}}\Gamma^1_{12};      \quad\quad&&\nabla_2A\indices{^1_{\bar{1}}}=A\indices{^2_{\bar{1}}}\Gamma^1_{22}=0;\\
\nabla_1A\indices{^1_{\bar{2}}}&=0;      \quad\quad&&\nabla_2A\indices{^1_{\bar{2}}}=0;\\
\nabla_1A\indices{^2_{\bar{1}}}&=\diff_zA\indices{^2_{\bar{1}}}+A\indices{^2_{\bar{1}}}\Gamma^2_{21};      \quad\quad&&\nabla_2A\indices{^2_{\bar{1}}}=\diff_{\zeta}A\indices{^2_{\bar{1}}}+A\indices{^2_{\bar{1}}}\Gamma^2_{22};\\
\nabla_1A\indices{^2_{\bar{2}}}&=0;      \quad\quad&&\nabla_2A\indices{^2_{\bar{2}}}=0.
\end{align*}
By \eqref{christoffel} we can rewrite $\bdiff^*A^{1,0}$ as
\begin{equation*}
\begin{split}
\bdiff^*A^{1,0}=&-g^{1\bar{1}}\nabla_1A\indices{^c_{\bar{1}}}\,\diff_c-g^{2\bar{1}}\nabla_2A\indices{^2_{\bar{1}}}\,\diff_\zeta=-g^{1\bar{1}}\nabla_1A\indices{^1_{\bar{1}}}\diff_z-\left(g^{1\bar{1}}\nabla_1A\indices{^2_{\bar{1}}}+g^{2\bar{1}}\nabla_2A\indices{^2_{\bar{1}}}\right)\diff_\zeta=\\
=&-\frac{\phi(\tau)}{\zeta\,g_0\,(1+\tau)^2}A\indices{^2_{\bar{1}}}\diff_z-\frac{1}{(1+\tau)g_0}\left(\diff_zA\indices{^2_{\bar{1}}}+A\indices{^2_{\bar{1}}}\,\diff_zt\,\left(1-\frac{\phi(\tau)}{1+\tau}\right)-\zeta\,\diff_zt\,\diff_{\zeta}A\indices{^2_{\bar{1}}}\right)\diff_\zeta.
\end{split}
\end{equation*}
\end{proof}
We proceed to calculate the divergence of $\bdiff^*A^{1,0}$. By definition
\begin{equation*}
\mrm{div}(\bdiff^*A^{1,0})=\nabla_a(\bdiff^*A^{1,0})^a=\diff_a(\bdiff^*A^{1,0})^a+(\bdiff^*A^{1,0})^b\Gamma^a_{ab}.
\end{equation*}
We compute the two terms separately. We will need the quantities
\begin{equation*}
\begin{split}
D_1(\tau):=&-\frac{\phi(\tau)}{(1+\tau)^2}=\phi(\tau)\,\diff_\tau\left(\frac{1}{1+\tau}\right)\\
D_2(\tau):=&\phi(\tau)\,\diff_\tau D_1(\tau).
\end{split}
\end{equation*}
The first term is the sum of 
\begin{equation*}
\diff_1(\bdiff^*A^{1,0})^1=\diff_z\left(D_1(\tau)\,\frac{A\indices{^2_{\bar{1}}}}{\zeta\,g_0}\right)=D_2(\tau)\,\frac{\diff_zt}{\zeta\,g_0}A\indices{^2_{\bar{1}}}-D_1(\tau)\,\frac{\Gamma^1_{11}(\Sigma)}{\zeta\,g_0}A\indices{^2_{\bar{1}}}+D_1(\tau)\,\frac{1}{\zeta\,g_0}\diff_z A\indices{^2_{\bar{1}}} 
\end{equation*}
and
\begin{equation*}
\begin{split}
\diff_2(\bdiff^*A^{1,0})^2=&\diff_\zeta\left(-\frac{\diff_zA\indices{^2_{\bar{1}}}}{(1+\tau)g_0}-\frac{A\indices{^2_{\bar{1}}}\,\diff_zt}{(1+\tau)g_0}-D_1(\tau)\,\frac{A\indices{^2_{\bar{1}}}\,\diff_zt}{g_0}+\frac{\zeta\,\diff_zt}{(1+\tau)g_0}\diff_{\zeta}A\indices{^2_{\bar{1}}}\right)=\\
=&-D_1(\tau)\,\frac{\diff_zA\indices{^2_{\bar{1}}}}{\zeta\,g_0}-\frac{\diff_\zeta\diff_zA\indices{^2_{\bar{1}}}}{(1+\tau)g_0}
-D_1(\tau)\frac{A\indices{^2_{\bar{1}}}\,\diff_zt}{\zeta\,g_0}-D_2(\tau)\,\frac{A\indices{^2_{\bar{1}}}\,\diff_zt}{\zeta\,g_0}+\frac{\zeta\,\diff_zt}{(1+\tau)g_0}\diff_{\zeta}\diff_{\zeta}A\indices{^2_{\bar{1}}}.
\end{split}
\end{equation*}
The sum is given by
\begin{equation*}
\begin{split}
\diff_a(\bdiff^*A^{1,0})^a=&-D_1(\tau)\,\frac{\Gamma^1_{11}(\Sigma)}{\zeta\,g_0}A\indices{^2_{\bar{1}}}
-\frac{\diff_\zeta\diff_zA\indices{^2_{\bar{1}}}}{(1+\tau)g_0}
-D_1(\tau)\,\frac{A\indices{^2_{\bar{1}}}\,\diff_zt}{\zeta\,g_0}+\frac{\zeta\,\diff_zt}{(1+\tau)g_0}\diff_{\zeta}\diff_{\zeta}A\indices{^2_{\bar{1}}}=\\
=&-(\bdiff^*A^{1,0})^1\Gamma^1_{11}(\Sigma)
-\frac{\diff_\zeta\diff_zA\indices{^2_{\bar{1}}}}{(1+\tau)g_0}
-D_1(\tau)\,\frac{A\indices{^2_{\bar{1}}}\,\diff_zt}{\zeta\,g_0}+\frac{\zeta\,\diff_zt}{(1+\tau)g_0}\diff_{\zeta}\diff_{\zeta}A\indices{^2_{\bar{1}}}.
\end{split}
\end{equation*}
On the other hand the second term in $\mrm{div}(\bdiff^*A^{1,0})$ is given by
\begin{equation*}
\begin{split}
(\bdiff^*A^{1,0})^b\Gamma^a_{ab}=&(\bdiff^*A^{1,0})^1\Gamma^1_{11}+(\bdiff^*A^{1,0})^1\Gamma^2_{21}+(\bdiff^*A^{1,0})^2\Gamma^1_{12}+(\bdiff^*A^{1,0})^2\Gamma^2_{22}=\\
=&(\bdiff^*A^{1,0})^1\Gamma^1_{11}(\Sigma)+D_1(\tau)\,\frac{\diff_zt}{\zeta\,g_0}\,A\indices{^2_{\bar{1}}}\,\left(\phi'(\tau)+\frac{\phi(\tau)}{1+\tau}\right)+D_1(\tau)\,\frac{\diff_zA\indices{^2_{\bar{1}}}}{\zeta\,g_0}+\\
&+D_1(\tau)\,\frac{\diff_zt}{\zeta\,g_0}A\indices{^2_{\bar{1}}}-D_1(\tau)\,\frac{\phi(\tau)\,\diff_zt}{\zeta\,g_0}A\indices{^2_{\bar{1}}}-D_1(\tau)\,\frac{\diff_zt\,\diff_\zeta A\indices{^2_{\bar{1}}}}{g_0}+(\bdiff^*A^{1,0})^2\frac{\phi'(\tau)-1}{\zeta}.
\end{split}
\end{equation*}
These computations show that we have 
\begin{equation*}
\begin{split}
\mrm{div}(\bdiff^*A^{1,0})=&-\frac{\diff_\zeta\diff_zA\indices{^2_{\bar{1}}}}{(1+\tau)g_0}+\frac{\zeta\,\diff_zt}{(1+\tau)g_0}\diff_{\zeta}\diff_{\zeta}A\indices{^2_{\bar{1}}}-\frac{1}{(1+\tau)g_0\,\zeta}\left(\frac{\phi(\tau)}{1+\tau}+\phi'(\tau)-1\right)\diff_zA\indices{^2_{\bar{1}}}+\\
&+\left(\frac{\phi(\tau)}{1+\tau}+\phi'(\tau)-1\right)\frac{\diff_zt\,\diff_\zeta A\indices{^2_{\bar{1}}}}{(1+\tau)g_0}-\frac{\diff_zt\,A\indices{^2_{\bar{1}}}}{(1+\tau)\zeta\,g_0}\left(\phi'(\tau)-1+\frac{\phi(\tau)}{1+\tau}\right)=\\
=&\frac{-\diff_\zeta\diff_zA\indices{^2_{\bar{1}}}+\zeta\,\diff_zt\,\diff_{\zeta}\diff_{\zeta}A\indices{^2_{\bar{1}}}}{(1+\tau)g_0}-\frac{1}{(1+\tau)g_0}\left(-\frac{\diff_zA\indices{^2_{\bar{1}}}}{\zeta}+\diff_zt\,\diff_\zeta A\indices{^2_{\bar{1}}}-\frac{\diff_zt\,A\indices{^2_{\bar{1}}}}{\zeta}\right)+\\
&+\frac{1}{(1+\tau)g_0}\diff_\zeta\left[\mrm{log}\,\phi(\tau)(1+\tau)\right]\left(-\diff_zA\indices{^2_{\bar{1}}}+\diff_zt\,\zeta\diff_\zeta A\indices{^2_{\bar{1}}}-\diff_zt\,A\indices{^2_{\bar{1}}}\right).
\end{split}
\end{equation*}
This quantity vanishes precisely when
\begin{equation*}
\begin{split}
-\zeta\diff_\zeta\diff_zA\indices{^2_{\bar{1}}}&+\zeta^2\,\diff_zt\,\diff_{\zeta}\diff_{\zeta}A\indices{^2_{\bar{1}}}-\left(-\diff_zA\indices{^2_{\bar{1}}}+\diff_zt\,\zeta\,\diff_\zeta A\indices{^2_{\bar{1}}}-\diff_zt\,A\indices{^2_{\bar{1}}}\right)+\\&+\zeta\diff_\zeta\left[\mrm{log}\,\phi(\tau)(1+\tau)\right]\left(-\diff_zA\indices{^2_{\bar{1}}}+\diff_zt\,\zeta\,\diff_\zeta A\indices{^2_{\bar{1}}}-\diff_zt\,A\indices{^2_{\bar{1}}}\right)=0.
\end{split}
\end{equation*}
Notice that
\begin{equation*}
-\zeta\diff_\zeta\diff_zA\indices{^2_{\bar{1}}}+\zeta^2\,\diff_zt\,\diff_{\zeta}\diff_{\zeta}A\indices{^2_{\bar{1}}}=\zeta\diff_\zeta\left(-\diff_zA\indices{^2_{\bar{1}}}+\diff_zt\,\zeta\diff_\zeta A\indices{^2_{\bar{1}}}-\diff_zt\,A\indices{^2_{\bar{1}}}\right).
\end{equation*}
Thus, introducing the \emph{locally defined function} 
\begin{equation}\label{eq:def_k}
k:=-\diff_zA\indices{^2_{\bar{1}}}+\diff_zt\,\zeta\diff_\zeta A\indices{^2_{\bar{1}}}-\diff_zt\,A\indices{^2_{\bar{1}}},
\end{equation}
the complex moment map equation $\mrm{div}(\bdiff^*A^{1,0})=0$ may be expressed \emph{locally} as
\begin{equation*}
\zeta\diff_\zeta k+\left(\zeta\diff_\zeta\left[\mrm{log}\,\phi(\tau)(1+\tau)\right]-1\right)k=0.
\end{equation*}
This condition can be rewritten as
\begin{equation*}
\diff_\zeta k+\diff_\zeta\left(\mrm{log}\frac{\phi(\tau)(1+\tau)}{\zeta\,\bar{\zeta}}\right)k=0.
\end{equation*}
This equation can be integrated; so we see that the equation $\mrm{div}(\bdiff^*A^{1,0})=0$ is satisfied locally if and only if the function $k$ defined by equation \eqref{eq:def_k} satisfies
\begin{equation}\label{eq:complex_mm_k_sol}
k=c\,\frac{\zeta\,\bar{\zeta}}{\phi(\tau)(1+\tau)}
\end{equation}
for some function $c=c(z,\zeta)$ such that $\diff_\zeta c=0$.

\paragraph*{Choosing $c=0$.} Let's consider the case in which the function $c$ in \eqref{eq:complex_mm_k_sol} is identically $0$. In this case, $A^{1,0}$ satisfies %(recall the definition of $k$ in \eqref{eq:def_k})
\begin{equation}\label{eq:complessa_c0}
-\diff_zA\indices{^2_{\bar{1}}}+\zeta\,\diff_zt\,\diff_\zeta A\indices{^2_{\bar{1}}}-\diff_zt\,A\indices{^2_{\bar{1}}}=0.
\end{equation}
If we now choose $A=A(\beta)$, i.e.
\begin{equation*}
A\indices{^2_{\bar{1}}}=2\I\left(\zeta(\beta\indices{_{\bar{1}}^2_2}-\beta\indices{_{\bar{1}}^1_1})-\zeta^2\beta\indices{_{\bar{1}}^1_2}+\beta\indices{_{\bar{1}}^2_1}\right)
\end{equation*}
for $\beta\indices{^1_1},\beta\indices{^2_2}\in\m{A}^{0,1}(\Sigma,\bb{C})$, $\beta\indices{^1_2}\in\m{A}^{0,1}(L^*)$, $\beta\indices{^2_1}\in\m{A}^{0,1}(L)$, we can get an interesting consequence from equation \eqref{eq:complessa_c0}. Indeed, on the divisor $\Sigma=\Sigma_0=\set{\zeta=0}$ we get, from equation \eqref{eq:complessa_c0}
\begin{equation*}
-\diff_z\beta\indices{_{\bar{1}}^2_1}-\diff_zt\,\beta\indices{_{\bar{1}}^2_1}=0
\end{equation*}
and recalling that $\diff_zt=\diff_z\mrm{log}(a(z))$, were $a(z)$ is the local representative of the fibre metric on $L$, this tells us that
\begin{equation*}
\beta\indices{_{\bar{1}}^2_1}=\frac{q(z)}{a(z)}
\end{equation*}
for some function $q$ over $\Sigma$ such that $\diff_zq=0$. Consider instead what equation \eqref{eq:complessa_c0} tells us for $\zeta=\infty$, i.e. on the zero-set of $\eta=\zeta^{-1}$; after the change of coordinates, equation \eqref{eq:complessa_c0} becomes  
\begin{equation*}
\diff_zA\indices{^2_{\bar{1}}}(\eta)+\diff_zt\left(\eta\diff_\eta A\indices{^2_{\bar{1}}}-A\indices{^2_{\bar{1}}}(\eta)\right)=0
\end{equation*}
where $A\indices{^2_{\bar{1}}}(\eta)=-2\I\left(\eta(\beta\indices{_{\bar{1}}^2_2}-\beta\indices{_{\bar{1}}^1_1})-\beta\indices{_{\bar{1}}^1_2}+\eta^2\,\beta\indices{_{\bar{1}}^2_1}\right)$. Setting $\eta=0$ we find
\begin{equation*}
\diff_z\beta\indices{_{\bar{1}}^1_2}-\diff_zt\,\beta\indices{_{\bar{1}}^1_2}=0
\end{equation*}
and so
\begin{equation*}
\beta\indices{_{\bar{1}}^1_2}=a(z)\,\tilde{q}(z)
\end{equation*}
for some function $\tilde{q}$ over $\Sigma$ such that $\diff_z\tilde{q}=0$. With these choices, the matrix associated to $\beta\in\m{A}^{0,1}(\mrm{End}(\m{O}\oplus L))$ in a local holomorphic frame for $L$ is
\begin{equation*}
\begin{pmatrix}
\beta\indices{^1_1} & \tilde{q}(z)\,a(z)\,\mrm{d}\bar{z}\\
\frac{q(z)}{a(z)}\,\mrm{d}\bar{z} & \beta\indices{^2_2}
\end{pmatrix}.
\end{equation*}
It is useful to notice the identity $\zeta\,\diff_\zeta A\indices{^2_{\bar{1}}}-A\indices{^2_{\bar{1}}}=-2\I\left(\zeta^2\,\beta\indices{_{\bar{1}}^1_2}+\beta\indices{_{\bar{1}}^2_1}\right)$. Plugging this into \eqref{eq:complessa_c0} the equation can be rewritten as
\begin{equation*}
-\zeta\diff_z\left(\beta\indices{_{\bar{1}}^2_2}-\beta\indices{_{\bar{1}}^1_1}\right)-\zeta^2\,\diff_z\beta\indices{_{\bar{1}}^1_2}+\diff_z\beta\indices{_{\bar{1}}^2_1}-\diff_zt\left(\zeta^2\,\beta\indices{_{\bar{1}}^1_2}+\beta\indices{_{\bar{1}}^2_1}\right)=0,
\end{equation*}
which reduces to
\begin{equation*}
\diff_z\left(\beta\indices{_{\bar{1}}^2_2}-\beta\indices{_{\bar{1}}^1_1}\right)=0.
\end{equation*}
So equation \eqref{eq:complessa_c0} is satisfied if and only if
\begin{equation}\label{eq:condizione_coeff_c0}
\begin{split}
\beta\indices{_{\bar{1}}^1_2}&=a(z)\,\tilde{q}(z)\quad\mbox{with }\diff_z\tilde{q}=0;\\
\beta\indices{_{\bar{1}}^2_1}&=\frac{q(z)}{a(z)}\quad\mbox{with }\diff_zq=0;\\
\diff\big(\beta\indices{^2_2}&-\beta\indices{^1_1}\big)=0.
\end{split}
\end{equation}

The first two conditions in equation \eqref{eq:condizione_coeff_c0} are still just local ones. However we can globalise them by choosing $L$ to be the anticanonical bundle of $\Sigma$, $L=T^{1,0}\Sigma$. Indeed, recall that $\beta\indices{^1_2}\in\m{A}^{0,1}(L^*)$, $\beta\indices{^2_1}\in\m{A}^{0,1}(L)$, so that if $L=T^{1,0}\Sigma$ then $\beta\indices{^1_2}$ must be an element of $\m{A}^{0,1}({T^{1,0}}^*\Sigma)$, while $\beta\indices{^2_1}$ must be an element of $\m{A}^{0,1}(T^{1,0}\Sigma)$. Then we can choose the quantity $\tilde{q}$ of equation \eqref{eq:condizione_coeff_c0} to be a constant, and the local condition on $\beta\indices{^1_2}$ becomes the the global condition $\beta\indices{^1_2}=\tilde{q}\,h$. This is compatible with $\beta\indices{^1_2}\in\m{A}^{0,1}({T^{1,0}}^*\Sigma)$, since $h$ is a Hermitian metric on $T^{1,0}\Sigma$. In the same way, if $q$ is the local representative of a global holomorphic quadratic differential on $\Sigma$ (that we denote still by $q$), then the local condition on $\beta\indices{^2_1}$ globalises to $\beta\indices{^2_1}=q^{\sharp_h}$, i.e. $\beta\indices{^2_1}$ should be the quadratic differential with one index raised by $h$.\\

Let us summarise the results of this Section. Suppose that $L=K_\Sigma^*=T^{1,0}\Sigma$ and that $\beta$ satisfies the globally defined equations
\begin{equation}\label{eq:cond_globali_coeff_c0}
\begin{split}
\beta\indices{^1_2}&=\tilde{q}\,h\quad\mbox{ for some constant }\tilde{q};\\
\beta\indices{^2_1}&=q^{\sharp_h}\quad\mbox{ for some holomorphic quadratic differential }q;\\
\diff\big(\beta\indices{^2_2}&-\beta\indices{^1_1}\big)=0.
\end{split}
\end{equation}
The the complex moment map equation is satisfied. From now we always assume that $L$, $\beta$ are of this form.

\subsection{The real moment map}\label{sec:ruled_surface_realmm}

In this section we will prove that there exists a solution to the HcscK equations on our ruled surface, at least when the fibres have sufficiently small volume. We will work with the two possible choices of formal complexification given by the expressions in \eqref{eq:low_rank}. First we reformulate Theorem \ref{teorema:solution_mm_reale} using the notation introduced in the last few sections.
\begin{teorema}\label{teorema:HCSCK_rigata_esistenza}
Let $\Sigma$ be a Riemann surface of genus $g\geq 2$, and consider the ruled surface $M=\bb{P}(\m{O}\oplus K_\Sigma^*)$. Then, for all sufficiently small $m>0$, there exists a K\"ahler metric $\omega$ in the class dual to $2\,\pi\left(\m{C}+m\,\Sigma_\infty\right)$ (see Lemma \ref{lemma:classe_phi}) and a ``Higgs field" $\alpha\in\mrm{Hom}({T^{1,0}}^*M,{T^{0,1}}^*M)$ such that the complex and real moment map equations
\begin{equation*}
\mrm{div}\left(\bdiff^*\bar{\alpha}^\transpose\right)=0
\end{equation*}
\begin{equation*}
2\,s(\omega)-2\,\hat{s}(\omega)+\f{m}(\omega,\mrm{Re}(\alpha^\transpose))=0
\end{equation*}
 are satisfied, with $\f{m}$ given by one of the expressions in \eqref{eq:low_rank}.
\end{teorema}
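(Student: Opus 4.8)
The plan is to combine the analysis of the complex moment map equation already carried out with the momentum (Calabi) construction, and then to solve the real moment map equation by an adiabatic limit argument in the style of \cite{Fine_phd}. Throughout I fix $L = K_\Sigma^* = T^{1,0}\Sigma$, $\omega_\Sigma$ the hyperbolic metric, and $h$ the compatible fibre metric with $F(h) = -\omega_\Sigma$, and I work with Calabi ansatz metrics $\omega_\phi$ in the class dual to $2\pi(\m{C} + m\,\Sigma_\infty)$ (Lemma \ref{lemma:classe_phi}), with momentum profile $\phi\colon[0,m]\to[0,\infty)$ subject to $\phi(0)=\phi(m)=0$, $\phi'(0)=1$, $\phi'(m)=-1$ (Proposition \ref{prop:omega_phi_estende}).

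First I would reduce the real moment map equation to a boundary value problem on the momentum interval. By the analysis of \S\ref{sec:ruled_surface} (equations \eqref{eq:cond_globali_coeff_c0}) the complex moment map equation $\mrm{div}(\bdiff^*\bar{\alpha}^\transpose)=0$ holds for any $\beta$ of the prescribed shape, and I would take $\beta\indices{^1_1}=\beta\indices{^2_2}=0$, $\beta\indices{^1_2}=\tilde q\,h$ with $\tilde q$ a constant, and (for now) $\beta\indices{^2_1}=0$, so that the induced deformation $A=A(\beta)$ of \eqref{eq:deformazione_proiettivo} is built purely from the fibrewise $S^1$-weight. Since $A^{1,0}=A\indices{^2_{\bar 1}}\,\mrm{d}\bar z\otimes\diff_\zeta$ one has $A^2=0$, so we are in the low-rank case: with the complexification \eqref{eq:low_rank} the vector field $X(J,A)$ collapses to a multiple of $g(\nabla^aA^{0,1},A^{1,0})\diff_a+\mrm{c.c.}$, while for the alternative \eqref{eq:low_rank_norm} the scalar $\tfrac12\norm{A^{1,0}}^2_{g_\phi}$ survives and must be kept (recall, as in Remark \ref{nota:surf_eq_complex_nonequiv}, that $\mrm{Tr}(A^2)=0$ but $\norm{A^{1,0}}^2_{g_\phi}\neq 0$). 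All ingredients being $S^1$-invariant, a computation in transversally normal coordinates using the Christoffel symbols \eqref{christoffel} expresses $\mrm{div}\,X(\omega_\phi,A(\beta))$ as an explicit function of $\tau$, $\phi$ and $\tilde q^2$; together with Proposition \ref{prop:curv_scal_phi} and Lemma \ref{lemma:media_curvatura}, the equation $2s(\omega_\phi)-2\hat s+\mrm{div}\,X=0$ becomes a second order ODE for $\phi$ on $[0,m]$ with the four boundary conditions above and with a source term proportional to $\tilde q^2$. (If one index of freedom is not enough, the holomorphic quadratic differential $q$ of \eqref{eq:cond_globali_coeff_c0} can be switched on, at the cost of a genuine PDE on $M$ whose analysis proceeds along the same lines in fibrewise-weighted Hölder spaces.)

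Next I would pass to the adiabatic limit $m\to 0$. Rescaling $\tau = m\,x$, the leading-order equation decouples into the constant scalar curvature equations of the hyperbolic base and of the $\bb{P}^1$-fibre, both explicitly solvable, which produces an approximate solution $\phi_m$; the obstruction to the honest cscK equation on $(M,[\omega_m])$ — the nonvanishing of the relative Futaki invariant of the fibrewise $\bb{C}^*$-action, cf. \cite[\S3.3, \S5.2]{Szekelyhidi_phd} — enters only at the next order in $m$ and is one-dimensional, generated by the Hamiltonian of that $\bb{C}^*$-action. I would choose $\tilde q$ (scaled with the appropriate power of $m$) so that the Higgs contribution $\mrm{div}\,X(\omega_{\phi_m},A(\beta))$ cancels this obstruction to leading order, the key point being that the pairing of $\mrm{div}\,X$ with the obstruction functional is nonzero and of the correct sign — this is the precise analogue, in the cscK setting, of the fact that a destabilising Higgs field restores the existence of a harmonic metric. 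One then runs a quantitative inverse function theorem at $\phi_m$: the linearisation of the left-hand side is, modulo the one-dimensional cokernel, uniformly invertible as $m\to 0$, and the derivative in $\tilde q$ spans a complement to that cokernel, so a genuine solution $(\phi,\beta)$ exists for all sufficiently small $m$. Finally I would verify that the $\phi$ so obtained is positive on $(0,m)$ with the prescribed boundary derivatives — hence $\omega_\phi$ is an honest Kähler metric on $M$ — and that $\alpha$, recovered from $A=\mrm{Re}(\alpha^\transpose)=A(\beta)$, is the desired Higgs field.

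The main obstacle is the uniform control of the one-dimensional cokernel together with the transversality of the Higgs direction: this is exactly what allows the construction to succeed on a surface that carries no cscK metric, and the sign computation behind it — showing that the Higgs term pushes the Futaki obstruction in the right direction — is the crux of the argument. Secondarily, the explicit reduction of $\mrm{div}\,X(\omega_\phi,A(\beta))$ to the momentum variable, in particular the covariant derivatives $\nabla^aA^{0,1}$ in \eqref{eq:low_rank}/\eqref{eq:low_rank_norm}, is a lengthy computation in which the two inequivalent choices of formal complexification must be handled separately.
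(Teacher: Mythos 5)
Your plan is essentially the paper's own proof: the same $S^1$-invariant choice of Higgs field (only $\beta\indices{^1_2}=\tilde q\,h$ switched on, $h$ Kähler--Einstein) makes the complex moment map vanish via \eqref{eq:cond_globali_coeff_c0}, and Lemma \ref{lemma:calcolo_div} reduces the real moment map to the integro-differential boundary value problem \eqref{eq:HCSCK_original} for a momentum profile $\phi$ and a constant $c\propto\card{\tilde q}^2$, which is then solved in the adiabatic limit $m\to 0$ exactly as you outline. Your ``one-dimensional cokernel plus transversality of the $\tilde q$-direction'' is realised in the paper as the joint isomorphism $(u,k)\mapsto u''+2k(3\lambda^2-2\lambda)$ onto zero-average functions, with the explicit approximate solution $(\phi_0,c_0)$, $c_0=2m^2$, accurate to $O(m^3)$, and the quantitative inverse function theorem applied on a ball of radius $O(m^{5/2})$ so that positivity of $\phi$ and of $c$ (your sign condition) survives the perturbation.
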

We will choose $A=\mrm{Re}(\alpha^\transpose) = A(\beta)$, for a form $\beta\in\m{A}^{0,1}(\mrm{End}(\m{O}\oplus L))$. Then the complex moment map equation holds provided $\beta$ satisfies the conditions \eqref{eq:cond_globali_coeff_c0}. 

Note that, for any $\beta$ and with $A=A(\beta)$, we know that $A^{1,0}=A\indices{^2_{\bar{1}}}\mrm{d}\bar{z}\otimes\diff_\zeta$ and so the matrix associated to $A^{1,0}$ in a system of bundle-adapted coordinates has the form $\begin{pmatrix} 0 & 0\\ * & 0\end{pmatrix}$. In particular we are in the low-rank situation described at the end of Section \ref{sec:moment_map_complex_surface}, as required by the statement of Theorem \ref{teorema:HCSCK_rigata_esistenza}.

We will present the details of the proof of Theorem \ref{teorema:HCSCK_rigata_esistenza} for the choice of complexification given by the first expression in \eqref{eq:low_rank}, namely
\begin{equation*}
\begin{split}
\f{m}(\omega_\phi,A)=&\mrm{div}_\phi\left[
-\frac{\mrm{grad}_\phi\left(\frac{1}{4}\mrm{Tr}(A^2)\right)}{1+\sqrt{1-\frac{1}{4}\mrm{Tr}(A^2)}}+\frac{g_\phi(\nabla_\phi^aA^{0,1},A^{1,0})\diff_a+\mrm{c.c.}}{1+\sqrt{1-\frac{1}{4}\mrm{Tr}(A^2)}}-\nabla_\phi^*\left(\frac{A^2}{1+\sqrt{1-\frac{1}{4}\mrm{Tr}(A^2)}}\right)\right].
\end{split}
\end{equation*}
The proof for the alternative complexified equation \eqref{eq:low_rank_norm} is essentially the same, but some of the computations are more involved. We will point out the key differences in the course of the proof.

We note that $A(\beta)$ is nilpotent, with $A(\beta)^2 = 0$, so with our current choice of complexification we find
\begin{equation*}
\begin{split}
\f{m}(\omega_\phi,A(\alpha))=&\frac{1}{2}\mrm{div}_{\phi}\left[g_\phi(\nabla_\phi^aA^{0,1},A^{1,0})\diff_a+\mrm{c.c.}\right].
\end{split}
\end{equation*}

In the rest of this Section we fix $L = K^*_{\Sigma}$ and choose $\beta$ so that the complex moment map vanishes, i.e. we assume that $A^{1,0}$ satisfies equation \eqref{eq:complessa_c0}. Notice that if we fix a point $p\in\bb{P}(\m{O}\oplus L)$ and a system of transversally normal coordinates around this point, equation \eqref{eq:complessa_c0} at the point $p$ simply reads as $\diff_zA\indices{^2_{\bar{1}}}=0$.

\begin{lemma}\label{lemma:calcolo_div}
Assume that $\beta\indices{^2_2}=\beta\indices{^1_1}$ and $\beta\indices{^2_1}=0$, so that the matrix of $1$-forms associated to $\beta$ is upper triangular. Then
\begin{equation*}
\mrm{div}_{\phi}\left[g_\phi(\nabla^aA^{0,1},A^{1,0})\diff_a+\mrm{c.c.}\right]=2\,\norm{A^{1,0}}^2_\phi\left(\phi''(\tau)+\frac{(\phi'(\tau)+1)^2}{\phi(\tau)}\right).
\end{equation*}
\end{lemma}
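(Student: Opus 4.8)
The plan is to reduce the claimed identity to a direct computation in transversally normal coordinates. Fix a point $p\in\bb{P}(\m{O}\oplus L)\setminus(\Sigma_0\cup\Sigma_\infty)$ and choose bundle-adapted holomorphic coordinates $(z,\zeta)$ that are transversally normal at $p$, so that $(\diff_zt)(p)=0$ and the metric $g_\phi$ and its inverse are diagonal at $p$, with $g_{1\bar1}=(1+\tau)g_0$ and $g_{2\bar2}=\phi(\tau)/(\zeta\bar\zeta)$. The hypotheses $\beta\indices{^2_2}=\beta\indices{^1_1}$ and $\beta\indices{^2_1}=0$ mean that only the $\beta\indices{^1_2}$ entry survives, so by \eqref{eq:deformazione_proiettivo} we have $A\indices{^2_{\bar1}}=-2\I\,\zeta^2\,\beta\indices{_{\bar1}^1_2}$; combined with the $c=0$ complex moment map condition \eqref{eq:complessa_c0}, which at $p$ reads $\diff_zA\indices{^2_{\bar1}}=0$, this kills a number of terms. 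The only nonzero components of $A^{1,0}$ and $A^{0,1}$ are $A\indices{^2_{\bar1}}$ and $A\indices{^{\bar2}_1}=\overline{A\indices{^2_{\bar1}}}$, and the norm is $\norm{A^{1,0}}^2_\phi = g_{2\bar2}g^{1\bar1}|A\indices{^2_{\bar1}}|^2$ (up to the normalization constant fixed by the paper's conventions).

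First I would compute the covariant derivatives $\nabla^aA^{0,1}$ at $p$ using the Christoffel symbols \eqref{christoffel}. Raising the index, $\nabla^aA^{0,1}$ has components $g^{1\bar1}\nabla_{\bar1}A\indices{^{\bar2}_1}$ and $g^{2\bar2}\nabla_{\bar2}A\indices{^{\bar2}_1}$, and using that $\diff_zA\indices{^2_{\bar1}}=0$ at $p$ (hence also the conjugate relation) only a controlled set of terms involving $\diff_\zeta$, $\diff_{\bar\zeta}$ and the surviving Christoffel symbols $\Gamma^1_{21}=\phi/((1+\tau)\zeta)$, $\Gamma^2_{22}=(\phi'-1)/\zeta$ remain. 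Then I would form the vector field $V^a := g_\phi(\nabla^aA^{0,1},A^{1,0}) = g_{2\bar2}\,\overline{\nabla^aA\indices{^2_{\bar1}}}\cdot(\text{conjugate bookkeeping})$ — being careful about which slot is conjugated in the Hermitian pairing of endomorphisms — add its complex conjugate, and compute $\mrm{div}_\phi(V+\bar V) = \nabla_aV^a + \nabla_{\bar a}\bar V^{\bar a}$. The key simplification is that all $z$-derivatives of $A\indices{^2_{\bar1}}$ vanish at $p$ by \eqref{eq:complessa_c0}, and $\diff_rt = 2/r$ while $\diff_r\tau = 2\phi(\tau)/r$ convert $\zeta$-derivatives into $\tau$-derivatives of $\phi$; the radial nature of everything means the $z$-direction contributes nothing and the computation collapses to an ODE identity in $\tau$.

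The output I expect is that after collecting terms, the $\diff_\zeta\diff_{\bar\zeta}$ piece produces the $\phi''(\tau)$ term and the Christoffel contributions produce $(\phi'(\tau)+1)^2/\phi(\tau)$, with the overall factor $2\norm{A^{1,0}}^2_\phi$ emerging from $g_{2\bar2}g^{1\bar1}|A\indices{^2_{\bar1}}|^2$. To be safe I would cross-check the final ODE coefficients against a degenerate or limiting case — e.g. comparing with the structure of the scalar curvature formula in Proposition \ref{prop:curv_scal_phi}, which has the same $\phi''+2\phi'/(1+\tau)$ shape, or by evaluating at the endpoints where $\phi$ vanishes to order one. The main obstacle is bookkeeping rather than conceptual: keeping the Hermitian-pairing conventions for $\mathrm{End}(TM)$ consistent with the paper's $g_J(A,B)=\mrm{Tr}(AB)$ and the index placement in \eqref{eq:indentita_tangenti}, and making sure the ``$\mrm{c.c.}$'' term is handled so that the antiholomorphic contribution doubles the holomorphic one rather than canceling it. A secondary subtlety is checking that the transversally-normal-coordinate computation, which is only valid at the single point $p$, correctly captures $\mrm{div}_\phi$ there — this is fine because divergence is a pointwise operator and $p$ was arbitrary in the dense open set, and both sides are continuous, so the identity extends.
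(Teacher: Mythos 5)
Your overall strategy coincides with the paper's: fix an arbitrary point $p$, work in transversally normal coordinates, use \eqref{eq:deformazione_proiettivo} with the upper-triangular hypothesis (so $A\indices{^2_{\bar{1}}}=-2\I\,\zeta^2\beta\indices{_{\bar{1}}^1_2}$, whence $\zeta\diff_\zeta A\indices{^2_{\bar{1}}}=2A\indices{^2_{\bar{1}}}$), expand $\mrm{div}_\phi$ through the Christoffel symbols \eqref{christoffel}, and note that the pointwise computation at the arbitrary $p$ suffices. However, there is a genuine gap in the stated "key simplification": it is \emph{not} true that the base ($z$)-direction contributes nothing, and using \eqref{eq:complessa_c0} only through its value at $p$ (namely $\diff_zA\indices{^2_{\bar{1}}}(p)=0$) is insufficient. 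The divergence contains second covariant derivatives, and the mixed term $g^{1\bar{1}}g^{1\bar{1}}g_{2\bar{2}}\nabla_1\nabla_{\bar{1}}A\indices{^{\bar{2}}_1}\,A\indices{^2_{\bar{1}}}$ is nonzero at $p$: to evaluate it one must differentiate the identity \eqref{eq:complessa_c0} in $\bar{z}$ \emph{before} evaluating, and the surviving piece comes from $\diff_z\diff_{\bar{z}}t$, which does not vanish at $p$ (it equals $g_0$, by the normalisation $F(h)=-\omega_\Sigma$). Likewise the first-order cross term $g^{1\bar{1}}\nabla_{\bar{1}}A\indices{^{\bar{1}}_1}\nabla_1A\indices{^1_{\bar{1}}}$ is nonzero even though $A\indices{^1_{\bar{1}}}=0$, because $\nabla_1A\indices{^1_{\bar{1}}}=A\indices{^2_{\bar{1}}}\Gamma^1_{12}$ with $\Gamma^1_{12}=\phi/((1+\tau)\zeta)$.

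These base-direction contributions are not decorative: collecting them gives
\begin{equation*}
\norm{A^{1,0}}^2_\phi\left(\frac{1+\phi'(\tau)}{1+\tau}-\frac{\phi(\tau)}{(1+\tau)^2}\right)+\norm{A^{1,0}}^2_\phi\,\frac{\phi(\tau)}{(1+\tau)^2}
=\norm{A^{1,0}}^2_\phi\,\frac{1+\phi'(\tau)}{1+\tau},
\end{equation*}
which exactly cancels a term $-\norm{A^{1,0}}^2_\phi\,\frac{1+\phi'(\tau)}{1+\tau}$ produced by the fibre-direction pieces ($\nabla_2\nabla_{\bar{2}}$ together with the $\Gamma^1_{21}$ contribution). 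If you drop the base direction as proposed, the computation yields $\norm{A^{1,0}}^2_\phi\big(\phi''+\frac{(\phi'+1)^2}{\phi}-\frac{1+\phi'}{1+\tau}\big)$ for each of the two conjugate terms, i.e.\ the wrong identity. So the fix is concrete: keep the $a=1,\bar{b}=\bar{1}$ terms, and compute $\nabla_1\nabla_{\bar{1}}A\indices{^{\bar{2}}_1}$ by differentiating \eqref{eq:complessa_c0} as an identity in a neighbourhood of $p$, using $\diff_z\diff_{\bar{z}}t=g_0$; the rest of your plan then goes through and reproduces the paper's proof.
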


\begin{proof}
We fix a point $p\in\bb{P}(\m{O}\oplus L)$ and a system of transversally normal coordinates $(z,\zeta)$ at this point. All of the following computations will be carried out at $p$.

From the definition we have
\begin{equation*}
\begin{split}
\mrm{div}_{\phi}\left[g_\phi(\nabla^aA^{0,1},A^{1,0})\diff_a\right]=&\nabla_a\left[g^{a\bar{b}}\nabla_{\bar{b}}A\indices{^{\bar{c}}_d}\,A\indices{^e_{\bar{f}}}g^{d\bar{f}}g_{e\bar{c}}\right]=\\
=&
g^{a\bar{b}}g^{d\bar{f}}g_{e\bar{c}}\nabla_a\nabla_{\bar{b}}A\indices{^{\bar{c}}_d}\,A\indices{^e_{\bar{f}}}+
g^{a\bar{b}}g^{d\bar{f}}g_{e\bar{c}}\nabla_{\bar{b}}A\indices{^{\bar{c}}_d}\,\nabla_aA\indices{^e_{\bar{f}}}
\end{split}
\end{equation*}
We proceed to examine the two terms. 

Using the fact that we are in transversally normal coordinates and that the only possibly non-vanishing component of $A^{1,0}$ is $A\indices{^2_{\bar{1}}}$, we can write the first term as
\begin{equation*}
\begin{split}
g^{a\bar{b}}g^{d\bar{f}}g_{e\bar{c}}\nabla_a\nabla_{\bar{b}}A\indices{^{\bar{c}}_d}\,A\indices{^e_{\bar{f}}}=&g^{a\bar{b}}g^{1\bar{1}}g_{2\bar{2}}\nabla_a\nabla_{\bar{b}}A\indices{^{\bar{2}}_1}\,A\indices{^2_{\bar{1}}}=\\
=&g^{1\bar{1}}g^{1\bar{1}}g_{2\bar{2}}\nabla_1\nabla_{\bar{1}}A\indices{^{\bar{2}}_1}\,A\indices{^2_{\bar{1}}}+g^{1\bar{1}}\nabla_2\nabla_{\bar{2}}A\indices{^{\bar{2}}_1}\,A\indices{^2_{\bar{1}}}.
\end{split}
\end{equation*}
A quick computation using equation \eqref{eq:complessa_c0} and the properties of the special system of coordinates gives
\begin{equation*}
\nabla_1\nabla_{\bar{1}}A\indices{^{\bar{2}}_1}=g_0\left(\bar{\zeta}\,\diff_{\bar{\zeta}}A\indices{^{\bar{2}}_1}-A\indices{^{\bar{2}}_1}+A\indices{^{\bar{2}}_1}\left(\phi'(\tau)-\frac{\phi(\tau)}{1+\tau}\right)\right)
\end{equation*}
and
\begin{equation*}
\nabla_2\nabla_{\bar{2}}A\indices{^{\bar{2}}_1}=A\indices{^{\bar{2}}_1}\frac{\phi(\tau)}{\zeta\,\bar{\zeta}}\left(\phi''(\tau)-\frac{\phi'(\tau)-1}{1+\tau}\right)-\diff_{\bar{\zeta}}A\indices{^{\bar{2}}_1}\frac{\phi(\tau)}{(1+\tau)\zeta}.
\end{equation*}
Hence the first term is
\begin{equation*}
\begin{split}
g^{a\bar{b}}g^{d\bar{f}}g_{e\bar{c}}\nabla_a\nabla_{\bar{b}}A\indices{^{\bar{c}}_d}\,A\indices{^e_{\bar{f}}}=&\frac{\phi(\tau)}{(1+\tau)^2\,\zeta\,\bar{\zeta}\,g_0}\left(\bar{\zeta}\,\diff_{\bar{\zeta}}A\indices{^{\bar{2}}_1}-A\indices{^{\bar{2}}_1}+A\indices{^{\bar{2}}_1}\left(\phi'(\tau)-\frac{\phi(\tau)}{1+\tau}\right)\right)\,A\indices{^2_{\bar{1}}}+\\
&+\frac{1}{(1+\tau)g_0}\left(A\indices{^{\bar{2}}_1}\frac{\phi(\tau)}{\zeta\,\bar{\zeta}}\left(\phi''(\tau)-\frac{\phi'(\tau)-1}{1+\tau}\right)-\diff_{\bar{\zeta}}A\indices{^{\bar{2}}_1}\frac{\phi(\tau)}{(1+\tau)\zeta}\right)\,A\indices{^2_{\bar{1}}}=\\
=&\norm{A^{1,0}}^2_\phi\left(\phi''(\tau)-\frac{\phi(\tau)}{(1+\tau)^2}\right).
\end{split}
\end{equation*}
On the other hand for the second term we have
\begin{equation*}
\begin{split}
g^{a\bar{b}}g^{d\bar{f}}g_{e\bar{c}}\nabla_{\bar{b}}A\indices{^{\bar{c}}_d}\,\nabla_aA\indices{^e_{\bar{f}}}=&g^{1\bar{1}}\nabla_{\bar{1}}A\indices{^{\bar{1}}_1}\nabla_1A\indices{^1_{\bar{1}}}+g^{1\bar{1}}\nabla_{\bar{2}}A\indices{^{\bar{2}}_1}\nabla_2A\indices{^2_{\bar{1}}}=\\
=&\frac{A\indices{^{\bar{2}}_1}\,A\indices{^2_{\bar{1}}}}{(1+\tau)g_0}\frac{\phi(\tau)^2}{(1+\tau)^2\,\zeta\,\bar{\zeta}}+\frac{1}{(1+\tau)g_0}\left(\diff_{\bar{\zeta}}A\indices{^{\bar{2}}_1}+A\indices{^{\bar{2}}_1}\Gamma^{\bar{2}}_{\bar{2}\bar{2}}\right)\left(\diff_\zeta A\indices{^2_{\bar{1}}}+A\indices{^2_{\bar{1}}}\Gamma^2_{22}\right)=\\
=&\norm{A^{1,0}}^2_\phi\frac{\phi(\tau)}{(1+\tau)^2}+\frac{\diff_\zeta\diff_{\bar{\zeta}}\left(A\indices{^2_{\bar{1}}}\,A\indices{^{\bar{2}}_1}\right)}{(1+\tau)g_0}+\\
&+\frac{\phi'(\tau)-1}{(1+\tau)g_0\,\zeta\,\bar{\zeta}}\left(\zeta\,\diff_\zeta\left(A\indices{^{\bar{2}}_1}\,A\indices{^2_{\bar{1}}}\right)+\bar{\zeta}\,\diff_{\bar{\zeta}}\left(A\indices{^{\bar{2}}_1}\,A\indices{^2_{\bar{1}}}\right)\right)+\frac{(\phi'(\tau)-1)^2}{\phi(\tau)}\,\norm{A^{1,0}}^2_\phi.
\end{split}
\end{equation*}

Up to this point of the proof, no assumption was made on the components of $\beta$. However, if we assume that $\beta$ is of the form $\begin{pmatrix} * & **\\ 0 & *\end{pmatrix}$ then
\begin{equation*}
\begin{split}
\zeta\diff_\zeta A\indices{^2_{\bar{1}}}=&2\,A\indices{^2_{\bar{1}}}.
\end{split}
\end{equation*}
So in this case we find
\begin{equation*}
\begin{split}
g^{a\bar{b}}g^{d\bar{f}}g_{e\bar{c}}\nabla_{\bar{b}}A\indices{^{\bar{c}}_d}\,\nabla_aA\indices{^e_{\bar{f}}}=\norm{A^{1,0}}^2_\phi\left(\frac{\phi(\tau)}{(1+\tau)^2}+\frac{(\phi'(\tau)+1)^2}{\phi(\tau)}\right).
\end{split}
\end{equation*}
Putting everything together we get
\begin{equation*}
\begin{split}
\mrm{div}_{\phi}\left[g_\phi(\nabla^aA^{0,1},A^{1,0})\diff_a\right]=&\norm{A^{1,0}}^2_\phi\left(\phi''(\tau)-\frac{\phi(\tau)}{(1+\tau)^2}\right)+\norm{A^{1,0}}^2_\phi\left(\frac{\phi(\tau)}{(1+\tau)^2}+\frac{(\phi'(\tau)+1)^2}{\phi(\tau)}\right)=\\
=&\norm{A^{1,0}}^2_\phi\left(\phi''(\tau)+\frac{(\phi'(\tau)+1)^2}{\phi(\tau)}\right)
\end{split}
\end{equation*}
and 
\begin{equation*}
\begin{split}
\mrm{div}_{\phi}\left[g_\phi(\nabla^aA^{0,1},A^{1,0})\diff_a+\mrm{c.c.}\right]=&2\,\norm{A^{1,0}}^2_\phi\left(\phi''(\tau)+\frac{(\phi'(\tau)+1)^2}{\phi(\tau)}\right).
\end{split}
\end{equation*}
\end{proof}
Notice that, under the assumption of Lemma \ref{lemma:calcolo_div},
\begin{equation*}
\norm{A^{1,0}}^2_\phi=4\frac{\phi(\tau)}{(1+\tau)g_0}\zeta\,\bar{\zeta}\,\card{\beta\indices{_{\bar{1}}^1_2}}^2
\end{equation*}
so that
\begin{equation*}
\frac{1}{2}\mrm{div}_{\phi}\left[g_\phi(\nabla^aA^{0,1},A^{1,0})\diff_a+\mrm{c.c.}\right]=4\frac{\phi(\tau)}{(1+\tau)g_0}\zeta\,\bar{\zeta}\,\card{\beta\indices{_{\bar{1}}^1_2}}^2\left(\phi''(\tau)+\frac{(\phi'(\tau)+1)^2}{\phi(\tau)}\right).
\end{equation*}
However, since we are assuming that $A$ satisfies equation \eqref{eq:complessa_c0}, $\beta$ should satisfy the conditions in equation \eqref{eq:cond_globali_coeff_c0}. So $\beta\indices{_{\bar{1}}^1_2}=\tilde{q}\,a(z)$ for some constant $\tilde{q}$, and
\begin{equation*}
\frac{1}{2}\mrm{div}_{\phi}\left[g_\phi(\nabla^aA^{0,1},A^{1,0})\diff_a+\mrm{c.c.}\right]=4\card{\tilde{q}}^2\frac{\phi(\tau)}{(1+\tau)g_0}a(z)^2\,\card{\zeta}^2\left(\phi''(\tau)+\frac{(\phi'(\tau)+1)^2}{\phi(\tau)}\right).
\end{equation*}

Now recall that we are assuming $L=K(\Sigma)^*=T^{1,0}\Sigma$, and $\zeta$ is a linear coordinate on $L$. We also have the Hermitian metric on the fibres of $L$ whose local representative is $a(z)$. If we choose this metric to be K\"ahler-Einstein, i.e. $a(z)=\lambda\,g_0(z)$ for some positive constant $\lambda$, the equation becomes
\begin{equation*}
\begin{split}
\frac{1}{2}\mrm{div}_{\phi}\left[g_\phi(\nabla^aA^{0,1},A^{1,0})\diff_a+\mrm{c.c.}\right]=&4\card{\tilde{q}}^2\frac{1}{1+\tau}\lambda\,a(z)\,\card{\zeta}^2\left(\phi(\tau)\,\phi''(\tau)+(\phi'(\tau)+1)^2\right)=\\
=&\frac{c}{m^2}\,\frac{\mrm{e}^t}{1+\tau}\left(\phi(\tau)\,\phi''(\tau)+(\phi'(\tau)+1)^2\right)
\end{split}
\end{equation*}
where we are collecting in $\frac{c}{m^2}$ all the various constants. 

We can finally write the zero-locus equation of the real moment map, using Proposition \ref{prop:curv_scal_phi} and Lemma \ref{lemma:media_curvatura}: since we are choosing a metric on $\Sigma$ that has constant scalar curvature equal to $-1$, the equation is 
\begin{equation}\label{eq:mappa_reale}
\phi''(\tau)+\frac{2}{1+\tau}\phi'(\tau)+\frac{1}{1+\tau}+\frac{4}{m(2+m)}=\frac{c}{m^2}\,\frac{\mrm{e}^t}{1+\tau}\left(\phi(\tau)\,\phi''(\tau)+(\phi'(\tau)+1)^2\right).
\end{equation}
(dividing throughout by a factor of $2$). 

The reason for introducing the factor $m^{-2}$ in the equation is that in the next sections we will find a solution of equation \eqref{eq:mappa_reale} in the \emph{adiabatic limit} when $m\to 0$, and to do this we will have to expand the equation with respect to $m$. This $m^{-2}$ factor has been chosen precisely in such a way that the expansion  in $m$ will have the appropriate form.\\

Let us summarise our computations so far. We showed that with all our assumptions, in particular those of Lemma \ref{lemma:calcolo_div}, the complex moment map vanishes automatically, while the real moment map equation reduces to the problem 
\begin{equation}\label{eq:HCSCK_original}
\begin{split}
\phi''(\tau)+2\frac{\phi'(\tau)}{1+\tau}+\frac{1}{1+\tau}+\frac{4}{m(2+m)}&=\frac{c}{m^2}\,\frac{\mrm{e}^t}{1+\tau}\left((\phi'(\tau)+1)^2+\phi(\tau)\,\phi''(\tau)\right)\\
\phi(0)=\phi(m)&=0\\
\phi'(0)=-\phi'(m)&=1
\end{split}
\end{equation}
to be solved for a positive function $\phi(\tau)$ on $[0,m]$ and a positive real number $c$. Here the function $t$ is a primitive of $\frac{1}{\phi(\tau)}$; we might fix the starting point of integration as $m/2$, since the choice of a different point can be absorbed by the constant $c$. From now on then we'll consider $t$ as
\begin{equation*}
t(\tau)=\int_{\frac{m}{2}}^{\tau}\frac{1}{\phi(x)}\mrm{d}x
\end{equation*}
hence equation \eqref{eq:HCSCK_original} becomes an ordinary integro-differential equation for $\phi$ and $c$.
\begin{nota} Essentially the same computations show that for the alternative choice of complexification \eqref{eq:low_rank_norm}, the real moment map equation reduces to the problem 
\begin{equation*}%\label{eq:HCSCK_original}
\begin{split}
\sqrt{4-2\frac{c}{m^2}\frac{\phi(\tau)}{1+\tau}\mrm{e}^t}&\left(\phi''(\tau)+\frac{2\,\phi'(\tau)}{1+\tau}+\frac{1}{1+\tau}\right)+\\
&+\frac{8}{m(2+m)}+\frac{\frac{c}{m^2}\frac{\phi(\tau)}{1+\tau}\mrm{e}^t}{\sqrt{4-2\frac{c}{m^2}\frac{\phi(\tau)}{1+\tau}\mrm{e}^t}}\left(\frac{\phi(\tau)}{(1+\tau)^2}-\frac{(1+\phi'(\tau))^2}{\phi(\tau)}\right)=0
\end{split}
\end{equation*}
with the same boundary and positivity conditions, and the same definition of $t(\tau)$.
\end{nota}
\subsubsection{Approximate solutions}

We may regard the problem \eqref{eq:HCSCK_original} as a family of integro-differential equations parametrized by $m\in\bb{R}_{>0}$. Our aim is to show that for sufficiently small values of this parameter (i.e. in the limit when the fibres of $\bb{P}(\m{O}\oplus L)$ are very small) there is a solution to the equation. Notice however that $m$ appears both in the equation and in the domain of definition of $\phi(\tau)$, since $\tau$ takes values in $[0,m]$. It will then more convenient to first change variables, letting $\tau= m\,\lambda$, so that $\lambda$ takes values in the fixed interval $[0,1]$. If we rewrite the problem \eqref{eq:HCSCK_original} in terms of $\phi(\lambda)$ we get
\begin{equation*}
\begin{split}
\frac{\phi''(\lambda)}{m^2}+2\frac{\phi'(\lambda)}{m(1+m\,\lambda)}+\frac{1}{1+m\,\lambda}+\frac{4}{m(2+m)}&=\frac{c}{m^2}\,\frac{\mrm{exp}\left(\int_{\frac{1}{2}}^{\lambda}\frac{m}{\phi(x)}\mrm{d}x\right)}{1+m\,\lambda}\left(\left(\frac{\phi'(\lambda)}{m}+1\right)^2+\phi(\lambda)\frac{\phi''(\lambda)}{m^2}\right)\\
\phi(0)=\phi(1)&=0\\
\phi'(0)=-\phi'(1)&=m
\end{split}
\end{equation*}
which is of course equivalent to the problem
\begin{equation*}
\begin{split}
\phi''(\lambda)+2\,m\frac{\phi'(\lambda)}{1+m\,\lambda}+\frac{m^2}{1+m\,\lambda}+\frac{4\,m}{2+m}&=\frac{c}{m^2}\,\frac{\mrm{exp}\left(\int_{\frac{1}{2}}^{\lambda}\frac{m}{\phi(x)}\mrm{d}x\right)}{1+m\,\lambda}\left((\phi'(\lambda)+m)^2+\phi(\lambda)\,\phi''(\lambda)\right)\\
\phi(0)=\phi(1)&=0\\
\phi'(0)=-\phi'(1)&=m,
\end{split}
\end{equation*}
to be solved for a momentum profile $\phi(\lambda)$ and a constant $c >0$. 
\begin{nota} The corresponding equation for \eqref{eq:low_rank_norm} is given by
\begin{equation*}
\begin{split}
&\left(4-2\frac{c}{m^2}\frac{\phi}{1+\lambda\,m}\mrm{exp}\left(\int_{1/2}^\lambda\frac{m}{\phi}\mrm{d}x\right)\right)^{\frac{1}{2}}\left(\phi''+\frac{2\,m\,\phi'}{1+\lambda\,m}+\frac{m^2}{1+\lambda\,m}\right)+\\
&\quad\quad\quad+\frac{8\,m}{2+m}+\frac{\frac{c}{m^2}\frac{\phi}{1+\lambda\,m}\mrm{exp}\left(\int_{1/2}^\lambda\frac{m}{\phi}\mrm{d}x\right)}{\left(4-2\frac{c}{m^2}\frac{\phi}{1+\lambda\,m}\mrm{exp}\left(\int_{1/2}^\lambda\frac{m}{\phi}\mrm{d}x\right)\right)^{\frac{1}{2}}}\left(\frac{m^2\,\phi}{(1+\lambda\,m)^2}-\frac{(m+\phi')^2}{\phi}\right)=0
\end{split}
\end{equation*}
with the same boundary conditions.
\end{nota}
Introduce the space
\begin{equation*}
\m{V}_m:=\left\lbrace\phi\in\m{C}^\infty([0,1])\mid \phi>0\mbox{ in }(0,1),\ \phi(0)=\phi(1)=0\mbox{ and }
\phi'(0)=-\phi'(1)=m\right\rbrace.
\end{equation*}
Our problem is equivalent to showing that the integro-differential operator
\begin{equation*}
\m{F}_m:\m{V}_m\times\bb{R}_{>0}\to\m{C}^{\infty}_0([0,1])
\end{equation*}
defined by
\begin{equation}\label{eq:operatore_F_m}
\begin{split}
\m{F}_m(\phi,c):=&\phi''(\lambda)+2\,m\frac{\phi'(\lambda)}{1+m\,\lambda}+\frac{m^2}{1+m\,\lambda}+\frac{4\,m}{2+m}-\\&
-\frac{c}{m^2}\,\frac{\mrm{exp}\left(\int_{\frac{1}{2}}^{\lambda}\frac{m}{\phi(x)}\mrm{d}x\right)}{1+m\,\lambda}\left((\phi'(\lambda)+m)^2+\phi(\lambda)\,\phi''(\lambda)\right)
\end{split}
\end{equation}
has a zero. The reason why the image of $\m{F}_m$ lies inside the space of zero-average functions is that in its original form the real HcscK equation is of the form
\begin{equation*}
\mbox{scalar curvature }-\mbox{ its average }+\mbox{ divergence of a vector field }=0.
\end{equation*}
In fact we will show that $\m{F}_m$ has a zero for all sufficiently small $m>0$. 

We follow the well-developed approach of \emph{adiabatic limits} and in particular the excellent reference \cite{Fine_phd}. In this approach one first constructs a sufficiently good approximate solution and then perturbs this to a genuine solution by using a suitable quantitative versione of the Implicit Function Theorem.

Thus our first step is to find an approximate solution, i.e. $(\phi_0,c_0)\in\m{V}_m\times\bb{R}_{>0}$ such that
\begin{equation*}
\m{F}_m(\tilde{\phi},\tilde{c})=O(m^n)
\end{equation*}
for some $n>0$, in a purely formal sense. %We'll talk later about the appropriate normed function spaces in which this to consider this convergence. 
It is in fact possible to find approximate solutions up to every order, but we'll just need the first one
\begin{equation*}
\begin{split}
\phi_0(\lambda)=&\frac{m}{2(2+m)}\lambda(1-\lambda)\left(4+2\,m-m(4+3\,m)\lambda(1-\lambda)\right);\\
c_0=&2\,m^2.
\end{split}
\end{equation*}
For this choice of $\phi$, $c$, we have
\begin{equation*}
\m{F}_m(\phi_0,c_0)=O(m^3)
\end{equation*}
moreover,
\begin{equation}\label{eq:prima_approssimazione}
\begin{split}
\phi_0''(\lambda)+2\,m\frac{\phi_0'(\lambda)}{1+m\,\lambda}+\frac{m^2}{1+m\,\lambda}+\frac{4\,m}{2+m}=O(m^2)\\
\frac{\mrm{exp}\left(\int_{\frac{1}{2}}^{\lambda}\frac{m}{\phi_0(x)}\mrm{d}x\right)}{1+m\,\lambda}\left((\phi_0'(\lambda)+m)^2+\phi_0(\lambda)\,\phi_0''(\lambda)\right)=O(m^2).
\end{split}
\end{equation}
\begin{nota} Precisely the same choice of approximate solution works for the more complicated equation corresponding to \eqref{eq:low_rank_norm}. 
\end{nota}
\paragraph*{Linearization around the approximate solution.} We wish to study the differential of $\m{F}_m$ around our approximate solution, $(\phi_0,c_0)$. Introduce the space
\begin{equation*}
\m{V}:=\left\lbrace\phi\in\m{C}^\infty([0,1])\mid \phi(0)=\phi'(0)=\phi(1)=\phi'(1)=0\right\rbrace.
\end{equation*}
The tangent space to $\m{V}_m\times\bb{R}_{>0}$ is $\m{V}\times\bb{R}$. The linearization 
\begin{equation*}
\left(D\m{F}_m\right)_{(\phi,c)}:\m{V}\times\bb{R}\to\m{C}^{\infty}_0([0,1])
\end{equation*}
around a point $(\phi,c)\in\m{V}_m\times\bb{R}_{>0}$ is given by
\begin{equation}\label{eq:linearizzazione}
\begin{split}
\left(D\m{F}_m\right)_{(\phi,c)}&(u,k)=u''(\lambda)+2\,m\frac{u'(\lambda)}{1+\lambda\,m}-\frac{k}{m^2}\,\frac{\mrm{exp}\left(\int_{\frac{1}{2}}^{\lambda}\frac{m}{\phi(x)}\mrm{d}x\right)}{1+m\,\lambda}\left((\phi'(\lambda)+m)^2+\phi(\lambda)\,\phi''(\lambda)\right)+\\
&+\frac{c}{m^2}\,\frac{\mrm{exp}\left(\int_{\frac{1}{2}}^{\lambda}\frac{m}{\phi(x)}\mrm{d}x\right)}{1+m\,\lambda}\left(\int_{\frac{1}{2}}^\lambda\frac{m\,u(x)}{\phi(x)^2}\mrm{d}x\right)\left((\phi'(\lambda)+m)^2+\phi(\lambda)\,\phi''(\lambda)\right)-\\
&-\frac{c}{m^2}\,\frac{\mrm{exp}\left(\int_{\frac{1}{2}}^{\lambda}\frac{m}{\phi(x)}\mrm{d}x\right)}{1+m\,\lambda}\left(2(\phi'(\lambda)+m)u'(\lambda)+u(\lambda)\,\phi''(\lambda)+\phi(\lambda)\,u''(\lambda)\right).
\end{split}
\end{equation}

Now consider the linearization around the approximate solution $(\phi_0,c_0)$. Taking into account \eqref{eq:prima_approssimazione} and the fact that $\phi_0(\lambda)=O(m)$, we have for the various terms in the linearized operator:
\begin{equation*}
\begin{split}
&u''(\lambda)+2\,m\frac{u'(\lambda)}{1+\lambda\,m}=u''(\lambda)+O(m);\\
&\frac{k}{m^2}\frac{\mrm{exp}\left(\int_{\frac{1}{2}}^{\lambda}\frac{m}{\phi_0(x)}\mrm{d}x\right)}{1+m\,\lambda}\left((\phi_0'(\lambda)+m)^2+\phi_0(\lambda)\,\phi_0''(\lambda)\right)=-2\,k\,(3\,\lambda^2-2\,\lambda)+O(m);\\
&\frac{c_0}{m^2}\frac{\mrm{exp}\left(\int_{\frac{1}{2}}^{\lambda}\frac{m}{\phi_0(x)}\mrm{d}x\right)}{1+m\,\lambda}\left(\int_{\frac{1}{2}}^\lambda\frac{m\,u(x)}{\phi_0(x)^2}\mrm{d}x\right)\left((\phi_0'(\lambda)+m)^2+\phi_0(\lambda)\,\phi_0''(\lambda)\right)=O(m);\\
&\frac{c_0}{m^2}\,\frac{\mrm{exp}\left(\int_{\frac{1}{2}}^{\lambda}\frac{m}{\phi_0(x)}\mrm{d}x\right)}{1+m\,\lambda}\left(2(\phi_0'(\lambda)+m)u'(\lambda)+u(\lambda)\,\phi_0''(\lambda)+\phi_0(\lambda)\,u''(\lambda)\right)=O(m).
\end{split}
\end{equation*}

Hence we see that the differential of $\m{F}_m$ at the point $(\phi_0,c_0)$ is
\begin{equation*}
\left(D\m{F}_m\right)_{(\phi_0,c_0)}(u,k)=u''(\lambda)+2\,k(3\,\lambda^2-2\,\lambda)+O(m).
\end{equation*}

\begin{lemma}
The map
\begin{equation}\label{eq:linearizzazione_primo_ordine}
\begin{split}
D:\m{V}\times\bb{R}&\to\m{C}^\infty_0([0,1])\\
(u,k)&\mapsto u''(\lambda)+2\,k(3\,\lambda^2-2\,\lambda)
\end{split}
\end{equation}
is an isomorphism.
\end{lemma}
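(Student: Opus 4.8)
The plan is to verify directly that the linear map $D$ in \eqref{eq:linearizzazione_primo_ordine} is both injective and surjective. The domain $\m{V}\times\bb{R}$ and the target $\m{C}^\infty_0([0,1])$ are the natural spaces: $\m{V}$ imposes the homogeneous boundary conditions $u(0)=u'(0)=u(1)=u'(1)=0$, while the target consists of zero-average functions. So the strategy is simply to show that for every $w\in\m{C}^\infty_0([0,1])$ there is a unique pair $(u,k)\in\m{V}\times\bb{R}$ with $u''(\lambda)+2k(3\lambda^2-2\lambda)=w(\lambda)$.

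First I would address the scalar parameter $k$. Integrating the equation $u'' = w - 2k(3\lambda^2-2\lambda)$ once over $[0,1]$ and using $u'(1)=u'(0)=0$ gives $0=\int_0^1 w - 2k\int_0^1(3\lambda^2-2\lambda)\,d\lambda$. Since $\int_0^1(3\lambda^2-2\lambda)\,d\lambda = 1-1 = 0$, this is automatically satisfied (consistent with $w$ having zero average), so it does not pin down $k$. The constraint that does pin down $k$ comes from the remaining boundary condition: given $k$, solve $u'' = w - 2k(3\lambda^2-2\lambda)$ with $u(0)=u'(0)=0$ by integrating twice, $u(\lambda)=\int_0^\lambda\int_0^s\bigl(w(r)-2k(3r^2-2r)\bigr)\,dr\,ds$; this automatically satisfies $u'(1)=0$ because the integrand has zero average (here one uses that the primitive $\lambda^3-\lambda^2$ of $3\lambda^2-2\lambda$ vanishes at both endpoints, so $\int_0^1(3\lambda^2-2\lambda)\,d\lambda=0$, matching $\int_0^1 w=0$). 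The one genuine condition left is $u(1)=0$, which reads $\int_0^1\int_0^s w(r)\,dr\,ds = 2k\int_0^1\int_0^s(3r^2-2r)\,dr\,ds = 2k\int_0^1(s^3-s^2)\,ds = 2k\bigl(\tfrac14-\tfrac13\bigr)=-\tfrac{k}{6}$. Hence $k$ is uniquely determined, $k=-6\int_0^1\int_0^s w(r)\,dr\,ds$, and then $u$ is uniquely determined by the double integration above. This proves bijectivity, with all four boundary conditions and the zero-average target condition accounted for; smoothness of $u$ is clear since $w$ is smooth.

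The only point requiring a little care is bookkeeping: checking that the boundary conditions $u'(1)=0$ and $u(1)=0$, together with $u(0)=u'(0)=0$, are exactly matched by the one free constant $k$ and the zero-average hypothesis on $w$, with no over- or under-determination. The key numerical facts are $\int_0^1(3\lambda^2-2\lambda)\,d\lambda=0$ and $\int_0^1(\lambda^3-\lambda^2)\,d\lambda=-\tfrac1{12}\neq 0$; the first ensures $u'(1)=0$ is automatic, the second ensures $u(1)=0$ determines $k$. I do not anticipate a serious obstacle here — the map $D$ is essentially $u\mapsto u''$ corrected by a one-dimensional space chosen precisely so that the boundary value problem is well-posed — so the ``hard part'' is merely organizing the elementary integrations cleanly. (This lemma is of course only the leading-order statement; the subsequent perturbation argument for sufficiently small $m$, via the quantitative implicit function theorem following \cite{Fine_phd}, will use it as the model operator.)

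\begin{proof}
We show that $D$ is a bijection. Let $w\in\m{C}^\infty_0([0,1])$, so $\int_0^1 w=0$, and seek $(u,k)\in\m{V}\times\bb{R}$ with $u''(\lambda)=w(\lambda)-2k(3\lambda^2-2\lambda)$. For any $k\in\bb{R}$, define
\begin{equation*}
u(\lambda)=\int_0^\lambda\int_0^s\bigl(w(r)-2k(3r^2-2r)\bigr)\,dr\,ds.
\end{equation*}
Then $u(0)=u'(0)=0$ and $u''=w-2k(3\lambda^2-2\lambda)$. Since $\int_0^1 w=0$ and $\int_0^1(3r^2-2r)\,dr=1-1=0$, we get $u'(1)=\int_0^1\bigl(w(r)-2k(3r^2-2r)\bigr)\,dr=0$ for every $k$. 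It remains to impose $u(1)=0$. Using $\int_0^s(3r^2-2r)\,dr=s^3-s^2$ and $\int_0^1(s^3-s^2)\,ds=\tfrac14-\tfrac13=-\tfrac1{12}$, we compute
\begin{equation*}
u(1)=\int_0^1\int_0^s w(r)\,dr\,ds-2k\int_0^1(s^3-s^2)\,ds=\int_0^1\int_0^s w(r)\,dr\,ds+\frac{k}{6}.
\end{equation*}
Thus $u(1)=0$ holds if and only if $k=-6\int_0^1\int_0^s w(r)\,dr\,ds$, which determines $k$ uniquely; with this value of $k$, the function $u$ is smooth, lies in $\m{V}$, and satisfies $D(u,k)=w$. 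This proves surjectivity. For injectivity, if $D(u,k)=0$ then $u''=-2k(3\lambda^2-2\lambda)$, and the computation above with $w=0$ forces $k=0$, hence $u''=0$; together with $u(0)=u'(0)=0$ this gives $u\equiv 0$.
\end{proof}
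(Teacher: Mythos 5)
Your proof is correct and follows essentially the same route as the paper: solve $u''=w-2k(3\lambda^2-2\lambda)$ by double integration with the constants fixed by the conditions at $\lambda=0$, observe that $u'(1)=0$ is automatic from the zero-average hypothesis (since $\int_0^1(3\lambda^2-2\lambda)\,d\lambda=0$), and use $u(1)=0$ to determine $k=-6\int_0^1\!\int_0^s w$ uniquely. The only cosmetic difference is that the paper writes the general solution with free constants $C_1,C_2$ and then fixes $C_1=C_2=0$, whereas you build those conditions into the definition of $u$ and add an explicit injectivity check.
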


\begin{proof}
Fix $f\in\m{C}^\infty([0,1])$ and consider
\begin{equation*}
u''(\lambda)+2\,k(3\,\lambda^2-2\,\lambda)=f(\lambda)
\end{equation*}
as a differential equation for $u(\lambda)$. The general solution is given by
\begin{equation*}
u(\lambda)=\int_0^\lambda\left(\int_0^yf(x)\mrm{d}x\right)\mrm{d}y-2\,k\left(\frac{\lambda^4}{4}-\frac{\lambda^3}{3}\right)+C_1\,\lambda+C_2
\end{equation*}
for constants $C_1$, $C_2$. There is a \textit{unique} choice of $k$, $C_1$, $C_2$ such that this solution $u$ lies in $\m{V}$, and this choice is
\begin{equation*}
C_1=C_2=0\mbox{ and }k=-6\,\int_0^1\left(\int_0^yf(x)\mrm{d}x\right)\mrm{d}y.
\end{equation*}
\end{proof}
So we have found an explicit inverse to the zeroth-order part of $\left(D\m{F}_m\right)_{(\phi_0,c_0)}$.
\begin{nota} The linearisation of the more complicated equation corresponding to \eqref{eq:low_rank_norm} is in fact just the same as $D\m{F}_m$, up to $O(m)$ terms, so Lemma \ref{eq:linearizzazione_primo_ordine} also applies to that case.  
\end{nota}
\paragraph*{Some estimates.} We recall two results that are essential to obtain an exact solution from the approximate one. The first one is a quantitative version of the usual fact that invertibility is an open property, while the second is a quantitative version of the Inverse Function Theorem.

\begin{lemma}[Lemma $7.10$ in \cite{Fine_phd}]\label{lemma:inversione}
Let $D:B_1\to B_2$ be a bounded linear map between Banach spaces, with bounded inverse $D^{-1}$. Then any other linear bounded operator $L$ such that $||D-L||\leq (2\,||D^{-1}||)^{-1}$ is also invertible, and $||L^{-1}||\leq 2\,||D^{-1}||$.
\end{lemma}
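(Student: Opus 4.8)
The plan is to deduce the statement from the classical Neumann series criterion for invertibility in a Banach algebra, so there is essentially nothing to invent; the only thing to do is to track constants carefully. The first step is to factor $L$ through $D$: since $D$ is invertible, write
\[
L = D - (D-L) = D\bigl(\idop - T\bigr), \qquad T := D^{-1}(D-L) \in \mathcal{B}(B_1),
\]
where $\mathcal{B}(B_1)$ denotes the bounded operators on $B_1$. Thus $L$ is invertible precisely when $\idop - T$ is, and in that case $L^{-1} = (\idop - T)^{-1} D^{-1}$.

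The second step is the norm bound on $T$. By submultiplicativity of the operator norm and the hypothesis $\|D-L\| \le (2\,\|D^{-1}\|)^{-1}$,
\[
\|T\| \le \|D^{-1}\|\,\|D-L\| \le \tfrac12 < 1 .
\]
Since $\mathcal{B}(B_1)$ is a Banach algebra, $\sum_{k\ge 0}\|T^k\| \le \sum_{k\ge 0}\|T\|^k < \infty$, so the Neumann series $\sum_{k\ge 0}T^k$ converges in operator norm to a two-sided inverse of $\idop - T$, and $\|(\idop-T)^{-1}\| \le \sum_{k\ge 0}\|T\|^k \le \sum_{k\ge 0}2^{-k} = 2$.

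Finally, combining the two steps, $L = D(\idop-T)$ is invertible and
\[
\|L^{-1}\| = \|(\idop-T)^{-1}D^{-1}\| \le \|(\idop-T)^{-1}\|\,\|D^{-1}\| \le 2\,\|D^{-1}\| ,
\]
which is exactly the claimed estimate. I do not expect any genuine obstacle here: this is the standard perturbation-of-invertibility argument, and the only subtlety is the bookkeeping of constants — the factor $\tfrac12$ built into the hypothesis is precisely what forces the geometric series to sum to $2$ and hence produces the clean bound $\|L^{-1}\| \le 2\,\|D^{-1}\|$ rather than a larger, less usable one.
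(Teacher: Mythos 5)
Your proof is correct: the factorization $L=D(\idop-T)$ with $T=D^{-1}(D-L)$, the bound $\|T\|\le\tfrac12$, and the Neumann series giving $\|(\idop-T)^{-1}\|\le 2$ yield exactly the claimed invertibility and estimate $\|L^{-1}\|\le 2\,\|D^{-1}\|$. The paper does not prove this lemma itself but simply cites it from Fine's thesis, where the argument is the same standard perturbation-of-invertibility proof you give, so there is nothing to add.
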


\begin{lemma}[Theorem $5.3$ in \cite{Fine_phd}]\label{lemma:funzione_inversa_quantitativo}
Let $F:B_1\to B_2$ be a differentiable map between Banach spaces, with derivative $DF:B_1\to B_2$ at $0$. Assume that $DF$ is an isomorphism, with inverse $P$, and let $\delta$ be such that $F-DF$ is Lipschitz on the ball $B(0,\delta)$ with a Lipschitz constant $l\leq (2||P||)^{-1}$. Then, for any $y\in B_2$ such that $||y-F(0)||<\delta\,(2||P||)^{-1}$ there is a unique $x$ in $B_1$ such that $||x||<\delta$ and $F(x)=y$.  
\end{lemma}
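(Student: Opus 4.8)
The plan is to recast the equation $F(x)=y$ as a fixed-point problem and to apply the Banach fixed-point theorem on a complete metric space, namely the closed ball $\overline{B(0,\delta)}\subset B_1$. Since $DF$ is an isomorphism with inverse $P$, I would introduce the operator
\begin{equation*}
T(x) = x - P\bigl(F(x)-y\bigr),
\end{equation*}
and observe that $x$ is a fixed point of $T$ if and only if $P(F(x)-y)=0$, which, as $P$ is injective, is equivalent to $F(x)=y$. Thus it suffices to produce a unique fixed point of $T$ in the open ball $\{\|x\|<\delta\}$.

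The key simplification is to rewrite $T$ in terms of the nonlinear remainder $G := F - DF$. Using $F(x)=G(x)+DF(x)$ together with $P\circ DF = \mathrm{Id}$ one gets $P(F(x)) = P(G(x)) + x$, whence
\begin{equation*}
T(x) = P(y) - P\bigl(G(x)\bigr).
\end{equation*}
Because $DF$ is linear we have $DF(0)=0$, so $G(0)=F(0)$ and therefore $T(0)=P\bigl(y-F(0)\bigr)$. The hypothesis that $G$ is Lipschitz on $B(0,\delta)$ with constant $l\le (2\|P\|)^{-1}$ then delivers the contraction estimate: for $x_1,x_2$ in the ball,
\begin{equation*}
\|T(x_1)-T(x_2)\| = \bigl\|P\bigl(G(x_1)-G(x_2)\bigr)\bigr\| \le \|P\|\, l\, \|x_1-x_2\| \le \tfrac12\|x_1-x_2\|.
\end{equation*}
To use this on the closed ball I would first note that the Lipschitz bound extends from $B(0,\delta)$ to its closure by continuity of $G$.

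It then remains to verify that $T$ maps $\overline{B(0,\delta)}$ into itself. For $\|x\|\le\delta$ the contraction bound gives $\|T(x)-T(0)\|\le\frac12\|x\|\le\delta/2$, while the smallness hypothesis $\|y-F(0)\|<\delta(2\|P\|)^{-1}$ yields
\begin{equation*}
\|T(0)\| = \bigl\|P\bigl(y-F(0)\bigr)\bigr\| \le \|P\|\,\|y-F(0)\| < \delta/2.
\end{equation*}
Hence $\|T(x)\|<\delta$, so $T$ carries the closed ball into the open ball. Banach's fixed-point theorem then produces a unique fixed point $x^{*}\in\overline{B(0,\delta)}$; since $\|T(x^{*})\|<\delta$ one in fact has $\|x^{*}\|<\delta$, and uniqueness on the closed ball forces uniqueness on the open ball. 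This $x^{*}$ is the sought solution of $F(x)=y$.

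I expect the argument to be essentially routine once the operator $T$ is correctly identified; there is no deep obstacle here. The only genuinely delicate points are bookkeeping ones: pinning down the exact constants (the threshold $l\|P\|\le\tfrac12$ for the contraction, and $\|y-F(0)\|<\delta(2\|P\|)^{-1}$ for the self-map property), and handling the mismatch between the open ball on which the Lipschitz hypothesis is stated and the closed ball on which completeness—hence the fixed-point theorem—is applied. I anticipate the constant-chasing in the self-map step to be the main thing to get right.
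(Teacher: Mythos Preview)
Your proof is correct and is precisely the standard contraction-mapping argument one expects for this quantitative inverse function theorem. Note, however, that the paper does not supply its own proof of this lemma: it is simply quoted (with attribution to Fine's thesis) and used as a black box, so there is nothing to compare your argument against.
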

In order to apply these results we embed $\m{V}\times\bb{R}$ and $\m{C}^{\infty}_0([0,1])$ into Banach spaces as follows:
\begin{itemize}
\item the first Banach space is the closure $\overline{\m{V}}$ of $\m{V}$ in $\m{C}^{l+2,\beta}([0,1])$, with the usual H\"older norm, for $l$ large enough and $0<\beta< 1$. We can then take the direct sum of this space with $\bb{R}$, and we let $\left(\overline{\m{V}}\times{\bb{R}},||.||\right)$ be the resulting Banach space;
\item for $\m{C}^{\infty}_0([0,1])$, we'll just consider it as a subset of $\m{C}^{l,\beta}_0([0,1])$.
\end{itemize}

Then we have the following estimate for the norm of the operator $D$ defined in equation \eqref{eq:linearizzazione_primo_ordine} (that is the zeroth-order part of the linearization of $\m{F}_m$ around the approximate solution $(\phi_0,c_0)$):
\begin{equation*}
\norm{D(u,k)}_{\m{C}^{l,\beta}}\leq\norm{u''}_{\m{C}^{l,\beta}}+2\,|k|\,\norm{3\,\lambda^2-2\,\lambda }_{\m{C}^{l,\beta}}\leq\norm{u}_{\m{C}^{l+2,\beta}}+22\,\card{k}\leq 22\,\norm{(u,k)}.
\end{equation*}
In order to prove a similar estiamate for the inverse, fix $f\in \m{C}^{l,\beta}_0([0,1])$ and let $(u_0,k_0):=D^{-1}(f)$. Then
\begin{equation*}
\card{k_0}=\left|6\,\int_0^1\left(\int_0^yf(x)\mrm{d}x\right)\mrm{d}y\right|\leq 3\,\sup{f}\leq 3\,\norm{f}_{\m{C}^{l,\beta}}
\end{equation*}
\begin{equation*}
\begin{split}
\norm{u_0}_{\m{C}^{l+2,\beta}}=&\norm*{\int_0^\lambda\left(\int_0^yf(x)\mrm{d}x\right)\mrm{d}y+2\,k_0\left(\frac{\lambda^4}{4}-\frac{\lambda^3}{3}\right)}_{\m{C}^{l,\beta}}\leq\\
&\leq\norm*{\int_0^\lambda\left(\int_0^yf(x)\mrm{d}x\right)\mrm{d}y}_{\m{C}^{l,\beta}}+2\,\card{k_0}\,\norm*{\frac{\lambda^4}{4}-\frac{\lambda^3}{3}}_{\m{C}^{l,\beta}}< 70\,\norm{f}_{\m{C}^{l,\beta}}
\end{split}
\end{equation*}
This shows
\begin{equation*}
\norm{D^{-1}(f)}< 73\,\norm{f}_{\m{C}^{l,\beta}}.
\end{equation*}

\begin{lemma}
For all sufficiently small $m > 0$ the map $\left(D\m{F}_m\right)_{(\phi_0,c_0)}$ is a linear isomorphism of Banach spaces. Moreover the norm of its inverse is less than $146$.
\end{lemma}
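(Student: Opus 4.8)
The plan is to deduce the statement from the quantitative openness of invertibility, Lemma \ref{lemma:inversione}, applied to the pair consisting of the explicit zeroth-order operator $D$ from \eqref{eq:linearizzazione_primo_ordine} and the full linearisation $L := (D\m{F}_m)_{(\phi_0,c_0)}$, both viewed as bounded operators $\overline{\m{V}}\times\bb{R}\to\m{C}^{l,\beta}_0([0,1])$. We already know that $D$ is an isomorphism with $\norm{D^{-1}(f)}<73\,\norm{f}_{\m{C}^{l,\beta}}$, hence $\norm{D^{-1}}\le 73$. The only thing left to prove is that $\norm{L-D}\to 0$ as $m\to 0$; granted this, for $m$ small enough $\norm{L-D}\le (146)^{-1}\le(2\norm{D^{-1}})^{-1}$, and Lemma \ref{lemma:inversione} yields that $L$ is invertible with $\norm{L^{-1}}\le 2\norm{D^{-1}}\le 146$. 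To obtain the strict bound one may instead invoke the Neumann series $L^{-1}=(\mathrm{Id}+D^{-1}(L-D))^{-1}D^{-1}$, valid once $\norm{D^{-1}}\norm{L-D}<1$, which gives $\norm{L^{-1}}\le\norm{D^{-1}}/(1-\norm{D^{-1}}\norm{L-D})$; this is $<146$ as soon as $\norm{D^{-1}}\norm{L-D}<1/2$, which holds for all sufficiently small $m$.

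The substance of the argument is therefore the operator-norm estimate $\norm{L-D}=O(m)$. From \eqref{eq:linearizzazione} the difference $L-D$ is, term by term, the sum of the four expressions displayed just before the Lemma, each of which was seen to be $O(m)$ pointwise. To promote these to bounds on the $\m{C}^{l,\beta}$-operator norm one must differentiate each error term up to order $l$ in $\lambda$ and estimate the result by $C\,m\,\norm{(u,k)}$, with $C$ uniform in $m\in(0,m_0]$. The mechanism producing the gain in $m$ is the same in all four terms: $\phi_0$ (which depends on $m$) is a fixed polynomial in $\lambda$ whose coefficients are $O(m)$, so $\phi_0$ and all its $\lambda$-derivatives are $O(m)$; the combination $(\phi_0'(\lambda)+m)^2+\phi_0(\lambda)\,\phi_0''(\lambda)$ occurring throughout is $O(m^2)$ by \eqref{eq:prima_approssimazione}, and this, multiplied by the prefactor $c_0/m^2=2$, supplies the overall factor of $m$; and the pieces carrying $u$ (namely $u'$, $u''$, and $\int_{1/2}^\lambda m\,u/\phi_0^2\,\mrm{d}x$) are linear in $(u,k)$, so their $\m{C}^{l,\beta}$ norms are controlled by $\norm{(u,k)}$ once the surrounding coefficient functions are known to be uniformly bounded in $\m{C}^{l,\beta}$.

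The delicate point — and the one I expect to be the main obstacle — is precisely this uniform $\m{C}^{l,\beta}$ control near the two endpoints $\lambda=0,1$, where $\phi_0$ degenerates like $m\,\lambda(1-\lambda)$. There the auxiliary function $t(\lambda)=\int_{1/2}^\lambda \frac{m}{\phi_0(x)}\,\mrm{d}x$ behaves like $\log\frac{\lambda}{1-\lambda}$, so $e^{t}\sim\lambda$ near $0$ and $e^{t}\sim(1-\lambda)^{-1}$ near $1$, and higher $\lambda$-derivatives of $e^{t}$ are genuinely singular; likewise $u/\phi_0^2$ is a priori of size $m^{-2}$. One must check that these singularities cancel: near $\lambda=1$ the factor $(\phi_0'+m)^2+\phi_0\phi_0''$ vanishes to first order (since $\phi_0'(1)=-m$), absorbing the $(1-\lambda)^{-1}$ blow-up of $e^{t}$, and the boundary vanishing $u(0)=u'(0)=u(1)=u'(1)=0$ together with $\phi_0\sim m\,\lambda(1-\lambda)$ makes $m\,u/\phi_0^2=O(m^{-1}\norm{u})$, integrable against the remaining $O(m^2)$ factor for a net $O(m\norm{u})$. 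Carrying this weighted-estimate bookkeeping through $l$ derivatives — or, more cleanly, re-expressing the relevant quantities in the momentum/Legendre variables of \cite{Hwang_Singer} and \cite{Fine_phd}, in which the coefficients are manifestly smooth up to the boundary — is the real content. Once it is in place, the conclusion of the Lemma follows from Lemma \ref{lemma:inversione} (or the Neumann series) exactly as above.
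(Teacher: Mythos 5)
Your proposal is correct and takes essentially the same route as the paper: the paper also applies Lemma \ref{lemma:inversione} to the pair $D$, $(D\m{F}_m)_{(\phi_0,c_0)}$, using the bound $\norm{D^{-1}}<73$ and the estimate $(D\m{F}_m)_{(\phi_0,c_0)}-D=O(m)$ read off from the term-by-term expansions displayed just before the lemma. Your extra care about the operator-norm meaning of $O(m)$ near the endpoints where $\phi_0$ degenerates, and the Neumann-series refinement for the strict bound, go beyond the paper, which simply takes that $O(m)$ bound as established by those expansions.
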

\begin{proof}
We can use Lemma \ref{lemma:inversione}; indeed, we know that $\left(D\m{F}_m\right)_{(\phi_0,c_0)} - D =O(m)$ so for $m$ small enough we'll have that the norm of the difference is less than $\frac{1}{146}$, as is needed to apply the Lemma.
\end{proof}
\begin{nota}
In fact precise estimates for the norm of $\left(D\m{F}_m\right)_{(\phi_0,c_0)}$ and its inverse are not needed. We only require that the norm of the inverse can be controlled by a quantity which is independent of $m$ and $l$. In what follows we'll write simply $N$ for the norm of $\left(D\m{F}_m\right)_{(\phi_0,c_0)}^{-1}$.
\end{nota}

\subsubsection{Proof of Theorem \ref{teorema:HCSCK_rigata_esistenza}}

We showed that for $m$ small enough we have an approximate solution $(\phi_0,c_0)$, depending on $m$, to the equation $\m{F}_m=0$, such that
\begin{equation*}
\m{F}_m(\phi_0,c_0)=O(m^3). 
\end{equation*}
Moreover, we know that the differential of $\m{F}$ around this approximate solution is an isomorphism of Banach spaces. Our next step is to use Lemma \ref{lemma:funzione_inversa_quantitativo} to show that for small enough $m$ we have a genuine solution to $\m{F}_m=0$.

Let $\m{G}_m:\overline{\m{V}}\times\bb{R}\to L^\infty_0([0,1])$ be defined as
\begin{equation*}
\m{G}_m(u,c):=\m{F}_m(\phi_0+u,c_0+c).
\end{equation*}
The differential of $\m{G}_m$ at $0$ is just $\left(D\m{F}_m\right)_{(\phi_0,c_0)}$, so it is an isomorphism. Then Lemma \ref{lemma:funzione_inversa_quantitativo} tells us that, if $\delta$ is the radius of a ball over which $\m{G}_m-D\m{G}_m$ is Lipschitz with a constant that is less than $\frac{1}{N}$, then for any $y$ such that $\norm{y-\m{G}_m(0)}\leq\frac{\delta}{N}$ there is a unique $x$ such that $\norm{x}<\delta$ and $\m{G}_m(x)=y$.

As $\m{G}_m(0)=O(m^3)$, in order to apply the result, we need to show that $\delta$ can be chosen to vanish slower than $m^3$ as $m \to 0$. 

However we also want $(\Phi,C)$ to satisfy some \emph{positivity conditions}: $\Phi$ should be strictly positive in the interior of $[0,1]$, and $C$ should be positive. The approximate solution satisfies these conditions, however $\phi_0(\lambda)=O(m)$ and $c_0=O(m^2)$; so in order to preserve positivity we need to choose a radius $\delta$ that goes to $0$ faster than $m^2$ as $m \to 0$ 

The next result shows that we can choose $\delta$ as required.

\begin{lemma}\label{lemma:costante_lipschitz}
Let $k\geq 2$. If $\delta\in O(m^k)$ then for $m$ small enough $\m{G}_m-D\m{G}_m$ is Lipschitz on $B(0,\delta)\subset\overline{\m{V}}\times\bb{R}$ with Lipschitz constant smaller than $\frac{1}{N}$.
\end{lemma}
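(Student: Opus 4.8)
The goal is to show that $\mathcal{G}_m - D\mathcal{G}_m$, which is the nonlinear part of the integro-differential operator $\mathcal{F}_m$ recentred at the approximate solution $(\phi_0,c_0)$, has arbitrarily small Lipschitz constant on a ball $B(0,\delta)\subset\overline{\mathcal{V}}\times\bb{R}$ with $\delta\in O(m^k)$, $k\geq 2$, once $m$ is small. The strategy is the standard one for these adiabatic-limit arguments (cf. \cite{Fine_phd}): write $\mathcal{N}_m:=\mathcal{G}_m - D\mathcal{G}_m$, observe that on a convex set the Lipschitz constant of $\mathcal{N}_m$ is bounded by $\sup_{B(0,\delta)}\norm{(D\mathcal{N}_m)_{(u,c)}}$, and then estimate this operator norm term by term using the explicit formula \eqref{eq:linearizzazione} for the linearisation of $\mathcal{F}_m$. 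Concretely, $(D\mathcal{N}_m)_{(u,c)} = (D\mathcal{F}_m)_{(\phi_0+u,c_0+c)} - (D\mathcal{F}_m)_{(\phi_0,c_0)}$, so it suffices to show that the map $(u,c)\mapsto (D\mathcal{F}_m)_{(\phi_0+u,c_0+c)}$, as a map into bounded operators $\overline{\mathcal{V}}\times\bb{R}\to\m{C}^{l,\beta}_0$, varies by at most $\tfrac{1}{N}$ over the ball $B(0,\delta)$.

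First I would expand each of the four groups of terms in \eqref{eq:linearizzazione} around $(\phi_0,c_0)$. The purely differential terms $u''(\lambda)+2m\,u'(\lambda)/(1+m\lambda)$ are independent of the base point, so they contribute nothing to $D\mathcal{N}_m$. For the remaining terms one substitutes $\phi=\phi_0+u$, $c=c_0+c$ and uses: (i) $\phi_0(\lambda)=O(m)$ uniformly together with the boundary behaviour $\phi_0\sim m\lambda$ near $0$ and $\phi_0\sim m(1-\lambda)$ near $1$, which controls the integrals $\int_{1/2}^\lambda m/\phi(x)\,dx$ and $\int_{1/2}^\lambda m\,u(x)/\phi(x)^2\,dx$ and the factor $\exp(\int_{1/2}^\lambda m/\phi)$; (ii) the estimates \eqref{eq:prima_approssimazione}, giving $(\phi_0'+m)^2+\phi_0\phi_0''=O(m^2)$ and likewise $\phi_0''+2m\phi_0'/(1+m\lambda)+\cdots=O(m^2)$; (iii) the Banach space bounds $\norm{u}_{\m{C}^{l+2,\beta}}, |c| \leq \delta = O(m^k)$ with $k\geq 2$. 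The key point is that $u$ (with all derivatives up to order $l+2$) is as small as $m^k$, so perturbing $\phi_0$ by $u$ changes any of the algebraic-polynomial expressions in $\phi,\phi',\phi''$ by $O(m^k)$, hence by at most $O(m^2)$; the only delicate quantities are the ones with $\phi$ in a denominator, where one needs the lower bound $\phi_0(\lambda)\geq c_1 m\,\lambda(1-\lambda)$ so that $\phi = \phi_0 + u \geq \phi_0/2$ once $\delta$ is small compared to $m$ — this is exactly where $\delta\in O(m^k)$ with $k\geq 2$ (hence $\delta = o(m)$) is used. Carrying this through shows every term of $(D\mathcal{F}_m)_{(\phi_0+u,c_0+c)} - (D\mathcal{F}_m)_{(\phi_0,c_0)}$ is $O(m)$ in operator norm, uniformly over $B(0,\delta)$, so for $m$ small the Lipschitz constant of $\mathcal{N}_m$ is $< 1/N$.

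The main obstacle I anticipate is the bookkeeping around the integral terms $\exp\!\big(\int_{1/2}^\lambda m/\phi\big)$ and $\int_{1/2}^\lambda m\,u/\phi^2$: because $\phi_0$ vanishes linearly at the endpoints, $m/\phi_0$ has a non-integrable-looking $1/\lambda$ singularity at $\lambda=0$, but the combination $\int m/\phi$ only needs to be controlled on the interior, and near the endpoints the integro-differential operator should be understood as built so that these products extend smoothly (as in the derivation of \eqref{eq:operatore_F_m}, where $t$ is a genuine function on the open fibre). One has to verify that differentiating the exponential with respect to the base point $\phi$ produces only bounded multipliers on $[0,1]$ — using $\int_{1/2}^\lambda m/\phi^2 \cdot u \,dx$ with $u = O(\phi_0\cdot m^{k-1}/\lambda(1-\lambda))$-type control near the endpoints, which follows from $u\in\overline{\mathcal{V}}$ (vanishing to first order at $0$ and $1$) together with $\norm{u}_{\m{C}^{l+2,\beta}} = O(m^k)$. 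Once these endpoint estimates are in place the rest is routine Hölder-norm algebra, estimating products and quotients of functions with controlled $\m{C}^{l,\beta}$ norms and controlled positive lower bounds, and the proof concludes. The analogous statement for the alternative complexified equation \eqref{eq:low_rank_norm} follows by the same argument, since (as noted in the preceding Remark) that equation differs from $\mathcal{F}_m$ only by $O(m)$ terms with the same structure.
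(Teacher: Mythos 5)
Your proposal is correct and follows essentially the same route as the paper: bound the Lipschitz constant of $\m{N}_m=\m{G}_m-D\m{G}_m$ by $\sup_{z\in B(0,\delta)}\norm{(D\m{N}_m)_z}$ via the Mean Value Theorem, identify $(D\m{N}_m)_z$ with $(D\m{F}_m)_{(\phi_0,c_0)+z}-(D\m{F}_m)_{(\phi_0,c_0)}$, and show term by term (expanding the exponential of $\int m/\phi$, the combination $(\phi'+m)^2+\phi\phi''$, and using $c=c_0+\tilde c=O(m^2)$) that this difference is $O(m)$ in operator norm when $\delta=O(m^k)\subseteq O(m^2)$. Your extra attention to the endpoint behaviour of $\phi_0$ and of $u\in\overline{\m{V}}$ is a more explicit treatment of estimates the paper's proof performs only formally, not a different argument.
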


This tells us that for a small enough $m$ we can choose $\delta$ in such a way that the solution of the equation that we have found satisfies the positivity conditions; it is enough to use Lemma \ref{lemma:costante_lipschitz} for $k=2+\frac{1}{2}$.

\begin{proof}[Proof of Lemma \ref{lemma:costante_lipschitz}]
Let $\m{N}_m:=\m{G}_m-D\m{G}_m$ be the nonlinear part of $\m{G}_m$. For $a,b\in B(0,\delta)$, the Mean Value Theorem implies
%\begin{equation*}
$\norm{\m{N}_m(a)-\m{N}_m(b)}_{\m{C}^{l,\beta}}\leq \norm{a-b}_{\m{C}^{l+2,\beta}}\cdot\mrm{sup}_{z\in B(0,\delta)}\norm{\left(D\m{N}_m\right)_z}$.
%\end{equation*}
For $z\in B(0,\delta)$,
\begin{equation*}
\begin{split}
\left(D\m{N}_m\right)_z(\varphi)=&\left(D\m{G}_m\right)_z(\varphi)-\left(D\m{G}_m\right)_0(\varphi)=\\
=&\left(D\m{F}_m\right)_{(\phi_0,c_0)+z}(\varphi)-\left(D\m{F}_m\right)_{(\phi_0,c_0)}(\varphi).
\end{split}
\end{equation*}
We will show that this quantity is $O(m)$, if $\delta\in O(m^2)$. Since $O(m^k)\subseteq O(m^2)$ for $k\geq 2$, this will give us the thesis.

To prove the claim, let $z=:(\tilde{y},\tilde{c})$; if $\delta$ is $O(m^2)$, since $\norm{z}\leq\delta$ also $\tilde{y}=O(m^2)$ and $\tilde{c}=O(m^2)$. The linearization of $\m{F}_m$ at $(\phi,c):=(\phi_0,c_0)+z$ is given by (recall equation \eqref{eq:linearizzazione})
\begin{equation*}
\begin{split}
\left(D\m{F}_m\right)_{(\phi,c)}&(u,k)=u''(\lambda)+2\,m\frac{u'(\lambda)}{1+\lambda\,m}-\frac{k}{m^2}\frac{\mrm{exp}\left(\int_{\frac{1}{2}}^{\lambda}\frac{m}{\phi(x)}\mrm{d}x\right)}{1+m\,\lambda}\left((\phi'(\lambda)+m)^2+\phi(\lambda)\,\phi''(\lambda)\right)+\\
&+\frac{c}{m^2}\frac{\mrm{exp}\left(\int_{\frac{1}{2}}^{\lambda}\frac{m}{\phi(x)}\mrm{d}x\right)}{1+m\,\lambda}\left(\int_{\frac{1}{2}}^\lambda\frac{m\,u(x)}{\phi(x)^2}\mrm{d}x\right)\left((\phi'(\lambda)+m)^2+\phi(\lambda)\,\phi''(\lambda)\right)-\\
&-\frac{c}{m^2}\,\frac{\mrm{exp}\left(\int_{\frac{1}{2}}^{\lambda}\frac{m}{\phi(x)}\mrm{d}x\right)}{1+m\,\lambda}\left(2(\phi'(\lambda)+m)u'(\lambda)+u(\lambda)\,\phi''(\lambda)+\phi(\lambda)\,u''(\lambda)\right).
\end{split}
\end{equation*}

Let us consider the series expansions:
\begin{equation*}
\begin{split}
\mrm{exp}\left(\int_{\frac{1}{2}}^{\lambda}\frac{m}{\phi(x)}\mrm{d}x\right)=&\mrm{exp}\left(\int_{\frac{1}{2}}^{\lambda}\frac{m}{\phi_0+\tilde{y}}\mrm{d}x\right)=\mrm{exp}\left(\int_{\frac{1}{2}}^{\lambda}\frac{m}{\phi_0}-\frac{m}{\phi_0^2}\tilde{y}+\dots\mrm{d}x\right)=\\
=&\mrm{exp}\left(\int_{\frac{1}{2}}^{\lambda}\frac{m}{\phi_0}+O(m)\,\mrm{d}x\right)=\mrm{exp}\left(\int_{\frac{1}{2}}^{\lambda}\frac{m}{\phi_0}\mrm{d}x\right)+O(m),
\end{split}
\end{equation*}
\begin{equation*}
\begin{split}
(\phi'(\lambda)+m)^2&+\phi(\lambda)\,\phi''(\lambda)=(\phi_0'+\tilde{y}'+m)^2+(\phi_0+\tilde{y})(\phi_0''+\tilde{y}'')=\\
=&(\phi_0'+m)^2+(\tilde{y}')^2+2(\phi_0'+m)\tilde{y}'+\phi_0\,\phi_0''+\tilde{y}\,\phi_0''+\phi_0\,\tilde{y}''+\tilde{y}\,\tilde{y}''=\\
=&(\phi_0'+m)^2+\phi_0\,\phi_0''+O(m^3).
\end{split}
\end{equation*}
So we have
\begin{equation*}
\begin{split}
\frac{k}{m^2}&\frac{\mrm{exp}\left(\int_{\frac{1}{2}}^{\lambda}\frac{m}{\phi(x)}\mrm{d}x\right)}{1+m\,\lambda}\left((\phi'(\lambda)+m)^2+\phi(\lambda)\,\phi''(\lambda)\right)=\\
&=\frac{k}{m^2}\frac{\mrm{exp}\left(\int_{\frac{1}{2}}^{\lambda}\frac{m}{\phi_0}\mrm{d}x\right)}{1+m\,\lambda}\,\left((\phi_0'+m)^2+\phi_0\,\phi_0''\right)+O(m).
\end{split}
\end{equation*}
For the other terms, recalling that $c=c_0+\tilde{c}=O(m^2)$, we have simply
\begin{equation*}
\frac{c}{m^2}\frac{\mrm{exp}\left(\int_{\frac{1}{2}}^{\lambda}\frac{m}{\phi(x)}\mrm{d}x\right)}{1+m\,\lambda}\left(\int_{\frac{1}{2}}^\lambda\frac{m\,u(x)}{\phi(x)^2}\mrm{d}x\right)\left((\phi'(\lambda)+m)^2+\phi(\lambda)\,\phi''(\lambda)\right)=O(m)
\end{equation*}
\begin{equation*}
\frac{c}{m^2}\,\frac{\mrm{exp}\left(\int_{\frac{1}{2}}^{\lambda}\frac{m}{\phi(x)}\mrm{d}x\right)}{1+m\,\lambda}\left(2(\phi'(\lambda)+m)u'(\lambda)+u(\lambda)\,\phi''(\lambda)+\phi(\lambda)\,u''(\lambda)\right)=O(m).
\end{equation*}
As a consequence
\begin{equation*}
\begin{split}
\left(D\m{F}_m\right)_{(\phi,c)}(u,k)=&u''(\lambda)+2\,m\frac{u'(\lambda)}{1+\lambda\,m}-\frac{k}{m^2}\frac{\mrm{exp}\left(\int_{\frac{1}{2}}^{\lambda}\frac{m}{\phi_0}\mrm{d}x\right)}{1+m\,\lambda}\,\left((\phi_0'+m)^2+\phi_0\,\phi_0''\right)+O(m)=\\
=&\left(D\m{F}_m\right)_{(\phi_0,c_0)}(u,k)+O(m).
\end{split}
\end{equation*}
Then for $z\in B(0,\delta)$, $\norm{\left(D\m{N}_m\right)_z}$ is $O(m)$. Hence for $m$ small enough, on a ball of radius $m^2$ the Lipschitz constant of $\m{N}_m$ will be smaller than $\frac{1}{N}$.
\end{proof}

This settles the problem of existence of a solution $(\Phi,C)\in\m{C}^{l+2,\beta}([0,1])\times\bb{R}$ of $\m{F}_m(y,c)=0$. To prove smoothness we consider the existence result we just showed for increasing values of $l$, with corresponding solutions $\Phi_l$. The uniqueness statement in Lemma \ref{lemma:funzione_inversa_quantitativo}, together with the fact that $\norm{u}_{\m{C}^{l,\beta}}\leq\norm{u}_{\m{C}^{l+1,\beta}}$, implies that actually all the various $\Phi_l$ are the same function, that is of course smooth.
\begin{nota} Given our previous remarks, it is straightforward to check that the same proof works for the more complicated equation corresponding to \eqref{eq:low_rank_norm}.  
\end{nota}

\addcontentsline{toc}{section}{References}
\bibliographystyle{alpha}
%\bibliographystyle{abbrv}
%{\small
\bibliography{../bibliografia_HCSCK}
%}
\vskip1cm
\noindent\rm{SISSA, via Bonomea 265, 34136 Trieste, Italy}\\
\noindent\tt{cscarpa@sissa.it}\\
\noindent\tt{jstoppa@sissa.it}
\end{document}